\newtheorem{theorem}{Theorem}[section]
\newtheorem{lemma}[theorem]{Lemma}
\newtheorem{proposition}[theorem]{Proposition}
\newtheorem{corollary}[theorem]{Corollary}   
\newtheorem{claim}[theorem]{Claim}
\newtheorem{keylemma}[theorem]{Iterative Lemma}
\theoremstyle{definition}
\newtheorem{definition}[theorem]{Definition}
\newtheorem{remark}[theorem]{Remark}
 \newenvironment{proofRT}%
{{\sc Proof of {Theorem~\ref{RegularityTheorem} }and Theorem~\ref{goto0}.}}
{{\sc q.e.d.} \\}
 \newenvironment{proofofimplication}
 {{\sc Proof that (\ref{assumptionGS}) implies (\ref{conclusionGS}).}}
{{\sc q.e.d.} \\}
 \newenvironment{george}%
{{\sc Proof of Theorem~\ref{rigiditytheorem}.}}%
{{\sc q.e.d.} \\}
{{\sc Proof of Lemma~\ref{uscord}.}}%
{{\sc q.e.d.} \\}
{{\sc Proof of Lemma~\ref{federeroutermeasure}.}}%
{{\sc q.e.d.} \\}
{{\sc Proof of Lemma~\ref{cd2general}.}}%
{{\sc q.e.d.} \\}
  \newenvironment{chika}%
{{\sc Proof of Corollary~\ref{symTeich}.}}%
{{\sc q.e.d.} \\}
 \newenvironment{bach}%
{{\sc Proof of Theorem~\ref{surf}.}}%
{{\sc q.e.d.} \\}
{{\sc Proof of Theorem~\ref{hmco2}.}}%
{{\sc q.e.d.} \\}
{{\sc Proof of claim~(\ref{rickclaim}).}}%
{{\sc q.e.d.} \\}
  \newenvironment{proofofRTMS}%
{{\sc Proof of Theorem~(\ref{RegularityTheoremModelSpace}).}}%
{{\sc q.e.d.} \\}
\numberwithin{equation}{section}
\newtheorem*{theorem*}{Theorem}
{{\sc Proof of Lemma~\ref{tri1}.}}%
{{\qed} \\}
{{\sc Proof of Theorem~\ref{regularity}.}}%
{{\qed} \\}
{{\sc Proof of Theorem~\ref{main}.}}%
{{\qed} \\}
\newenvironment{proofof(i)}%
    {{\sc Proof of $(i)$.}}%
  {{\qed} \\}  
  \newenvironment{proofof(iv)}%
    {{\sc Proof of $(iv)$.}}%
  {{\qed} \\}  
\newcommand{\R}{\mathbb R}
\newcommand{\C}{\mathbb C}
\newcommand{\Sp}{\mathbb S}
\newcommand{\vt}{\vartheta}
\newcommand{\du}{d\mu}
\newcommand{\s}{\Sigma}
\newcommand{\D}{\mathbb D}
\title{  Rigidity of Teichm\"{u}ller space}
\author[Daskalopoulos]{Georgios Daskalopoulos}
\address{Department of Mathematics \\
                 Brown Univeristy \\
                 Providence, RI}
\email{daskal@math.brown.edu}
\author[Mese]{Chikako Mese}
\address{Johns Hopkins University\\
Department of Mathematics\\
Baltimore, MD}
\email{cmese@math.jhu.edu}
\begin{document}
\maketitle
\begin{abstract}
We prove the \emph{holomorphic rigidity conjecture of Teichm\"{u}ller space} which loosely speaking states that the action of the mapping class group uniquely determines the Teichm\"{u}ller space as a complex manifold. 
The method of proof is through harmonic maps.  
We prove that the singular set of a harmonic map from a smooth $n$-dimensional Riemannian domain to the Weil-Petersson completion $\overline{\mathcal T}$ of Teichm\"{u}ller space
 has Hausdorff dimension at most $n-2$, and moreover,   $u$ has certain decay near the singular set.  Combining this with the earlier work of  Schumacher, Siu and Jost-Yau,  
 we provide a proof of the holomorphic rigidity of Teichm\"{u}ller space. 
 In addition, our results  provide as a byproduct a harmonic maps proof of both the  high rank and the rank one  superrigidity of the mapping class group proved via other methods by Farb-Masur and Yeung. 
 \end{abstract}

\section{Introduction} 
\subsection{Statement of  Results and brief history}
The main result of  the paper is the following statement.

\begin{theorem}[{\bf Holomorphic Rigidity of Teichm\"{u}ller Space}] \label{rigiditytheorem}
Let $ \Gamma$ denote the mapping class group of an oriented  surface  $S$ of genus $g \geq 2$.   Assume that $ \Gamma$ acts (as a discrete automorphism group) on a contractible K\"{a}hler manifold $\tilde{M}$ such that there is a finite index subgroup  $ \Gamma'$ of  $ \Gamma$ satisfying the properties: 
\begin{itemize}
\item[(i)]    $M:=\tilde{M}/{\Gamma'}$ is a smooth quasiprojective variety.
\item[(ii)]  $M$ admits a compactification $\overline M$ as an algebraic variety such that   the codimension  of $\overline M \backslash M$ is $\geq$ 3. 
\end{itemize}
Then $\tilde{M}$ is equivariantly biholomorphic or conjugate biholomorphic to  the Teichm\"{u}ller space ${\mathcal T}$ of $S$ 
where $\Gamma$ acts on ${\mathcal T}$ as the mapping class group.
\end{theorem}

We will derive Theorem~\ref{rigiditytheorem} as a consequence of the following  more general holomorphic rigidity Theorem and its Corollary.

\begin{theorem}\label{rigiditytheorem0}
Let $M$ be a complete, finite volume K\"ahler manifold  with universal cover $\tilde M$ and $\pi_1(M)$ finitely generated. Let $ \Gamma$ be the mapping class group of an oriented  surface  $S$ of genus $g$  and  $p$ marked points such that $k=3g-3+p>0$, $\overline {\mathcal T}$  the Weil-Petersson completion of the Teichm\"{u}ller space ${\mathcal T}$ of $S$ and $\rho: \pi_1(M) \rightarrow \Gamma$ a homomorphism.   If there exists a  finite energy
 $\rho$-equivariant  harmonic map $ u: \tilde M \rightarrow \overline {\mathcal T}$,  
then there  exists a stratum ${\mathcal T}'$ of $\overline{\mathcal T}$ such that $ u$ defines a pluriharmonic map into ${\mathcal T}'$.
 Furthermore,
\[
 \sum_{i,j,k,l}R_{ijkl}d''u_i \wedge d'u_j \wedge d'u_k \wedge d''u_l \equiv 0
\]
where $R_{ijkl}$ denotes the Weil-Petersson curvature tensor. In particular,
if additionally the (real) rank of $u$ is $\geq  3$ at some point, then  $ u$ is holomorphic or conjugate holomorphic. 
\end{theorem}

The assumption about the existence of a finite energy
 $\rho$-equivariant  harmonic map to the Weil-Petersson completion $\overline{\mathcal T}$ of Teichm\"{u}ller space holds in many important cases. For example, if $M$ is compact and
   $\rho$  is sufficiently large (see definition below), then harmonic maps exist.  More generally, this is also true if we replace the assumption that $M$ is compact by the assumption $M$ is complete, satisfies the assumptions of Theorem~\ref{rigiditytheorem0} and admits a finite energy  map to $\overline{\mathcal T}$.
   
 Recall from \cite[p.142]{papa} or \cite[Definition 2.1]{daskal-wentworth}  that   two
  pseudo-Anosov elements of the mapping class group are called {\it independent} if their fixed point sets in the space of projective measured foliations  do not coincide.
 A subgroup of the mapping class group $\Gamma$ is called {\it {sufficiently large}} if it contains two independent pseudo-Anosov elements. A homomorphism $\rho$ into the mapping class group is called sufficiently large if its image is sufficiently large.

   By combining Theorem~\ref{rigiditytheorem0} above with   \cite[Corollary 1.3]{daskal-wentworth} we obtain the following.
     
   \begin{corollary}\label{rigiditytheorem1}Let $M$ be a complete, finite volume K\"ahler manifold  with universal cover $\tilde M$ and $\pi_1(M)$  finitely generated. Let $ \Gamma$ be the mapping class group of an oriented  surface  $S$ of genus $g$  and  $p$ marked points such that $k=3g-3+p>0$ and $\rho: \pi_1(M) \rightarrow \Gamma$ a homomorphism that is sufficiently large.   If there exists a  finite energy
 $\rho$-equivariant   map $ \tilde M \rightarrow  {\mathcal T}$,  
then there  exists a $\rho$-equivariant  pluriharmonic map $ u: \tilde M \rightarrow  {\mathcal T}$.
 Furthermore,
\[
 \sum_{i,j,k,l}R_{ijkl}d''u_i \wedge d'u_j \wedge d'u_k \wedge d''u_l \equiv 0
\]
where $R_{ijkl}$ denotes the Weil-Petersson curvature tensor. In particular,
if additionally the (real) rank of $u$ is $\geq  3$ at some point, then  $ u$ is holomorphic or conjugate holomorphic. 
\end{corollary}

   The rank condition also holds in many important applications, for example in Theorem~\ref{rigiditytheorem}. This is usually verified by showing that certain nontrivial homology classes in $M$ of degree $\geq 3$ are mapped  nontrivially under $u$ (see for example \cite{siu}).
   
   The following  theorem, due to  Farb-Masur and Yeung, also follows as a byproduct of our methods.
 
\begin{corollary}[{\bf{Superrigidity of the MCG, cf. \cite{farb-masur},  \cite{yeung}}}] \label{symTeich}
 Let $\tilde M= G \slash K$ be an irreducible symmetric space of noncompact type other than
 $SO_0(p,1) \slash SO(p) \times SO(1)$, $SU_0(p,1) \slash S(U(p) \times U(1))$.
  Let 
 $\Lambda$ be a discrete subgroup of $G$ with finite volume quotient and let  $\Gamma$ denote the mapping class group of an oriented  surface of genus $g$  and  $p$ marked points such that $k=3g-3+p>0$.  If the rank of $\tilde M$ is $\geq 2$, we  assume additionally that $\Lambda$ is cocompact. Then there exists no sufficiently large  homomorphism $\rho: \Lambda \rightarrow \Gamma$. 
  \end{corollary}
  
  The  phenomenon of strong rigidity was discovered by Mostow for a large class of locally symmetric spaces of nonpositive curvature. The famous Mostow rigidity theorem of 1968 \cite{mostow} states that if two compact hyperbolic manifolds of dimension greater than two  have the same fundamental group, then they are isometric. In particular, Mostow's result says that for compact hyperbolic manifolds, the metric structure is rigidly determined by the topology.   This statement  was later extended to other locally symmetric spaces of nonpositive curvature, not necessarily compact but satisfying a finite volume assumption   (cf.~\cite{mostow2}, \cite{prasad}). 
  
A natural question is whether structures other than metric structures are also rigidly determined by the topology. One such case is holomorphic rigidity within the class of K\"ahler manifolds. In fact, a weak form of holomorphic rigidity was discovered earlier in the 1960 work of Calabi-Vesentini.  \cite{calabi-vesentini}.  They showed that 
compact quotients of bounded complex symmetric domains of complex dimension at least two do not admit any nontrivial infinitesimal holomorphic deformations. In the late 1970's, Yau conjectured that strong  rigidity holds for compact K\"{a}hler manifolds of complex dimension at least two and negative sectional curvature.  This  was subsequently proved in 1980 using harmonic maps by Siu  \cite{siu} in the case when one of the manifolds has strong negative curvature. 

Siu's work inspired an outburst of  important results in geometric superrigidity including the work of Corlette \cite{corlette}, Mok-Siu-Yeung \cite{mok-siu-yeung}, Jost-Yau (cf. \cite{jost} and the references therein) and Gromov-Schoen\cite{gromov-schoen} among others. The proofs of all the aforementioned results  use harmonic maps.
Indeed, one starts with the work of Eells-Sampson \cite{eells}  which asserts that 
 if two Riemannian manifolds are homotopy equivalent and if one of them is non-positively  curved, then there  there exists a harmonic map from the manifold without the curvature condition to the other manifold which   is also a homotopy equivalence. Then  a Bochner-type formula leads to the conclusion that the harmonic map must preserve either the metric or the holomorphic structure. The passage through harmonic maps is necessary because the system of equations which determines that a map is either totally geodesic  or holomorphic  are overdetermined whereas the system of harmonic map equations is not.

Siu \cite{siu2} and Jost-Yau \cite{jost-yau1} extended Siu's result to a class of non-compact symmetric domains  with appropriate metric properties at infinity. Given that Teichm\"{u}ller space resembles a complex symmetric domain and admits a metric of strong negative curvature (as we will see in the next paragraph),  Jost and Yau also attempted to prove   holomorphic rigidity of Teichm\"{u}ller space  \cite{jost-yau}.  Their proof was incorrect.  
 
Before we continue, we  briefly review some important properties of the Teichm\"{u}ller space  $\mathcal T$ (of an oriented  surface  $S$ of genus $g$  and  $p$ marked points such that $k=3g-3+p>0$) that are relevant to this article. First recall that 
  $\mathcal T$    endowed with the Weil-Petersson metric $G_{wp}$ is a   K\"ahler  manifold  \cite{ahlfors}  whose sectional curvature  is negative \cite{tromba} and \cite{wolpert0}.  Moreover, the curvature tensor of $G_{wp}$ is strongly negative  in the sense of Siu \cite{schumacher}, which makes it plausible that $\mathcal T$ is holomorphicaly rigid. However, the Weil-Petersson metric is incomplete \cite{wolpertPJ} and \cite{chu}, and this causes major difficulties in pursuing Siu's approach. 
  
  Let $(\overline{\mathcal T}, d_{\overline{\mathcal T}})$ denote the metric completion of $({\mathcal T},G_{wp})$.
  The metric space $(\overline{\mathcal T}, d_{\overline{\mathcal T}})$ is a complete NPC space; i.e.~a geodesic space with non-positive curvature in the sense of Alexandrov  \cite{daskal-wentworth}, \cite{wolpert} and \cite{yamada}.   
 Set theoretically, $\overline{\mathcal T}$ is  nothing but the augmented Teichm\"uller space  \cite{masur}, \cite{abikoff}. 
Its boundary $\partial {\mathcal T}$  can be stratified by smooth open strata corresponding to deformations of nodal surfaces formed by pinching a finite set of nontrivial, nonperipheral, simple closed curves   \cite{masur} and \cite{wolpert}. In other words, $\overline{\mathcal T}$ is a stratified space (with the original Teichm\"{u}ller space ${\mathcal T}$ being the top dimensional open stratum). 

Given the incompleteness of Teichm\"uller space, one is tempted to replace ${\mathcal T}$ by $\overline{\mathcal T}$ and study harmonic maps to the NPC metric space $\overline{\mathcal T}$. Harmonic maps to metric spaces was initiated in the seminal paper of Gromov and Schoen \cite{gromov-schoen} where they study  harmonic maps  to Euclidean buildings (a special type of Riemannian polyhedra with non-positive curvature in the sense of Alexandrov). Their work was subsequently extended for harmonic maps  into  general NPC spaces by Korevaar-Schoen and Jost  \cite{korevaar-schoen1},  \cite{korevaar-schoen2} and \cite{jost}.   For other work on harmonic maps to singular spaces relevant to this paper, we refer to   \cite{daskal-meseDM} and \cite{daskal-meseSR}.

In \cite{gromov-schoen} (as  well as in  \cite{daskal-meseDM} and \cite{daskal-meseSR}), the main technical point is how to handle  the singularities of the harmonic map.  To do this, one  gains control of the map near the set of points that do not map to  smooth points in the target.
We do the same  in this paper, but  there are  additional difficulties stemming from the non-local compactness of $\overline{\mathcal T}$. By contrast, 
the spaces studied by  \cite{gromov-schoen}  were locally compact. The most important technical challenge tackled in this paper is to overcome the difficulty presented by the non-local compactness of $\overline{\mathcal T}$.  

Before attempting to study harmonic maps, one needs to get a good understanding of the geometry of $\overline{\mathcal T}$ near its boundary.
In \cite{masur}, Masur initiated the study of the Weil-Petersson metric near the boundary of $\overline{\mathcal T}$.
In recent years,  many authors have extended Masur's work to establish stronger asymptotic properties of the Weil-Petersson geometry.   See for example,  \cite{schumacher}, \cite{daskal-wentworth}, \cite{yamada}, \cite{wolpert}, \cite{wo4}, \cite{liusunyau1}, \cite{liusunyau2} and \cite{huang} among many others.
In \cite{daskal-meseC1},  we  proved stronger $C^1$-estimates which will be used in this paper.   These estimates differ from the  previously known derivative estimates  because  they estimate the asymptotic difference of the Weil-Petersson metric and a product metric given on the product of the boundary strata and its normal space (which will be described in more detail below, cf.~Section~\ref{intro-part2}).

We end this summary by stating the two main technical theorems that allow us to control the harmonic map near its singular set. Below we denote by ${\mathcal R}(u)$ to be the set of points in the domain that possess a neighborhood mapping into a single stratum in $\overline{\mathcal T}$ and ${\mathcal S}(u)$ to be its complement.

\begin{theorem}\label{RegularityTheorem}
Let  $({\mathcal T}, G_{wp})$ denote the Teichm\"uller space of an oriented surface  of genus $g$  and $p$ marked points such that $k=3g-3+p>0$ with the Weil-Petersson metric and  let $(\overline{\mathcal T}, d_{ \overline{\mathcal T}})$ be its metric completion.  If $(\Omega,g)$ is an  $n$-dimensional Lipschitz Riemannian domain and $u:(\Omega,g) \rightarrow( \overline{\mathcal T}, d_{ \overline{\mathcal T}})$ is a harmonic map, then 
\[
\dim_{\mathcal H} \Big({\mathcal S}(u)\Big) \leq n-2.
\]
\end{theorem}

\begin{theorem}\label{goto0}
Let   $u:(\Omega,g) \rightarrow( \overline{\mathcal T}, d_{ \overline{\mathcal T}})$ be as in Theorem~\ref{RegularityTheorem}.  For any compact subdomain $\Omega_1$ of $\Omega$, there exists a sequence of smooth functions $\psi_i$  with $\psi_i \equiv 0$ in a neighborhood of ${\mathcal S}(u) \cap \overline{\Omega}_1$, $0 \leq \psi_i \leq 1$ and $\psi_i(x) \rightarrow 1$ for all $x \in \Omega_1 \backslash {\mathcal S}(u)$ such that
\[
\lim_{i \rightarrow \infty} \int_{\Omega} |\nabla \nabla u| |\nabla \psi_i| \ d\mu =0.
\]
\end{theorem}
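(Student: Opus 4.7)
The plan is to construct $\psi_i$ as a capacity-type cutoff adapted to an efficient cover of ${\cal S}(u) \cap \overline{\Omega_1}$, and to control $\int |\nabla\nabla u|\,|\nabla\psi_i|$ by Cauchy--Schwarz combined with the reverse Poincar\'e (Caccioppoli) inequality for harmonic maps into NPC spaces. The sum $\sum_j r_j^{n-2}$, where $\{r_j\}$ are the radii of such a cover, is the capacity weight at codimension two; while this cannot be made small directly from $\dim_{\cal H}({\cal S}(u)) \leq n-2$, the stronger fact that ${\cal H}^{n-2+\gamma}({\cal S}(u))=0$ for every $\gamma>0$ (equivalent to the dimension bound) lets us drive it to zero by an elementary interpolation.

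Concretely, pick positive sequences $\rho_i, \gamma_i$ with $\rho_i, \gamma_i \rightarrow 0$. By the definition of Hausdorff dimension one can cover ${\cal S}(u) \cap \overline{\Omega_1}$ by finitely many balls $\{B(x_j^{(i)}, r_j^{(i)})\}_{j=1}^{N_i}$ with $r_j^{(i)} \leq \rho_i$ and $\sum_j (r_j^{(i)})^{n-2+\gamma_i} \leq \rho_i^{2\gamma_i}$, so that
\[
\sum_j (r_j^{(i)})^{n-2} \;\leq\; \rho_i^{-\gamma_i} \sum_j (r_j^{(i)})^{n-2+\gamma_i} \;\leq\; \rho_i^{\gamma_i} \;\rightarrow\; 0.
\]
Fix $\chi \in C^\infty([0,\infty))$ with $\chi \equiv 0$ on $[0,1]$, $\chi \equiv 1$ on $[2,\infty)$, $|\chi'| \leq 2$, and define $\psi_i$ to be a standard mollification of $\min_j \chi(|x - x_j^{(i)}|/r_j^{(i)})$. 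By construction $\psi_i$ vanishes in a neighborhood of ${\cal S}(u) \cap \overline{\Omega_1}$, takes values in $[0,1]$, equals $1$ outside $\bigcup_j B(x_j^{(i)}, 2r_j^{(i)})$ (hence converges to $1$ on $\Omega_1 \setminus {\cal S}(u)$ since $\rho_i \rightarrow 0$), and satisfies $|\nabla\psi_i| \leq C/r_j^{(i)}$ on each annulus $A_j^{(i)} := B(x_j^{(i)}, 2r_j^{(i)}) \setminus B(x_j^{(i)}, r_j^{(i)})$, with $\nabla\psi_i \equiv 0$ outside $\bigcup_j A_j^{(i)}$.

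To conclude the estimate, $u$ is locally Lipschitz by Korevaar--Schoen, so $|\nabla u| \leq L$ on a neighborhood of $\overline{\Omega_1}$, and a Caccioppoli inequality gives $\int_{B(x_j^{(i)}, 2r_j^{(i)})} |\nabla\nabla u|^2 \, d\mu \leq CL^2 (r_j^{(i)})^{n-2}$. Cauchy--Schwarz on each $A_j^{(i)}$ then yields $\int_{A_j^{(i)}} |\nabla\nabla u|\cdot (C/r_j^{(i)})\, d\mu \leq CL\, (r_j^{(i)})^{n-2}$, and summing over $j$ produces $\int_\Omega |\nabla\nabla u|\,|\nabla\psi_i|\, d\mu \leq CL \sum_j (r_j^{(i)})^{n-2} \rightarrow 0$ as required. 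The chief obstacle is the borderline codimension-two nature of the bound, which forces the Hausdorff interpolation above and requires the Caccioppoli estimate to hold uniformly on balls potentially close to other parts of ${\cal S}(u)$; if this uniform reverse Poincar\'e estimate proves delicate in the stratified NPC setting, one would substitute the dyadic cutoffs above by logarithmic ones and invoke the quantitative decay of $u$ near ${\cal S}(u)$ hinted at in the abstract.
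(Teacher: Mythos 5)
There are two genuine gaps here, and each one by itself is fatal. First, the covering interpolation is backwards: for $r_j \leq \rho_i$ one has $r_j^{\,n-2} = r_j^{\,n-2+\gamma_i}\, r_j^{-\gamma_i} \geq \rho_i^{-\gamma_i} r_j^{\,n-2+\gamma_i}$, not $\leq$, so smallness of $\sum_j r_j^{\,n-2+\gamma_i}$ gives no control whatsoever on $\sum_j r_j^{\,n-2}$. This is not a repairable sign error: the bound $\dim_{\cal H}({\cal S}(u)) \leq n-2$ only yields ${\cal H}^{n-2+\gamma}=0$ for $\gamma>0$ and is compatible with ${\cal H}^{n-2}({\cal S}(u))$ being positive or infinite, in which case \emph{every} cover has $\sum_j r_j^{\,n-2}$ bounded below. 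The paper never claims ${\cal H}^{n-2}({\cal S}(u))=0$, so a pure capacity argument from the dimension bound cannot work. Second, the Caccioppoli estimate $\int_{B_{2r}}|\nabla\nabla u|^2\,d\mu \leq CL^2 r^{n-2}$ on balls meeting ${\cal S}(u)$ is circular. The Bochner-type inequality $\Delta \frac12|\nabla u|^2 \geq |\nabla du|^2 - C|\nabla u|^2$ that would produce it is only valid pointwise on the regular set, where the image lies in a smooth stratum; to integrate it by parts across ${\cal S}(u)$ and absorb the boundary contribution one must already possess cutoffs $\psi_i$ with $\int |\nabla\nabla u|\,|\nabla\psi_i|\,d\mu \to 0$ --- which is precisely the statement of Theorem~\ref{goto0}. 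Indeed the entire purpose of this theorem, as the introduction explains, is to justify the global application of the Bochner formula; assuming a Hessian energy bound across the singular set assumes the conclusion.

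The paper's actual route is quite different and does not pass through a generic covering argument. Theorem~\ref{goto0} is the localized {\sc Statement 2}$[j]$, established by backwards induction on the stratum index $j$ in Section~\ref{proofMRTsection}: after verifying Assumptions 1--6 (in particular the first variation formula for the target, the subharmonicity Lemma~\ref{anothersh}, and the order/monotonicity statement Theorem~\ref{thefirstvsigma} for the singular component $v$), the cutoffs are produced by repeating the construction of \cite{daskal-meseDM} Section 11, which exploits the quantitative order-one decay of $v$ toward ${\cal P}_0$ near ${\cal S}_j(u)$ together with the inductive codimension-two control of the deeper strata and higher-order points. If you want to salvage your outline, the missing ingredient is exactly this quantitative decay near order-one singular points (your own closing sentence concedes as much); the dimension bound alone is too weak an input.
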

Theorem~\ref{goto0} should be viewed as  an \emph{estimate on the growth of the norm of the gradient  $\nabla u$ of $u$ near its singular set}.  The existence of  the sequence  $\psi_i$ allows us to  justify Stoke's Theorem, a crucial  step  in  applying the Bochner technique to rigidity.

\subsection{Description of the main technical points}
\label{intro-part2}
 As mentioned before, all the above theorems are proved by using the theory of harmonic maps to metric spaces. The proof takes advantage of the
important special feature of  the metric space $\overline{\mathcal T}$  near a boundary point ---  it is  asymptotically isometric to the product of  a smooth open stratum  ${\mathcal T}' \subset \partial {\mathcal T}$ (which has the structure of a  smooth K\"ahler manifold)  and a simpler metric space $\overline{\bf H}$  or its product $\overline{\bf H} \times \dots \times \overline{\bf H}$  (cf. \cite{daskal-wentworth}, \cite{yamada}, \cite{wolpert}, \cite{wo4}, \cite{liusunyau1}, \cite{liusunyau2} and \cite{daskal-meseC1}).  The metric space $\overline{\bf H}$ is  called  the  \emph{model space}.
Thus, for a harmonic map $u:\Omega \rightarrow \overline{\mathcal T}$ near  a singular point $x \in {\mathcal S}(u)$, we can write
 $u=(V,v)$   where $V$ is the \emph{regular component} that  maps  into the smooth manifold ${\mathcal T}'$  and $v$ is the \emph{singular component} mapping into $\overline{\bf H}$ or  $\overline{\bf H} \times \dots \times \overline{\bf H}$.

The difficulty in analyzing $u=(V,v)$   is that the component maps $V$ and $v$ are not necessary harmonic. 
This situation is further complicated by the fact that the singular component $v$  may be the non-dominant component  (i.e.~the higher order term) of $u$.   Moreover, one cannot use tools from   elliptic PDE's~(as one would  for maps into Riemannian manifolds) because the harmonic maps  may  a priori have a  large singular set.
Nonetheless, in this paper we will push forward the harmonic map theory  by overcoming two major obstacles.  The \emph{first obstacle}  is that the Weil-Petersson metric near the boundary of  ${\mathcal T}$ is not a product, but only asymptotically a product.  
 The \emph{second obstacle} is  the non-local compactness and  degenerating geometry of  $\overline{\mathcal T}$.     
The techniques that we will introduce  to handle these issues are the main accomplishments of this paper  and  the crux of the proofs of the Regularity Theorems~\ref{RegularityTheorem} and \ref{goto0}. \\

{\sc Overcoming obstacle 1:  Monotonicity Formula and the Order Function.}
A key technical tool in analyzing the structure of
a harmonic map $u:(\Omega,g) \rightarrow (X,d)$ from a Riemannian domain into an NPC space is the order function $Ord^u$ of $u$.
If $u$ is a harmonic function,  then $Ord^u(x_0)$ is the order with which $u$ attains its value $u(x_0)$ at $x_0$. 
  In its simplest form, the order is the limit as $r \rightarrow 0$ of the  scale invariant ratio 
\begin{equation} \label{monotonicityofratio}
\frac{rE(r)}{I(r)}= \frac{r  \displaystyle{\int_{B_r(x_0)} |\nabla u|^2 \ d\mu}}{\displaystyle{\int_{\partial B_r(x_0)} d^2(u,P_0) d\Sigma}}
\end{equation}
where  the numerator is $r$ times the energy $E(r)$ 
of $u$ in a geodesic ball $B_r(x_0)$ of radius $r$ centered at $x_0 \in \Omega$ and the denominator $I(r)$ is  the  $L^2$-distance between  $P_0 \in X$ and $u$ on the boundary  $\partial B_r(x_0)$. 
A ratio of this type had been previously used in the study of various elliptic PDE problems (e.g.~\cite{agmon}, \cite{almgren}, \cite{garofolo-lin}, \cite{landis1}, \cite{landis2},\cite{lin}, \cite{miller}), but Gromov and Schoen \cite{gromov-schoen} were the first to introduce this idea in the context of harmonic maps to NPC metric spaces.

 The existence of the order function  is due to the monotonicity (in the parameter $r$) of the ratio~(\ref{monotonicityofratio}) which in turn  
follows from the domain and target variations of harmonic maps. 
The idea for  the domain variation is as follows.   Let $B_r(x_0)$ be a geodesic neighborhood  of $x_0$ with  normal coordinates $x=(x^1, \dots, x^n)$ centered at $x_0=0$ and consider a diffeomorphism of the form
$F_t(x)=(1+\tau \eta(x)) x$  where $\eta$ has compact support in $B_r(x_0)$ (hence $F_t$ is the identity  outside  $B_r(x_0)$).  A domain variation of $u$ is the one-parameter family $u_t=u \circ F_t$ with $u_0=u$.  Since the total energy  function 
\begin{equation}\label{minprop}
t \mapsto E^{u_t}=\int_\Omega |\nabla u_t|^2 d\mu
\end{equation}
  has a minimum at $t=0$, we can differentiate the above equation in $t$ and obtain the domain variation formula 
\begin{equation} \label{DomainVariationHM}
0 = \int_{B_r(x_0)} |\nabla u|^2 (2-n) \eta- |\nabla u|^2 \sum_i x^i \frac{\partial \eta}{\partial x^i} + 2 \sum_{i,j,k} g^{ik} \frac{\partial \eta}{\partial x^i} x^j \frac{\partial u}{\partial x^j} \cdot \frac{\partial u}{\partial x^k} d\mu.
\end{equation}
  For harmonic maps between smooth Riemannian manifolds, the domain variation formula  yields the well known monotonicity of the scale-invariant (with respect to  dilation of the domain) energy,
    \[
  r^{2-n} E(r) = r^{2-n} \int_{B_r(x_0)} |\nabla u|^2 d\mu.
  \]
This has  played  an important role in the regularity theory of harmonic maps between smooth Riemannian manifolds (notably in the Schoen-Uhlenbeck $\epsilon$-regularity theorem [SU]).   Using a generalization of the notion of energy,  for harmonic maps to NPC spaces (cf.~\cite{gromov-schoen} and \cite{korevaar-schoen1}),  the domain variation formula readily generalizes  to the case of  NPC targets.  

Gromov and Schoen's innovation in \cite{gromov-schoen} was to improve the classical monotonicity formula  to  obtain a more sophisticated tool for studying harmonic maps into NPC spaces.    The idea is  to combine the domain variation formula with the convexity of the distance function $d$ on the target NPC space $X$. Indeed, they consider  target variations of  $u$ by   pulling it back  along a geodesic to a fixed point.  More precisely, fix $x_0 \in \Omega$, $P_0 \in X$ and a non-negative function $\zeta$ with  compact support in a neighborhood $B_r(x_0)$ of $x_0$.  Consider an one-parameter family of maps $u_t$, for $t>0$ sufficiently small, by setting $u_t(x)$ to be the point on a geodesic between  $P_0$ and $u(x)$ at a distance $(1-t\zeta(x))d(P_0,u(x))$ from $P_0$.  The  minimizing property  of the energy of $u$
yields the   subharmonicity  of the function $d(u,P_0)$; more precisely,   $d(u,P_0)$ satisfies in the weak sense the differential inequality (cf.~\cite[Proposition 2.2]{gromov-schoen})
\begin{equation} \label{weaksubharmonicity}
\triangle d^2(u(x),P_0) \geq 2 |\nabla u|^2.
\end{equation}
Combining   the domain variation formula (\ref{DomainVariationHM}) with the target variation formula (\ref{weaksubharmonicity}), they obtain  the monotonicity formula (cf.~\cite[proof of (2.5)]{gromov-schoen})
\[
\frac{1}{r} + \frac{E'(r)}{E(r)} - \frac{I'(r)}{I(r)} \geq O(r)
\]
where $O(r)$ measures how far away the domain metric $g$ is from being Euclidean.  The monotonicity of the ratio (\ref{monotonicityofratio}) follows immediately from this differential inequality if  $O(r)$ is identically equal to $0$. If $O(r)$ not equal to 0, one simply adjusts the ratio (\ref{monotonicityofratio}) by multiplying it by $e^{cr^2}$  for an appropriate choice of $c>0$.   The limit of (\ref{monotonicityofratio}) at each point on the domain  defines  the order function $Ord^u:\Omega \rightarrow [1,\infty)$. 

In  \cite{daskal-meseDM} and in the present paper, we extend the notion of order to a wider class of maps.  To movivate this generalization,    recall that 
a harmonic map  $u=(u^1, \dots, u^m): \Omega \rightarrow {\mathbb R}^m$ into the Euclidean space can be viewed as $n$-independent harmonic functions. 
Assuming continuity,  a harmonic map between Riemannian manifolds can also be expressed as a set of component functions $u=(u^1, \dots, u^m)$ by using local coordinates;  but if the target metric is non-Euclidean, the component functions are not independent of each  other.  Indeed, the harmonic map equations 
\[
\triangle u^i + \sum_{\alpha, \beta} \sum_{j,k} g^{\alpha \beta} \Gamma^i_{jk} \circ u \frac{\partial u^j}{\partial x^\alpha} \frac{\partial u^k}{\partial x^\beta}=0, \ i=1, \dots, m
\]
show that the behavior of each component function is influenced by the behavior of the other component functions via the Christoffel symbols $\Gamma^i_{jk}$ of the target metric.  On the other hand, Riemannian manifolds are locally asymptotic to Euclidean space.  Namely, normal coordinates centered at a point show that  a smooth Riemannian manifold is Euclidean up to second order at  that point.  We can interpret  this to mean that Riemannian manifolds are asymptotically a product of $m$-copies of ${\mathbb R}$.

Analogously to harmonic maps into $\R^m$, a harmonic map $u$ into a Euclidean building can be   expressed  by component maps which are themselves harmonic.  Indeed,  we can locally write  $u=(V,v)$ where $V$ is a  harmonic map into a Euclidean space  and $v$ is a harmonic map into a lower dimensional Euclidean building. 
It is a serious technical issue that many of the techniques developed by Gromov and Schoen cannot be directly applied to  NPC spaces that don't decompose locally as a product.  

In this paper, building upon earlier work in  \cite{daskal-meseDM}, we  develop a  technique to study  harmonic maps into spaces that are  only  \emph{asymptotically} a product  of NPC spaces. In many ways, the step from  harmonic  maps into a  product  of  NPC spaces  to harmonic maps into  a space that is asymptotically a product is analogous to the passage  from harmonic functions to harmonic maps into Riemannian  manifolds.  As indicated above, a harmonic map into $\overline{\mathcal T}$ is given by $u=(V,v)$ where $V$ maps into a smooth Riemannian manifold and $v$  maps into an NPC space.
Since $v$ is not a harmonic map, we will have to modify  (\ref{minprop}). In fact,  we will derive analogues of the domain and target variation formulas   (\ref{DomainVariationHM}) and (\ref{weaksubharmonicity}) with correction terms.  Combining these formulas, we will obtain the monotonicity formula
\[
\frac{1}{r} + \frac{E'(r)}{E(r)} - \frac{I'(r)}{I(r)} \geq -C
\]
where $C$ is a constant that not only depends on  how far away the domain metric $g$ is from the Euclidean metric but also on how far the target metric is from being a product metric.  The conclusion  is that we can associate an order function $Ord^v:\Omega \rightarrow [1,\infty)$ to the singular component map $v$ of $u$ and use it to analyze its behavior.  
\\

{\sc Overcoming obstacle 2:  Inductive argument and  regularity.}
  The second obstacle is  the non-local compactness and  degenerating geometry of $(\overline{\mathcal T}, d_{ \overline{\mathcal T}})$ near the boundary.     In order to explain how we deal with this issue, we will first introduce two fundamental concepts  from the work of Gromov and Schoen \cite{gromov-schoen}. 
Let $X$ be an NPC space, let's say a  Euclidean building for the sake of concreteness, and  $X_0$ a totally geodesic subspace of $X$, for example an apartment of $X$.   
The first fundamental concept is the notion  of   a homogeneous degree 1 map  $l: \R^n \rightarrow X_0 \subset X$ being \emph{effectively contained} in $X_0$.  This loosely means that a sufficiently  small neighborhood of the image of $l$ is contained in $X_0$ except for a set of small measure.    
The second is the notion of $X_0$ being \emph{essentially regular}. Loosely, this means that a harmonic map into $X_0$ has an approximation by a homogeneous map that is better than first order.   
To illuminate these notions, we give the following example.\\

\begin{figure}[h]\label{effcont}
    \centering
    \includegraphics[width=\textwidth]{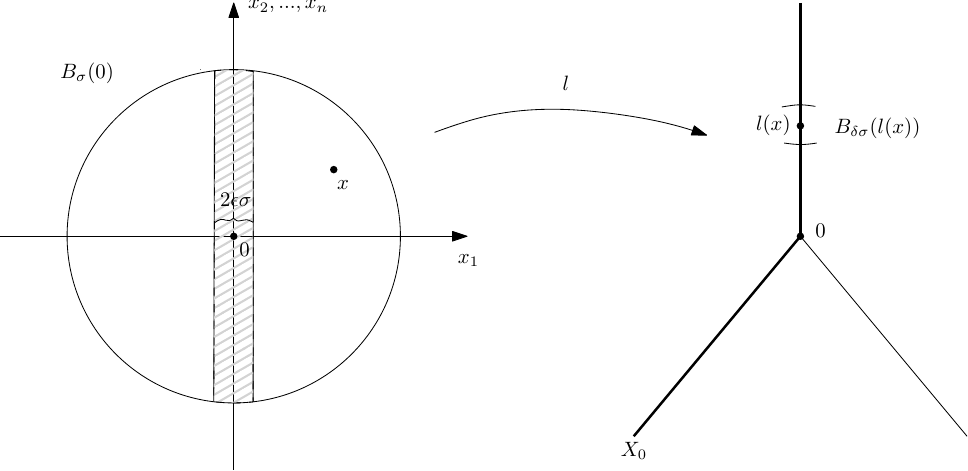}
    \caption{The non-shaded area shows the set of points whose image admits a $\delta \sigma$ neighborhood that does not intersect 
   $X \backslash X_0$.}
\end{figure}
\emph{Example 1.}  Let $X$ be a $k$-pod formed by  $k$ distinct copies $E_1, \dots,  E_k$ of the half-line  $[0,\infty)$   identified at 0 (called the juncture of the $k$-pod).  The distance $d_X(p,q)$ between two points $p \in E_i$ and $q \in E_j$  is defined to be  $|p-q|$ if $i=j$ and $p+q$ if $i \neq j$. Then $(X, d_X)$ is an NPC space.

We identify $X_0=E_1 \cup E_2 $ as a totally geodesic subspace  of $X$ isometric to $\R$ and let $l$ be an affine function (a special case of a homogeneous degree 1 discussed in \cite[Proposition 3.1]{gromov-schoen}), i.e.
\begin{equation}  \label{alphaxb}
l: \R^n \rightarrow \R  \simeq X_0 \subset X, \ \ 
l(x)=\vec{A} \cdot x +b,
\end{equation}
for some $\vec{A} \in \R^n$ and $b \in \R$. In the above, we can assume  $b=0$; otherwise, 
$l$ maps a neighborhood of
 $0 \in \R^n$ into a subset of $X_0 \simeq \R$, away from the juncture. Also by rotating our coordinates if necessary we may assume that $\vec{A}=(a,0,...0)$. Note
that in this case  $l(x)=a x^1 $ and
\[
  {\bf B}_{\delta \sigma}(l(x)) \cap X \backslash X_0 \neq \emptyset  \ \ \ \Leftrightarrow \ \ \ |l(x) |<\delta \sigma  \ \ \ \Leftrightarrow \ \ \  |x^1 |<\frac{\delta}{ |a|} \sigma.
\]
Hence, given $\epsilon>0$, there exists $\delta$ (for example, we can take $\delta=\frac{\epsilon |a|}{2v_{n-1}}$ where $v_{n-1}$ denotes the Euclidean volume of the unit $(n-1)$-dimensional ball) such that
\begin{equation}\label{exeffct}
\mbox{Vol}\{ x \in B_{\sigma}(0):  {\bf B}_{\delta \sigma}(l(x)) \cap (X \backslash X_0) \neq \emptyset \} <  \epsilon \sigma^n.
\end{equation}
See {\sc Figure} 1.   This  defines  the notion of \emph{a linear map   effectively contained in a totally geodesic subspace} in the sense of \cite[page 211]{gromov-schoen}.

We now come to the notion of \emph{essentially regular}. In this example, the totally geodesic subspace $X_0=E_1 \cup E_2 \simeq \R$ is essentially regular in the sense of \cite[page 210]{gromov-schoen}.  More precisely, for a harmonic function $f: (B_1(0), g)   \rightarrow  X_0 \simeq \R$,  the Taylor approximation implies
\[
d(f(x),l(x)) \leq C|x|^2 
\]
where  $l(x)=\nabla f(0) \cdot x+f(0)$ and  the constant $C$ depends only on the geometry of the domain and the  total energy of $f$.  
Thus, $X_0 \simeq \R$ is \emph{essentially regular}; namely there exists $\alpha>0$ (we can take  $\alpha=1$ in this example) and $C>0$ such that
\begin{equation}\label{exessreg}
\sup_{x \in B_{\sigma}(0)} d(f(x),l(x)) \leq C \sigma^{1+\alpha} \sup_{x \in B_1(0)} d(f(x),L(x)), \ \ \ \forall \sigma \in (0,\frac{1}{2}]
\end{equation}
for any affine function $L(x)=\vec{A}\cdot x+b$.
 The important feature of essential regularity is that the parameters $\alpha$ and  $C$ are independent of the subspace $X_0$ and depend only the geometry of the domain and the  total energy $E^f$ of $f$. 
The case of Euclidean buildings is a higher dimensional generalization of the above example with its apartments playing the role of essentially regular subspaces.
  
For the sake of this introduction and in order to illustrate the main ideas, we will briefly discuss the Gromov-Schoen argument adapted to the simple case where $X$  is a $k$-pod as in Example 1.  A more technical discussion will be presented at the beginning of Section~\ref{sec:  proofofkeylemma}.  We start with a  harmonic map $u:B_1(0) \rightarrow X$,  where $B_1(0) \subset \R^n$ is the unit ball,  and a homogeneous degree 1 map
$l:B_1(0) \rightarrow X$  as in (\ref{alphaxb}) is effectively contained in an essentially regular totally geodesic subspace $X_0 \simeq \R$. We also assume that $u(0)=l(0)$ and that $u$ and $l$ are $D$-close, i.e 
\begin{equation} \label{setup}
\sup_{x \in B_1(0)} d(u(x), l(x)) < D.
\end{equation}
From the initial data, $u$ and $l$,  the goal is to produce \emph{a linear scale approximation}; i.e.
\begin{equation} \label{ssa}
\sup_{x \in B_{\sigma}(0)} d(u(x), l(x)) < c\sigma, \ \ c>0.
\end{equation}
The idea of proving regularity by the use of  a linear scale approximation is well known.  Examples include the $\epsilon$-regularity theorem of Schoen-Uhlenbeck \cite{su} and other work concerning the uniqueness of tangent maps  \cite[Chapter 3] {simon}. Estimate (\ref{ssa})  is usually achieved by an inductive process, where at each stage one improves the estimate by a fixed amount. In the example above, 
 the idea is to show that there exists  $\theta \in (0,\frac{1}{2}]$ such that if  an affine map 
 \[
_il:B_{\theta^i}(0) \rightarrow X_0
\]
at the $i^{th}$ stage is  ``close" to $u$ in a  ball of radius $\theta^i$,  then one can find a  new affine map
 \[
 _{i+1}l:B_{\theta^{i+1}}(0) \rightarrow X_0
 \]
that is ``closer" to   $u$  in a smaller ball radius $\theta^{i+1}$ for the $(i+1)^{th}$ stage.

To find $_{i+1}l$, consider the  harmonic function $v:  B_{\theta^i}(0) \rightarrow X_0 \approx \R$ with boundary condition $\pi \circ u$ where $
\pi:   X \rightarrow X_0
$ is the closest point projection map.  Since $X_0$ is essentially regular, $v$ has a ``good" linear approximation  $_{i+1}l$.  Since $l$ is effectively contained in $X_0$ and approximates $u$, then maps $u$ and $v$ are ``close."    One can show that indeed $_{i+1}l$ is the desired linear map for the $(i+1)^{th}$ stage.   For the convenience of the reader, we will sketch this simpler version of the inductive argument in subsection~\ref{simple g-s} before our main regularity results.

In this paper, we will apply a variation of the  Gromov-Schoen argument with the completion of Teichm\"uller space $\overline{\mathcal T}$ playing the role of a Euclidean building. Since all the degenerating geometry of $\overline{\mathcal T}$ comes from the model space $\overline{\bf H}$,  we will limit our discussion to  $\overline{\bf H}$  in this introduction.  This case  was treated  in our previous paper \cite{daskal-meseER}, and what is outlined below can also serve as its summary. In this paper, we will further extend these ideas to handle the case of $\overline{\mathcal  T}$. 
 
We first define  
$\overline{\bf H}$ precisely.  Consider  the Riemannian surface $({\bf H}, g_{\bf H})$
consisting of the upper half plane
 \[
 {\bf H}=\{(\rho,\phi) \in \R^2: \rho >0, \phi \in \R\}\]
endowed with the Riemannian metric 
\[
g_{\bf H}=d\rho^2+ \rho^6 d\phi^2.
\]
The NPC space $\overline{\bf H}$ is the  metric completion of ${\bf H}$ constructed by adding the boundary line $\{\rho=0\}$ and identifying this line as a single point $P_0$.  We call $P_0$ the \emph{singular point of $\overline{\bf H}$}.  The difficulty in  analyzing the behavior of a harmonic map into $\overline{\bf H}$ is caused by the degenerating geometry and the non-local compactness of $\overline{\bf H}$.  

The first step  is to find essentially regular totally geodesic subspaces of  $\overline{\bf H}$.  
 The difficulty is that, because of the degenerating geometry of ${\bf H}$ near $P_0$ (the Gaussian curvature approaches $-\infty$ near $P_0$),  the only totally geodesic subspaces of $\overline{\bf H}$  that resemble Euclidean spaces and contain  $P_0$ are the point $P_0$ itself and geodesics emanating from $P_0$. (These geodesics are given by curves $\rho \mapsto (\rho,\phi_0)$ for a fixed $\phi_0$.) 
The degenerating geometry of ${\bf H}$    is highlighted by  
 the harmonic map equations in $\overline{\bf H}$,  
\begin{equation} \label{hmequphi'}
u_{\rho} \triangle u_{\rho} = 3  u_{\rho}^6 |\nabla u_{\phi}|^2 \ \mbox{ and }  \ u_{\rho}^4 \triangle u_{\phi} = -6 \nabla u_{\rho} \cdot u_{\rho}^3 \nabla u_{\phi}.
\end{equation}
Notice that the right hand side of each equation  is bounded since $u$ is Lipschitz.  The left hand side of  each equation, though, involves $u_{\rho}$.  Thus,  the \emph{harmonic map equations are degenerate} since $u_{\rho}(x) \rightarrow 0$ as $x \rightarrow {\mathcal S}(u)$.  The following example provides a hint on how to proceed.\\

\emph{Example 2.}
Consider the 2-dimensional space $(H^+,g_0)$ where 
\[
H^+=\{(r,\theta) \in {\bf R}^2:  r>0\} \ \mbox{ and }  \ 
g_0(r,\theta)
= dr^2+ r^2 d\theta^2.
\]
The  Christoffel symbols with respect to the polar  coordinates $(r,\theta)$ are
\begin{eqnarray*}
\begin{array}{lcl}
\Gamma_{r r}^{r}=0 & & \Gamma_{\theta \theta}^{\theta} =  0 \\
\Gamma_{r \theta}^{r}=0 & & \displaystyle{ \Gamma_{r \theta}^{\theta}=\frac{1}{r}}
\\
\Gamma_{\theta \theta}^{r}=-r & &  \Gamma_{r r}^{\theta}=0.
\end{array}
\end{eqnarray*}
For a map $h$ into $(H^+,g_0)$, write $h=(h_{r},h_{\theta})$ with respect to the polar coordinates $(r,\theta)$.
Then the harmonic map equations are  
\begin{eqnarray} \label{he1}
h_{r} \triangle h_{r} =   h_{r}^2 |\nabla h_{\theta}|^2 \ \mbox{ and }  \ h_{r}^2 \triangle h_{\theta} = -2 \nabla h_{r} \cdot h_{r} \nabla h_{\theta}.
\end{eqnarray}
This set of equations looks very similar to the harmonic map equations (\ref{hmequphi'}) in  the sense that they are both degenerate. 
Now assume that
 the value of $h_\theta$ is contained in  $[0, 2\pi)$ which allows us to apply the change of variables to Euclidean coordinates 
\begin{equation} \label{ec1}
(r,\theta) \mapsto (x=r \cos \theta,y=r \sin \theta).
\end{equation} 
This change of variables converts equation  (\ref{he1}) to the standard  harmonic map equations with respect to the Eucledian metric, i.e.~$
 \triangle h_x=  0 \ \mbox{ and } \  \triangle h_y=  0.
$
In this form, the  smoothness of $h_x$ and $h_y$ can be immediately deduced from  the  theory of elliptic partial differential equations. \\

Example 2 illustrates  the following key points:  
\begin{itemize}
\item[(i)]  The polar coordinates $(r,\theta)$ in $\R^2$ are ill-suited for the regularity theory of  harmonic maps.   
\item[(ii)]
 A bound on the angular component of a harmonic map implies regularity results. 
  \end{itemize}
By the same token as (i), the standard coordinates $(\rho,\phi)$ of $({\bf H},g_{\bf H})$ are ill-suited to study harmonic maps (although they are convenient when studying the behavior of the degenerating Riemann surfaces corresponding to points of ${\mathcal T}$  approaching its boundary).  Furthermore, (ii) hints that one should look to  bound the ``angular" coordinates in order to find essentially regular subspaces.

The idea of choosing the right  coordinates and finding essentially regular subspaces to study the harmonic maps led to our paper \cite{daskal-meseER}.  There,  we introduced a change of variables which takes the coordinates $(\rho,\phi)$ to new coordinates   analogous to the change of variables (\ref{ec1}) from polar coordinates $(r,\theta)$ to Euclidean  coordinates $(x,y)$. 
 In essence, we introduced a new coordinate system for $({\bf H},g_{\bf H})$ that can be used to study harmonic maps.

Before we describe the new coordinates of ${\bf H}$, we will  first discuss the difficulty caused by the degenerating geometry and non-compactness of $\overline{\bf H}$ in relation to the key point (ii) above.
For a harmonic map $u:\Omega \rightarrow \overline{\bf H}$ and $x_0 \in \Omega$, a consequence of having a well-defined order $Ord^u(x_0)$ is that there exists a sequence of  blow-up maps of $u$ at $x_0$.  (Loosely speaking, these are maps constructed by concentrating in on the point $x_0$ and scaling up  $u$ restricted to small  geodesic balls centered at  $x_0$.)  Because of the non-local compactness of $\overline{\bf H}$ near $P_0$,  if $u(x_0)=P_0$, then this sequence of blow up maps does not converge as a map into $\overline{\bf H}$ since there exists no uniform bound  on the angular component for the sequence.  
 In short, we cannot expect to approximate  $u$ by a homogeneous degree 1 map $l$ with a good bound on the angular component map $l_{\phi}$.  This poses a problem in setting up the  Gromov-Schoen inductive argument since the heart of this argument is to use an essentially  regular subspace that effectively contains a homogeneous degree 1 map approximating $u$ (cf.~(\ref{setup})).

The problem described in the paragraph above led us to consider  the NPC space 
\[
\overline{\bf H}_2=\overline{\bf H}^+ \sqcup \overline{\bf H}^-/\sim.
\]
Here, $\overline{\bf H}^+$ and $\overline{\bf H}^-$ denote two distinct  copies of $\overline{\bf H}$ and  $\sim$ indicates that the singular point $P_0$ from each copy  is identified as a single point.   The induced distance function   $d_{{\bf H}_2}$ on $\overline{\bf H}_2$ is given by 
\begin{eqnarray*}
d_{{\bf H}_2}((\rho_1,\phi_1), (\rho_2,\phi_2)) = 
\left\{
\begin{array}{ll}
d_{\bf H}((|\rho_1|,\phi_1), (|\rho_2|,\phi_2)) & \mbox{ if $\rho_1 \rho_2 \geq 0$}\\
 |\rho_1|+|\rho_2| & \mbox{ if $\rho_1\rho_2<0$}.  
\end{array}
\right.
\end{eqnarray*}
By using the identification $(\rho,\phi) \mapsto (-\rho,\phi)$ in $\overline{\bf H}^-$, we obtain 
 ``coordinates" on $\overline{\bf H}_2$  where  
 \begin{eqnarray*}
 \overline{\bf H}^+ & = & \{(\rho,\phi) \in \R^2:  \rho>0\},\\
 \overline{\bf H}^- & = & \{(\rho,\phi) \in \R^2:  \rho<0\},\\
P_0 & = & \{(\rho,\phi) \in \R^2:  \rho=0\}.
\end{eqnarray*}
(Calling $(\rho,\phi)$ coordinates is a slight misnomer as they are not coordinates in the traditional sense near $P_0$.)
 The importance of $\overline{\bf H}_2$ can be explained by    the observation that 
harmonic maps  into $\overline{\bf H}_2$ exhibit a completely different behavior  than the one described in the previous paragraph.    Indeed, at an order 1 singular point, a harmonic map into $\overline{\bf H}_2$  can locally be approximated by a single homogeneous degree 1 map $l:B_1(0) \rightarrow \overline{\bf H}_2$  given by $l(x)=(l_\rho(x),l_\phi(x))=(Ax^1,0)$ for some constant $A>0$ (after a rotation of the domain $B_1(0)$ and a translation $(\rho,\phi) \mapsto (\rho, \phi-c)$ of the target $\overline{\bf H}_2$ for an appropriate constant $c \in \R$).  Here, the key point is that the angular component map of $l$ is identically constant.
The map $l$ is effectively contained in  the  subspace $\overline{\bf H}_2[\phi_0]$
for $\phi_0>0$. (This assertion follows from essentially the same argument as the proof of  Lemma~\ref{geomcor} below.)
Since $s \mapsto (s,\phi_0)$ and $s \mapsto (s,-\phi_0)$ are geodesics, $\overline{\bf H}_2[\phi_0]$ is   geodesically convex  in $\overline{\bf H}_2$.  A harmonic map  whose image lies in $\overline{\bf H}_2[\phi_0]$ has the property that its angular component function $v_{\phi}$ is bounded. 
The change of coordinates
\begin{equation} \label{coch2}
(\rho,\phi) \mapsto (\rho \cos \sqrt 3 \rho^2 \phi , \rho \sin \sqrt 3 \rho^2 \phi )
\end{equation}
in $\overline{\bf H}_2$ is 
analogous to the change of coordinates in $\R^2$ from polar coordinates $(r,\theta)$ to  the standard  coordinates $(x,y)$.
By applying elliptic theory after the change of variables,  we prove $\overline{\bf H}_2[\phi_0]$ is essentially regular. 
%
%

The key to showing  regularity of harmonic maps into $\overline{\bf H}$ is the close relationship between the geometries of $\overline{\bf H}$ and $\overline{\bf H}_2$ near $P_0$ which we now describe.
First, observe that 
the  curve $\hat \gamma(\tau)=(\tau, \phi_*)$, with $\phi_*$ fixed, in $\overline{\bf H}_2$    is a geodesic line.  In $\overline{\bf H}$, there are no geodesic lines through $P_0$, only  geodesic rays  with  $P_0$ as an endpoint.  On the other hand, since $\overline{\bf H}_2$ is a union of two copies of $\overline{\bf H}$,  $\hat \gamma$ resembles the curve $\sigma$ constructed by joining  two geodesic rays in $\overline{\bf H}$.  More specifically, let 
\[
\sigma^{\phi_0}(\tau)=
\left\{
\begin{array}{ll} (\tau,\phi_0) & \mbox{for $\tau \in (0,\infty)$}\\
P_0 & \mbox{for $\tau=0$}\\
(-\tau,-\phi_0) & \mbox{for $\tau \in (-\infty,0)$}.
\end{array}
\right.
\]
Moreover, let $\gamma^{\phi_0}$ be the geodesic segment in $\overline{\bf H}$  from  $(1,\phi_0)$ to  $(1,-\phi_0)$.   Then since   
\[
\lim_{\phi_0 \rightarrow \infty} d((1,-\phi_0), (1,\phi_0)) = 2= \mbox{length}(\sigma^{\phi_0}\big|_{[-1,1]}),
\]
the geodesic  $\gamma^{\phi_0}$ resembles the  broken geodesic $\sigma^{\phi_0}\big|_{[-1,1]}$ 
 for $\phi_0>0$  large. 
(Details of this phenomenon is given in Section~\ref{sssec:sg}; specifically, see Lemma~\ref{symapp}.)
Therefore, the geodesic $\hat \gamma$ of $\overline{\bf H}_2$ resembles the geodesic  $\gamma^{\phi_0}$  of $\overline{\bf H}$ for $\phi_0>0$ large.  We use this property of geodesics to identify $\overline{\bf H}_2$ with $\overline{\bf H}$ as follows.  

Observe that  $\overline{\bf H}_2$ is foliated by an one-parameter family of geodesic lines $\{\rho \mapsto (\rho,\phi)\}$ (whose images are the horizontal lines in the left diagram of Figure~\ref{figure:cocER}).  
  Motivated by this, we also foliate ${\bf H}$ by a  family of geodesics (see in the right diagram of Figure~\ref{figure:cocER}).  We define a map which associates the family of geodesics in $\overline{\bf H}_2$ to the family of geodesics in $\overline{\bf H}$.  Indeed, let
 \begin{equation} \label{defofc}
 c=(c_{\rho},c_{\phi}):(-\infty,\infty) \times (-\infty,\frac{3}{2})\rightarrow {\bf H}
 \end{equation}
satisfying the following:  
\begin{itemize}
\item[(i)]
$s \mapsto c(s,t)=(c_{\rho}(s,t), c_{\phi}(s,t))$ is a unit speed geodesic such that 
\[
c_{\rho}(s,t)=c_{\rho}(-s,t), \ \ c_{\phi}(s,t)=-c_{\phi}(s,t).
\] 
\item[(ii)]   $t \mapsto  c_{\rho}(0,t)$  satisfies the equation \[
\frac{\partial c_{\rho}}{\partial t}(0,t)=c_{\rho}^3(0,t).
\]
\item[(iii)] $c_{\rho}(0,1)=1$ and $c_{\phi}(0,t)=0$ for all $t \in (-\infty,\frac{3}{2}).
$\end{itemize}

The parameters $s$ and $t$ define coordinates of ${\bf H}$ via the map
\[
(s,t) \mapsto c(s,t).
\]
Given a homogeneous degree 1 map $l(x)$ of the form $l(x)=c(Ax^1,t_*)$, we apply a translation by $t_*$ to construct coordinates $(\varrho,\varphi)$. 
More precisely, since 
\begin{equation} \label{atstar}
l(0)=(0,t_*) \ \mbox{in the coordinates $(s,t)$},
\end{equation} 
we  define  coordinates $(\varrho,\varphi)$ by setting 
\begin{equation} \label{cov}
(\varrho, \varphi)=(s,t-t_*).
\end{equation}
This results in
\[
l(0)=(0,0) \ \mbox{in coordinates $(\varrho,\varphi)$}.
\]
Using the new coordinates $(\varrho,\varphi)$ \emph{anchored at $t_*$}, we introduce a family of  totally geodesic subspaces of ${\bf H}$ which will play a central role in the proof of the key technical Lemma.

\begin{figure}[h] \label{figure:cocER}
    \centering
    \includegraphics[width=\textwidth]{g2.pdf}
    \caption{$\overline{\bf H}_2$ on the left and  $\overline{\bf H}$ on the right} \label{pic:varrhovarphi}
\end{figure}

Here, we emphasize that  the coordinates $(\varrho,\varphi)$ not only depend on the family of geodesics $\{s \mapsto c(s,t)\}$ but also on the parameter $t_*$.  We are interested in the asymptotics as $t_* \rightarrow -\infty$.



%
The expression of the metric $g_{\bf H}$ in the  coordinates $(\varrho,\varphi)$  is
\begin{eqnarray*}
g_{\bf H}(\varrho,\varphi) & = & \left(
\begin{array}{cc}
\left| \frac{\partial c}{\partial s}(\varrho,\varphi+t_*) \right|^2 & <\frac{\partial c}{\partial s}(\varrho,\varphi+t_*), \frac{\partial c}{\partial t}(\varrho,\varphi+t_*)>\\
<\frac{\partial c}{\partial s}(\varrho,\varphi+t_*), \frac{\partial c}{\partial t}(\varrho,\varphi+t_*)>& \left| \frac{\partial c}{\partial t}(\varrho,\varphi+t_*) \right|^2
\end{array}
\right) \\
& = &  \left(
\begin{array}{cc}
1 & 0\\
0 &  \left| \frac{\partial c}{\partial t}(\varrho,\varphi+t_*) \right|^2
\end{array}
\right).
\end{eqnarray*}
The top diagonal entry is equal to 1 because $s \mapsto u(s,t)$ is unit speed (cf.~(i)).
The off-diagonal terms are equal to 0 because of the following reason:  First, note that the curve $t \mapsto c(0,t)$ parametrizes  the line $\phi=0$ by (ii) and (iii).  Next, since the  geodesic $s \mapsto c_{\rho}(s,t)$ is symmetric  in the variable $s$ by (i), its   minimum value is achieved at $s=0$.  In particular, $\frac{\partial c_{\rho}}{\partial s}(0,t)=0$ which in turn implies  $\frac{\partial c}{\partial s}(0,t)$   is parallel to the line $\rho=0$.  Therefore, we conclude that the Jacobi field $\frac{\partial c}{\partial t}$ is perpendicular to the velocity vector $\frac{\partial c}{\partial s}$ of the geodesic at $s=0$, and  they must be  perpendicular for all $s$ by a standard property of Jacobi fields.  This justifies that the off-diagonal entries are equal to 0.  
%
 The bottom diagonal term $\left| \frac{\partial c}{\partial t}\right|$ quantifies  how the family of geodesics $\{c_t(s)\}=\{c(s,t)\}$ are spread apart.  The differential equation $\frac{\partial c_{\rho}}{\partial t}(0,t)=c_{\rho}^3(0,t)$ of (ii)  gives the  initial spread (i.e.~the spread at $s=0$).  In \cite[Section 4]{daskal-meseER}, we have shown that this is enough to prove  $\left|\frac{\partial c}{\partial t}(s,t) \right| - c_{\rho}^3(s,t) \rightarrow 0$ uniformly for $s$ in a compact set away from $s=0$ as $t_* \rightarrow -\infty$. 
In summary,   in the coordinates $(\varrho,\varphi)$, $g_{\bf H}$ has the property that
  \begin{eqnarray}\label{C0exp}
\ \ g_{\bf H}(\varrho,\varphi) 
=   \left(
\begin{array}{cc}
1 & 0\\
0 &   \left| \frac{\partial c}{\partial t}(\varrho,\varphi+t_*) \right|^2\end{array}
\right) \approx   \left(
\begin{array}{cc}
1 & 0\\
0 &  \varrho^6
\end{array}
\right) \ \mbox{ as $t_* \rightarrow -\infty$}.
\end{eqnarray}
In an analogy with $\overline{\bf H}_2$, we showed in \cite{daskal-meseER} that the
totally geodesic subspaces 
\[
{\bf H}[\varphi_0] = \{(\varrho, \varphi) \in {\bf H}:  |\varphi| \leq \varphi_0\}
\] 
(pictured in the right diagram of  {\sc Figure}~2) are essentially regular, and we can set-up the  Gromov-Schoen inductive argument with $\overline{\bf H}[\varphi_0]$ as the totally geodesic set effectively containing the homogeneous degree 1 map $l(x)=(l_\varrho(x),l_\varphi)=(Ax^1,0)$.  With this, we can prove the regularity of harmonic maps into $\overline{\bf H}$ (cf.~\cite[Theorem 35]{daskal-meseER}).

As explained above, $\overline{\mathcal T}$ near a boundary point is asymptotically isometric to the product of a smooth K\"ahler manifold and the product of a finite number of copies of the model space. 
In this paper, we use the  strategy described above but also incorporating this almost product structure,  to prove the regularity  of harmonic maps into $\overline{\mathcal T}$ (cf.~Theorem~\ref{RegularityTheorem}).

\subsection{Summary of the Paper}     In the following paragraphs, we outline the  organization of the paper and explain the main ideas:\\

In Section~\ref{WPcomptionofTspace}, we discuss the asymptotic geometry of the Weil-Petersson completion $\overline{\mathcal T}$ of Teichm\"uller space. 
According to \cite{yamada}, \cite{daskal-wentworth}, \cite{wolpert} and \cite{daskal-meseC1}, the Weil-Petersson completion of a Teichm\"{u}ller space near a boundary point is  asymptotically isometric to the product of  a  boundary stratum ${\mathcal T}'$  and a normal space $\overline{\bf H}^{k-j}=\overline{\bf H} \times \dots \times \overline{\bf H}$.  We refer to  Section~\ref{sec:modelspace} for a precise definition of  the metric space $(\overline{\bf H},d_{\overline{\bf H}})$ given as a metric completion of the incomplete Riemann surface $({\bf H},g_{\bf H})$.  Since each open  boundary stratum ${\mathcal T}'$ can be identified with a  product of lower dimensional Teichm\"{u}ller spaces hence a  smooth Hermitian manifold, the singular behavior of the Weil-Petersson geometry  is completely captured by the model space $\overline{\bf H}$.    For one, the Gauss curvature of $g_{\bf H}$ approaches $-\infty$ near its boundary  reflecting the sectional curvature  blow-up of $G_{wp}$ near $\partial {\mathcal T}$.  Moreover, the non-local compactness of $\overline{\mathcal T}$  is also captured by $\overline{\bf H}$.  Indeed, a geodesic ball in $(\overline{\bf H},d_{\bf H})$ centered at a boundary point is not compact. The degenerating geometry and the lack of compactness \emph{imposes  severe challenges in the theory of harmonic maps} and the core of this paper is to deal with these phenomena.   In Section~\ref{sec:stratification}, we define a stratification preserving homeomorphism between a neighborhood ${\mathcal N} \subset \overline{\mathcal T}$ of a point  on a boundary stratum  and a neighborhood  in $\C^j \times \overline{\bf H}^{k-j}$.   In   Section~\ref{secC1}, we detail the precise way in which the Weil-Petersson metric  in ${\mathcal N}$  is asymptotically a product metric.  

In Section~\ref{MintoMS}, we prove the Regularity Theorem~\ref{RegularityTheoremModelSpace} for harmonic maps into the model space $(\overline{\bf H},d_{\overline{\bf H}})$.   The importance of this section is that, by considering $(\overline{\bf H},d_{\overline{\bf H}})$ as the target space, we isolate  the main difficulties (namely, the non-compactness and degenerating geometry)  that we will need to deal with  when the target space is  $\overline{\mathcal T}$.  Central to the proof is the notion of order of a harmonic map into an NPC space introduced in \cite{gromov-schoen}.  The order and other relevant notions from the theory of harmonic maps are summarized in Section~\ref{basic1}.   We remark that the order of a harmonic function is the order with which it attains its value;   equivalently, it is the degree of the monomial that best approximates it.  

The strategy of the proof of the Regularity Theorem~\ref{RegularityTheoremModelSpace} is to first prove that the set of higher order points  (i.e.~the set of points of order $>1$) is of Hausdorff codimension at least 2.   We then complete the proof by showing that \emph{no order 1 singular points exist}.  We do this  in Section~\ref{sec:into hbar1}  by applying the key technical Lemma for the Model Space (cf. Lemma~\ref{keylemma}), a special case of the key technical Lemma~\ref{keylemma'}.   This lemma  gives sufficient conditions for a map into $(\overline{\bf H},d_{\overline{\bf H}})$ not to hit the boundary point $P_0$.  The ideas surrounding the key technical Lemma is the lynchpin of the proof of the regularity theorem as we address the degeneration and non-compactness of the model space at $P_0$.  We note that the most technically difficult part, the proof the of the key technical Lemma~\ref{keylemma'} is postponed until Section~\ref{sec:  proofofkeylemma}.

In Section~\ref{intoWPcompletion}, we prove the Regularity Theorems~\ref{RegularityTheorem} and \ref{goto0} for harmonic maps into $\overline{\mathcal T}$.  The proof follows the similar strategy as for the proof of  Regularity Theorem~\ref{RegularityTheoremModelSpace} for the model space.  The first step of showing that the set of higher order points is of Hausdorff codimension at least 2 is done in  much the same way as in Section~\ref{MintoMS}.  On the other hand, the second step of dealing with the order 1 singular points is more difficult  because of the  complicated structure of the stratification for $\overline{\mathcal T}$.   
Nonetheless, the main issue  is  the same  for both  $\overline{\mathcal T}$ and  $\overline{\bf H}$, namely,  the  non-compactness and the degenerating geometry near the boundary.   We will again invoke the key technical Lemma~\ref{keylemma'}.  The idea is to use the asymptotic product structure of $(\overline{\mathcal T}, d_{\overline{\mathcal T}})$ near its boundary to decompose the given harmonic map into two maps, one of which maps into a boundary stratum (which is a smooth K\"{a}hler manifold) and the other   into the normal space $\overline{\bf H}^{k-j}$.  
These two maps are not harmonic because of the lack of product structure, but  the latter map is \emph{asymptotically harmonic} in an appropriate sense.   
We thus  adjust the arguments of Section~\ref{MintoMS} so that they work for   asymptotically harmonic maps.     For the reader's convenience, we will give a detailed  outline of this argument  at the beginning of Section~\ref{intoWPcompletion}.    

In Section~\ref{sec:  proofofkeylemma}, we  prove the key technical Lemma.  This can be thought of as the core of the paper and  the most technically challenging part of this work.

In Section~\ref{sec:dim2}, we specialize to the case when the domain dimension is 2.  In fact, we prove that there are no singular points in this case (cf. Theorem~\ref{surf} below).   
 
 In Section~\ref{proofrigiditytheorem0}, we prove our Theorem~\ref{rigiditytheorem0} and Corollary~\ref{rigiditytheorem1}. This follows fairly easily from  Theorem~\ref{RegularityTheorem} and  Theorem~\ref{goto0} by applying the result of \cite{siu}.
 
 In Section~\ref{applications1}, we deduce Theorem~\ref{rigiditytheorem} from our main Theorem~\ref{rigiditytheorem0} and Corollary~\ref{rigiditytheorem1}.
 Additionally, as a by-product,  we  provide a harmonic maps proof of 
Corollary~\ref{symTeich}.
 
  We would like to point out that for a  harmonic map $u$ defined on a general Riemannian domain Theorem~\ref{RegularityTheorem}  only asserts that the singular set ${\mathcal S}(u)$ of $u$ is of codimension at least 2 (or more precisely that $u$ maps a connected domain into a single stratum up to codimension at 2) and  does not necessarily imply  that $u$ maps into  of ${\mathcal T}$ (or even a single stratum). Our main theorem asserts that the stronger statement is true {\it{only}} when the harmonic map $u$ is  holomorphic. However, we show that for two dimensional domains, this assertion is always true. Namely,
\begin{theorem}\label{surf}
If  $u:(\Omega,g) \rightarrow( \overline{\mathcal T}, d_{ \overline{\mathcal T}})$ is a harmonic map from a connected Lipschitz domain  $\Omega$  in a Riemann  surface,  then there exists a single stratum ${\mathcal T}'$ of $\overline{\mathcal T}$ such that $u(\Omega) \subset  {\mathcal T}'$. 
\end{theorem}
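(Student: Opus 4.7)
The plan is to reduce the theorem to ruling out isolated singular points, and then to contradict the existence of such a point by a blow-up argument at the level of the model space $\overline{\bf H}$. Applying Theorem~\ref{RegularityTheorem} with $n=2$, the singular set ${\cal S}(u)$ has Hausdorff dimension at most zero, hence is discrete; therefore ${\cal R}(u) = \Omega \setminus {\cal S}(u)$ is open and connected. Since the strata of $\overline{\cal T}$ are pairwise disjoint, the stratum containing $u(x)$ is a locally constant function of $x \in {\cal R}(u)$, and by connectedness there exists a single stratum ${\cal T}'$ with $u({\cal R}(u)) \subset {\cal T}'$. The theorem thus reduces to showing ${\cal S}(u) = \emptyset$.

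Suppose for contradiction that $x_0 \in {\cal S}(u)$; after shrinking, we may assume $x_0$ is the only singular point in a disk $B_r(x_0) \subset \Omega$. Continuity of $u$ places $u(x_0) \in \overline{{\cal T}'}$, and since $x_0$ is singular, $u(x_0)$ must lie in a proper boundary stratum ${\cal T}''$ of ${\cal T}'$. Using the asymptotic product structure of $\overline{\cal T}$ near $u(x_0) \in {\cal T}''$ from Section~\ref{summaryofdm}, we decompose $u = (u', v)$ where $v : B_r(x_0) \to \overline{\bf H}^N$ is an asymptotically harmonic component (with $N \geq 1$ the number of curves pinched in ${\cal T}''$ but not in ${\cal T}'$), $v(x_0) = (P_0, \ldots, P_0)$, and $v(z) \in {\bf H}^N$ for all $z \ne x_0$. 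The task is now to exclude an isolated boundary-hitting singularity of an asymptotically harmonic map into $\overline{\bf H}^N$.

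By Proposition~\ref{corofinduct'} applied to $v$, if the order of $v$ at $x_0$ were equal to one, then $v$ would not hit the boundary in a neighborhood of $x_0$; since $v(x_0) \in \partial \overline{\bf H}^N$, the order must be some $\alpha > 1$. Using Theorem~\ref{thefirstvsigma} to construct blow-ups, we extract a non-trivial tangent map $v_* : {\bf R}^2 \to TC$ of homogeneity $\alpha$, where $TC$ is the tangent cone of $\overline{\bf H}^N$ at the base point. The Wolpert-type asymptotics $ds^2 \sim dr^2 + c\, r^6 d\theta^2$ for $\overline{\bf H}$ near $P_0$ (Section~\ref{sec:modelspace}) imply by direct rescaling that the tangent cone of $\overline{\bf H}$ at $P_0$ is the Euclidean half-line $[0,\infty)$, so $TC \cong [0,\infty)^N$. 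Each component of $v_*$ is thus a non-negative $\alpha$-homogeneous harmonic function on ${\bf R}^2$; such a function has the form $r^\alpha(A \cos(\alpha\theta) + B \sin(\alpha\theta))$ and necessarily changes sign unless $A = B = 0$, forcing $v_* \equiv 0$ and contradicting the non-triviality of the tangent map.

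The main obstacle is this final blow-up step: the tangent cone of $\overline{\bf H}$ at $P_0$ collapses the twist direction, so if the approach to the boundary occurs predominantly along twists a naive rescaling may yield a trivial limit without producing the desired contradiction. In that situation a secondary renormalization adapted to the Wolpert coordinates is needed, drawing on the essentially regular property (Theorem~\ref{essreg**}) and the effectively contained property (Lemma~\ref{geomcor}) developed in Section~\ref{sec:coordinates}, to control the blow-up of $v$ in the collapsing direction and thereby extract a genuinely non-trivial limit that can be ruled out by the same sign argument, or alternatively by invoking Corollary~\ref{hmtoH} combined with the decay estimate of Theorem~\ref{goto0} integrated against a cutoff centered at $x_0$.
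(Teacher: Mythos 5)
Your reduction to excluding singular points is sound in outline, but two steps do not hold up. First, ``Hausdorff dimension $\leq 0$ hence discrete'' is not a valid inference: a closed set of Hausdorff dimension zero can have accumulation points. The paper does not get isolation of singular points for free from Theorem~\ref{RegularityTheorem}; it proves separately that the order-one singular points ${\cal S}_j(u)$ are isolated (by blowing up at a putative accumulation point, propagating the order bound $Ord^{v_0}(\xi_\star)\geq 1+\epsilon_0$ along the ray $t\xi_\star$ by homogeneity, and contradicting the codimension-two bound on higher-order points of the tangent map), and that the higher-order points are isolated because they are zeros of the holomorphic Hopf differential. You would need some version of this to ``shrink so that $x_0$ is the only singular point in a disk.''

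The decisive gap is in your final blow-up step. The tangent cone of $\overline{\bf H}$ at $P_0$ is \emph{not} the half-line $[0,\infty)$: in the homogeneous coordinates $(\rho,\Phi)=(\rho,\rho^3\phi)$ the metric $d\rho^2+\rho^6 d\phi^2$ is exactly invariant under $(\rho,\Phi)\mapsto(\lambda\rho,\lambda\Phi)$, because $\phi$ ranges over all of ${\bf R}$ rather than a circle; every rescaling of $\overline{\bf H}$ about $P_0$ is isometric to $\overline{\bf H}$ itself, and blow-up limits of harmonic maps retain a genuine second component (Lemma~\ref{linearapproxblowupsym} exhibits the degree-one limits as symmetric geodesics, which are not radial). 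So the components of $v_*$ are not non-negative harmonic functions and the sign argument has nothing to act on; your closing paragraph concedes this and the proposed ``secondary renormalization'' is exactly the hard content that is not supplied. Moreover, a tangent-map argument at $x_0$ alone cannot work in principle: a homogeneous harmonic map of degree $\alpha>1$ into a cone is a perfectly consistent local model, so no contradiction is available from homogeneity and non-negativity. The paper's actual mechanism for killing an isolated singular point is global on a small disk and of a completely different nature: choose $r$ with $\partial B_r(x_0)\subset{\cal R}(u)$, so $u(\partial B_r(x_0))\subset{\cal T}'$ by Lemma~\ref{onestratum}; take Wolpert's convex exhaustion function $f:{\cal T}'\to[0,\infty)$, note $u(\partial B_r(x_0))$ lies in a sublevel set $\{f\leq c\}$, which is convex, and invoke the maximum principle for harmonic maps with boundary values in a closed convex set to conclude $u(\overline{B_r(x_0)})\subset\{f\leq c\}\subset{\cal T}'$, contradicting $x_0\in{\cal S}(u)$. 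This convexity argument is the missing idea; without it (or a genuine substitute) the proposal does not close.
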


It is reasonable to conjecture that this assertion holds  for higher dimensions; however, this is not needed for the applications discussed in this article. 

As a final comment, we would like to point out  that due to the length of this paper, we have omitted  several important topics that will be presented elsewhere. First is the connection with symplectic Lefschetz fibrations which, by Theorem~\ref{surf}, induce harmonic maps and in some cases even minimal surfaces  into the Teichm\"uller space.  More generally, our results imply  a classification theorem for surface fibrations over quasi projective varieties. Indeed,  Theorem~\ref{rigiditytheorem0} and Corollary~\ref{rigiditytheorem1} imply that, under mild  non degeneracy conditions on the rank of the harmonic map (which can be checked by topological considerations), any smooth fibration on a quasiprojective variety with quasiperiodic monodromy at infinity is isomorphic to a holomorphic fibration. (We would like to thank J. Jost for originally pointing this out to us.) In another direction, we would like to  remove Assumption (ii) on the codimension of the singular set of $\bar M$ from Theorem~\ref{rigiditytheorem}.  This Assumption was added by Jost and Yau in \cite{jost-yau}, in order to guarantee  the existence of  a finite energy map from $M$ to ${\mathcal T}$. The existence of  a finite energy map is also one of our Assumptions in Theorem~\ref{rigiditytheorem0} and Corollary~\ref{rigiditytheorem1}. It is possible that a more careful analysis would yield a finite energy map in general. However, in an upcoming article, we will circumvent this issue by considering infinite energy maps.

{\bf Acknowledgements.}  In the special case when the domain  
is a region in a Riemann surface,   it was first proved by R. Wentworth that the singular set of a harmonic map into the model space 
of Teichm\"{u}ller space is empty. 
 Although his method is strictly \emph{two dimensional} (using the Hopf differential associated with the harmonic map) and 
cannot be generalized to arbitrary dimensions, 
some of the preliminary results used in this paper (for example, the structure of limits of harmonic maps to the model space)
 have their origin in \cite{wentworth}. We would like to thank R. Wentworth for sharing his unpublished manuscript with us, W. Minicozzi for his continuous support of this project, B. Shiffman for the reference \cite{bernie} and R. Schoen, K. Uhlenbeck, S. Wolpert and S. T. Yau for sharing their insights on the subject with us. Additionally, we would also like to thank Victoria Gras Andreu for drawing Figures 1-3. Last, but not least, we would like to thank the referee for carefully reading the manuscript and making several suggestions that improved the exposition.

\section{The Weil-Petersson completion of Teichm\"uller space} \label{WPcomptionofTspace}
In this Section, we discuss the asymptotic geometry of the Weil-Petersson completion $\overline{\mathcal T}$ of Teichm\"uller space.  Near its boundary, $\overline{\mathcal T}$ is  asymptotically isometric to the product of  the Weil-Petersson metric on a stratum ${\mathcal T}'$ which is a product of  lower dimensional Teichm\"{u}ller spaces  and the normal space $\overline{\bf H}^{k-j}$ which is a product  of the model space $\overline{\bf H}$.  Moreover, the singular behavior of the Weil-Petersson geometry  is completely captured by the model space $\overline{\bf H}$.   In Section~\ref{sec:modelspace}, we collect several properties of the model space  $\overline{\bf H}$ that we will need later. 
 In Section~\ref{sec:stratification}, we define a stratification preserving homeomorphism between a neighborhood ${\mathcal N} \subset \overline{\mathcal T}$ of a point  on the boundary stratum  and a neighborhood  in $\C^j \times \overline{\bf H}^{k-j}$.  This homeomorphism will  be used to define local coordinates in ${\mathcal T}$.   In Section~\ref{secC1}, we give a precise description of the asymptotic product structure of the Teichm\"{u}ller space near its boundary.   Indeed, 
Theorem~\ref{WP2} states the  $C^1$-estimates of the Weil-Petersson metric proved in \cite{daskal-meseC1}.   These estimates  
improve other $C^1$-estimates existing in the literature, for example  \cite{liusunyau1} and  \cite{liusunyau2}; more precisely, we show that the $C^1$-error term  of the Weil-Petersson metric is the derivative of the error appearing in the  well known $C^0$-estimates of \cite{yamada}, \cite{daskal-wentworth} and \cite{wolpert}.  
In Theorem~\ref{A2}, the $C^1$-estimates reformulated in   the precise way needed to apply the techniques developed in \cite{daskal-meseDM}.  (The other well-known estimates in the literature, for example \cite{wo4}, are to our knowledge insufficient for this purpose.)
 
\subsection{The Model Space} \label {sec:modelspace}
Consider the smooth Riemannian manifold $({\bf H}, g_{\bf H})$ consisting of the upper half plane
 \[
 {\bf H}=\{(\rho,\phi) \in \R^2: \rho >0, \phi \in \R\}\]
endowed with the Riemannian metric 
\[
g_{\bf H}=d\rho^2+ \rho^6 d\phi^2.
\]
(Note that in most literature on Weil-Petersson geometry, one considers the slightly different metric $4dr^2+ r^6 d\theta^2$ which is clearly isometric to $g_{\bf H}$  via the change of coordinates $ \rho=2r , \phi=\frac{\theta}{8}$.)   We call $(\rho,\phi)$ the \emph{standard model space coordinates} and $g_{\bf H}$   the \emph{model space metric}.
The Christoffel symbols of $g_{\bf H}$ are given by
\begin{equation} \label{christoffel}
\begin{array}{lcl}
\Gamma_{\rho \rho}^{\rho}=0 & & \Gamma_{\phi \phi}^{\phi} =  0 \\
\Gamma_{\rho \phi}^{\rho}=0 & & \displaystyle{ \Gamma_{\rho \phi}^{\phi}=\frac{3}{\rho}}
\\
\Gamma_{\phi \phi}^{\rho}=-3\rho^5 & &  \Gamma_{\rho \rho}^{\phi}=0.
\end{array}
\end{equation}
The Gauss curvature is
\[
K=-\frac{6}{\rho^2}.
\] 
The geodesic equations for $\gamma=(\gamma_{\rho},\gamma_{\phi})$  are given by the equations
\begin{eqnarray} \label{gequ}
\gamma^{\phi_0}_{\rho} \frac{d^2 \gamma^{\phi_0}_{\rho}}{ds^2} = 3(\gamma^{\phi_0}_{\rho})^6 \left( \frac{d\gamma^{\phi_0}_{\phi}}{ds} \right)^2 \mbox{ and } (\gamma^{\phi_0}_{\rho})^4 \frac{d^2\gamma^{\phi_0}_{\phi}}{ds^2} =-6 (\gamma^{\phi_0}_{\rho})^3 \frac{d\gamma^{\phi_0}_{\rho}}{ds} \frac{d\gamma^{\phi_0}_{\phi}}{ds}.
\end{eqnarray} 

Let $d_{\bf H}$ be the distance function of $\bf H$ induced by the metric $g_{\bf H}$; i.e.~for $P=(\rho,\phi), P =(\rho',\phi') \in {\bf H}$, 
let 
\[
d_{\bf H}(P,P') =\inf_{{\mathcal G}_{P,P'} }\mbox{length}(\gamma)
\]
where ${\mathcal G}_{P,P'} $ is the set of  all piecewise $C^1$ curves $\gamma:[a,b] \rightarrow {\bf H}$ with $\gamma(0)=P$ and  $\gamma(1)=P'$.
The metric space $({\bf H},g_{\bf H})$   is incomplete since  for any fixed $\phi_0 \in \R$, the geodesic 
\[
\gamma=(\gamma_{\rho},\gamma_{\phi}):[0,1) \rightarrow ({\bf H},g_{\bf H}), \ \ \gamma_{\rho}(t)=1-t, \gamma_{\phi}(t)=\phi_0
\]
 leaves every compact subset of ${\bf H}$ and is of length 1. 
On the other hand
\begin{lemma} 
$({\bf H},d_{\bf H})$ is geodesic; i.e.~for any $P,P' \in {\bf H}$, there exists a curve $\gamma \in {\mathcal G}_{P,P'} $ such that $d_{\bf H}(P,P')= \mbox{length}(\gamma)$.  
\end{lemma}
\begin{proof}
Suppose not. Then, there exist a  sequence $\gamma_i \in {\mathcal G}_{P,P'} $ and $t_i \in [0,1]$ such that length$(\gamma_i) \rightarrow d_{\bf H}(P,P')$ and $\hat{\rho}_i \rightarrow 0$ for $(\hat{\rho}_i, \hat{\phi}_i):=\gamma_i(t_i)$.  Since $\mbox{length}(\gamma_i) =\mbox{length}(\gamma_i\big|_{[0,t_i]}) +\mbox{length}(\gamma_i\big|_{[t_i,1]}) \geq (\rho_1-\hat{\rho_i})+ (\rho_2-\hat{\rho_i})$, this  implies $d_{\bf H}(P,P') \geq \rho_0+\rho_1$.  This is a contradiction; indeed,   if we let $\gamma_{\epsilon}$ be the join of the straight line from $P=(\rho,\phi)$ to $(\epsilon, \phi)$, followed by the straight line from $(\epsilon,\phi)$ to $(\epsilon,\phi')$ followed by the straight line from $(\epsilon,\phi')$ to $P'=(\rho',\phi')$, then 
$
\mbox{length}(\gamma_{\epsilon})= \rho-\epsilon+ \epsilon^3|\phi-\phi'|+\rho'-\epsilon
< \rho+\rho' \leq d_{\bf H}(P,P')$  for $\epsilon>0$ sufficiently small.
\end{proof}
The metric completion of $({\bf H},d_{\bf H})$ is denoted by $(\overline{\bf H}, d_{\overline{\bf H}})$.  Here, 
\[
\overline{\bf H} = {\bf H} \cup \{P_0\}
\] 
where we can think of  the entire axis $\rho=0$ is identified to a single point $P_0$.   The distance function $d_{\overline{\bf H}}: \overline{\bf H} \times \overline{\bf H} \rightarrow [0,\infty)$ is given by
$$
d_{\overline{\bf H}}(P,Q)=
\left\{
\begin{array}{cl}
d_{\bf H}(P,Q) & \mbox{if }P,Q \in {\bf H}\\
\rho & \mbox{if } P=P_0 \mbox{ and } Q=(\rho,\phi) \in {\bf H}.
\end{array}
\right.
$$ Since every neighborhood of $P_0$ contains points with arbitrary large $\phi$-coordinate, it follows that
the space  $(\overline{\bf H},d_{\overline{\bf H}})$ is not locally compact. \emph{This is the source of many technical hurdles in this paper.} However, $(\overline{\bf H}, d_{\overline{\bf H}})$ is an NPC space since it is a metric completion of a geodesically convex negatively curved surface.  
We also record the following two simple lemmas.
\begin{lemma} \label{distequiveuc}
If  $P_1, P_2 \in {\bf H}$ are given  as $P_1=(\rho_1,\phi_1)$ and $P_2=(\rho_2,\phi_2)$, then
\[
|\rho_1-\rho_2|  \leq d_{\overline{\bf H}}(P_1,P_2).
\]   
\end{lemma}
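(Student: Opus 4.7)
The plan is to show that the $(s,t)$ coordinates (and hence $(\varrho,\varphi)$) form an orthogonal system for $g_{\bf H}$, so that $|\nabla \varrho|_{g_{\bf H}} = 1$, and then apply the fundamental theorem of calculus along any rectifiable curve from $P_1$ to $P_2$ to conclude.

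More precisely, since $\varrho = s$ under the change of variables (\ref{cov}), it suffices to prove that the cross term $F(s,t) := \langle \partial_s c, \partial_t c\rangle_{g_{\bf H}}$ vanishes identically on the domain $(-\infty,\infty)\times(-\infty,\tfrac{3}{2})$. First I would check the initial condition $F(0,t) = 0$: by the symmetry condition (\ref{unitspeed}), $c_\rho$ is even and $c_\phi$ is odd in $s$, so $\partial_s c_\rho(0,t) = 0$; and by the normalization (\ref{normalizeit}), $c_\phi(0,t)\equiv 0$ so $\partial_t c_\phi(0,t) = 0$. Expanding with respect to the metric $d\rho^2 + \rho^6\,d\phi^2$, the cross term at $s=0$ is
\[
F(0,t) = \partial_s c_\rho \cdot \partial_t c_\rho + c_\rho^6\,\partial_s c_\phi \cdot \partial_t c_\phi \Big|_{s=0} = 0.
\]

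Next I would show $\partial_s F \equiv 0$. Writing $\gamma_t(s) := c(s,t)$, property (\ref{unitspeed}) says $|\partial_s c|^2_{g_{\bf H}} \equiv 1$, so differentiating in $t$ and using that $\nabla$ is torsion-free gives
\[
0 = \tfrac{1}{2}\partial_t|\partial_s c|^2 = \langle \nabla_{\partial_t}\partial_s c, \partial_s c\rangle = \langle \nabla_{\partial_s}\partial_t c, \partial_s c\rangle.
\]
Since $\gamma_t$ is a geodesic, $\nabla_{\partial_s}\partial_s c = 0$, and therefore
\[
\partial_s F = \langle \nabla_{\partial_s}\partial_t c, \partial_s c\rangle + \langle \partial_t c, \nabla_{\partial_s}\partial_s c\rangle = 0.
\]
Combined with $F(0,t) = 0$, this yields $F \equiv 0$, so the metric takes the form $g_{\bf H} = ds^2 + G(s,t)\,dt^2$ in $(s,t)$ coordinates, with $G>0$ on ${\bf H}$.

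Finally, the diagonal form gives $|\nabla s|^2_{g_{\bf H}} = g^{ss} = 1$, hence $|\nabla \varrho|_{g_{\bf H}} = 1$. For any rectifiable curve $\alpha:[0,1]\to {\bf H}$ joining $P_1$ to $P_2$, the fundamental theorem of calculus and Cauchy--Schwarz yield
\[
|\varrho_2 - \varrho_1| = \left|\int_0^1 \langle \nabla\varrho, \alpha'(r)\rangle\,dr\right| \leq \int_0^1 |\alpha'(r)|\,dr = L(\alpha),
\]
and taking the infimum over all such $\alpha$ gives $|\varrho_1 - \varrho_2| \leq d_{\bf H}(P_1,P_2)$. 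The only real step of substance is verifying the orthogonality of the $(s,t)$ frame; once that is in hand, the inequality is immediate from the 1-Lipschitz property of the coordinate function $\varrho$. I do not anticipate any serious obstacle, since both the vanishing of $F$ at $s=0$ and the constancy of $F$ along the geodesics are forced by the defining properties (\ref{at1})--(\ref{unitspeed}).
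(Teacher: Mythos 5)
Your proof is correct, and it follows the same underlying route the paper relies on: the lemma is an immediate consequence of the diagonal form $g_{\bf H}=d\varrho^2+{\cal J}(\varrho,\varphi)\,d\varphi^2$ recorded in (\ref{wrtnew}), since then $\varrho$ is a $1$-Lipschitz function and integrating along any path joining $P_1$ to $P_2$ gives the inequality. The only difference is that the paper imports the orthogonality of the $(\varrho,\varphi)$ system from \cite{daskal-meseER} without proof, whereas you supply it via the standard Gauss-lemma computation ($F(0,t)=0$ from the symmetry and normalization conditions (\ref{normalizeit})--(\ref{unitspeed}), and $\partial_sF\equiv 0$ from the geodesic and unit-speed properties); that computation is carried out correctly.
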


\begin{proof} 
Let $\gamma$ be the geodesic from $P_1$ to $P_2$.
Let $\pi$ be the projection map onto the geodesic $\{\phi=\phi_1\} \cup \{P_0\}$.   Then $\pi$ is distance decreasing and 
$
d(P_1,P_2)= \mbox{length}(\gamma) \geq \mbox{length}(\pi \circ \gamma_1) \geq |\rho_1-\rho_2|.
$
\end{proof}

\begin{lemma}  \label{tangentcone}
The tangent cone $T_{P_0}\overline{\bf H}$ is isometric to $[0,\infty)$.
\end{lemma}

\begin{proof}  First, note that any unit speed geodesic emanating from $P_0$ is of the form
\[
\gamma_{\phi_0}:  [0,\infty) \rightarrow \overline{\bf H}, \ \ \gamma_{\phi_0}(t) =
\left\{
\begin{array}{ll}
P_0 & \mbox{for }t=0\\
(t, \phi_0) & \mbox{for }t>0
\end{array}
\right.
\]
for some fixed $\phi_0 \in (-\infty,\infty)$.  Comparing the length of the geodesic from $(\epsilon,\phi_0)$ to $(\epsilon,\phi_0')$ to the length of the vertical line  from $(\epsilon,\phi_0)$ to $(\epsilon,\phi_0')$,
we obtain
\[
d(\gamma_{\phi_0}(\epsilon), \gamma_{\phi_0'}(\epsilon)) \leq \epsilon^3 |\phi_0-\phi_0'|.
\]
Thus, the angle   between the two geodesics  $\gamma_{\phi_0}$ and $\gamma_{\phi_0'}$  at $P_0$ is given by 
\[
\angle_{P_0}(\gamma_{\phi_0}, \gamma_{\phi_0'})=\lim_{\epsilon \rightarrow 0} \cos^{-1} \frac{2\epsilon^2 - d^2(\gamma_{\phi_0}(\epsilon), \gamma_{\phi_0'}(\epsilon))}{2\epsilon^2} \leq \lim_{\epsilon \rightarrow 0} \cos^{-1} \frac{2\epsilon^2 - \epsilon^6|\phi_0-\phi_0'|^2}{2\epsilon^2}=0.
\]
It follows that the space of directions at $P_0$ (i.e.~the space of equivalence classes of geodesics emanating from $P_0$) contains exactly one element.  Since the tangent cone is the metric cone over the space of directions, it is isometric to $[0,\infty)$.
\end{proof}

Another important feature of the space $({\bf H}, g_{\bf H})$ is that it possesses a {\it homogeneous structure}.
More precisely, we can define new coordinates  $(\rho,\Phi)$ of ${\bf H}$ where the first coordinate function $\rho$ is the same as that of the original coordinates, but  the second coordinate function $\Phi$  defined  by setting
 \[
 \Phi=\rho^3 \phi.
\]
We call $(\rho,\Phi)$ {\it the homogeneous coordinates} and in these coordinates
  the metric is given by
\begin{eqnarray}\label{equatinhomcord}
g_{\bf H} & = & 
\left( \begin{array}{cc}
1+9\Phi^2 \rho^{-2}  &  -3\rho^{-1} \Phi  \\
& \\
 - 3\rho^{-1} \Phi  & 1
\end{array}
\right).
\end{eqnarray}
For $\lambda >0$ consider the \emph{dilation map}
\[
\lambda: \overline{\bf H} \rightarrow \overline{\bf H} 
\]
given in homogeneous coordinates by
\[
 P=(\rho, \Phi) \mapsto \lambda P=(\lambda\rho, \lambda \Phi) \mbox{ and } P_0 \mapsto P_0.
\]
It follows immediately from (\ref{equatinhomcord}) that the local expression of $g_{\bf H}$ is invariant under dilations. This implies that if we extend the dilation map $\lambda$ to $\overline {\bf H}$ by 
\begin{equation} \label{hcs}
 \lambda P= \left\{ \begin{array}{cl} \lambda P &  \mbox{ if $P \neq P_0$} \\ 
P_0 & \mbox{ if $P=P_0$}
\end{array}
\right.
\end{equation}
then the   distance function $d_{\overline{\bf H}}$   is homogeneous of degree 1; i.e.
\begin{equation} \label{dfih}
d_{\overline{\bf H}}(\lambda P, \lambda Q) =\lambda d_{\overline{\bf H}}(P,Q), \ \ \ \forall P, Q \in \overline{\bf H}.
\end{equation}

The stratification of  $\overline{\bf H}={\bf H} \cup \{P_0\}$ induces a stratification on the product space $  \overline{\bf H}^l$ for any positive integer $l$. The metric 
$g_{\bf H}$ defines a metric $h$ on the stratified space $  \overline{\bf H}^l$ so that $(\overline{\bf H}^l,h)$ becomes a stratified Hermitian space. The distance function $d_h$ induced by $h$ coincides with the completion of the distance function on ${\bf H}^l$ induced from the metric $g_{\bf H}$.
\begin{definition} \label{defprodh}
For a positive integer $l$, we refer to the stratified Hermitian space $(\overline{\bf H}^l,h)$ and the NPC metric space $(\overline{\bf H}^l, d_h)$
as 
\emph{the normal space} (to the boundary of Techm\"{u}ller space).  This terminology will be justified in Theorem~\ref{A2} below.
\end{definition}

In the following, we summarize the properties of the normal space.  

\begin{proposition}[{\bf{Homogeneous structure of  the Model Space}}]  \label{A1}
The metric space  $(\overline{\bf H}^{k-j},d_h)$ is an NPC space with  a homogeneous structure with respect to ${\mathbb P}_0 = (P_0, \dots, P_0) \in \overline{\bf H}^{k-j}$. In other words, there is a continuous map 
  \[
  \R_{> 0} \times \overline{\bf H}^{k-j} \rightarrow \overline{\bf H}^{k-j},\ \ \  (\lambda, P) \mapsto \lambda P
  \]
 such that $\lambda {\mathbb P}_0={\mathbb P}_0$ for every $\lambda>0$ 
and the distance function $d_h$ is homogeneous of degree 1 with respect to this map, i.e. 
\[
d_h(\lambda P, \lambda Q) =\lambda d_h(P,Q), \ \ \ \forall P, Q \in \overline{\bf H}^{k-j}, \  \lambda \in (0,\infty).
\]
\end{proposition}

\begin{proof}
Indeed, using the homogeneous structure on $\overline{\bf H}$ defined by (\ref{hcs}), we can define a continuous map $\R_{> 0} \times \overline{\bf H}^{k-j} \rightarrow \overline{\bf H}^{k-j}$ by setting
 \[
(\lambda, (P^1, \dots, P^{k-j})) \rightarrow (\lambda P^1, \dots, \lambda P^{k-j}).
\]
\end{proof}

\subsection{Local coordinates of ${\mathcal T}$ near $\partial {\mathcal T}$}

\label{sec:stratification}
Let  ${\mathcal T}$ denote  the Teichm\"uller space of  an oriented   surface  of genus $g$  with $p$ marked points  such that $k=3g-3+p>0$.  Endowed with the Weil-Petersson metric $G_{wp}$, $({\mathcal T}, G_{wp})$ is a smooth K\"ahler manifold   of complex dimension $k=3g-3-p$ (cf. \cite{ahlfors}) and has negative sectional curvature (cf. \cite{tromba} and \cite{wolpert0}). However, $({\mathcal T}, G_{wp})$  is   \emph{incomplete} (cf. \cite{chu} and \cite{wolpertPJ}).  Let $(\overline{\mathcal T}, d_{\overline{\mathcal T}})$  denote its metric completion. The metric space $(\overline{\mathcal T}, d_{\overline{\mathcal T}})$ is no longer a smooth manifold, but it is an NPC metric space (cf. \cite{daskal-wentworth}, \cite{wolpert} and \cite{yamada}). Furthermore,
 $\overline{\mathcal T}$ is a stratified space (cf. \cite{masur}), sometimes called \emph{the augmented Teichm\"uller} space; more precisely, we can write 
\begin{equation}\label{strat'}
\overline{\mathcal T}= \bigcup   {\mathcal T}'
\end{equation}
where ${\mathcal T}'={\mathcal T}$ or ${\mathcal T}'$ is the space  parametrizing nodal surfaces  obtained from the original  surface  with a number of  (mutually disjoint) simple closed curves pinched.  (One can show that ${\mathcal T}'$ is a product of lower dimensional Teichm\"uller spaces.)
We call ${\mathcal T}'$ an \emph{open stratum} of $\overline{\mathcal T}$. Recall  that all the strata are totally geodesic with respect to the Weil-Petersson distance (cf. \cite{daskal-wentworth}, \cite{wolpert} and \cite{yamada}). 

Define $\#:  \overline{\mathcal T} \rightarrow \{0,\dots, k\}$ by setting
\begin{equation} \label{numba}
\#P=j
\end{equation}
if $P \in {\mathcal T}'$ where ${\mathcal T}'$ is a $j$-dimensional open stratum. 
Consider $P \in \overline{\mathcal T}$ with $\#P=j$  corresponding to a nodal surface $R_0$.  Let  $s=(s_1,\dots, s_j) \in \C^j  \mapsto R_s$ be a parameterization of the neighborhood of $R_0$ in ${\mathcal T}'$.  We can regularize each node of $R_s$ by the  plumbing construction, and let  $t=(t_1, \dots, t_{k-j}) \in \C^{k-j}$ denote  the  {\it plumbing coordinates}.    Thus,  provided that all the $t_i's$ are nonzero, we can construct an analytic family of Riemann surfaces $R_{s,t}$ of genus $g$  with $p$ marked points degenerating as $(t_1, \dots, t_{k-j}) \rightarrow (0, \dots, 0)$ to the nodal surface $R_s$. The parameters $s$ and $t$ together define a set of coordinates  on ${\mathcal T}$ near $P$ (see \cite{abikoff}, \cite{masur}, \cite{yamada}, \cite{daskal-wentworth} and \cite{wolpert} for further details).

The parameter $t$ gives rise to the normal space ${\bf H}^{k-j}$.  Indeed, we define
\begin{equation} \label{tonH}
t=(t_1, \dots, t_{k-j}) \in  \C^{k-j} \mapsto ((\rho_1, \phi_1),..., (\rho_{k-j}, \phi_{k-j}))\in {\bf H}^{k-j}
\end{equation}
 by setting  
\[
\rho_i=2(-\log |t_i|)^{-\frac{1}{2}} \ \mbox{and} \  \phi_i=\frac{1}{8} \arg t_i.
\]
The stratification of the space $\C^j \times  \overline{\bf H}^{k-j}$ induced from the stratification of $\overline{\bf H}={\bf H} \cup \{P_0\}$  is compatible with the stratification on $\overline{\mathcal T}$ given in (\ref{strat'}). More precisely, given $P \in \overline {\mathcal T}$ with $\#P=j$ (cf. (\ref{numba})),  there exists a  neighborhood ${\mathcal N} \subset \overline{\mathcal T}$ of $P$, a neighborhood ${\mathcal U} \subset \C^j$ of ${\mathcal O}=(0,\dots,0)$, a neighborhood ${\mathcal V} \subset \overline{\bf H}^{k-j}$ of ${\mathbb P}_0=(P_0, \dots, P_0)$ such that the map  
 \begin{equation} \label{Psihomeo}
\Psi: {\mathcal N} \rightarrow  {\mathcal U} \times {\mathcal V} \subset \C^j \times  \overline{\bf H}^{k-j}
 \end{equation}
 given in terms of the parameters described above as
    \[  
  Q  \mapsto \Psi(Q)=(s_1,...s_j, (\rho_1, \phi_1),...,(\rho_{k-j}, \phi_{k-j}))
  \]
has the following properties:
 \begin{itemize}
 \item[$(i)$] $ \Psi(P)=({\mathcal O},{\mathbb P}_0)=(0, \dots, 0,P_0, \dots, P_0) \in \C^j \times  {\bf H}^{k-j}$. 
 \item[$(ii)$] $\Psi$ is a stratification preserving homeomorphism and when restricted to each open stratum is a  biholomorphism, hence:
  \item[$(iii)$]  If $G$ denotes the pullback of the Weil-Petersson metric $G_{WP}$ under $ \Psi^{-1}$,
 then  $G$ is a Hermitian metric along each stratum of ${\mathcal U} \times {\mathcal V}$
 such that 
\[
 \Psi: ({\mathcal N}, G_{WP})\rightarrow  ({\mathcal U} \times {\mathcal V}, G)
 \] 
 is a Hermitian isometry between stratified spaces.
 In particular $ \Psi$ induces an isometry $ \Psi: ({\mathcal N}, d_{G_{WP}})\rightarrow  ({\mathcal U} \times {\mathcal V}, d_{G})$, where   $d_{G_{WP}}$ and $d_G$
 denote the distance functions defined by $G_{WP}$ and $G$ respectively.
 \end{itemize}

Throughout the rest of the paper, we will use the map $ \Psi$ as local coordinates near a $j$-dimensional stratum and express the Weil-Petersson metric in terms of $\Psi$.
 Using the natural identification  ${\mathcal U}={\mathcal U} \times \{P_0 \}$, let $H$ denote any smooth extension of  $G \Big|_{{\mathcal U} \times \{P_0 \}}$ from ${\mathcal U}$  to $\C^j$. Let $V=(V^1, \dots, V^j)$ be normal coordinates of $H$ near 0 and assume without loss of generality that they are the restriction of the standard coordinates on $\C^j$. Let $h$ denote the metric on the statified space $\overline{\bf H}^{k-j}$ as in Definition~\ref{defprodh}.

It is a straightforward computation to show that in terms of the complex parameter $t=(t_1, \dots, t_{k-j})$ of ${\bf H}^{k-j}$ given by (\ref{tonH}), the Hermitian metric $h$ has the expression 
\[
h=(h_{i\bar{j}}) \mbox{ where }   h_{i\bar{j}} =
\left\{
\begin{array}{cll}
  \pi^3 |t_i|^{-2} (-\log |t_i|)^{-3} & \mbox{ for } & i=j\\
0 &\mbox{ for } & i \neq j.
 \end{array}
 \right.
\]
The co-metric is
\[
h=(h^{i\bar{j}}) \mbox{ where }   h^{i\bar{j}} =
\left\{
\begin{array}{cll}
 \displaystyle{  \pi^{-3} |t_i|^{2} (-\log |t_i|)^{3}}   & \mbox{ for } & i=j\\
0 &\mbox{ for }  & i \neq j.
 \end{array}
 \right.
\]
 \begin{definition} \label{productmetric}
The metric  $G$ above will again be called the \emph{Weil-Petersson metric}.  Additionally, with  $H$ and $h$ as above, metric $H \oplus h$ will be called the \emph{product metric}.
 \end{definition}

\subsection{$C^0$ and $C^1$-estimates of Weil-Petersson metric}
\label{secC1}
The $C^0$-asymptotic behavior of the Weil-Petersson metric near the boundary of Teichm\"uller space is given by the well-known estimates below. Notice  that we use the {\it upper case $I, J$  to index the $s$-coordinates and  lower case $i, j,k$ for the $t$-coordinates}.  

\begin{theorem}[{\bf \cite{daskal-wentworth}, \cite{masur},  {\cite{yamada}, \cite{wolpert}}}]\label{dwyw}
The Weil-Petersson co-metric $G_{WP}^{-1}=(G^{**})$ 
  satisfies the following estimates (assuming $i,j,k$ are all distinct):
 \begin{eqnarray*} 
(i') \ \ \ \ G^{i\overline i}  &=&   h^{i\overline i} \left( 1+  O\left(\sum_{l=1}^{k-j} (-\log|t_l|)^{-2}\right)\right)\\
(ii')  \ \ \ \ G^{j\overline k}&=& O(|t_j||t_k|)\ \ \\
(iii') \ \ \ \  G^{I\overline j} &=& O(|t_j|) \\
(iv') \ \ \   G^{I\overline J} & = & G^{I\bar{J}}(0)+  O\left(\sum_{l=1}^{k-j} (-\log|t_l|)^{-2}\right). 
 \end{eqnarray*} 
The Weil-Petersson metric  $G_{WP}=(G_{**})$  satisfies the following estimates  (assuming  $i,j,k$ are all distinct):
 \begin{eqnarray*} 
(i) \ \ \ \ G_{i\overline i}  &=&   h_{i\overline i} \left( 1+  O\left(\sum_{l=1}^{k-j} (-\log|t_l|)^{-2}\right)\right)\\
(ii)  \ \ \ \ G_{j\overline k}&=& O\left( (-\log|t_j|)^{-3}(-\log|t_k|)^{-3} |t_j|^{-1} |t_k|^{-1}\right) \\
(iii) \ \ \ \  G_{I\overline j} &=& O\left(|t_j|^{-1} (-\log|t_j|)^{-3} \right) \\
(iv) \ \ \   G_{I\overline J} & = & G_{I\bar{J}}(0)+  O\left(\sum_{l=1}^{k-j} (-\log|t_l|)^{-2}\right). 
\end{eqnarray*}
\end {theorem}

The $C^0$-estimates above are not strong enough for the proof of Theorem~\ref{rigiditytheorem0}.  Indeed, in \cite{daskal-meseDM},  we developed  a general harmonic map theory in the setting where the target space has   a $C^1$-asymptotic product structure.  Subsequently, in  \cite{daskal-meseC1}  we proved the  asymptotic  $C^1$-estimates for the Weil-Petersson metric  suited  for the  techniques of \cite{daskal-meseDM}.  These estimates (cf.~Theorem~\ref{WP} and Theorem~\ref{WP2} below) give a more precise description of the asymptotic product structure than the  ones given in  \cite{schumacher}, \cite{huang},  \cite{liusunyau1}, \cite{liusunyau2} and \cite{liusunyau3}.  In particular,  our results  estimate the derivatives of the difference between  the Weil-Petersson metric $G_{WP}$ and the model metric $h$  and can be summarized as follows:  
\begin{quote}
\emph{The $C^1$-error terms of the co-metric is of the same order as the derivative of $C^0$-error terms.}
 \end{quote}
Our results in \cite{daskal-meseC1} also differ from the ones in \cite{wo4}  in the sense that they are expressed in terms of local coordinates on ${\mathcal T}$. Notice that
Wolpert expresses his asymptotic estimates in terms of a certain frame given by gradients of geodesic length functions, but unfortunately this frame  does not come from a set of local coordinates on Teichm\"{u}ller space. It is not clear to the authors how to use Wolpert's estimates in conjunction with harmonic maps.   In the estimates below, we again use the {\it upper case $I, J, K$  to index the $s$-coordinates and  lower case $i, j, k$ for the $t$-coordinates}. 

\begin{theorem}[ {\bf{\cite{daskal-meseC1} Theorem 2}}] \label{WP} The Weil-Petersson co-metric $G_{WP}^{-1}=(G^{**})$ satisfies the following estimates (assuming $i,  j, k$ are all distinct):   
\begin{eqnarray*}
(i) \ \ \ \frac{\partial }{\partial t_i} G^{i \overline i}& = &  \frac{\partial }{\partial t_i}{h}^{ii}+O\left(|t_i| (-\log|t_i|) \right)\\
(ii)  \ \ \ \frac{\partial }{\partial t_i} G^{j \overline j}&=& O\left(|t_i|^{-1}(-\log |t_i|)^{-3} |t_j|^{2}(- \log|t_j|)^{3}\right)\\
(iii)  \ \ \ \frac{\partial }{\partial t_i} G^{i \overline j}&=&O\left(|t_j|\right) \\
(iv)  \ \ \ \frac{\partial }{\partial t_i} G^{j \overline k}&=& O\left(|t_i|^{-1} (-\log|t_i|)^{-3} |t_j| |t_k|\right) \\
(v) \ \ \  \frac{\partial }{\partial t_i}  G^{I \overline J} & = & O\left(|t_i|^{-1} (-\log|t_i|)^{-3}\right) \\ 
(vi) \ \ \ \frac{\partial }{\partial t_i} G^{j \overline I} &=& O\left(|t_i|^{-1} (-\log|t_i|)^{-3} |t_j|\right) \\
(vii) \ \ \ \frac{\partial }{\partial t_i} G^{I \overline j} &=& O\left(|t_i|^{-1} (-\log|t_i|)^{-3} |t_j|\right) \\
(viii) \ \ \ \frac{\partial }{\partial t_i} G^{I \overline i} &=& O(1). 
 \end{eqnarray*}
\end{theorem}
We also record the following estimates of Liu, Sun and Yau, to get the complete picture of the $C^1$-asymptotic behavior of the Weil-Petersson co-metric.
\begin{theorem}[ {\bf{\cite{liusunyau3}, formula (3.16)}}]\label{WP3} The Weil-Petersson co-metric satisfies the following estimates (assuming $i,j$ are distinct):
\begin{eqnarray*}
(i) \ \ \ \frac{\partial }{\partial s_I} G^{i \overline i}& = &  O\left(|t_i|^2 (-\log|t_i|)^3 \right)\\
(ii)  \ \ \ \frac{\partial }{\partial s_I} G^{i \overline j}&=&O\left(|t_i| |t_j|\right)\\
(iii) \ \ \  \frac{\partial }{\partial s_I}  G^{J \overline i} & = & O\left(|t_i|\right) \\ 
(vi) \ \ \ \frac{\partial }{\partial s_I} G^{i \overline J} &=& O\left(|t_i| \right) \\
(v) \ \ \ \frac{\partial }{\partial s_I} G^{J \overline K} &=& O(1). 
 \end{eqnarray*}
\end{theorem}
By inverting the matrix $G^{i\overline j}$ and combining the above three theorems,  we obtain the $C^1$-estimates of the Weil-Petersson metric.

\begin{theorem}[ {\bf{\cite{daskal-meseC1}, Theorem 3}}]\label{WP2} The Weil-Petersson metric satisfies the following $t$-derivative  estimates (assuming $i,  j, k, $ are all distinct): 
\begin{eqnarray*}
(i) \ \ \ \frac{\partial }{\partial t_i} G_{i\overline i}& = &  \frac{\partial }{\partial t_i}{h}_{i\overline i}+O\left(|t_i|^{-3}(-\log |t_i|)^{-5} \right)\\
(ii)  \ \ \ \frac{\partial }{\partial t_i} G_{j\overline j}&=& O\left(|t_i|^{-1}(-\log |t_i|)^{-3} |t_j|^{-2}(- \log|t_j|)^{-3}\right) \\
(iii)  \ \ \ \frac{\partial }{\partial t_i} G_{i \overline j}&=&O\left(|t_i|^{-2} (-\log|t_i|)^{-3}(|t_j|^{-1} (-\log|t_j|)^{-3}\right) \\
(iv)  \ \ \ \frac{\partial }{\partial t_i} G_{j\overline k}&=& O\left(|t_i|^{-1} (-\log|t_i|^{-3})(|t_j|^{-1} (-\log|t_j|)^{-3}(|t_k|^{-1} (-\log|t_k|)^{-3} \right)  \\
(v) \ \ \  \frac{\partial }{\partial t_i}  G_{I\overline J} & = & O\left(|t_i|^{-1} (-\log|t_i|)^{-3}\right) \\ 
(vi) \ \ \ \frac{\partial }{\partial t_i} G_{I\overline j} &=& O\left(|t_i|^{-1} (-\log|t_i|)^{-3} |t_j|^{-1} (-\log|t_j|)^{-3} \right)  \\
(vii) \ \ \ \frac{\partial }{\partial t_i} G_{j\overline I} &=& O\left(|t_i|^{-1} (-\log|t_i|)^{-3} |t_j|^{-1} (-\log|t_j|)^{-3} \right)  \\
(viii) \ \ \ \frac{\partial }{\partial t_i} G_{I\overline i} &=& O\left(|t_i|^{-2} (-\log|t_i|)^{-3} \right)
\end{eqnarray*}
\end{theorem}


\begin{theorem}\label{WP4} The Weil-Petersson metric satisfies the following $s_I$-derivative estimates (we are not assuming  $i,  j$ are distinct): 
\begin{eqnarray*}
(i)  \ \ \ \frac{\partial }{\partial s_I} G_{i\overline j}&=& O\left((|t_i|^{-1} (-\log|t_i|)^{-3}(|t_j|^{-1} (-\log|t_j|)^{-3} \right)  \\
(ii) \ \ \  \frac{\partial }{\partial s_I}  G_{J\overline K} & = & O\left(1\right) \\ 
(iii) \ \ \ \frac{\partial }{\partial s_I} G_{J\overline j} &=& O\left( |t_j|^{-1} (-\log|t_j|)^{-3} \right)  \\
(iv) \ \ \ \frac{\partial }{\partial s_I} G_{j\overline J} &=& O\left( |t_j|^{-1} (-\log|t_j|)^{-3} \right)  \\
 \end{eqnarray*}
\end{theorem}

%
%

In the next corollary, we reformulate the estimates in Theorem~\ref{dwyw},  Theorem~\ref{WP}, Theorem~\ref{WP2} and Theorem~\ref{WP4} in terms of the metrics $G$, $H$ and $h$. Again, we use the {\it upper case $I, J, K$  to index the $V=(V^1, \dots, V^j)$-coordinates of $\C^j$ and  lower case $i,l,m$ to index the $v=(v^1, \dots, v^{k-j})$ coordinates of $\overline{\bf H}^{k-j}$.}

\begin{proposition} [{\bf{$C^1$-Asymptotic Product structure of the WP-Metric}}] \label{A2}  The Weil-Petersson metric $G$  is asymptotically the product metric
$
H \oplus h
$
of Definition~\ref{productmetric} in the following sense:\\    
\\
Let $V=(V^1, \dots,V^j)$ be coordinates for $\C^j$  and $v=(v^1, \dots, v^{k-j})$ be  coordinates for  
${\bf H}^{k-j}$.
 There exists a constant  $C>0$  such that if we write
\begin{eqnarray*}
H(V) = (H_{I \overline J}(V)), & & H^{-1}(V) = (H^{I \overline J}(V)), \\
h(v) = (h_{i \overline l}(v)), & & h^{-1}(v) = (h^{i \overline l}(v)),\\
G(V,v)  =  \left( 
\begin{array}{cc}
G_{I \overline J}(V,v) & G_{I \overline l}(V,v)\\
G_{l \overline J}(V,v) & G_{i \overline l}(V,v)
\end{array}
\right),
& &
G^{-1}(V,v)  =  \left( 
\begin{array}{cc}
G^{I \overline J}(V,v) & G^{I \overline l}(V,v)\\
G^{l \overline J}(V,v) & G^{i \overline l}(V,v)
\end{array}
\right) 
\end{eqnarray*}
with respect to coordinates $(V, v)$ of  $\C^j \times {\bf H}^{k-j}$,  
then the following estimates hold near $(0,{\mathbb P}_0)$ with $I,J, K=1, \dots, j$ and $i,l,m=j+1, \dots, k$:\\
\\
\emph{$C^0$-estimates:}
\begin{equation} \label{metricproperties}
\begin{array}{rcl}
|G_{I \overline J}(V,v)-H_{I \overline J}(V)|& \leq & C  H_{I \overline I}(V)^{\frac{1}{2}}H_{J \overline J}(V)^{\frac{1}{2}} d^2(v,{\mathbb P}_0)\\
|G_{I \overline l}(V,v)| &\leq &C H_{I \overline I}(V)^{\frac{1}{2}} h_{l \overline l}(v)^{\frac{1}{2}}\ d^2(v,{\mathbb P}_0)\\
|G_{i \overline l}(V,v)-h_{i \overline l}(v)|& \leq & C  h_{i \overline i}(v)^{\frac{1}{2}}   h_{l \overline l}(v)^{\frac{1}{2}} \ d^2(v,{\mathbb P}_0)
\end{array}
\end{equation}
\noindent \emph{$C^0$-estimates of the inverse:}
\begin{equation} \label{C0inverse}
\begin{array}{rcl}
|G^{I \overline J}(V,v)-H^{I \overline J}(V)|& \leq & C H^{I \overline I}(V)^{\frac{1}{2}}H^{J \overline J}(V)^{\frac{1}{2}} d^2(v,{\mathbb P}_0)\\
|G^{I \overline l}(V,v)| &\leq &C H^{I \overline I}(V)^{\frac{1}{2}} h^{l \overline l}(v)^{\frac{1}{2}} \ d^2(v,P_0) \\
|G^{i \overline l}(V,v)-h^{i \overline l}(v)|& \leq & C h^{i \overline i}(v)^{\frac{1}{2}}   h^{l \overline l}(v)^{\frac{1}{2}} \ d^2(v,{\mathbb P}_0) 
\end{array}
\end{equation}
\emph{$C^1$-estimates:}\\
\begin{equation}  \label{metricproperties'}
\begin{array}{rcl}
 | \frac{\partial}{\partial V^I}{G}_{J \overline K}(V,v)| & \leq & C  H_{I \overline I}(V)^{\frac{1}{2}} H_{J \overline J}(V)^{\frac{1}{2}}H_{K \overline K}(V)^{\frac{1}{2}} \\
 |\frac{\partial}{\partial V^I}{G}_{J \overline l}(V,v)| &\leq & C  H_{I \overline I}(V)^{\frac{1}{2}}H_{J \overline J}(V)^{\frac{1}{2}}h_{l \overline l}(v)^{\frac{1}{2}}  \ d(v,{\mathbb P}_0)\\
|\frac{\partial}{\partial V^I}{G}_{i \overline j}(V,v)|& \leq & C H_{I \overline I}(V)^{\frac{1}{2}} h_{i \overline i}(v)^{\frac{1}{2}} h_{j \overline j}(v)^{\frac{1}{2}}
 \end{array} 
\end{equation}
\begin{equation} \label{metricproperties''}
\begin{array}{rcl}
 | \frac{\partial}{\partial v^i}{G}_{I \overline J}(V,v)| & \leq & C  h_{i \overline i}(v)^{\frac{1}{2}} H_{I \overline I}(V)^{\frac{1}{2}}H_{J \overline J}(V)^{\frac{1}{2}} d(v,{\mathbb P}_0)\\
 |\frac{\partial}{\partial v^i}{G}_{I\overline j}(V,v)|& \leq & C H_{I \overline I}(V)^{\frac{1}{2}} h_{i \overline i}(v)^{\frac{1}{2}} h_{j \overline j}(v)^{\frac{1}{2}} \\
  |\frac{\partial}{\partial v^m}\left( {G}_{i \overline j}(V,v)-h_{i \overline j}(v)\right)| & \leq & C  h_{m \overline m}(v)^{\frac{1}{2}} h_{i \overline i}(v)^{\frac{1}{2}}h_{j \overline j}(v)^{\frac{1}{2}}
  \end{array} 
\end{equation}
\end{proposition}

 \begin{proof}
The estimates we need to prove are coordinate independent.  Thus, we can assume  that  $V$ are normal coordinates centered at $0$ for the metric $H$ and $v=t$.     With this, we have
\[
H_{I \overline I}(V)=O(1), \ \   h_{i \overline i}(v)= O( |t_i|^{-2} (-\log |t_i|)^{-3})
\]
 and 
\[
(-\log |t_i|)^{-1} \leq \sum_{l=1}^{k-j}  (-\log |t_l|)^{-1}  \leq Cd^2(v, {\mathbb P}_0).
\]
As an example, we check the first and the second  estimate in (\ref{metricproperties''}). For the first,  we use Theorem~\ref{WP2}(iv) and the fact that $H_{I \overline I}(V)^{\frac{1}{2}}H_{J \overline J}(V)^{\frac{1}{2}} =O(1)$ to obtain (for $|t_i|$ sufficiently small such that $(-\log |t_i|)^{-1}<1$ and a generic constant $C$) \begin{eqnarray*}
\left|  \frac{\partial }{\partial v^i}  G_{I\overline J} \right| & \leq  & C\left(|t_i|^{-1} (-\log|t_i|)^{-3}\right) \\ 
 &\leq& C\left(|t_i|^{-1} (-\log|t_i|)^{-\frac{3}{2}}\right)  \left( (-\log|t_i|)^{-\frac{1}{2}}\right)\\
 & \leq & C  h_{i \overline i}(v)^{\frac{1}{2}} H_{I \overline I}(V)^{\frac{1}{2}}H_{J \overline J}(V)^{\frac{1}{2}} d(v,{\mathbb P}_0).
 \end{eqnarray*} 
For the second estimate with $ i\neq j$ , we use Theorem~\ref{WP2}(vi) to obtain
\begin{eqnarray*}
\left| \frac{\partial }{\partial v^i} G_{I\overline j} \right|& \leq & C\left(|t_i|^{-1} (-\log|t_i|)^{-3} |t_j|^{-1} (-\log|t_j|)^{-3} \right)\\
&\leq & C \left(|t_i|^{-1} (-\log|t_i|)^{-\frac{3}{2}}\right)\left( |t_j|^{-1} (-\log|t_j|)^{-\frac{3}{2}} \right)\\
& \leq & C H_{I \overline I}(V)^{\frac{1}{2}} h_{i \overline i}(v)^{\frac{1}{2}} h_{j \overline j}(v)^{\frac{1}{2}}\
\end{eqnarray*}  
If $ i= j$  we use Theorem~\ref{WP2}(viii) to obtain
\begin{eqnarray*}
\left| \frac{\partial }{\partial v^i} G_{I\overline i} \right| & \leq & C\left(|t_i|^{-2} (-\log|t_i|)^{-3} \right)\\
& \leq & C H_{I \overline I}(V)^{\frac{1}{2}}h_{i \overline i}(v).
\end{eqnarray*}
The other estimates can be justified the same way. 
 \end{proof}

 \section{Maps into the Model Space $\overline {\bf H}$}
 \label{MintoMS}
Given a map $u$ into the model space $\overline {\bf H}$, a \emph{regular point} is a point of the domain of $u$ that maps  into ${\bf H}$ and a \emph{singular point} is a point of the domain of $u$ that maps to $P_0$.   The \emph{regular set} ${\mathcal R}(u)$ is the set of regular points and the \emph{singular set} ${\mathcal S}(u)$ is the set of singular points of $u$.  The goal of this section is to prove the following slightly easier version of the main theorem; more specifically, we prove a regularity theorem for harmonic maps into the model space of the Weil-Petersson metric.   
\begin{theorem}[{\bf{Regularity Theorem for Harmonic Maps into the Model Space}}] \label{RegularityTheoremModelSpace}
  If   $u:(\Omega,g) \rightarrow (\overline{\bf H},d_{\overline{\bf H}})$ is a harmonic map from an  $n$-dimensional Lipschitz Riemannian domain, then 
\[
\dim_{\mathcal H} \Big({\mathcal S}(u)\Big) \leq n-2.
\]
\end{theorem}

The strategy is to first show that the 
set ${\mathcal S}^{>1}(u)$ of  singular points of order $>1$ (for the definition of the \emph{order} see (\ref{orderdef})), is of   Hausdorff codimension at least 2 (cf. Subsection~\ref{sec:into hbar0}, Proposition~\ref{hmintoms}), then to prove that there exist no order 1 singular points  (cf. Subsection~\ref{sec:into hbar1},  Proposition~\ref{order1pointsmodelspace}). Note that the order is always $\geq 1$ by Lipschitz continuity (cf. \cite{gromov-schoen}, Theorem 2.3).  The proof of Proposition~\ref{order1pointsmodelspace}  relies heavily on the \emph{key technical Lemma for the Model Space} (cf. Subsection~\ref{sec:into hbar1}, Lemma~\ref{keylemma}) which gives sufficient conditions when a harmonic map into  $(\overline{\bf H},d_{\overline{\bf H}})$ does not hit the boundary point $P_0$.   This is a special case of the \emph{key technical Lemma} stated  in Section~\ref{intoWPcompletion} that is used  to address the regularity theorem for harmonic maps into $(\overline{\mathcal T}, d_{\overline{\mathcal T}})$.  The \emph{key technical Lemma} is the most challenging aspect of this paper as it introduces new techniques to address the non-local compactness  and degenerating geometry of the target space $(\overline{\bf H},d_{\overline{\bf H}})$ or $(\overline{\mathcal T}, d_{\overline{\mathcal T}})$ near the boundary.

\subsection{Harmonic maps into NPC spaces}\label{basic1}

In this subsection, we recall some basic  facts regarding harmonic maps into general NPC spaces. The standard references are  \cite{gromov-schoen}, \cite{korevaar-schoen1} and \cite{korevaar-schoen2}.  Additionally,  \cite{daskal-meseDM}  discusses harmonic map theory in a setting most relevant of this paper.    
 
 Let $(\Omega, g)$ be an $n$-dimensional Riemannian domain and let $(Y,d)$ an NPC space. 
For a finite energy map $u: (\Omega,g) \rightarrow (Y,d)$, let  $|\nabla u|^2$ denote the energy density as defined in \cite{korevaar-schoen1} (1.10v).  
A map $u$ is said to be \emph{harmonic}  if it is energy minimizing amongst all  finite energy maps with the same boundary conditions on every bounded  Lipschitz subdomain $\Omega' \subset \Omega$   (cf. \cite{korevaar-schoen1}). 
We record the following important result. 
\begin{theorem}[{\bf Lipschitz continuity: \cite{gromov-schoen}, \cite{korevaar-schoen1}, \cite{serbinowski}}] \label{KSlip}
A harmonic map $u: (\Omega,g) \rightarrow (Y,d)$ into an NPC space is 
   locally Lipschitz continuous with the local Lipschitz constant depending on the geometry of $ (\Omega,g) $,  the total energy of $u$ and the distance to the boundary.  If the boundary of $\Omega$ is smooth and the boundary data are $C^{\alpha}$ $(0<\alpha<1)$, the map $u$ extends up to the boundary with the $C^{\alpha}$ norm depending on the boundary data and on the total energy.
   \end{theorem} 
   
 Next, we recall the notion of \emph{order}.   Let $v:(\Omega,g) \rightarrow (Y,d)$ be a map (not necessarily harmonic).  For $x_0 \in \Omega$, define
\begin{eqnarray*} \label{nott}
E^v_{x_0}(\sigma) :=\int_{B_{\sigma}(x_0)} |\nabla v|^2 d\mu \ \mbox{ and } \
I^v_{x_0}(\sigma) := \int_{\partial B_{\sigma}(x_0)} d^2(v,v(x_0)) d\Sigma.
\end{eqnarray*} 
When the dependence of point $x_0$ is understood, we will omit it from the notation above and write $E^v(\sigma)$ and $I^v(\sigma)$  instead.
The \emph{order of the map $v$ at $x_0$} is defined by
  \begin{eqnarray}\label{orderdef}
 Ord^v(x_0):=\lim_{\sigma \rightarrow 0} \frac{\sigma \ E^v(\sigma) }{I^v(\sigma)} \mbox{ provided the limit exists.}
 \end{eqnarray}
 
 \begin{definition}
The set
 \[
 {\mathcal S}^{>1}(v):=\{x_0 \in \Omega:  \ Ord^v(x_0) \mbox{ exists and is $>1$}\}
 \]
 is the \emph{higher order points} of $v$.
 \end{definition}

 \begin{remark}
 For a harmonic function $u:(\Omega,g) \rightarrow \R$ and $x_0 \in \Omega$,  $Ord^u(x_0)$ is the order with which $u$ attains its value $u(x_0)$ at $x_0$.
 \end{remark}

\begin{theorem}[{\bf{Existence of the order function: \cite{gromov-schoen}, \cite{korevaar-schoen1}}}] \label{orderforharmonic}
For a harmonic map $u:(\Omega,g) \rightarrow (Y,d)$ into an NPC space and a compact subset $K$ of $\Omega$, there exist  constants $c>0$ and $\sigma_0>0$ depending only on the  domain metric (with $c=0$ when $g$ is a Euclidean metric) such that for any $x_0 \in K$,
\[
\sigma \mapsto e^{c \sigma^2}\frac{\sigma \ E^u(\sigma) }{I^u(\sigma)} \mbox{ 
 is non-decreasing for $\sigma  \in (0,\sigma_0)$}.
 \]
Thus, $Ord^u(x_0)$ exists  for all $x_0 \in K$.   Furthermore, 
\[
\sigma \mapsto e^{c \sigma^2}\frac{E^u(\sigma) }{\sigma^{n-2+2\alpha}}, \ \  \mbox{ and }  \sigma \mapsto e^{c \sigma^2}\frac{I^u(\sigma) }{\sigma^{n-1+2\alpha}}
\mbox{ 
 are non-decreasing for $\sigma \in (0,\sigma_0)$}.
 \]
 \end{theorem}
 \begin{proof}
The statements  above follow from Section 1.2 of \cite{gromov-schoen}  combined with \cite{korevaar-schoen1}, \cite{korevaar-schoen2}.
\end{proof}

   We record the following important result of \cite{korevaar-schoen2} Proposition~3.7 and Theorem 3.11.
   \begin{theorem}[{\bf{Compactness Theorem \cite{korevaar-schoen2}}}]
   \label{KScpct}
Assume the following:
\begin{itemize}
\item[(i)] The sequence   of smooth metrics $g_i$ on $B_R(0)$ converges in $C^\infty$ to the Euclidean metric  $g_0$.
 \item[(ii)] $(Y_i,d_i)$ is a sequence of NPC spaces.
\item[(iii)] The sequence   $v_i:(B_R(0),g_i) \rightarrow (Y_i,d_i)$ of maps  has a  uniform Lipschitz constant on compact subsets of $B_R(0)$.
\end{itemize}  
Then there exists a subsequence $v_{i'}$ of  $v_i$ converging locally uniformly in the pullback sense (cf. \cite{korevaar-schoen2} Definition 3.3) to a map $v_0:(B_1(0),g_0) \rightarrow (Y_0,d_0)$ into an NPC space, and $v_0$ has the  same local Lipschitz constant as $v_i$. 
Furthermore, if  $v_{i}$ is harmonic,
then  $v_0$ is also a harmonic  and
\[
E_0^{v_0}(r)=\lim_{i \rightarrow \infty} E_0^{v_{i'}}(r), \ \ \ \forall r \in (0,R).
\]
\end{theorem}

\begin{remark}
 The first assertion of the Compactness Theorem~\ref{KScpct} statement  can be viewed as a generalized version of the Arzela-Ascoli theorem for maps into different target spaces.   Note that (by an application of the usual Arzela-Ascoli Theorem to the sequence of pullback distance functions)
  \begin{equation} \label{pbdfuc}
  d(v_{i'}(\cdot), v_{i'}(\cdot)) \mbox{ converges locally uniformly to }d_0(v_0(\cdot), v_0(\cdot)).   
  \end{equation}
  This latter property will be important in the application of Theorem~\ref{KScpct}.
  \\
  \end{remark}

We now define the notion of \emph{blow-up maps} of a map $v: (\Omega,g) \rightarrow (Y,d)$ (not necessarily harmonic) at $x_0 \in \Omega$.  Throughout the paper we will define different cases of blow-up maps so it is important to deal with the general case first. Below,  $g_0$ denotes the standard Euclidean metric.  
 We identify $x_0=0$ via  normal coordinates $(x^1, \dots, x^n)$ centered at $x_0$ and let
  $v:(B_R(0),g) \rightarrow (Y,d)$ be a Lipschitz map.
For  $\sigma_0>0$ sufficiently small,  a function 
 \[
\nu: (0,\sigma_0) \rightarrow \R_{>0} \ \ \mbox{ with } \  \lim_{\sigma \rightarrow 0^+} \nu(\sigma)= 0
 \]
is called a  \emph{scaling factor}. Let $g_{\sigma}$ denote the rescaled metric on $B_R(0)$ given in terms of the  coordinates $(x^1, \dots, x^n)$ as
 \begin{equation} \label{defineblowupmap'}
 {g_{\sigma}}_{ij}(x) =g_{ij}(\sigma x)
  \end{equation} 
  and
  \[
  d_{\sigma}(P,Q)=\nu(\sigma)^{-1}d(P,Q).
 \]
 The \emph{blow-up map} of $v$ at $x_0=0$ with scaling factor $\nu(\sigma)$  is the map defined by 
 \begin{equation} \label{defineblowupmap}
 v_{\sigma}: (B_1(0) \subset B_{\sigma^{-1}R}(0) ,g_{\sigma}) \rightarrow (Y,d_{\sigma}), \ \ 
 v_{\sigma}(x)=v(\sigma x).
 \end{equation}
 \begin{remark} \label{blowupsforhm}
For a harmonic map $u: \Omega \rightarrow (Y,d)$  and $x_0 \in \Omega$, we 
 make a special choice of the scaling factor.  More specifically, we let $\nu(\sigma)$ be equal to 
 \[
  \mu(\sigma):=\sqrt{\frac{I^u(\sigma)}{\sigma^{n-1}}}.
 \]
With this choice,   the blow-up map 
   \[
u_{\sigma}:(B_1(0),g_{\sigma}) \rightarrow (Y,d_{\sigma})
 \]
 satisfies 
\[
 I^{u_{\sigma}}(1)=1 \mbox{ and } E^{u_{\sigma}}(1) \leq 2Ord^u(x_0) \mbox{ for $\sigma>0$ sufficiently small.}
 \]
In particular, the sequence  $u_{\sigma}$ has  uniformly bounded energy.  Again applying  the monotonicity properties of Theorem~\ref{orderforharmonic}, we have
 \begin{equation} \label{endec}
 E^{u_{\sigma}}(\theta) \leq \theta^{n-2+2\alpha} E_0
 \end{equation}
 where the constant $E_0$ can be chosen independently of $\sigma$ and $\alpha=Ord^u(x_0) \geq 1$.
Moreover, $u_\sigma$ is a harmonic map and Theorem~\ref{KSlip} and (\ref{endec}) imply
\begin{equation} \label{uLip}
u_{\sigma}
\mbox{ has uniform local Lipschitz bound}.
\end{equation}
Thus, applying the Compactness Theorem~\ref{KScpct},   we can find a sequence $\sigma_i \rightarrow 0$ such that  $u_{\sigma_i}$   converges locally uniformly in the pullback sense  to a non-constant harmonic map 
 \[
 u_*:(B_1(0),g_0) \rightarrow (Y_*,d_*)
 \]
  from a Euclidean ball into an NPC space.  
By following the argument of \cite{gromov-schoen} Proposition~3.3, we conclude that the map $u_*$ is  \emph{homogeneous degree $\alpha$}; more precisely,  for any $\xi \in \partial B_1(0)$, 
the image $\{u_*(t\xi): \ t \in (0,1)\}$ is a geodesic
 and
\[
d_*(u_*(t\xi ),u_*(0)) = t^{\alpha} d_*(u_*(\xi),u_*(0)), \ \ \forall t \in (0,1).
\]
Since $u_*$ is Lipschitz continuous by Theorem~\ref{KSlip}, it follows that
\[
Ord^u(x_0)=Ord^{u_*}(0)=\alpha \geq 1.
\]

In the case of a harmonic map $u$ into a smooth Riemannian manifold $M$, the target space of a  tangent map $u_*$  at $x$  is the tangent space $T_{u(x)}M$.  On the other hand, the target space  $(Y_*,g_*)$ of a tangent  map $u_*$  at $x$  may be quite different from the tangent  cone $T_{u(x)}Y$.   
For example,  if $u:(\Omega,g) \rightarrow (\overline{\bf H},d_{\overline{\bf H}})$ is a harmonic map with $u(x)=P_0$, then a tangent map $u_*$ cannot  map to the tangent cone $T_{P_0}\overline{\bf H}$ at $P_0$.  Indeed, if the image of $u_*$ is  $T_{P_0}\overline{\bf H}$,  then we  have a violation of the minimum principle  since $T_{P_0}\overline{\bf H}$ is isometric to $[0,\infty)$ (cf. Lemma~\ref{tangentcone}). This is one of the technical issues dealt in this paper. 
 \end{remark}

\begin{definition}
The homogeneous harmonic map $u_*$ defined in Remark~\ref{blowupsforhm} is called a \emph{tangent map} of $u$ at $x_0$.
\end{definition}

We will record  three  lemmas  about the upper semicontinuity of the order and Hausdorff dimension. Some version of the lemmas  are more or less known to the experts, but we will include their proofs here for completeness since the exact version stated below is hard to find in literature. 

 \begin{lemma} \label{uscord} 
Let $u:(\Omega,g) \rightarrow (Y,d)$ be a harmonic map.  Let $x_0 \in \Omega$ and  $u_{\sigma_i}$ be a sequence of  blow-up maps of $u$ at $x_0$  converging locally unifomrly in the pullback sense to a tangent map $u_*$.  If $x_i \rightarrow x_*$, then
\[
\liminf_{\sigma_i \rightarrow 0} Ord^{u_{\sigma_i}}(x_i) \leq Ord^{u_*}(x_*).
\]
\end{lemma}
\begin{proof}
Fix $r \in (0,1)$.  By  Theorem~\ref{KScpct}  and (\ref{uLip}), we have 
\[
E^{u_*}_{x_*}(r) =  \lim_{\sigma_i \rightarrow 0} E^{u_{\sigma_i}}_{x_*}(r).
\]  
Furthermore, we claim
\begin{equation} \label{thisuseslip}
  \lim_{\sigma_i \rightarrow 0}|E^{u_{\sigma_i}}_{x_*}(r) -E^{u_{\sigma_i}}_{x_i}(r)|=0. 
\end{equation}
To prove (\ref{thisuseslip}), for $\epsilon >0$ choose $i$ large so that $|x_i-x_*|<\epsilon$. By the uniform Lipschitz assumption (\ref{uLip}) there exists $C >0$ such that
\[
E^{u_{\sigma_i}}_{x_*}(r)-C\epsilon \leq E^{u_{\sigma_i}}_{x_*}(r-\epsilon) \leq E^{u_{\sigma_i}}_{x_i}(r) \leq E^{u_{\sigma_i}}_{x_*}(r+\epsilon) \leq E^{u_{\sigma_i}}_{x_*}(r)+C\epsilon
\] 
which immediately implies the desired equality.
Combining the above, we have
 \[
 E^{u_*}_{x_*}(r) =  \lim_{\sigma_i \rightarrow 0} E^{u_{\sigma_i}}_{x_i}(r).
 \]
 Furthermore, by  the local uniform convergence of  the pullback distance functions (\ref{pbdfuc}) 
 \[
 I^{u_*}_{x_*}(r)=\lim_{\sigma_i \rightarrow 0} I^{u_{\sigma_i}}_{x_i}(r).
 \]
   Combining the above two equalities, we obtain
 \[
 \lim_{\sigma_i \rightarrow 0}\frac{r E^{u_{\sigma_i}}_{x_i}(r)}{I^{u_{\sigma_i}}_{x_i}(r)}=\frac{r E^{u_*}_{x_*}(r)}{I^{u_*}_{x_*}(r)}.
\] 
 Now we apply the monotonicity property of Theorem~\ref{orderforharmonic}, namely
 \[
Ord^{u_{\sigma_i}}(x_i) \leq
 e^{cr} \frac{r E^{u_{\sigma_i}}_{x_i}(r)}{I^{u_{\sigma_i}}_{x_i}(r)}.
\]
Taking liminf as $i \rightarrow \infty$  in the above inequality, we obtain
\[
\liminf_{\sigma_i \rightarrow 0} Ord^{u_{\sigma_i}}(x_i) \leq e^{cr} \frac{r E^{u_*}_{x_*}(r)}{I^{v_0}_{x_*}(r)} .
\]
Finally, letting $r \rightarrow 0$ yields
\[
\liminf_{\sigma_i \rightarrow 0} Ord^{u_{\sigma_i}}(x_i) \leq Ord^{u_*}(x_*).
\]
\end{proof}

\begin{lemma} \label{federeroutermeasure}
Let $E_i$ be a sequence of compact subsets of $\R^n$  and let $E_0 \subset \R^n$ be a compact set.  Assume that if $x_i$  is a sequence such that $x_i \in E_i$ and $x_i \rightarrow x_*$, then $x_* \in E_0$.  Then
\[
 \limsup_{i \rightarrow \infty} \dim_{\mathcal H}(E_i) \leq \dim_{\mathcal H}(E_0).
 \] 
\end{lemma}

\begin{proof}
First, for any subset $E \subset \R^n$ and any real number $s \in [0,n]$, recall that  \cite{federer2}
\begin{equation} \label{hath}
\dim_{\mathcal H}(E)= \inf\{s :  \hat{\mathcal H}^s(E)=0\}
\end{equation}
where
\[
\hat{\mathcal H}^s(E) = \inf \left\{ \sum_{i=1}^{\infty} r_i^s:  \mbox{ all coverings $\{B_{r_i}(x_i)\}_{i=1}^{\infty}$ of $E$ by open balls. }\right\}.
\]
Since $E_0$ is compact, we may consider finite coverings $E_0 \subset \bigcup_{i=1}^N B_{r_i}(x_i)$.  Fix $\epsilon>0$.  By the assumption,  we have that for $i$ sufficiently large
\[
E_i \subset \{x :  |x-E_0|<\epsilon\}
\] 
where $|x-E_0|=\inf \{|x-y|:  y \in E_0\}$.  Thus, if 
$\hat{\mathcal H}^s(E_0)=0$ for some $s \in [0,n]$, then $\hat{\mathcal H}^s(E_i)=0$ for $i$ sufficiently large.   The assertion follows from (\ref{hath}).  
\end{proof}

\begin{lemma} \label{cd2general}
\label{hmintoms}
Let $u:(\Omega,g) \rightarrow (Y,d)$ be a  map. For any $x_0 \in {\mathcal S}^{>1}(u)$, assume that there exists a sequence  $u_{\sigma_i}$  of  blow-up maps of $u$ at $x_0$  converging locally uniformly in the pullback sense to a homogeneous harmonic map $u_*$ with the following properties:

\begin{itemize}
\item[(i)] If a sequence $\{x_i\} \subset B_1(0)$ is such that $x_i \in {\mathcal S}^{>1}(u_{\sigma_i})$ with $x_i \rightarrow x_*$, then $x_* \in {\mathcal S}^{>1}(u_*)$.\item [(ii)]  $
\dim_{\mathcal {H}}({\mathcal S}^{>1}(u_*)) \leq n-2.
$
\end{itemize}
Then 
\[
\dim_{\mathcal {H}}({\mathcal S}^{>1}(u)) \leq n-2.
\]  
\end{lemma}
\begin{proof}
Assume on the contrary that $
\dim_{\mathcal H}({\mathcal S}^{>1}(u)) > n-2$; i.e., there exists  $s >n-2$ such that ${\mathcal H}^s({\mathcal S}^{>1}(u)) >0$.  By \cite{federer} 2.10.19,  there exists $x_0 \in {\mathcal S}^{>1}(u)$ such that  
\[
\lim_{\sigma \rightarrow 0} {\mathcal H}^s({\mathcal S}^{>1}(u_{\sigma})) =\lim_{\sigma \rightarrow 0} \frac{ {\mathcal H}^s({\mathcal S}^{>1}(u)  \cap B_{\sigma}(x_00))}{\sigma^{s}} \geq 2^{-s}.
\]
Thus,  $\dim_{\mathcal H}({\mathcal S}^{>1}(u_{\sigma_i}))  \geq s$ for $\sigma_i$ sufficiently small. From (i)  and  Lemma~\ref{federeroutermeasure}, we conclude that 
\[
n-2<s \leq  \limsup_{\sigma_i \rightarrow 0} \dim_{\mathcal H}({\mathcal S}^{>1}(u_{\sigma_i})) \leq \dim_{\mathcal H}({\mathcal S}^{>1}(u_*)).
\]
This contradicts (ii).
\end{proof}

\subsection{The Order Gap} \label{sec:ordergap}  
In this subsection, we  prove an  order gap theorem for the limit map of a sequence of harmonic maps into $(\overline{\bf H},d_{\overline{\bf H}}).$  (We note that an important example of such a limit is a tangent map of harmonic map into $(\overline{\bf H},d_{\overline{\bf H}})$ as we shall see in Section~\ref{sec:into hbar0}.)  For two dimensional domains, many of the ideas in this subsection first appeared in \cite{wentworth}  in a slightly different language.  

We will use the following properties of a  map $u:(\Omega,g) \rightarrow (\overline{\bf H},d_{\overline{\bf H}})$.  Given an open set $ U \subset \Omega$ such that  $u(U)$ is contained in the  smooth Riemannian manifold $({\bf H},g_{\bf H})$, we can write 
\[
u=(u_{\rho},u_{\phi})
\]
in $U$ with respect to the standard coordinates $(\rho,\phi)$ of ${\bf H}$. 
If $u$ is Lipschitz continuous in $U$, then there exists a constant $C>0$ such that 
\begin{equation} \label{lip}
|\nabla u_{\rho}| \leq C \ \mbox{ and } \ |u_{\rho}^3 \nabla u_{\phi}|\leq C.
\end{equation}
If $u: (\Omega,g) \rightarrow (\overline{\bf H},d_{\overline{\bf H}})$ is a harmonic map, then $u_\rho$ and $u_{\phi}$ satisfy  the  equations   in $\Omega \backslash {\mathcal S}(u)$ 
\begin{eqnarray} \label{hmequphi}
u_{\rho} \triangle u_{\rho} = 3  u_{\rho}^6 |\nabla u_{\phi}|^2 \ \mbox{ and }  \ u_{\rho}^4 \triangle u_{\phi} = -6 \nabla u_{\rho} \cdot u_{\rho}^3 \nabla u_{\phi}.
\end{eqnarray}
In the above $\nabla$ and $\triangle$ denote the gradient and the Laplacian with respect to the metric $g$.
 

\begin{lemma} \label{proveitsfl}
Let $B_R(0) \subset {\mathbb R}^n$. There exists $\epsilon_0 >0$ depending only on the domain dimension $n$ such that if  $w_i=(w_i^{\rho},w_i^{\phi}):(B_R(0),g_i) \rightarrow (\overline{\bf H},d_{\overline{\bf H}})$ is a sequence of harmonic maps with uniformly bounded energy converging   locally uniformly in the pullback sense 
(cf. Theorem~\ref{KScpct}) to a   homogeneous harmonic map $v_0:(B_R(0),g_0) \rightarrow (Y_0,d_0)$  into an NPC space, $\lim_{i\rightarrow \infty} w_i(0)=P_0=\overline{\bf H}\backslash \bf H$ and  the metric $g_i$ converging in $C^\infty$ to the standard Euclidean metric  $g_0$,  then
 \[
Ord^{v_0}(0)=1 \ \mbox{ or } \ Ord^{v_0}(0) \geq  1+\epsilon_0.
\]
If $Ord^{v_0}(0)=1$, then $v_0$ maps into a geodesic.
Furthermore, the set of higher order points of $v_0$  has Hausdorff codimension at least 2; i.e.
\[
\dim_{\mathcal H}({\mathcal S}^{>1}(v_0)) \leq  n-2.
\]
  \end{lemma}

\begin{remark}
The notion of  local uniform convergence in the pullback sense that appears in Lemma~\ref{proveitsfl} was discussed in Theorem~\ref{KScpct}.   On the other hand, in the proof of Lemma~\ref{proveitsfl}, we only need the fact that the sequence of pullback distance functions $d_{\overline{\bf H}}(w_i(\cdot), w_i(\cdot))$ converges locally uniformly to the pullback distance function $d_0(v_0(\cdot),v_0(\cdot))$.   In particular, a sequence of blow-up maps of a harmonic map has a subsequence satisfying this property (cf. (\ref{pbdfuc}) and Remark~\ref{blowupsforhm}).
\end{remark}

\begin{proof}  
For the sake of  simplicity, we will assume that $g_i$ is the standard Euclidean metric $g_0$.  
We break up the proof of Lemma~\ref{proveitsfl} into four claims.\\
\\
{\sc Claim 1}. \emph{If  $\Omega_0$ is a connected component of the open set $\{x \in B_R(0):  v_0(x) \neq v_0(0)\}$, then $v_0$ maps $\overline{\Omega}_0$  into a geodesic ray starting at $v_0(0)$.}\\  

{\sc Proof of Claim 1}.  Since $E^{w_i}(R)$ is uniformly bounded, Theorem~\ref{KSlip} and (\ref{lip}) imply that, for any $r \in (0,R)$, there exists $C>0$ such that for all $i$ and $x \in B_r(0) \backslash \{x: w_i(x) \neq P_0\}$
\begin{eqnarray}\label{lipcont}
 |\nabla w_i^{\rho}|(x) \leq C, \ \ (w_i^{\rho}(x))^3|\nabla w_i^{\phi}|(x) \leq C.
  \end{eqnarray}  

Fix $x_{\Omega_0} \in \Omega_0$ and let $K$ be a compact set contained  in $\Omega_0$ and containing  $x_{\Omega_0}$.  
 The local uniform convergence in the pullback sense of $w_i$ to $v_0$ and the fact that $\lim_{i \rightarrow \infty} w_i(0)=P_0$, imply 
 \[
  \lim_{i \rightarrow \infty}w_i^{\rho}(x)=  \lim_{i \rightarrow \infty}d_{\overline{\bf H}}(w_i(x),P_0)=\lim_{i \rightarrow \infty}d_{\overline{\bf H}}(w_i(x),w_i(0))=d_0(v_0(x),v_0(0))=:f(x)
 \]
for $x \in B_R(0)$. Since $K$ is  compactly contained in $\Omega_0$,  the convergence $w_i^{\rho}(x) \rightarrow f(x)$ is uniform in $K$, and  it follows that  the function $w_i^{\rho}$ is bounded away from 0 in $K$ for $i$ sufficiently large.
Therefore (\ref{lipcont}) implies that $w_i^{\phi}$ is uniformly  Lipschitz in $K$,  and there exists a subsequence of  $w_i^{\phi}-w_i^{\phi}(x_{\Omega_0})$ 
 (which we shall still denote by $w_i^{\phi}-w_i^{\phi}(x_{\Omega_0})$ by an abuse of notation)  that converges uniformly in $K$.  By taking a compact exhaustion of $\Omega_0$  and applying a diagonalization procedure, we can assume (by taking a subsequence if necessary) that  $w_i^{\phi}-w_i^{\phi}(x_{\Omega_0})$ converges locally uniformly to some function $g$ in $\Omega_0$.  Thus,  $(w_i^{\rho}, w_i^{\phi}-w_i^{\phi}(x_{\Omega_0}))$  converges locally uniformly in $\Omega_0$ to the map $(f, g):\Omega_0 \rightarrow {\bf H}$.  Since $(w_i^{\rho}, w_i^{\phi}-w_i^{\phi}(x_{\Omega_0}))$  is a sequence of harmonic maps into a smooth Riemannian manifold $({\bf H},g_{\bf H})$,  this convergence is actually  locally  $C^\infty$.  The map $w_i$ is harmonic which implies that the functions $w_i^{\rho}$ and $w_i^{\phi}$ satisfy  
 \[
 w_i^{\rho} \triangle  w_i^{\rho} =3 ( w_i^{\rho} )^6 |\nabla  w_i^{\phi}|^2 \ \mbox{ in } \Omega_0.
 \] 
Thus, the functions $f$ and $g$ also satisfy
\begin{equation} \label{hmeq}
f \triangle f = 3 {f}^6 |\nabla g |^2 \ \mbox{ in } \Omega_0.
\end{equation}
Furthermore, the homogeneity of $v_0$ implies  the homogeneity of $f$. Thus, extend the domain of  $f$ is $\R^n$, assume $\Omega_0$ is a cone and write \[
f(r,\theta)=r^{\alpha} F(\theta) \ \mbox{ in } \Omega_0
\]
where  $\alpha=Ord^{v_0}(0)$,
 \[
F:\Omega_0 \cap \partial B_1(0) =:\Lambda  \subset \Sp^{n-1} \rightarrow \overline\R_+
\]
 and $\theta=(\theta^1,\dots,\theta^{n-1})$ are the coordinates of $\Sp^{n-1}$.  
 The above two equations imply 
 that 
\[
\alpha(\alpha+n-1)F+  \triangle_{\theta} F=  r^{4\alpha+2}F^6(\theta)|\nabla g |^2.
\]
Since this inequality holds for any $r>0$,  we can conclude that $
|\nabla g |^2=0$.  Hence $f$  is a harmonic function by (\ref{hmeq}).
Since $w_i^{\phi}-w_i^{\phi}(x_{\Omega_0})=0$ at $x=x_{\Omega_0}$, we see that $g(x_{\Omega_0})=0$.   Hence  $g=0$ in $\Omega_0$ and $(w_i^{\rho},w_i^{\phi}-w_i^{\phi}(x_{\Omega_0}))$ converges locally uniformly to $(f,0)$ in $\Omega_0$.  This in turn  implies 
$(w_i^{\rho},w_i^{\phi})$ converges locally uniformly in the pullback sense  to $(f,0)$ in $\Omega_0$.  Since $(f,0)$ maps into the geodesic ray $\{(\rho,0) \in {\bf H}\} \cup \{P_0\}$, $w_i(0) \rightarrow P_0$ and  $(w_i^{\rho},w_i^{\phi})$  also converges locally uniformly in the pullback sense  to $v_0$ in $\Omega_0$,  $v_0$ also maps $\overline{\Omega}_0$ into a geodesic ray starting at $v_0(0)$.   \\
\\
{\sc Claim 2}.  \emph{There exists $\epsilon_0 \in (0,1)$ sufficiently small  depending only on the domain dimension $n$ such that if $Ord^{v_0}(0) < 1+\epsilon_0$, then there exists a geodesic $\gamma$ such that $v_0(B_1(0)) \subset \gamma$.  
}\\

{\sc Proof of Claim 2.}  This argument essentially goes back to \cite{gromov-schoen}, but we include it here for the sake of completeness.  Let $\Omega_0$, $f$, $F$, $\Lambda$ and  $\theta=(\theta^1,\dots,\theta^{n-1})$ be as in the proof of Claim 1; i.e. $\Omega_0$ is extended to a cone in $\R^n$, $F(\theta)$ is defined in $\Lambda=\Omega_0 \cap \Sp^{n-1}$ and 
\begin{equation} \label{eig}
 \triangle f = 0 
\
 \mbox{ and }
\
f(r,\theta)=r^{\alpha} F(\theta) \mbox{ in } \Omega_0  \mbox{ with }\alpha=Ord^{v_0}(0).
\end{equation}
Combining the above two equations, we conclude that $F$ is a Dirichlet eigenfunction with eigenvalue $\alpha+n-1$; i.e. $F$ satisfies 
\[
\left\{
\begin{array}{ll}
\alpha(\alpha+n-1)F+  \triangle_{\theta} F=0 & \mbox{in } \Lambda
\\
F\big|_{\Omega}=0 &
\end{array}
\right.
\]
in the domain $\Lambda$.

Now assume that  there exists at least three distinct connected components of $\{x \in B_R(0):  v_0(x) \neq v_0(0)\}$. Then at least one of the components, which we will call $\Omega_0$, has the property that  $\mbox{Vol}(\Lambda) \leq \frac{1}{3} \mbox{Vol}(\Sp^{n-1})$ for $\Lambda=\Omega_0 \cap \Sp^{n-1}$ (after extending $\Omega_0$ as a cone in $\R^n$ as in the proof of Claim 1). Recall that the Faber-Krahn theorem implies that the first Dirichlet eigenvalue $\lambda_1(\Lambda)$ of $\Lambda$ is bounded from below by the first eigenvalue $\lambda_1({\mathcal B})$ of a geodesic ball $\mathcal B$ in $\Sp^{n-1}$ with volume equal to $\frac{1}{3} \mbox{Vol}(\Sp^{n-1})$.  Since $\lambda_1({\mathcal B}) \geq n-1+\delta_n$ for some number $\delta_n>0$ depending only on $n$, it follows that 
\[
\alpha(\alpha+n-1) \geq \lambda_1(\Lambda) \geq \lambda_1({\mathcal B}) \geq n-1+\delta_n.
\]
Therefore, there exists $\epsilon_0>0$ depending only on $n$ such that 
 $\alpha \geq 1+ \epsilon_0$. Consequently,  if $\alpha < 1+\epsilon_0$, then  $\{x \in B_R(0):  v_0(x) \neq v_0(0)\}$ has at most two components. 
The maximum principle applied to the subharmonic function $f=d_0(v_0,v_0(0))$  implies that there cannot be only one component.   Therefore, $\alpha < 1+\epsilon_0$ implies that there exist exactly two connected components $\Omega_+$ and $\Omega_-$  of $\{x \in B_R(0):  v_0(x) \neq v_0(0)\}$.  
Let $\gamma_+$ and $\gamma_-$ be geodesic rays starting at $v_0(0)$  such that $v_0(\Omega_+) \subset \gamma_+$ and  $v_0(\Omega_+) \subset \gamma_-$.  Since $v_0$ is harmonic, $\gamma:=\gamma_+ \cup \gamma_-$ is a geodesic.     \\
\\

{\sc Claim 3.} Either $Ord^{v_0}(0)=1 \ \mbox{ or } \ Ord^{v_0}(0) \geq  1+\epsilon_0$. \emph{If $Ord^{v_0}(0)=1$, then $v_0$ maps into a geodesic.}   \\

{\sc Proof of Claim 3.}  Let $\epsilon_0 \in (0,1)$ be as in Claim 2 and assume $Ord^{v_0}(0) < 1+\epsilon_0$. By Claim 2, the image of $v_0$ is contained in a geodesic $
\gamma$.  Thus, we can identify $\gamma$ with $\R$ and assume $v_0$ is a harmonic  function.  Since $Ord^{v_0}(0)  < 1+\epsilon_0<2$ and the order of a harmonic function is integer valued, we conclude  $Ord^{v_0}(0)=1$.   In this case,  $v_0$ is a degree 1 harmonic function, hence linear.  \\
\\
{\sc Claim 4.}  
$
\dim_{\mathcal H}({\mathcal S}^{>1}(v_0)) \leq  n-2$.
\\

{\sc Proof of Claim 4}.
We will apply Federer's dimension reduction argument.
Assume on the contrary that $
\dim_{\mathcal H}({\mathcal S}^{>1}(v_0)) > n-2$;~i.e. there exists  $s >n-2$ such that ${\mathcal H}^s({\mathcal S}^{>1}(v_0)) >0$.  By \cite{federer} 2.10.19,   there exists $x_0 \in {\mathcal S}^{>1}(v_0)$ such that $x_0 \neq 0$  and
\[
\lim_{\sigma \rightarrow 0} {\mathcal H}^s({\mathcal S}^{>1}(v_{0, \sigma})) =\lim_{\sigma \rightarrow 0} \frac{ {\mathcal H}^s({\mathcal S}^{>1}(v_0) \cap B_{\sigma}(x_0)) }{\sigma^{s}} \geq 2^{-s}
\]
where $v_{0, \sigma}$ is the blow-up map of $v_0$ at $x_0$. We claim that  
\begin{equation} \label{ogv}
Ord^{v_0}(x_0) \geq 1+\epsilon_0
\end{equation} 
for the same $\epsilon_0>0$ as {\sc Claim} 2. Indeed, since  $v_0$ maps into a union of geodesics,  the function $f(x)=d_{\overline{\bf H}}(v_0(x),v_0(0))$ is a homogeneous harmonic functions in each component $\Omega_0$ of  $B_1(0) \backslash \{v_0(x)=v_0(0)\}$. In particular, $f$  in $\Omega_0$ satisfies (\ref{eig}).  Thus, we can apply the same argument as in Claim 2 to show an order gap for $v_0$ with the same $\epsilon_0$.  Since $Ord^{v_0}(x_0)\neq 1$ (because $x_0 \in {\mathcal S}^{>1}(v_0)$), the claim follows.

By rotating if necessary, we can assume $x_0=(0,\dots, 0, |x_0|)$.   
The homogeneity of $v_0$ implies that  $Ord^{v_0}(0,\dots, 0, t) \geq 1+\epsilon_0$ for  $0 <t<1$.  This in turn implies that $Ord^{v_{0, \sigma}}(0,\dots, 0,t) \geq 1+\epsilon_0$  for  $-1<t <1$.  By the upper semicontinuity of order (cf. Lemma~\ref{uscord}), this  implies $Ord^{v_{0, *}}(0, \dots, 0,t) \geq 1+\epsilon_0$ for  $-1<t <1$ where $v_{0, *}$ is a tangent map of $v_0$ at $x_0$.  
Thus,  if $\vec{e}_1, \dots, \vec{e}_n$ are the standard basis vectors of $\R^n$, then $v_{0, *}$  is independent of the $\vec{e}_n$-direction and its restriction to $\R^{n-1}$ spanned by $\vec{e}_1, \dots, \vec{e}_{n-1}$ denoted $\tilde{v}_{0,*}$, is a homogeneous map of degree $\alpha_{0, *} \geq 1+\epsilon_0$.  We then have
\[
 {\mathcal S}^{>1}(v_{0, *}) = {\mathcal S}^{>1}(\tilde{v}_{0, *})   \times \R
\ \mbox{ 
and }
\
\dim_{\mathcal H}({\mathcal S}^{>1}(\tilde{v}_{0,*}))  \geq s-1.
\]
Since $s>n-2$, we can repeat this argument inductively to produce a geodesic with order not equal to 1 at some point, which is contradiction. (This part of the argument is essentially the same as in \cite{gromov-schoen} Lemma 6.5 where we refer the reader for further details).   
\end{proof}

\subsection{Higher order points}  \label{sec:into hbar0}
The goal of this subsection is to prove that the set of higher order points of a harmonic map into $(\overline{\bf H},d_{\overline{\bf H}})$ is of Hausdorff codimension 2 (cf. Proposition~\ref{hmintoms}).  To do this, we apply Lemma~\ref{proveitsfl} to a sequence of blow-up maps of a harmonic map into $(\overline{\bf H},d_{\overline{\bf H}})$.  Generally speaking, note that  the blow-up maps  of a map into an NPC space do not necessarily map into the same NPC space as the original map  (because the distance function $d_{\sigma}$ is different from the original distance function $d$).  On the other hand, for a map into $(\overline{\bf H},d_{\overline{\bf H}})$, we can use the homogeneous structure   of $(\overline{\bf H},d_{\overline{\bf H}})$  discussed in Section~\ref{sec:modelspace} to define its blow-up maps   as a map again into $(\overline{\bf H},d_{\overline{\bf H}})$.
Indeed, given a harmonic map $u=(u^{\rho},u^{\phi}):(B_R(0),g) \rightarrow  (\overline{\bf H},d_{\overline{\bf H}})$ with $u(0)=P_0$, we can define \begin{equation} \label{defofblowups}
u_{\sigma}: (B_1(0), g_{\sigma}) \rightarrow  (\overline{\bf H},d_{\overline{\bf H}}), \ \ u_{\sigma}(x)=\nu^{-1}(\sigma)u(\sigma x).
 \end{equation}
In other words, if we write  
$
u=(u_{\rho}, u_{\Phi})
$ and $u_{\sigma}=(u_{\sigma \rho}, u_{\sigma \Phi})$ in the homogeneous coordinates, then
\[
 u_{\sigma \rho}(x)=\nu^{-1}(\sigma)u_{\rho}(\sigma x) \ \mbox{ and } \  u_{\sigma \Phi}(x) =\nu^{-1}(\sigma) u_{\Phi}(\sigma x).
\]
Because of  the homogeneity of the distance function under the dilation map, this is equivalent to the construction of blow-up maps given by (\ref{defineblowupmap}).
By Remark~\ref{blowupsforhm}, there exists a sequence $\sigma_i \rightarrow 0$ such that  $u_{\sigma_i}=(u_{\sigma_i}^{\rho}, u_{\sigma_i}^{\phi}) $  converges locally uniformly in the pullback sense to a tangent map  $u_*$  of $u$. The next is a corollary of Lemma~\ref{proveitsfl}.

\begin{corollary} \label{msproperties}
If $u=(u^{\rho},u^{\phi}):B_1(0) \rightarrow (\overline{\bf H},d_{\overline{\bf H}})$,  $x_0 \in B_1(0)$ and $u_*$ is a tangent map of $u$ at $x_0$, then
\[
Ord^{u}(x_0)=1 \ \mbox{ or } \ Ord^{u}(x_0) \geq  1+\epsilon_0, \ \mbox{ and } \ \dim_{\mathcal {H}}({\mathcal S}^{>1}(u_*)) \leq n-2.
\]
Moreover, if $u(x_0)=P_0$ and $Ord^u(x_0)=1$, then  $u_*$   maps into a geodesic.   
\end{corollary}

\begin{proof}  
First, assume $u(x) \neq P_0$.  Then $u$ maps a neighborhood of $x$ into a smooth Riemannian manifold, and  $u_*$ maps into $T_{u(x)}{\bf H}=\R^2$ and the lemma holds trivially with $\epsilon_0=1$.  Next, assume $u(x)=P_0$ which then implies  $u_{\sigma_i}(0)=P_0$.  The lemma follows by applying  Lemma~\ref{proveitsfl} with $w_i=u_{\sigma_i}$ and $v_0=u_*$.  
\end{proof}

We now arrive at the following.

\begin{proposition}
\label{hmintoms}
If $u=(u^{\rho},u^{\phi}):B_1(0) \rightarrow (\overline{\bf H}, d_{\overline{\bf H}})$ is a harmonic map,  then the set of points such that $Ord^u >1$ is of Hausdorff codimension at least 2, i.e.
\[
\dim_{\mathcal {H}}({\mathcal S}^{>1}(u)) \leq n-2.
\]  
\end{proposition}

\begin{proof}
Given $x_0 \in B_1(0)$, there exists a  sequence $\{u_{\sigma_i}\}$ of blow-up maps that converges locally uniformly in the pullback sense to a tangent map $u_*$ (cf. Remark~\ref{blowupsforhm}).  It suffices to check  assumptions (i) and  (ii) of Lemma~\ref{cd2general}.  
First, assume  $x_i \in {\mathcal S}^{>1}(u_{\sigma_i})$ with  $x_i \rightarrow x_*$.  By the order gap property of Corollary~\ref{msproperties}, we have
$Ord^{u_{\sigma_i}}(x_i) \geq 1+\epsilon_0$.  
The upper semicontinuity of order (cf. Lemma~\ref{uscord}) implies $Ord^{u_*}(x_*) \geq 1+\epsilon_0$ which in turn implies $x_* \in {\mathcal S}(u_*)$.  This verifies (i).    By Corollary~\ref{msproperties}, we have $\dim_{\mathcal {H}}({\mathcal S}^{>1}(u_*)) \leq n-2$.  This verifies  (ii).
\end{proof}

\subsection{Order 1 points}    \label{sec:into hbar1}
We continue with the proof of Regularity  Theorem~\ref{RegularityTheoremModelSpace}.  In view of Proposition~\ref{hmintoms},  it suffices to show that there exists no order 1 singular points of a harmonic map.  In this subsection, we analyze the order 1 points.  
An important tool for this analysis is a    global coordinate system  of ${\bf H}$ 
that are   constructed 
 by foliating ${\bf H}$ by  \emph{symmetric geodesics}.
We introduce 
  \emph{symmetric geodeiscs} in Section~\ref{sssec:sg} and study their  properties. The important observation (cf. Lemma~\ref{linearapproxblowupsym}) is that  blow-up maps at an order 1 point behave  like symmetric geodesics.  
 In Section~\ref{sssec:prt}, we will complete  the proof of Theorem~\ref{RegularityTheoremModelSpace} by showing that there exists no order 1 singular points.
 
 \subsubsection{Symmetric Geodesics} \label{sssec:sg}
 A  \emph{symmetric geodesic} is an arclength parameterized geodesic  $\gamma:(-\infty,\infty) \rightarrow ({\bf H},g_{\bf H})$
such that
if we write
$
\gamma=(\gamma_{\rho},\gamma_{\phi})
$
 with respect to the original coordinates $(\rho,\phi)$ of ${\bf H}$, then
\[
\gamma_{\rho}(s)=\gamma_{\rho}(-s) \ \mbox{ and } \ \gamma_{\phi}(s)=-\gamma_{\phi}(-s).
\] 
The behavior of symmetric geodesics is explained by the following lemma. See also Figure \ref{pic:long geodesics}.
\vspace{.5cm}
\begin{figure}[h]
    \centering
    \includegraphics[width=0.9\textwidth]{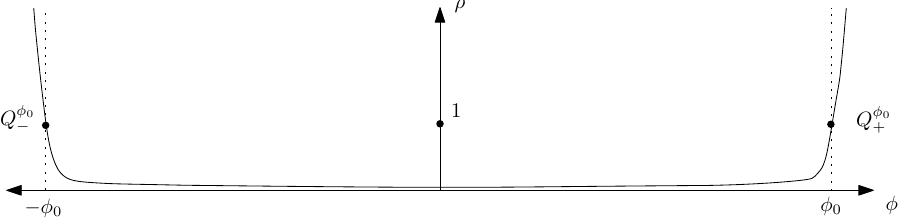}
    \caption{As $\phi_0 \rightarrow \infty$ the geodesic becomes almost vertical}\label{pic:long geodesics}
\end{figure}
\begin{lemma} \label{symapp}
Let $\phi_0>0$ and   $\sigma^{\phi_0}=(\sigma^{\phi_0}_{\rho},\sigma^{\phi_0}_{\phi}):(-\infty,\infty) \rightarrow (\overline{\bf H},d_{\overline{\bf H}})$ be a piecewise geodesic  defined by 
\[
\sigma^{\phi_0}(s)=
\left\{
\begin{array}{ll} (s,\phi_0) & \mbox{for $s \in (0,\infty)$}\\
P_0 & \mbox{for $s=0$}\\
(-s,-\phi_0) & \mbox{for $s \in (-\infty,0)$}.
\end{array}
\right.
\]
  Let $\gamma^{\phi_0}=(\gamma^{\phi_0}_{\rho},\gamma^{\phi_0}_{\phi}):(-\infty,\infty) \rightarrow (\overline{\bf H},d_{\overline{\bf H}})$ be the unit speed  symmetric geodesic passing through the points
\[
Q^{\phi_0}_-=(1,-\phi_0) \mbox{ and } Q^{\phi_0}_+=(1,\phi_0).
\]
  Then 
  \[
  d_{\overline{\bf H}}(\gamma^{\phi_0},\sigma^{\phi_0}) \rightarrow 0 \mbox{  as  } \phi_0 \rightarrow \infty \mbox{  uniformly on  the interval $[-1,1]$}.
  \]
\end{lemma}

\begin{proof}   
We break up the proof into  three claims.\\
\\
{\sc Claim 1}. For any $\phi_0$,  $\gamma^{\phi_0}_{\rho}(0) \leq \gamma^{\phi_0}_{\rho}(s)$ for all $s \in (-\infty, \infty)$.\\
\\
{\sc Proof of Claim 1}.   The first of the geodesic equations (\ref{gequ})
 implies that  $\gamma^{\phi_0}_{\rho}$ is convex.  Combining this with the symmetry of $\gamma^{\phi_0}_{\rho}$,  {\sc Claim 1} follows.\\
\\
{\sc Claim 2.}  $\gamma_{\rho}^{\phi_0}(0) \rightarrow 0$ as $\phi_0 \rightarrow \infty$.\\
\\
{\sc Proof of Claim 2}. 
If  $\gamma_{\rho}^{\phi_0}(0) \geq c>0$, then $\gamma_{\rho}^{\phi_0}(s) \geq c$ for all $s$ by claim~$(i)$ and hence
 \[
1=\left| \frac{d\gamma^{\phi_0}}{ds} \right|^2 =\left( \frac{d\gamma^{\phi_0}_{\rho}}{ds} \right)^2+\gamma_{\rho}^6(s) \left( \frac{d\gamma^{\phi_0}_{\phi}}{ds} \right)^2 \geq c^6 \left( \frac{d\gamma^{\phi_0}_{\phi}}{ds} \right)^2.
\]
Thus,
\[
\phi_0^2 \leq |\gamma_{\phi}^{\phi_0}(1) |^2 \leq  \left( \int_0^1 \left| \frac{d\gamma^{\phi_0}_{\phi}}{ds} \right| ds \right)^2 \leq \int_0^1 \left| \frac{d\gamma^{\phi_0}_{\phi}}{ds} \right|^2  ds \leq c^{-6}.
\]
Since this impossible for large $\phi_0$, we have  {\sc Claim}~2. \\
\\
{\sc Claim 3}.   $d_{\overline{\bf H}}(Q^{\phi_0}_-, \gamma^{\phi_0}(-1))=d_{\overline{\bf H}}(Q^{\phi_0}_+, \gamma^{\phi_0}(1)) \rightarrow 0$ as $\phi_0 \rightarrow  \infty$. \\
\\
{\sc Proof of Claim 3.}  This assertion follows immediately from the fact that $\gamma^{\phi}$ is a unit speed geodesic passing through points $Q^{\phi_0}_-$ and $Q^{\phi_0}_+$ and that $d(Q^{\phi_0}_-,Q^{\phi_0}_+) \rightarrow 2$ as $\phi_0 \rightarrow \infty$.  This proves {\sc Claim 3}.\\
\\
Claims~2 and 3 assert 
\[
d_{\overline{\bf H}}(\sigma^{\phi_0}(0), \gamma^{\phi_0}(0))=d_{\overline{\bf H}}(P_0, \gamma^{\phi_0}(0)) =\gamma_{\rho}^{\phi_0}(0) \rightarrow 0 
\]
and 
\[ d_{\overline{\bf H}}(\sigma^{\phi_0}(1), \gamma^{\phi_0}(1))= d_{\overline{\bf H}}(Q^{\phi_0}_+, \gamma^{\phi_0}(1)) \rightarrow 0.
  \]   
  Since $\gamma^{\phi_0}$ and $\sigma^{\phi_0}$ are geodesics on the interval $[0,1]$, the assertion  follows from the convexity of the function $t \mapsto d(\gamma^{\phi_0}(t),\sigma^{\phi_0}(t))$ (which follows from the NPC condition).
\end{proof}

\begin{definition} \label{defstretchy}
A   map 
$l=(l_{\rho},l_{\phi}):B_1(0) \rightarrow ({\bf H},g_{\bf H})$ is said to be a \emph{symmetric homogeneous degree 1 map}  if 
 \begin{equation} \label{stretchy}
l  (x)=\gamma(Ax^1)
 \end{equation}
where  $A>0$ and $\gamma$ is a symmetric geodesic.  We call $A$ the \emph{stretch factor} or simply the  \emph{stretch} of $l$.
\end{definition}
\begin{definition} \label{defstretchy}
A map
\[
T:\overline{\bf H} \rightarrow \overline{\bf H}
\]
 given by 
\[
 T(P_0)=P_0  \ \ \mbox{ and } \ \ T(\rho,\phi)=(\rho,\phi+c), \ (\rho,\phi) \in {\bf H} 
\]
for some $c \in \R$ is called a   \emph{translation isometry}. Notice that  
if $T$ is a translation isometry,  $l$ is a symmetric homogeneous degree 1 map and $R:\R^n \rightarrow \R^n$ is a rotation, then 
\[
L=T \circ l \circ R:  B_1(0) \rightarrow ({\bf H},g_{\bf H})
\]
 is a homogeneous degree 1 in the sense of Remark~\ref{blowupsforhm}. 
\end{definition}

Let $u_{\sigma_i}$ be a sequence of blow-up maps  of a harmonic map $u:(B_R(0),g) \rightarrow (\overline{\bf H}, d_{\overline{\bf H}})$ at $x_0=0 \in B_R(0)$ converging locally uniformly in the pullback sense a tangent map  $u_*:(B_1(0),g_0) \rightarrow (Y_*,d_*)$.  If $Ord^u(0)=1$, then  $u_*$ maps into a geodesic by Lemma~\ref{proveitsfl}.  Let $A$ be the norm of the gradient of $u_*$.  The following lemma
gives more precise information of $u_*$ by embedding this geodesic in $({\bf H},g_{\bf H})$ as  a (translation of) a sequence of symmetric geodesics.   This also explains why symmetric homogeneous degree 1 maps  given in Definition~\ref{defstretchy} naturally arise in the study of harmonic maps into $(\overline{\bf H},d_{\overline{\bf H}})$.  

 \begin{lemma} \label{linearapproxblowupsym}
For a harmonic map $u:(B_R(0),g) \rightarrow (\overline{\bf H}, d_{\overline{\bf H}})$, let   $u_{\sigma_i}$, $u_*$ and $A$ be as above.      If $Ord^u(0)=1$,   then there exist a sequence of translation isometries $T_i:{\bf H} \rightarrow {\bf H}$,  a rotation  $R: \R^n \rightarrow \R^n$ and a sequence of symmetric homogeneous degree 1 maps $l_i:B_1(0) \rightarrow ({\bf H},g_{\bf H})$ with $d_{\overline{\bf H}}(P_0,l_i(0)) \rightarrow 0$  and stretch $A$ 
such that
\[
 \lim_{\sigma_i \rightarrow 0} \sup_{B_r(0)} d_{\overline{\bf H}}(u_{\sigma_i}, T_i \circ l_i \circ R) = 0, \ \forall r \in (0,1).
\]   
 \end{lemma}
 
\begin{proof} 
By Corollary~\ref{msproperties},  $u_*$ maps onto a geodesic.  By identifying this geodesic with $\R$, we can assume for the rest of the of the proof that $u_*$ is a linear function.  
After rotating the domain $B_1(0) \subset \R^n$ if necessary, we may assume  
\begin{equation} \label{ax1}
u_*:  B_1(0) \rightarrow \R, \ \ \ \ u_*(x)=Ax^1
\end{equation}
 for some constant $A$ and 
\[
\Omega_{\pm}=\{x=(x^1, \dots, x^n) \in B_1(0):  \pm x^1>0 \}.
\]
 Following the proof of Lemma~\ref{proveitsfl}, Claim 1, we obtain that 
 \begin{equation} \label{ucL}
\mbox{$d_{\overline{\bf H}}(u_{\sigma_i}, L_{\pm,i}) \rightarrow 0$   uniformly on compact sets of $\Omega_{\pm}$}
\end{equation}
 where
 \[
L_{\pm,i}(x)=(Ax^1,\phi_{\pm,i}), \ \  x \in \Omega_{\pm}
 \]
 with  $\phi_{\pm,i}$ equal to the $\phi$-coordinate of  $u_{\sigma_i}(x_{\Omega_{\pm}})$ and $x_{\Omega_{\pm}}= (\pm \frac{1}{2}, 0, \dots, 0) \in \Omega_{\pm}$.   (Here, $x_{\Omega_{\pm}}$ and $\Omega_{\pm}$ replace  $x_{\Omega_0}$ and $\Omega_0$ in Lemma~\ref{proveitsfl}, Claim 1).  Define the map $
 L_i:  B_1(0) \rightarrow \overline{\bf H}
 $
by setting 
 \[
 L_i(x)= 
 \left\{
 \begin{array}{ll}
L_{+,i}(x) & \mbox{if }x^1>0 \\
P_0 & \mbox{if } x^1=0\\
L_{-,i}(x) & \mbox{if } x^1<0\\
\end{array}
 \right.
\]
Since $u_{\sigma_i}$ converges locally uniformly to $u_*(x)=Ax^1$ and $d_{\overline{\bf H}}(L_i(\cdot), L_i(\cdot))=d_{\R}(u_*(\cdot), u_*(\cdot))$ we have,
\begin{equation} \label{ucL"}
d_{\overline{\bf H}}(u_{\sigma_i}(\cdot), u_{\sigma_i}(\cdot))-d_{\overline{\bf H}}(L_i(\cdot), L_i(\cdot)) \rightarrow 0
\end{equation}
uniformly on compact sets of $B_1(0)$.
We claim that 
\begin{equation} \label{ucL'}
\mbox{$d_{\overline{\bf H}}(u_{\sigma_i},L_i) \rightarrow 0$ uniformly on compact subsets of $B_1(0)$.}
\end{equation}
  Indeed, let $K \subset B_1(0)$ be a compact set and $\epsilon>0$ be given.
 For all $i$, we can choose $\delta>0$ such that 
\[
|x^1|<\delta \ \Rightarrow \ d_{\overline{\bf H}}(L_i(x), P_0)<\epsilon.
\]
By (\ref{ucL"})
\[
\lim_{\sigma_i \rightarrow 0}d_{\overline{\bf H}}(u_{\sigma_i}(x), P_0)=\lim_{\sigma_i \rightarrow 0}d_{\overline{\bf H}}(u_{\sigma_i}(x), u_{\sigma_i}(0))=\lim_{i \rightarrow \infty}d_{\overline{\bf H}}(L_i(x), L_i(0))=\lim_{i \rightarrow \infty}d_{\overline{\bf H}}(L_i(x), P_0)
\]
hence for $i$ sufficiently large, 
\[
x \in K \mbox{ and } |x^1|<\delta \ \Rightarrow \ d_{\overline{\bf H}}(u_{\sigma_i}(x), P_0)<\epsilon.
\]
Thus, for $i$ sufficiently large,  $x \in K$ and $|x^1|<\delta$ imply
\begin{eqnarray*}
\lefteqn{d_{\overline{\bf H}}(u_i(x), L_i(x))}\\
 & \leq & d_{\overline{\bf H}}(u_i(x), P_0)+d_{\overline{\bf H}}(L_i(x),P_0) \\
& < & 2\epsilon 
\end{eqnarray*}
For $x \in K$ with $|x^1| \geq \delta$, we have $d_{\overline{\bf H}}(u_{\sigma_i}(x), L_i(x))<\epsilon$ for sufficiently large $i$ by (\ref{ucL}).  This proves (\ref{ucL'}).  

We now use Lemma~\ref{symapp} to replace $L_i$ (up to a translation isometry) with a symmetric homogeneous degree 1 map. Indeed,  
recall that by construction, the $\phi$-coordinate of the  point $L_i(x_{\Omega_{\pm}})$ is  $\phi_{\pm,i}$.  Define
\[
\phi_i:=\frac{|\phi_{+,i}-\phi_{-,i}|}{2},
\]
and let $l_i$ be a symmetric homogeneous 1 map with $l_i(x)=\gamma(Ax^1)$ where $\gamma$ is a geodesic passing through $(A,\phi_i)$ and $(A,-\phi_i)$ and $T_i$ be the translation isometry such that the $\phi$-coordinate of $T_i \circ l_i(x_{\Omega_{\pm,i}})$ is equal to $\phi_{\pm,i}$.    By  Lemma~\ref{symapp}, we conclude that 
\[
d_{\overline{\bf H}}(L_i,T_i \circ l_i) \rightarrow 0 \ \mbox{uniformly on compact subsets of $B_1(0)$}.
\]  
Combined with (\ref{ucL'}), we have proved the assertion. 
\end{proof}

\subsubsection{The completion of the proof of Regularity Theorem~\ref{RegularityTheoremModelSpace}}  \label{sssec:prt} 
The following lemma is the heart of the argument of Regularity Theorem~\ref{RegularityTheoremModelSpace}.      Due to its highly technical nature, we postpone the proof until Section~\ref{sec:  proofofkeylemma}.

\begin{lemma}[{\bf The Key Technical Lemma for the Model Space}] \label{keylemma}
Given $c_0$, $E_0,A>0$, there exists $D_0 \in (0,1)$ that give the following implication. \\
\\
{\bf Assumptions. }  The metric metric $g$ on $B_1(0)$ and the map $v:(B_1(0),g) \rightarrow (\overline{\bf H},d_{\overline{\bf H}})$ satisfy:
\vspace*{0.08in}
\begin{itemize}
 \item[{\bf (i)}]{\bf{(almost Euclidean domain metric)}}  
The metric  $g$ is  close to the Euclidean metric in the sense of (\ref{volumewrtmetric'}).  
\item[{\bf (ii)}]{\bf{(energy decay)}}
The energy of the map $v$
satisfies
    \[
  E^v(\vartheta) \leq \vartheta^n E_0, \ \ \ \forall \vartheta \in (0,\frac{1}{2}).
\]
\item[{\bf (iii)}]{\bf{(close to a symmetric homogeneous degree 1 map)}} There exists a symmetric homogeneous degree 1 map 
\[
l:B_1(0) \rightarrow ({\bf H},g_{\bf H})
\]
 with stretch $A$ such that
    \[
\sup_{B_{\frac{1}{2}}(0)} d_{\overline{\bf H}}(v,l) < D_0.
\]

\item[{\bf (iv)}]{\bf{(subharmonicity of the distance)}}
For   $\vartheta \in (0,\frac{1}{24})$, $R \in [\frac{5}{8},\frac{7}{8}]$  and a harmonic map $w:(B_{\vartheta R}(0),g)  \rightarrow (\overline{\bf H},d_{\overline{\bf H}})$, 
 \begin{equation} \label{shofhm}
 \sup_{B_{\frac{15\vartheta R}{16}}(0)} d_{\overline{\bf H}}^2(v,w)  \leq  \frac{c_0}{(\vartheta R)^{n-1}} \int_{\partial B_{\vartheta R}(0)} d_{\overline{\bf H}}^2(v,w)d\Sigma.
\end{equation} 
 \end{itemize}
 {\bf Conclusion. }  Then  $v(0) \neq P_0$
\end{lemma}

\begin{proof}
This is a special case of the key technical Lemma~\ref{keylemma'}. The proof  is given in  subsection~\ref{sec:theproof}.
\end{proof}

By combining  Lemma~\ref{linearapproxblowupsym} with Lemma~\ref{keylemma}, we obtain the following 
\begin{proposition}\label{order1pointsmodelspace}
 If $u=(u^{\rho},u^{\phi}):B_1(0) \rightarrow (\overline{\bf H}, d_{\overline{\bf H}})$ is a harmonic map,  then  there exist no order 1 singular points of $u$.
\end{proposition}

\begin{proof}
For $x_0 \in {\mathcal S}(u)$,  let $u_{\sigma_i}$ be a sequence of blow-up maps of $u$ at $x_0$ converging to a tangent map $u_*$.  We want to show $Ord^u(x_0)>1$.  On the contrary,  assume $Ord^u(x_0)=1$.  As in the proof of Lemma~\ref{linearapproxblowupsym}, we assume that $u_*(x)=Ax^1$ (cf. (\ref{ax1}).    For sufficiently small $\sigma_i>0$,  assumption (i) of Lemma~\ref{keylemma} is satisfied with $g$ replaced by $g_{\sigma_i}$.   

Next, since $g_{\sigma_i}$ converges to $g_0$ in $C^k$ (for any $k=1, 2, \dots$) as $\sigma_i \rightarrow 0$, there exists $c_0>0$ (independent of  $\sigma_i$ for $\sigma_i>0$ sufficiently small)  such that for any subharmonic function $f:B_1(0) \rightarrow \R$  with respect to the metric $g_{\sigma_i}$, we have  
\[
 \sup_{B_{\frac{15\vartheta R}{16}}(0)} f  \leq  \frac{c_0}{(\vartheta R)^{n-1}} \int_{\partial B_{\vartheta R}(0)} f d\Sigma.
\]
Furthermore, by (\ref{endec}),  there exists $E_0>0$ such that
 \[
  E^{u_{\sigma_i}}(\vartheta) \leq \vartheta^n E_0, \ \ \forall \vartheta \in (0,\frac{1}{2}).
\]

Choose $D_0>0$ as in Lemma~\ref{keylemma} depending on $E_0,A$ and $c_0$ above.   By Lemma~\ref{linearapproxblowupsym}   there exists $\sigma_i>0$ sufficiently small, a rotation $R:\R^n \rightarrow \R^n$, a translation isometry $T$ and a symmetric homogeneous degree 1 map $l$ with stretch $A$  such that 
\[
\sup_{B_{\frac{1}{2}}(0)} d_{\overline{\bf H}} (T \circ u_{\sigma_i} \circ R, l)<D_0.
\]
In other words, assumption (iii) of Lemma~\ref{keylemma} is satisfied with $v$ replaced by $T \circ u_{\sigma_i} \circ R$.  Since $T$ and $R$ are isometries,  assumption (ii) of Lemma~\ref{keylemma} is satisfied with $v$ replaced by $T \circ u_{\sigma_i} \circ R$.
Furthremore, since $T \circ u_{\sigma_i} \circ R$ is a harmonic map, hence $d_{\overline{\bf H}}^2(T \circ u_{\sigma_i} \circ R,w)$ is a subharmonic function for any harmonic map $w$.  Thus, assumption (iv) of Lemma~\ref{keylemma} is satisfied with $v=T \circ u_{\sigma_i} \circ R$. 
 Lemma~\ref{keylemma} implies that $T \circ u_{\sigma_i} \circ R(0) \neq P_0$ which in turn implies $u(x_0) \neq P_0$, a contradiction.
\end{proof}

\begin{proofofRTMS}
Combine Proposition~\ref{hmintoms} and Proposition~\ref{order1pointsmodelspace}.
\end{proofofRTMS}

\section{Harmonic maps into $\overline{\mathcal T}$}  \label{intoWPcompletion} 

In this section, we prove the  Regularity Theorems~\ref{RegularityTheorem} and~\ref{goto0} for harmonic maps into the  Weil-Petersson completion $(\overline{\mathcal T}, d_{\overline{\mathcal T}})$ of Teichm\"{u}ller space ${\mathcal T}$.   Recall that $\overline{\mathcal T}$ is a stratified space 
\begin{equation}\label{strat}
\overline{\mathcal T}= \bigcup   {\mathcal T}'
\end{equation}
where ${\mathcal T}'$ is either the original Teichm\"{u}ller space ${\mathcal T}$ or  an open stratum of $\overline{\mathcal T}$ (cf. Section~\ref{secC1}).  
For a map $u:(\Omega,g) \rightarrow( \overline{\mathcal T}, d_{ \overline{\mathcal T}})$, we say $x\in \Omega$ is  a \emph{regular point} if $u$  maps a neighborhood of $x$  into an open stratum ${\mathcal T}'$ of $\overline{\mathcal T}$. A  point $x \in \Omega$ is a  \emph{singular point} of $u$ if it is not a regular point.   We define the  \emph{regular set}   ${\mathcal R}(u)$ as the set  of regular points and the \emph{singular set} ${\mathcal S}(u)$ as a set of singular points.

Analogously to the proof of the Regularity Theorem~\ref{RegularityTheoremModelSpace} for  harmonic maps into $(\overline{\bf H},d_{\overline{\bf H}})$, the strategy is to  first show that the set of higher order points is of Hausdorff codimension at least 2, then to study the order 1 points. 
However, this  is  more involved  for harmonic maps into $(\overline{\mathcal T}, d_{\overline{\mathcal T}})$.  More precisely,  because of the  more complicated structure of the stratification for $\overline{\mathcal T}$ as compared to $\overline{\bf H}$, we will  use  an induction based on the codimension of the boundary stratum.  
Nonetheless, the main issue  is  the same  for both  $\overline{\mathcal T}$ and  $\overline{\bf H}$, namely,  the  non-compactness and  degenerating geometry near the boundary.

We will now give the outline of this section.  
According to Section~\ref{secC1},  a neighborhood of a point on an $j$-dimensional open stratum is asymptotically the product of a smooth K\"{a}hler manifold of dimension $j$ and  a neighborhood of ${\mathbb P}_0=(P_0, \dots, P_0)$ in the normal space $\overline{\bf H}^{k-j}$.  
In Section~\ref{deflocalmodel}, we define a local representation $u=(V,v)$ with respect to  this asymptotic product structure;  
more specifically,  $V$ maps into a  smooth  K\"{a}hler manifold of dimension $j$ (hence is referred to as the  \emph{regular component map}) and $v$ maps into  $(\overline{\bf H}^{k-j}, d_h)$ (hence is referred to as the \emph{singular component map}).   
We will prove that the set of higher order points of $u$ is of Hausdorff codimension at least 2 in Section~\ref{higordpts}.  

The rest of Section~\ref{intoWPcompletion} is devoted to studying the  order 1 points of $u$.
For this, we will rely on the   \emph{key technical Lemma} (whose proof is deferred until Section~\ref{sec:  proofofkeylemma})
 which gives sufficient conditions for  a map into  $(\overline{\bf H}^{k-j}, d_h)$ to not hit the boundary point ${\mathbb P_0}$.  This is the most challenging aspect of this paper as it introduces techniques to address the non-compactness  and  degenerating geometry of $(\overline{\mathcal T}, d_{\overline{\mathcal T}})$.  
 The difference between this section and Section~\ref{MintoMS} (where we prove the regularity theorem for harmonic maps into the model space)
is that the singular component map $v$ \emph{is not a harmonic map}. Again, this is because the Weil-Petersson metric is not a product near the boundary.   (Recall that in the proof of Theorem~\ref{RegularityTheoremModelSpace}, the \emph{key technical Lemma  for the Model Space} is applied to a sequence of blow-up maps of a  harmonic map into $(\overline{\bf H}, d_{\overline{\bf H}}$).)
For this reason,  we introduce in Section~\ref{apprham} the notion of a \emph{sequence of approximately harmonic maps into $(\overline{\bf H}^{k-j}, d_h)$} and prove that if $v_i$ is such a sequence converging to a harmonic map $v_0$ with $Ord^{v_0}(0)=1$, then  $v_0(0) \neq {\mathbb P}_0$.  (This is the generalization of the  result contained in Proposition~\ref{order1pointsmodelspace} for the case of a sequence of harmonic maps).

In Section~\ref{indstepp}, we begin the proof  that the set of order 1 singular points of $u$ is of Hausdorff codimension at least 2 by  setting up an induction  on the codimension of the stratum intersecting the image of $u$. Notice that if the codimension is 0, then $u$ maps into the interior of Teichm\"uller space. Hence $u$ is regular and there is nothing to prove.  The method follows closely our paper 
 \cite{daskal-meseDM}, where we developed a theory of harmonic maps  $u=(V,v)$ into  spaces with an asymptotic product structure  with the map $v$  not necessarily harmonic.  
More specifically, in  \cite{daskal-meseDM} we developed the tools such  as  \emph{monotonicity},  \emph{order function}  and  \emph{tangent map} for almost harmonic maps  to study the singular component map $v$.  
 The purpose of Section~\ref{sec:DM} is to introduce the results from \cite{daskal-meseDM}  needed in this paper and sketch the main ideas of their proof adapted to the case of maps to the Weil-Petesson completion of Teichm\"uller space. In particular, we define the order of the singular component $v$.  Analogously to the case of a harmonic map, we first  show that   the set of  higher order points   of $v$ is of codimension at least 2 and then show  that there are no singular points of $u$ that are also order 1 points of $v$.   
Finally, in Section~\ref{conclusionofproof}, we finish the proof of the Regularity Theorems~\ref{RegularityTheorem} and~\ref{goto0} by completing the inductive step of the argument.

 \subsection{A Local Represention for Maps into  $\overline{\mathcal T}$}\label{deflocalmodel}
For a map  $u:(\Omega,g) \rightarrow (\overline{\mathcal T}, d_{\overline{\mathcal T}})$, recall that  ${\mathcal R}(u)$ is the set of points in $\Omega$ that possess a neighborhood mapping into a single stratum in $\overline{\mathcal T}$ and ${\mathcal S}(u)$ is its complement. 
We decompose the singular set ${\mathcal S}(u)$ as a disjoint union of sets
\begin{eqnarray}\label{sinset}
{\mathcal S}(u)=\bigcup_{j=0}^{k}\hat{\mathcal S}_j(u)
\end{eqnarray}
where  
\[
\hat{\mathcal S}_j(u) =\{x \in {\mathcal S}(u):  \# u(x)=j \}, \ \ j=0,\dots,k.
\]
In other words, $x \in \hat{\mathcal S}_j(u)$ implies that $u(x)$  is a point in a $j$-dimensional stratum.
If $\#u(x)=k$, then $u(x) \in {\mathcal T}$, and hence $u(x) \in {\mathcal R}(u)$.  Thus,   
\begin{equation} \label{fis}
\hat{\mathcal S}_k(u)=\emptyset.
\end{equation}  
For $x_{\star} \in \hat{\mathcal S}_j(u)$, consider the composition $\Psi \circ u$ in $B_{\sigma_{\star}}(x_{\star})$ for a sufficiently small $\sigma_{\star}>0$ where 
\[
\Psi: {\mathcal N} \subset( \overline{\mathcal T}, d_{ \overline{\mathcal T}}) \rightarrow  {\mathcal U} \times {\mathcal V} \subset \C^j \times  \overline{\bf H}^{k-j}
\]
is the coordinate chart defined in Section~\ref{sec:stratification}.  

\begin{definition} \label{locrep}
For $x_{\star} \in \hat{\mathcal S}_j(u)$, we will write  the composition $\Psi \circ u$  in $B_{\sigma_{\star}}(x_{\star})$ as 
\begin{equation} \label{eqforlr}
u=(V,v):(B_{\sigma_{\star}}(x_{\star}),g) \rightarrow  ({\mathcal U} \times {\mathcal V}, d_G)
\end{equation}
where $d_G$ is the distance function induced from  the Weil-Petersson metric $G$ (cf.~Definition~\ref{productmetric}) 
and refer to it as a \emph{local representation} of $u$ at $x_{\star}$.
\end{definition}

Let $H$ and $h$ be as in Corollary~\ref{A2}. The \emph{regular  component map} of $u$ is the map   
\begin{equation} \label{regcpt}
V: (B_{\sigma_{\star}}(x_{\star}),g) \rightarrow (\C^j,H)
\end{equation}
into the hermitian manifold $(\C^j,H)$.  The \emph{singular component} of $u$ is the map 
\begin{equation} \label{singcpt}
v=(v^1, \dots, v^{k-j}):(B_{\sigma_{\star}}(x_{\star}),g) \rightarrow (\overline{\bf H}^{k-j},h).
\end{equation}
In particular, we observe that (since $v$ can map into interior points of $\overline{\bf H}^{k-j}$), $\#u(x) \geq j$ for all $x \in B_{\sigma_{\star}}(x_{\star})$.  Therefore,   
\begin{equation} \label{regcpt00}
\hat{\mathcal S}_l(u) \cap B_{\sigma_{\star}}(x_{\star})=\emptyset, \ \ \forall l=0, \dots, j-1.
\end{equation}

\begin{remark} \label{regularityV}
Let $u$ as in (\ref{locrep}) be a harmonic map, $x_0 \in B_{\sigma_{\star}}(\frac{x_{\star}}{2})$, $\sigma_0 \in (0,\frac{\sigma_{\star}}{2})$ and $\phi \in C^\infty_c(B_{\sigma_0}(x_0))$.  By considering a  variation $u_t=(V+t\eta, v)$, where $V=(V^1, \dots, V^j)$ is as in (\ref{regcpt}) and $\eta =(\eta^1, \dots, \eta^j)$ with  $\eta^I= \sum_K G^{IK} (V,v) \varphi$, a straightforward computation implies 
\[
-\int_{B_{\sigma_0}(x_0)} g^{\alpha \beta} \frac{\partial V^I}{\partial x^\alpha} \frac{\partial \varphi}{\partial x^\beta} d\mu = \int_{B_{\sigma_0}(x_0)} \varphi \cdot f d\mu
\]
for a bounded function $f$. (For explicit details, see~\cite[Lemma 50]{daskal-meseDM}.) 
Thus, by elliptic regularity,  $V \in W^{2,p}(B_{\sigma_0}(x_0))$ and  $V$ is (H\"older) continuous. \end{remark}

\subsection{Higher Order Points of $u$}\label{higordpts}
The purpose of this subsection is to show that  the set of higher order points of a harmonic map $u:(\Omega,g) \rightarrow (\overline{\mathcal T}, d_{\overline{\mathcal T}})$ 
is of Hausdorff codimension at least 2.  Let  $x_{\star} \in \hat{\mathcal S}_j(u)$
and 
\[
u=(V,v):(B_{\sigma_{\star}}(x_{\star}),g) \rightarrow ({\mathcal U} \times {\mathcal V}, d_G)
\]
be  a local representation  (cf. Definition~\ref{locrep}).
For $x_0 \in \hat{\mathcal S}_j(u) \cap B_{\sigma_{\star}}(x_{\star})$,  
 identify $x_0=0$ via normal coordinates for the metric $g$  and  identify $V(x_0)=0$ via normal coordinates for the metric $H$.  
We consider the family of blow-up maps  $u_\sigma$  of the harmonic map $u$ described in Remark~\ref{blowupsforhm}; in other words, $u_\sigma$ is scaled with respect to the scaling factor 
\begin{equation} \label{standardscaling}
\mu(\sigma) = \sqrt{\frac{I^u(\sigma)}{\sigma^{n-1}}}.
\end{equation}
More precisely, we consider the maps 
\begin{equation} \label{buwrtu}
V_\sigma:(B_1(0),g_\sigma)  \rightarrow  (\C^j,H_{\mu(\sigma)}) \ \ \ v_\sigma:(B_1(0),g_\sigma)  \rightarrow  (\overline{\bf H}^{k-j},d_h)
\end{equation}
and
\begin{equation} \label{bups}
u_\sigma=(V_\sigma,v_\sigma): (B_1(0),g_\sigma) \rightarrow ({\mathcal U} \times {\mathcal V},d_{G_{\mu(\sigma)}})
\end{equation}
where
\[
V_\sigma(x)=\mu^{-1}(\sigma) V(\sigma x) \  \mbox{and} \ v_\sigma(x)=\mu^{-1}(\sigma) v(\sigma x).
\]
The metrics $g_\sigma$ and $H_{\mu(\sigma)}$ are defined in terms of 
 the normal coordinates of $g$ on $B_1(0)$ and the  coordinates $V^1,...,V^j$ on $\C^j$ by
 \[
{g_\sigma}_{ kl}(x)=g_{kl}(\sigma x), \ \mbox{and} \ {H_{\mu(\sigma)}}_{IJ}(y)=H_{IJ}(\mu(\sigma) y).
\]
The metric $G_{\mu(\sigma)}$ on the stratified space ${\mathcal U} \times {\mathcal V}$ is defined similarly by
\[
{G_{\mu(\sigma)}}_{ kl}(y,P)=G_{ kl}(\mu(\sigma)y, \mu(\sigma)P)
\] 
 in terms of the  coordinates $V^1,...,V^j$ on $\C^j$ and the homogeneous coordinates $(\rho^1, \Phi^1) ,...,(\rho^{k-j}, \Phi^{k-j})$ on $\overline{\bf H}^{k-j}$.  In the above, the dilation map on $\C^j$ is the standard multiplication map, whereas the dilation map on  $\overline{\bf H}^{k-j}$ is defined in (\ref{hcs}).
We denote by
\[
H\oplus h_{\mu(\sigma)}=H_{\mu(\sigma)} \oplus h_{\mu(\sigma)}
\]
the product metric on the stratified space $\C^j \times \overline{\bf H}^{k-j}$ and let $d_{H\oplus h_{\mu(\sigma)}}$, $d_{G_{\mu(\sigma)}}$ denote the corresponding distance functions. 
\begin{lemma} \label{disest2}
Let $u=(V,v)$, $u_\sigma=(V_\sigma,v_{
\sigma})$ be as above and  $d_{H \oplus h}$, $d_G$ be the distance functions on ${\mathcal U} \times {\mathcal V}$  induced by the metric $H \oplus h$ and $G$ respectively.
\begin{itemize}
\item[(i)]  There exists a constant $C>0$ such that for $P,Q \in {\mathcal U} \times {\mathcal V}$ at  distance at most $\lambda$ from $(0, {\mathbb P}_0)$,
\[
\left( 1-C\lambda^2  \right)\leq \frac{d_{H \oplus h}(P,Q)}{d_{G}(P,Q)} \leq \left(1+C \lambda^2 \right). 
\]
\item[(ii)] If $h=(W,w):B_1(0) \rightarrow ({\mathcal U} \times {\mathcal V},G)$ is Lipschitz continuous in $B_R(0)$, for some $R \in (0,1)$, then there exists $C>0$ such that
\[
\left|  |\nabla h|^2(x)- \left(|\nabla W|^2(x)+|\nabla w|^2(x) \right) \right| \leq C d^2_h(w(x),(0, {\mathbb P}_0))
\]
for almost every $x \in B_R(0)$.
Here, we view $W$ and $w$ as maps into $({\mathcal U}, H)$ and $({\mathcal V}, d_h)$ respectively.
\item[(ii)] Given $R \in (0,1)$, there exists $C>0$ such that for  almost every  $x \in B_R(0)$, every   $x \in {\mathcal R}(u) \cap B_R(0)$ and $\sigma>0$ sufficiently small, the  blow-up map 
\[
u_\sigma=(V_\sigma,v_\sigma): (B_1(0),g_\sigma) \rightarrow ({\mathcal U} \times {\mathcal V}, d_{G_{\mu(\sigma)}})
\]
of the harmonic map $u$ with scaling factor  (\ref{standardscaling}) 
satisfies
\[
(1+C  \sigma^2)^{-1}   |\nabla u_\sigma|^2(x) \leq   |\nabla V_\sigma|^2(x)+\sum_{i=1}^{k-j} |\nabla v_\sigma^i|^2(x)  \leq (1+C \sigma^2) |\nabla  u_\sigma|^2(x). 
\]
\end{itemize}
\end{lemma}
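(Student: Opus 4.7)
The plan is to handle the three parts in turn, with all three ultimately resting on the $C^0$-estimates (\ref{metricproperties}) of Assumption~\ref{ASS2} together with the almost-diagonal condition (\ref{ndcy}).

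For \emph{(i)}, I would first promote the componentwise estimate $|G_{AB} - (H \oplus h)_{AB}| \leq C (H \oplus h)_{AA}^{1/2}(H \oplus h)_{BB}^{1/2} d^2(v,P_0)$ to a quadratic-form comparison
\[
(1 - C\lambda^2)(H \oplus h) \leq G \leq (1 + C\lambda^2)(H \oplus h)
\]
on tangent vectors at every point whose $h$-distance from $P_0$ is at most $\lambda$. The upgrade uses Cauchy--Schwarz applied to the sum $\sum_A (H\oplus h)_{AA}^{1/2}|V^A|$ together with (\ref{ndcy}) to show that $\sum_A (H\oplus h)_{AA}|V^A|^2$ is comparable to $(H\oplus h)_{AB}V^AV^B$. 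With the quadratic-form comparison in hand, the length of any curve $\gamma$ measured in $G$ and in $H\oplus h$ differs by a factor $(1\pm C\lambda^2)^{1/2}$; since NPC convexity keeps any minimizing geodesic between points of $B_{\lambda}(P_0)$ inside $B_{2\lambda}(P_0)$, the infimum over curves gives the desired distance ratio bound (up to absorbing constants in $C$).

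For \emph{(ii)}, at every $x \in {\cal R}(u)\cap B_R(0)$ (or a.e.\ $x$), one writes the energy densities in coordinates:
\[
|\nabla h|^2_G = g^{\alpha\beta}G_{AB}(h)\,\partial_\alpha h^A \partial_\beta h^B, \qquad |\nabla W|^2 + |\nabla w|^2 = g^{\alpha\beta}(H\oplus h)_{AB}(h)\,\partial_\alpha h^A\partial_\beta h^B,
\]
and subtracts. The $C^0$-estimates (\ref{metricproperties}) applied termwise, combined with (\ref{ndcy}) and Cauchy--Schwarz exactly as in \emph{(i)}, give the \emph{multiplicative} bound
\[
\bigl||\nabla h|^2_G - |\nabla W|^2_H - |\nabla w|^2_h\bigr| \leq C\,d^2(w,P_0)\,|\nabla h|^2.
\]
The Lipschitz hypothesis on $h$ over $B_R(0)$ then absorbs $|\nabla h|^2$ into the constant, yielding the stated estimate.

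For \emph{(iii)}, I would apply the \emph{multiplicative} form of \emph{(ii)} to the original map $u$ at the point $\sigma x$ and then rescale. The key bookkeeping is that, because the homogeneous structure on $\overline{\bf H}^{k-j}$ makes $h$ self-similar under $P\mapsto \mu(\sigma)^{-1}P$ and because $H_{\mu(\sigma)}(y) = H(\mu(\sigma)y)$, the scalings are compatible and give
\[
|\nabla u_\sigma|^2_{G_{\mu(\sigma)}}(x) = \sigma^2 |\nabla u|^2_G(\sigma x), \qquad |\nabla V_\sigma|^2_{H_{\mu(\sigma)}}(x) + \sum_i |\nabla v^i_\sigma|^2_h(x) = \sigma^2\bigl(|\nabla V|^2_H + |\nabla v|^2_h\bigr)(\sigma x),
\]
with no residual powers of $\mu(\sigma)$. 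Since $u$ is Lipschitz near $x_\star$ with $v(x_\star) = {\cal P}_0$, one has $d(v(\sigma x),P_0) \leq L\sigma$ for $x\in B_R(0)$, so the multiplicative error from \emph{(ii)} at $\sigma x$ is at most $CL^2\sigma^2\,|\nabla u|^2_G(\sigma x)$. Multiplying through by $\sigma^2$ and using the scaling identities above converts this into $CL^2\sigma^2 |\nabla u_\sigma|^2_{G_{\mu(\sigma)}}(x)$, which rearranges to the two-sided bound with factor $1\pm C\sigma^2$.

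The main technical obstacle is step \emph{(iii)}, specifically verifying the clean cancellation in the scaling identities: naively one expects a $\mu(\sigma)^{-2}$ factor from the target rescaling, but this is absorbed because the model-space metric is homogeneous of degree $-2$ under the dilation used to define $v_\sigma$ and because $H_{\mu(\sigma)}$ is defined with the matching convention. One must carefully chase these factors (using (\ref{hcs})--(\ref{dfih}) for the $\overline{\bf H}$-factors and the definition of $H_{\mu(\sigma)}$ for the ${\bf C}^j$-factor) to confirm that after rescaling the error retains the multiplicative structure $|\nabla u|^2\cdot d^2(v,P_0)$ and not a weaker absolute bound, which is what makes the final estimate proportional to $|\nabla u_\sigma|^2$ rather than merely proportional to $\sigma^4$.
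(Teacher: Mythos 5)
Your proposal is correct and follows essentially the same route as the paper's (very terse) proof: part $(i)$ from the $C^0$-estimates of Assumption~\ref{ASS2}, part $(ii)$ from $(i)$ together with the definition of energy density and the Lipschitz hypothesis, and part $(iii)$ from the multiplicative intermediate form of $(ii)$ combined with the uniform Lipschitz bound on the blow-up maps and $\mu(\sigma)\leq C\sigma$. The only quibble is that your scaling identities in $(iii)$ omit the common factor $\mu(\sigma)^{-2}$ that should appear on both sides; since it cancels in the two-sided comparison, this does not affect the argument.
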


\begin{proof} 
Part (i) follows from the  $C^0$-estimates of $G$ contained in Theorem~\ref{dwyw}.  Indeed,  for any vector $\gamma' \in T_{P'} (\C^j \times {\bf H}^{k-j})$ with $P' \in B_{\lambda}({\mathbb P}_0)$, we have
\[
\left| <\gamma',\gamma'>_{H \oplus h} - <\gamma',\gamma'>_{G}\right| \leq C \lambda^2 <\gamma',\gamma'>_{H \oplus h}.
\]
Let
\[
\gamma:  [0,d_{G}(P,Q)] \rightarrow  \C^j \times {\bf H}^{k-j}
\]
 be the arclength parameterized geodesic with respect to $d_G$ between $P \in B_{\lambda}({\mathbb P}_0)$ and $Q \in B_{\lambda}({\mathbb P}_0)$.  Then
 \begin{eqnarray*} \label{asyoneway}
 d_{H\oplus h}^2(P,Q)
 &  \leq & \left( \int_0^{d_{G}(P,Q)}  <\gamma' , \gamma'>_{H \oplus h}^{\frac{1}{2}} dt \right)^2 
 \nonumber \\
  &  \leq & d_{G}(P,Q)  \int_0^{d_{G}(P,Q)}  <\gamma' , \gamma'>_{H \oplus h} dt  \nonumber \\
    &  \leq  & (1+C\lambda^2) d_{G}(P,Q) \int_0^{d_{G}(P,Q)}  <\gamma' , \gamma'>_{G} dt \nonumber \\
        &  \leq  &  d_{G}^2(P,Q) \left( 1+C\lambda^2  \right).
\end{eqnarray*}
Next, let 
\[
\gamma:  [0,d_{H \oplus h}^2(P,Q)] \rightarrow \C^j \times {\bf H}^{k-j}
\]
 be the  arclength parameterized geodesic with respect to $d_{H \oplus h}$ between $P$ and $Q$. 
Thus
\begin{eqnarray*} \label{asyotherway}
d_{G}^2(P,Q) 
& \leq & \left( \int_0^{d_{H \oplus h}(P,Q)} < \gamma', \gamma'>^{\frac{1}{2}}_{G}  dt \right)^2  \nonumber \\
& \leq & d_{H \oplus h}(P,Q) \int_0^{d_{H \oplus h}(P,Q)} < \gamma', \gamma'>_{G}  dt  \nonumber \\
& \leq  & (1+C\lambda^2)d_{H \oplus h}(P,Q)
\int_0^{d_{H \oplus h}(P,Q)}  <\gamma' , \gamma'>_{H \oplus h} dt  \nonumber \\
& \leq &  d_{H \oplus h}^2(P,Q)   \left( 1+C\lambda^2 \right).
\end{eqnarray*}
This completes the proof of (i). 
The inequalities of (ii) hold for almost every $x \in B_R(0)$ by the definition of  energy density (cf.  \cite{korevaar-schoen1}) and (i). 
Finally, since $u_\sigma$ is uniformly Lipschitz continuous in $B_R(0)$ (cf. (\ref{uLip})),  (iii) follows from (ii).
\end{proof}

\begin{lemma} \label{blowupcomponent}
Let $u=(V,v)$ a local representation at $x_{\star} \in \hat{\mathcal S}_j(u)$.  For $x_0 \in \hat{\mathcal S}_j(u) \cap B_{\sigma_{\star}}(x_{\star})$,  there exists a sequence $\sigma_i \rightarrow 0$  such that the  blow-up maps $u_{\sigma_i}=(V_{\sigma_i},v_{\sigma_i})$ of $u$ at $x_0$ converge locally uniformly in the pullback sense to a nonconstant map   
\[
u_*=(V_*,v_*)=(V_*,v_*^1, \dots, v^{k-j}_*):B_1(0) \rightarrow \C^j \times Y_{1*} \times \dots \times Y_{k-j*}
\] 
where  $(Y_{1*},d_{1*}), \dots,   (Y_{k-j*},d_{k-j*})$ are NPC spaces and the sequences    $V_{\sigma_i}$, $v^1_{\sigma_i}$, $\dots$, $v^{k-j}_{\sigma_i}$ converge locally uniformly in the pullback sense to   homogeneous degree $\alpha$ harmonic maps  $V_*$, $v_*^1$, $ \dots$, $v^{k-j}_*$ respectively. 
\end{lemma}

\begin{proof}
For any $r \in (0,1)$, Lemma~\ref{disest2} and (\ref{uLip})  imply that there exists $C>0$ such that
 \begin{equation} \label{locallipboundforblowup}
 |\nabla V_\sigma|^2, \ |\nabla v_\sigma^1|^2, \dots, |\nabla v_\sigma^{k-j}|^2 \leq C \mbox{ in } B_r(0)
 \end{equation}
for sufficiently small $\sigma$ (with respect to the metric $g(0)$ on the domain which is uniformly equivalent to $g_\sigma$ for $\sigma$ small). 
Let $\sigma_i \rightarrow 0$ be such that $u_{\sigma_{i}}$ converges to a tangent map  $u_*$ locally uniformly in the pullback sense
(cf.~Remark \ref{blowupsforhm}). 
Applying the Compactness Theorem~\ref{KScpct} and a diagonalization argument, we also have that there exist a subsequence of $\sigma_i$   (which we call again $\sigma_{i} $  for the sake of simplicity),  NPC spaces  $(Y_{1*},d_{1*}), \dots,   (Y_{k-j*},d_{k-j*})$  and maps $V_*: {\bf R}^n  \rightarrow  (\C^j, H(0))$, $v_*^1: {\bf R}^n  \rightarrow (Y_{1*},d_{1*}), \dots, v_*^{k-j}: {\bf R}^n  \rightarrow (Y_{k-j*},d_{k-j*})$   
such that  $V_{\sigma_{i}}$, $v_{\sigma_{i}}^1$, $\dots$, $v_{\sigma_{i}}^{k-j}$  converge  locally uniformly in the pull-back sense to  $V_*, v_*^1, \dots, v_*^{k-j}$  respectively.  
Furthermore, Lemma~\ref{disest2} implies that for $x',x'' \in B_1(0)$, 
\[
d_{G_{\sigma_{i}}}^2(u_{\sigma_{i}}(x'), u_{\sigma_{i}}(x''))= d_{H_{\sigma_{i}}}^2(V_{\sigma_{i}}(x'), V_{\sigma_{i}}(x''))+\sum_{\mu=1}^{k-j} d^2_{\bf H}(v_{\sigma_{i}}^\mu(x'), v_{\sigma_{i}}^\mu(x''))+O(\sigma_{i}^2).
\] 
Thus, we conclude that $u_{\sigma_{i}}$ converges locally uniformly in the pullback sense to 
\[
(V_*,v_*^1, \dots, v^{k-j}_*):B_1(0) \rightarrow \C^j \times Y_{1*} \times \dots \times Y_{k-j*}
\]  
and
\[
d_*^2(u_*(x'), u_*(x''))= |V_*(x') - V_*(x'')|^2+\sum_{m=1}^{k-j}  d_{m*}^2(v_*^m(x'), v_*^m(x'')).
\]    
In particular, we can now assume that  $u_*$ is the map $(V_*,v_*^1, \dots, v^{k-j}_*)$.  The harmonicity of $V_*$, $v_*^1, \dots, v^{k-j}_*$ is implied by the harmonicity of the tangent map $u_*$.   Furthermore, the homogeneity of tangent map $u_*$ implies the homogeneity of $V_*$, $v_*^1,\dots,v_*^{k-j}$.
\end{proof}

\begin{lemma}\label{ordergap'}
Let $u:(\Omega,g) \rightarrow  (\overline{\mathcal T}, d_{\overline{\mathcal T}})$ be a harmonic map.     
There exists  $\epsilon_0>0$ depending only on the dimension $n$  of $\Omega$ such that for $x_0 \in \hat{\mathcal S}_j(u)$ and  a tangent map $u_*$ of $u$ at $x_0$,  we have
\begin{equation} \label{fo}
Ord^u(x_0)=1 \ \mbox{ or } \ Ord^u(x_0) \geq  1+\epsilon_0
\end{equation}
and 
\begin{equation} \label{so}
\dim_{\mathcal H}({\mathcal S}^{>1}(u_*)) \leq  n-2.
\end{equation}
\end{lemma}

\begin{proof}
For $x_0 \in {\mathcal R}(u)$, statements (\ref{fo}) and (\ref{so}) obviously hold (with $\epsilon_0=1$) since all the strata of $\overline{\mathcal T}$ are smooth  manifolds.  Thus, now consider 
 $x_0 \in \hat{\mathcal S}_j(u)$.  By Lemma~\ref{blowupcomponent}, there exists a sequence of blow-up $u_{\sigma_i}=(V_{\sigma_i},v_{\sigma_i})$ at $x_0$ that converges locally uniformly in the pullback sense to a map   
\[
u_*=(V_*,v_*^1, \dots, v^{k-j}_*):B_1(0) \rightarrow \C^j \times Y_{1*} \times \dots \times Y_{k-j*}
\] 
with $V_*, v_*^1, \dots, v^{k-j}_*$ homogeneous  harmonic maps and $V_{\sigma_i}$, $v_{\sigma_i}=(v_{\sigma_i}^1, \dots, v^{k-j}_{\sigma_i})$ converging locally uniformly in the pullback sense to $V_*$, $v_*=(v_*^1, \dots, v^{k-j}_*)$ respectively.
First, assume $V_*$ is  non-constant.  Then   $Ord^{u_*}(0)=Ord^{V_*}(0)$, and since $V_*$ is a harmonic map into Euclidean space, statements (\ref{fo}) and (\ref{so}) obviously hold (again with $\epsilon_0=1$).
Alternatively, assume that $V_*$ is  a constant map.   In this case,
\begin{equation} \label{crackers}
\lim_{\sigma_i \rightarrow 0} \sup_{\partial B_r(0)} d_{H_{\mu(\sigma_i)}}(V_{\sigma_i}(0),V_{\sigma_i}) =0, \ \ \ \forall r \in (0,1).
\end{equation}
Define
\[
\hat{u}_{\sigma_i}: B_{\frac{1}{2}}(0) \rightarrow (\C^j \times \overline{\bf H}^{k-j},d_{H\oplus h_{\mu(\sigma_i)}}),  \ \ \ \hat{u}_{\sigma_i} =(V_{\sigma_i}(0),v_{\sigma_i})
\]
and let 
 \[
\phi_{\sigma_i}: B_{\frac{1}{2}}(0) \rightarrow (\C^j \times \overline{\bf H}^{k-j},d_{H \oplus h_{\mu(\sigma_i)}}),  \ \ \ \phi_{\sigma_i} =(W_{\sigma_i},w_{\sigma_i})
\]
be the  harmonic map with boundary values equal to $\hat{u}_{\sigma_i}$.  
 Since $\phi_{\sigma_i}$ and $u_{\sigma_i}$ are harmonic maps, $d_{H\oplus h_{\mu(\sigma_i)}}^2(\phi_{\sigma_i}, u_{\sigma_i}, )$ is a weakly subharmonic function by \cite{korevaar-schoen1} Lemma 2.4.2.  Thus, there exists a constant $c_0>0$ such that
\[
d^2_{H\oplus h_{\mu(\sigma_i)}}(\phi_{\sigma_i}(x), u_{\sigma_i}(x)) \leq c_0 \int_{\partial B_{\frac{1}{2}}(0)} d^2_{H\oplus h_{\mu(\sigma_i)}}(\phi_{\sigma_i},u_{\sigma_i}) d\Sigma.
\] 
By Lemma~\ref{disest2} and noting that  $\phi_{\sigma_i}=\hat{u}_{\sigma_i}$  on $B_{\frac{1}{2}}(0)$, we have
 \begin{eqnarray*}
\lim_{\sigma_i \rightarrow 0} \sup_{B_{\frac{1}{4}}(0)} d^2_h(w_{\sigma_i}(x), v_{\sigma_i}(x))
 & \leq & C  \lim_{\sigma_i \rightarrow 0}  \sup_{B_{\frac{1}{4}}(0)} d^2_{H\oplus h_{\mu(\sigma_i)}}(\phi_{\sigma_i}(x), u_{\sigma_i}(x))
 \\
 & \leq &  Cc_0 \lim_{\sigma_i \rightarrow 0}\int_{\partial B_{\frac{1}{2}}(0)} d^2_{H\oplus h_{\mu(\sigma_i)}}(\phi_{\sigma_i},u_{\sigma_i}) d\Sigma\\
 & = &  Cc_0 \lim_{\sigma_i \rightarrow 0}\int_{\partial B_{\frac{1}{2}}(0)} d^2_{H\oplus h_{\mu(\sigma_i)}}(\hat{u}_{\sigma_i},u_{\sigma_i}) d\Sigma\\
  &  \leq & C\lim_{\sigma_i \rightarrow 0} \int_{\partial B_{\frac{1}{2}}(0)} d^2_{H_{\mu(\sigma_i})}(V_{\sigma_i}(0),V_{\sigma_i}) d\Sigma\\
  & = & 0 \ \ \ \ \mbox{(by (\ref{crackers}))}.
 \end{eqnarray*}
 Thus, the sequence $w_{\sigma_i}$ of harmonic maps into $\overline{\bf H}^{k-j}$ converges locally uniformly in the pullback sense to $v_*$ and $w_{\sigma_i}(0) \rightarrow {\mathbb P}_0$.  Applying 
Lemma~\ref{proveitsfl} with $w_i =w_{\sigma_i}$ and $v_0=v_*$, we conclude  that there exists $\epsilon_0 \in (0,1]$  satisfying (\ref{fo}) and also that (\ref{so}) is valid.
 \end{proof}

The following is  the main result of this subsection.

\begin{proposition}\label{highorderpts}
If $u: (\Omega,g) \rightarrow( \overline{\mathcal T}, d_{ \overline{\mathcal T}})$ is a harmonic map from an $n$-Riemannian domain, then the set ${\mathcal S}^{>1}(u)$ of higher order points  is of Hausdorff co-dimension 2; i.e.~
\[
\dim_{\mathcal H} ({\mathcal S}^{>1}(u)) \leq n-2.
\]
\end{proposition}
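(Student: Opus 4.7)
The plan is to apply Corollary~\ref{P'sforharmonic} to the harmonic map $u:\Omega\to\overline{\cal T}$. This reduces the proposition to verifying, at every $x_0\in\Omega$, that some tangent map $u_*$ of $u$ at $x_0$ satisfies the dichotomy $Ord^{u_*}(0)=1$ or $Ord^{u_*}(0)\geq 1+\epsilon_0$, together with $\dim_{\cal H}({\cal S}_0(u_*))\leq n-2$, for a uniform $\epsilon_0>0$ independent of $x_0$. At a regular point $x_0\in{\cal R}(u)$ there is nothing to do, since $u$ is smooth into a single stratum and the tangent map is its differential, a linear map, for which both conditions are automatic.

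At a singular point $x_0\in\hat{\cal S}_j(u)$ with $j<k$ I would work with the local representation $u=(V,v):B_{\sigma_\star}(x_0)\to{\bf C}^j\times\overline{\bf H}^{k-j}$. Lemma~\ref{blowupcomponent} extracts a subsequence $\sigma_i\to 0$ along which $u_{\sigma_i}=(V_{\sigma_i},v_{\sigma_i}^1,\dots,v_{\sigma_i}^{k-j})$ converges in the pullback sense to a non-constant tangent map $u_*=(V_*,v_*^1,\dots,v_*^{k-j})$, each component being harmonic and homogeneous of degree $\alpha:=Ord^u(x_0)$, with $V_*$ valued in the flat Hermitian space $({\bf C}^j,H(0))$ and each $v_*^m$ valued in an NPC space. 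The argument then splits into two cases.

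If $V_*$ is non-constant, then it is a non-trivial harmonic polynomial map of degree $\alpha$ into a Hermitian vector space, which forces $\alpha\in\{1,2,3,\dots\}$ and hence $\alpha=1$ or $\alpha\geq 2$. Otherwise $V_*$ is constant, so some $v_*^m$ must be non-constant, and I would verify that $\{v_{\sigma_i}^m\}$ is a sequence of asymptotically harmonic maps in the sense of Definition~\ref{asymptoticallyharmonic}: (i) follows from the use of normal coordinates on the domain, (ii) from the uniform Lipschitz bound on $\{u_{\sigma_i}\}$ combined with Lemma~\ref{disest2}, (iii) from the choice of subsequence, and (iv) from the subharmonicity estimate of Lemma~\ref{anothersh} applied to $v$. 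Proposition~\ref{ahmresult} then delivers $\alpha=Ord^{v_*^m}(0)\geq 1+\epsilon_0$.

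The second hypothesis $\dim_{\cal H}({\cal S}_0(u_*))\leq n-2$ on the tangent map I would obtain by exploiting the homogeneity of $u_*$: the higher-order set is a cone with vertex at the origin, so it suffices to bound the singular set on $\partial B_1(0)$ by $n-3$ and induct on the domain dimension $n$. The main obstacle is the verification of property (iv) in Definition~\ref{asymptoticallyharmonic}, since Lemma~\ref{anothersh} requires Assumptions~\ref{ASS1}--\ref{assumeit4} to be in force for the local representation of $u$, and these assumptions are themselves verified only in Section~\ref{proofMRTsection} by an inductive argument on the stratum index $j$ that invokes the current proposition at lower values of $j$. The clean logical route is therefore a joint induction on $j$, with Assumptions~\ref{ASS1}--\ref{assumeit4} made available at stratum index $j$ before Proposition~\ref{highorderpts} is applied at that index, as in the overall inductive scheme of \cite{daskal-meseDM}.
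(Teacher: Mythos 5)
Your reduction via Corollary~\ref{P'sforharmonic}, the use of Lemma~\ref{blowupcomponent}, and the case split on whether $V_*$ is constant all match the paper. The divergence, and the gap, is in the case $V_*$ constant. You propose to treat $\{v_{\sigma_i}\}$ as a sequence of asymptotically harmonic maps, getting property $(iv)$ of Definition~\ref{asymptoticallyharmonic} from Lemma~\ref{anothersh} and then invoking Proposition~\ref{ahmresult}. But Lemma~\ref{anothersh} is only available once Assumptions~\ref{ASS1}--\ref{ASS4} hold for the local representation at stratum index $j$, and Assumption~\ref{ASS3}$(ii)$ demands $\dim_{\cal H}\bigl(({\cal S}(u)\setminus{\cal S}_j(u))\cap B_{\sigma_\star/2}(x_\star)\bigr)\le n-2$. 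Since ${\cal S}_j(u)=\hat{\cal S}_j(u)\setminus{\cal S}_0(u)$, the set ${\cal S}(u)\setminus{\cal S}_j(u)$ contains the higher-order points of $\hat{\cal S}_j(u)$ itself, so Assumption~\ref{ASS3} at stage $j$ already presupposes the codimension-two bound on exactly the set you are trying to control at stage $j$. Your proposed ``joint induction on $j$'' therefore does not close: within a single stage the hypothesis you need is the conclusion you are proving. A second, smaller problem is that Lemma~\ref{anothersh} is stated only at base points $x_0\in{\cal S}_j(u)$, i.e.\ order-one points, whereas the dichotomy must be verified at every $x_\star\in\hat{\cal S}_j(u)$, including the higher-order ones.

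The paper breaks the circle differently, and this is the step you are missing. When $V_*$ is constant, it introduces the comparison map $\hat{u}_{\sigma_i}=(V_{\sigma_i}(0),v_{\sigma_i})$ and lets $h_{\sigma_i}=(W_{\sigma_i},w_{\sigma_i})$ be the \emph{genuinely harmonic} map on $B_{1/2}(0)$ with boundary values $\hat{u}_{\sigma_i}$. Since $d^2(h_{\sigma_i},u_{\sigma_i})$ is weakly subharmonic (\cite{korevaar-schoen1} Lemma 2.4.2) and its boundary values are controlled by $d^2(V_{\sigma_i}(0),V_{\sigma_i})\to 0$, the components $w^\mu_{\sigma_i}$ are honest harmonic maps into $\overline{\bf H}$ converging in the pullback sense to $v_*^\mu$ with $w_{\sigma_i}(0)\to{\cal P}_0$. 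One then applies Lemma~\ref{proveitsfl} -- which needs no Assumptions and no induction -- to obtain the dichotomy $Ord^{v_*}(0)=1$ or $\ge 1+\epsilon_0$ together with $\dim_{\cal H}({\cal S}_0(v_*))\le n-2$. Note also that the weaker dichotomy (order exactly $1$ permitted) suffices for Theorem~\ref{78}; you do not need the strict bound $\alpha\ge 1+\epsilon_0$ that Proposition~\ref{ahmresult} provides, and indeed you cannot have it at order-one points. Finally, the cone/dimension-reduction argument you sketch for $\dim_{\cal H}({\cal S}_0(u_*))\le n-2$ is unnecessary here, as that bound is part of the conclusion of Lemma~\ref{proveitsfl}.
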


\begin{proof}
Given $x_0 \in \Omega$, 
let  $u_{\sigma_i}$ be a sequence of blow-up maps that converges locally uniformly in the pullback sense to a tangent map $u_*$.   It  suffices to check  assumptions (i) and  (ii) of Lemma~\ref{cd2general}.  
To check (i), assume  $x_i \in {\mathcal S}^{>1}(u_{\sigma_i})$ with  $x_i \rightarrow x_*$.  By the order gap property of Lemma~\ref{ordergap'}, we have
$Ord^{u_{\sigma_i}}(x_i) \geq 1+\epsilon_0$.  
The upper semicontinuity of order (cf. Lemma~\ref{uscord}) implies $Ord^{u_*}(x_*) \geq 1+\epsilon_0$ which in turn implies $x_* \in {\mathcal S}^{>1}(u_*)$.  This verifies (i).    By Corollary~\ref{msproperties}, we have $\dim_{\mathcal {H}}({\mathcal S}^{>1}(u_*)) \leq n-2$.  This verifies  (ii).
\end{proof}

In view of Proposition~\ref{highorderpts}, it makes sense to disregard the higher order  points of the singular set of $u$. Thus, with the notation as in Section~\ref{deflocalmodel}, we set
\begin{equation} \label{sj}
{\mathcal S}_j(u) =\hat{\mathcal S}_j(u) \backslash {\mathcal S}^{>1}(u).
\end{equation}
In other words, ${\mathcal S}_j(u)$ is the set  singular points of $\hat{\mathcal S}_j(u)$ of order 1.

 \subsection{A Sequence of Asymptotically Harmonic Maps}\label{apprham}
 
Let $u:(\Omega,g) \rightarrow( \overline{\mathcal T}, d_{ \overline{\mathcal T}})$ be a harmonic map, $x_{\star} \in S_j(u)$ and $u=(V,v)$ be a local representation at $x_{\star}$.   Since the Weil-Petersson metric is not a product near the boundary of $ \overline{\mathcal T}$, neither the regular component  map $V$ nor  the singular component map $v$ is a harmonic map.  We will see later (cf. Lemma~\ref{compblowupu} and Lemma~\ref{compblowupv}) that the singular component $v$  is  asymptotically harmonic in the sense that a sequence of  blow-up maps  of $v$ at $x_0 \in {\mathcal S}_j(u) \cap B_{\frac{\sigma_{\star}}{2}}(x_{\star})$ is a   \emph{sequence of asymptotically harmonic maps}.  We now define this notion.  

\begin{definition}  \label{asymptoticallyharmonic}
We say  that a sequence of maps $v_i:(B_1(0),g_i) \rightarrow (\overline{\bf H}^{k-j},d_h)$  with $v_i(0)={\mathbb P}_0$ is  a \emph{sequence of asymptotically harmonic maps} if the following conditions are satisfied: 
\begin{itemize}
    \item[(i)]  The sequence of metrics  $g_i$  on $B_1(0) \subset {\bf R}^n$ converges in $C^\infty$  to the Euclidean metric $g_0$ on $B_1(0) \subset {\bf R}^n$.
\item[(ii)] There exists a constant $E_0>0$ such that  $E^{v_i}(\vartheta) \leq \vartheta^n E_0$ for $\vartheta \in (0,\frac{3}{4}]$ where $n$ is the dimension of the domain $B_1(0)$. 

  
\item[(iii)]  The sequence $v_i \big|_{B_{\frac{1}{2}}(0)}$ converges locally uniformly in the pullback sense to a  homogeneous harmonic map $v_0:(B_{\frac{1}{2}}(0),g_0) \rightarrow (Y_0,d_0)$  into an NPC space. 
(Note that we also allow $v_0$ to be the constant map  for technical purposes.)

\item[(iv)]  For any fixed  $R \in (0,1)$, $r \in (0,1)$  and $c>0$, there exist $c_0>0$ and a sequence $c_i \rightarrow 0$ such that for any harmonic map  $w:(B_R(0),g_i)  \rightarrow  \overline{\bf H}^{k-j}$  with 
\[
\sup_{B_R(0) }d_h(w,{\mathbb P}_0) \leq c,
\]
we have
 \begin{equation} \label{abc}
 \sup_{B_{r \vartheta}(0)} d_h^2(v_i,w)  \leq  \frac{c_0}{\vartheta^{n-1}}  \int_{\partial B_{\vartheta}(0)} d_h^2(v_i,w)d\Sigma_{g_i} + c_i \vartheta^3, \ \forall  \vartheta \in (0,R]
 \end{equation} 
 where $\Sigma_{g_i}$ is the volume form on $\partial B_{\vartheta}(0)$ with respect to the metric $g_i$.
\end{itemize}
\end{definition}
 
 \begin{remark}\label{harmisasharm}
The sequence of  blow-up maps of a harmonic map $u:(B_1(0),g) \rightarrow (\overline{\bf H}^{k-j},d_h)$ with $u(0)= {\mathbb P}_0$ as in Remark~\ref{blowupsforhm} is a sequence of asymptotically harmonic maps.  In particular, since $u_{\sigma_i}$ is harmonic  for each $i$, inequality (\ref{abc}) is satisfied with $v_i=u_{\sigma_i}$ and $c_i=0$ (cf. \cite{korevaar-schoen1} Lemma 2.4.2; replace $\eta$ by $t\eta$ and take the limit $t \rightarrow 0$).  
 \end{remark}

\begin{remark} \label{thepropertyoftheharmonicmapw}
The theory we developed for a sequence of asymptotically harmonic maps in \cite{daskal-meseDM} only requires that inequality (\ref{abc})  holds for the following two types of harmonic maps: (i) 
the Dirichlet solution  with $w\big|_{\partial B_{\frac{3}{4}}(0)}=v_i\big|_{\partial B_{\frac{3}{4}}(0)}$ when $v_i$ is uniformly Lipschitz continuous in $B_{\frac{3}{4}}(0)$ and  (ii) 
$w$ is identically equal to ${\mathbb P}_0$.   
\end{remark}

The importance of a sequence of asymptotic harmonic map   is that the limit map $v_0$ satisfies the following property.  This should be compared to the result about 
the limit of harmonic maps in  Lemma~\ref{proveitsfl}.

\begin{lemma} \label{ahmresult}
 Let $v_{i}:(B_1(0),g_i) \rightarrow  (\overline{\bf H}^{k-j},d_h)$   be
a sequence of asymptotic harmonic maps with $v_i(0) ={\mathbb P}_0$.
Then $v_0$ (cf.~Definition~\ref{asymptoticallyharmonic} (iii)) 
 maps into a product of NPC spaces; i.e.  
\[
v_0=(v_0^1, \dots, v_0^{k-j}): B_{\frac{1}{2}}(0)  \rightarrow Y_0=Y_0^1 \times \dots \times Y_0^{k-j}
\]
where $v_0^{\mu}: B_{\frac{1}{2}}(0) \rightarrow (Y_0^{\mu},d_0^{\mu})$ (for $\mu=1, \dots, k-j$) is a homogeneous harmonic map  into an NPC space. 
If $v_0$ is non-constant, then
there exists $\epsilon_0>0$ such that 
 \begin{equation} \label{ks}
Ord^{v_0}(0)=1 \ \mbox{ or } \ Ord^{v_0}(0) \geq  1+\epsilon_0.
\end{equation}
If $Ord^{v_0}(0)=1$, then either $v_0^{\mu}$ maps into a geodesic or $v_0^{\mu}(x)=P_0$ for all $x \in B_{\frac{1}{2}}(0)$.
Furthermore, set of higher order points of $v_0$  has codimension at least 2; i.e.
\begin{equation} \label{em}
\dim_{\mathcal H}({\mathcal S}^{>1}(v_0)) \leq  n-2.
\end{equation}
\end{lemma}

\begin{proof}  
Let $w_{i}:B_{\frac{3}{4}}(0) \rightarrow ({\bf H}^{k-j},d_h)$ be the harmonic map whose boundary values agree with that of $v_{i}\big|_{B_{\frac{3}{4}}(0)}$.  
Let 
\begin{equation} \label{nout}
R=\vartheta=\frac{3}{4}, \ r=\frac{2}{3}.
\end{equation}
By  Definition~\ref{asymptoticallyharmonic} (ii)
\begin{equation}\label{ebr}
E^{w_i}(\frac{3}{4}) \leq E^{v_i}(\frac{3}{4}) \leq (\frac{3}{4})^n E_0,
\end{equation}
 hence Theorem~\ref{KSlip} implies that for a fixed $z_0 \in \partial B_{\frac{3}{4}}(0)$ and any 
 $x \in  B_{\frac{3}{4}}(0)$,
 \[ 
 d( w_i(x), w_i(z_0))
 \]
is uniformly bounded. This, combined with Definition~\ref{asymptoticallyharmonic} (iii), implies for any 
 $x \in  B_{\frac{3}{4}}(0)$,
\begin{eqnarray*}
d( w_i(x), {\mathbb P}_0) &\leq& d( w_i(x), w_i(z_0))+ d( w_i(z_0), {\mathbb P}_0) \\
&=& d( w_i(x), w_i(z_0))+ d( v_i(z_0), v_i(0))\\
&\leq& c,
\end{eqnarray*}
hence by (iv) of Definition~\ref{asymptoticallyharmonic}
we obtain
\begin{equation} \label{vwconverg}
\lim_{i \rightarrow \infty} \sup_{B_{\frac{1}{2}}(0)} d^2_h(v_{i},w_{i})  =0.
\end{equation}
Again by  (\ref{ebr}), Theorem~\ref{KSlip} implies that $\{w_i\}$  has  uniform local  Lipschitz estimates which in turn implies that  $\{w_i^{\mu}\}$ has  uniform local  Lipschitz estimates for each $\mu=1, \dots, k-j$.  Thus, by Compactness Theorem~\ref{KScpct},  there exists  a subsequence of $w_i^{\mu}\big|_{B_{\frac{1}{2}}(0) } $ (which we shall still denote again by the same notation for simplicty) that converges locally uniformly in the pullback sense to a limit map  $v_0^{\mu}:B_{\frac{1}{2}}(0) \rightarrow (Y_0^{\mu},d_0^{\mu})$ into an NPC space.  By (\ref{vwconverg}), the sequence $v_{i}^{\mu}$ also converges locally uniformly in the pullback sense to $v_0^{\mu}$.  
Thus, combining this with Definition~\ref{asymptoticallyharmonic} (iii), we can write $v_0=  (v_0^1, \dots, v_0^{k-j})$.  Furthermore, (\ref{vwconverg}) also implies
that  $\lim_{i \rightarrow \infty} (w_i^{\mu}(0),P_0) = 0$.
Thus, the assertions (\ref{ks}) and (\ref{em}) follow from Lemma~\ref{proveitsfl}.
\end{proof}

Lemma~\ref{ahmresult} leaves the possibility that $Ord^{v_0}(0)=1$.
We will next eliminate this case  in Proposition~\ref{linearapproxblowupsym'} below.  For this purpose, we need the following key technical Lemma~\ref{keylemma'} (that generalizes  Lemma~\ref{keylemma})  which is the lynchpin to  the regularity theorem.  We reiterate that this  lemma handles the  difficulties stemming from the non-compactness and the degenerating geometry of the target space $\overline {\mathcal T}$
(like Lemma~\ref{keylemma}) with the additional complication that $v$ is not necessarily harmonic but only approximately harmonic in a certain sense (unlike Lemma~\ref{keylemma}).   The proof is deferred until  Section~\ref{sec:  proofofkeylemma} because of its highly technical nature.  

\begin{lemma}[{\bf Key Technical Lemma}] \label{keylemma'}
Given $c_0 \geq 1$, $E_0, A^1, \dots, A^m>0$,
there exist  $D_0 \in (0,\frac{1}{\sqrt{8}})$ and  $c>0$ that give the following implication.\\
\\
{\bf Assumptions. }  The metric metric $g$ on $B_1(0)$ and the map $v=(v^1, \dots, v^{k-j}):(B_1(0),g) \rightarrow (\overline{\bf H}^{k-j},d_h)$ satisfy:
\vspace*{0.08in}
\begin{itemize}
 \item[{\bf (i)}]{\bf{(almost Euclidean domain metric)}}  
The metric  $g$ is  $C^{\infty}$-close to the Euclidean metric.  
\item[{\bf (ii)}]{\bf{(energy decay)}}
The energy of the map $v$
satisfies
    \[
  E^v(\vartheta) \leq \vartheta^n E_0, \ \ \ \forall \vartheta \in (0,\frac{1}{2}).
\]
\item[{\bf (iii)}]{\bf{(close to a symmetric homogeneous degree 1 map)}} There exists a map
\[
l=(T^1 \circ l^1 \circ R^1, \dots, T^m \circ l^m \circ R^m, l^{m+1},...,l^{k-j} ):B_{\theta^i}(0) \rightarrow ({\bf H}^{k-j},h)
\]
where  for $\mu=1, \dots, m$,
\[
\begin{array}{rl}
R^{\mu}:B_{\theta^i}(0) \rightarrow B_{\theta^i}(0) & \mbox{ is a rotation}, \\
T^{\mu}:{\bf H} \rightarrow {\bf H} &\mbox{ is a translation isometry},\\ 
l^{\mu}:B_1(0) \rightarrow {\bf H} & \mbox{ is a symmetric homogeneous degree 1 map}
\end{array}
\]
  with stretch $A^{\mu}$      such that
\[
\sup_{B_{\frac{1}{2}}(0)} d_h(v,l) < D_0,
\]
\[
d_{\overline{\bf H}}(P_0, l^{\mu}(0))<\frac{1}{\sqrt{8}}, \ \ \ \forall {\mu} \in \{1, \dots, m\}
\]
and 
\[
l^{\mu} \mbox{ is identically equal to }P_0
\]
for $\mu=m+1, \dots, k-j$.
\item[{\bf (iv)}]{\bf{(almost subharmonicity of the distance)}}
There exists $c_0 \geq 1$ such that for   $\vartheta \in (0,\frac{1}{24})$, $R \in [\frac{5}{8},\frac{7}{8}]$  and a harmonic map $w:(B_{\vartheta R}(0),g)  \rightarrow (\overline{\bf H}^{k-j},d_h)$, 
 \begin{equation} \label{abc'}
 \sup_{B_{\frac{15\vartheta R}{16}}(0)} d_h^2(v,w)  \leq  \frac{c_0}{(\vartheta R)^{n-1}} \int_{\partial B_{\vartheta R}(0)} d_h^2(v,w)d\Sigma + c \vartheta^{3}.
\end{equation}
 \end{itemize}
 {\bf Conclusion. } Then $v(0) \neq {\mathbb P}_0$.
\end{lemma}

\begin{proof}
See Subection~\ref{sec:theproof}.
\end{proof}

 \begin{proposition} \label{linearapproxblowupsym'}
 Let $v_{i}:(B_1(0),g_i) \rightarrow  (\overline{\bf H}^{k-j},d_h)$   be
a sequence of asymptotic harmonic maps.   If $v_0$ (defined in Definition~\ref{asymptoticallyharmonic} (iii)) is non-constant, then $Ord^{v_0}(0) \neq 1$.     
 \end{proposition}

\begin{proof}
On the contrary, assume   $Ord^{v_0}(0)=1$.
By Lemma~\ref{ahmresult}, we can assume that $v_0$ maps into a product of NPC spaces; i.e.  
\[
v_0=(v_0^1, \dots, v_0^{k-j}): B_{\frac{1}{2}}(0)  \rightarrow Y_0=Y_0^1 \times \dots \times Y_0^{k-j}
\]
where $(Y_0^{\mu},d_0^{\mu})$ is an NPC space for $\mu=1, \dots, k-j$.
Since $v_0$ is a homogeneous harmonic map, each component map  $v_0^{\mu}: B_{\frac{1}{2}}(0) \rightarrow (Y_0^{\mu},d_0^{\mu})$  is  a homogeneous harmonic map.     By reordering if necessary, we can assume $v_0^1, \dots, v_0^m$ are non-constant maps and $v_0^{m+1}(x)= P_0, \dots, v_0^{k-j}(x) = P_0$ for all $x \in B_{\frac{1}{2}}(0)$.

Let  $w_{i}=(w_i^1, \dots, w^{\mu}):B_{\frac{3}{4}}(0) \rightarrow ({\bf H}^{k-j},d_h)$ be the harmonic map whose boundary values agrees with that of $v_{i}\big|_{B_{\frac{3}{4}}(0)}$ as in the proof of Lemma~\ref{ahmresult}.   Then 
$w_i\big|_{B_{\frac{1}{2}}(0) } $ converges locally uniformly in the pullback sense to 
 $v_0$ by Definition~\ref{asymptoticallyharmonic} (iv) which in turn
implies that $w_i^{\mu}\big|_{B_{\frac{1}{2}}(0) } $ converges locally uniformly in the pullback sense to 
 $v_0^{\mu}$ for each $\mu=1, \dots, m$.   Therefore, by Lemma~\ref{proveitsfl},  $v_0^{\mu}$ maps into a geodesic since we are assuming $Ord^{v_0}(0)=1$.
Therefore, we can apply the same argument as the proof of   Lemma~\ref{linearapproxblowupsym} with $w_i$ replacing  $u_{\sigma_i}$ and $v_0$ replacing $u_*$  to   conclude that there exists a sequence of translation isometries $T_i^{\mu}$, a rotation $ R^{\mu}:B_{\theta^i}(0) \rightarrow B_{\theta^i}(0)$ and   a sequence of  symmetric  homogeneous degree 1 maps  $l_i^{\mu}$ with 
\begin{equation} \label{red}
d_{\bf H}(P_0,l_i^{\mu}(0)) \rightarrow 0
\end{equation}
 and  stretch $A^{\mu}$ such  that
\[
\lim_{i \rightarrow \infty} \sup_{B_{\frac{1}{2}}(0)} d(w_i^{\mu}, T_{i}^{\mu} \circ l_i^{\mu} \circ R^{\mu}) = 0.
\]
This defines the constant $A^{\mu}$.  Combined with (\ref{vwconverg}), we see that
\begin{equation}\label{frank}
\lim_{i \rightarrow \infty} \sup_{B_{\frac{1}{2}}(0)} d(v_i^{\mu}, T_{i}^{\mu} \circ l_i^{\mu} \circ R^{\mu}) = 0.
\end{equation}
Let  $E_0$ and $c_0$ be the constants in Definition~\ref{asymptoticallyharmonic}  (ii) and (iv) respectively.  Let    $D_0 \in (0,\frac{1}{\sqrt{8}})$ and $c$ be constants   in Lemma~\ref{keylemma'} corresponding to  $c_0$, $E_0$, $A^1, \dots, A^{m}$.   By  Definition~\ref{asymptoticallyharmonic} (iv), (\ref{red}) and (\ref{frank}), we can fix $i$ sufficiently large such that $c_i \leq c$, 
and
\[
d(P_0, l_i^{\mu}(0))<\frac{1}{\sqrt{8}}, \ \ \ \forall {\mu} \in \{1, \dots, m\}
\]
and
\[
\sup_{B_{\frac{1}{2}}(0)} d(v_{i}^{\mu}, T_{i}^{\mu} \circ l_i^{\mu} \circ R^{\mu}) <\frac{D_0}{m}, \ \ \ \forall {\mu} \in \{1, \dots, m\}.
\]
Define $l=(l^1, \dots, l^{k-j}):B_1(0) \rightarrow \overline{\bf H}^{k-j}$ by setting 
\[
l^{\mu}=T_{i}^{\mu} \circ l_i^{\mu} \circ R^{\mu}, \ \forall \mu=1, \dots, m \ \mbox{ and } \ l^{\mu}\equiv P_0, \ \forall \mu=m+1, \dots, k-j
\]
which gives us
\[
\sup_{B_{\frac{1}{2}}(0)} d(v_{i}, l) <D_0.
\]
Applying Lemma~\ref{keylemma'}, we obtain 
$v_i(0) \neq {\mathbb P}_0$.  This contradiction proves $Ord^{v_0}(0) \neq 1$. 
\end{proof}

\subsection{The Inductive Hypothesis }\label{indstepp}
In this subsection, we begin the proof of Theorems~\ref{RegularityTheorem} and~\ref{goto0} 
by starting a backwards induction  on $j$. 
We need the following two statements for a harmonic map $u:(\Omega,g) \rightarrow( \overline{\mathcal T}, d_{ \overline{\mathcal T}})$:
\\
\\
{\sc Statement 1}$[j]$:  For any $x_{\star} \in {\mathcal S}_j(u)$ and a local representation $u=(V,v):(B_{\sigma_{\star}}(x_{\star}),g) \rightarrow ({\mathcal U} \times {\mathcal V}, d_G)$ at $x_{\star}$ (cf. Definition~\ref{locrep}), we have  
\[
\dim_{\mathcal H}\left({\mathcal S}(u) \cap B_{\frac{\sigma_{\star}}{2}}(x_{\star})\right) \leq n-2.
\]
\\
{\sc Statement 2}$[j]$:  For  $x_{\star} \in {\mathcal S}_j(u)$, a local representation $u=(V,v):(B_{\sigma_{\star}}(x_{\star}),g) \rightarrow ({\mathcal U} \times {\mathcal V}, d_G)$ at $x_{\star}$, $q \in [1,2)$ sufficiently close to 2 and any subdomain $\Omega$  compactly contained in 
\[
B_{\frac{\sigma_{\star}}{2}}(x_{\star}) \backslash  \left( {\mathcal S}(u) \cap v^{-1}({\mathbb P}_0) \right),
\] 
 there exists a sequence of smooth  functions $\psi_i$ and a neighborhood of $\mathcal N_i$ contained in an $\epsilon_i$-neighborhood of ${\mathcal S}(u)$ with   $\psi_i \equiv 0$ in a neighborhood of ${\mathcal S}(u) \cap \overline{\Omega}$, $\psi_i \equiv 1$ outside of ${\mathcal N}_i$, $\epsilon_i \rightarrow 0$, $0 \leq \psi_i \leq 1$, $\psi_i \rightarrow 1$ for all $x \in \Omega   \backslash {\mathcal S}(u) $ 
\[
\lim_{i \rightarrow \infty} \int_{B_{\frac{\sigma_{\star}}{2}}(x_{\star})} \ |\nabla u| |\nabla \psi_i| \ d\mu =0,
\]
\[
\lim_{i \rightarrow \infty} \int_{B_{\frac{\sigma_{\star}}{2}}(x_{\star})} \ |\nabla u| |\nabla \psi_i|^q \ d\mu =0
\]
and 
\[
\lim_{i \rightarrow \infty} \int_{B_{\frac{\sigma_{\star}}{2}}(x_{\star})} \ |\nabla \nabla u| |\nabla \psi_i| \ d\mu =0.
\]
\\

We will prove {\sc Statement 1}$[j]$ and {\sc Statement 2}$[j]$  for all $j \in \{0, \dots, k\}$ by a backwards induction on $j$ as follows: \\
\\
{\sc Initial Step}.  
 {\sc Statement 1}$[k]$ and {\sc Statement 2}$[k]$ hold  since   $\hat{\mathcal S}_{k}(u)=\emptyset$ (cf.~(\ref{fis})).    \\
\begin{quote}\emph{Inductive Hypothesis [j+1]}:  \ {\sc Statement 1}$[m]$ and {\sc Statement 2}$[m]$ hold for $m=j+1, j+2, \dots, k$. \\
\end{quote}

\noindent {\sc Inductive Step}.  The \emph{Inductive Hypothesis [j+1]} implies that {\sc Statement 1}$[j]$ and {\sc Statement 2}$[j]$ hold. \\

  

Before we prove the {\sc Inductive Step}  in Section~\ref{conclusionofproof},   we will need to further analyze the singular component $v$ of the harmonic map $u$ in the next section. 

\subsection{Order of the singular component map} \label{sec:DM}
In this Section, we  prove  existence of the order function for the singular component  $v$ of a harmonic map into $\overline{\mathcal T}$. The difference with Gromov-Schoen
is  that $v$ is not necessarily energy minimizing, but only \emph{almost} energy minimizing. However, the basic steps are the same as in Gromov-Schoen  with the additional complication of keeping track of the error terms coming from the  almost harmonic map $v$. As in  \cite{gromov-schoen}, before proving that the order function exists we  have to show a {\it{target variation formula}} and a {\it{domain variation formula}}. These theorems have been proved for approximate harmonic maps to a wide range of spaces  in  \cite{daskal-meseDM}. 

 For the sake of completeness we state these theorems and sketch their proof in  Proposition~\ref{targetvariation} and Corollary~\ref{cor:tv}  for the target variation, Proposition~\ref{firstvariationdomain} and Corollary~\ref{domvariation} for the domain variation and Proposition~\ref{orderofv} for the existence of the order function. These theorems parallel \cite[Proposition 2.2]{gromov-schoen}  for the target variation, \cite[Formula (2.3)]{gromov-schoen} for the domain variation and \cite[Formula (2.5)]{gromov-schoen} for the monotonicity and thus the existence of the order function.  


Throughout this subsection, we 
assume that   the \emph{Inductive Hypothesis [j+1]}   holds  
for a harmonic map $u:(\Omega,g) \rightarrow( \overline{\mathcal T}, d_{ \overline{\mathcal T}})$, and let 
   \[
u=(V,v):(B_{\sigma_{\star}}(x_{\star}),g) \rightarrow  ({\mathcal U} \times {\mathcal V}, d_G)
\]
 be a local representation of $u$ at $x_\star \in {\mathcal S}_j(u)$. We start with the following proposition which is a restatement of our inductive hypothesis.
 
 \begin{proposition} \label{A6}
For any $q \in [1,2)$ sufficiently close to 2 and any subdomain $\Omega_1$ compactly contained in 
\[
B_{\frac{x_\star}{2}}(x_{\star}) \backslash ( {\mathcal S}(u) \cap v^{-1}({\mathbb P}_0)), 
\]
there exists a sequence of smooth  functions $\psi_i$ and a neighborhood of $\mathcal N_i$ contained in an $\epsilon_i$-neighborhood of ${\mathcal S}(u)$ with  $0 \leq \psi_i \leq 1$, $\psi_i \equiv 0$ in a neighborhood of ${\mathcal S}(u) \cap \overline{\Omega_1}$, $\psi_i \equiv 1$ outside of ${\mathcal N}_i$, $\epsilon_i \rightarrow 0$,  $\psi_i \rightarrow 1$ for all $x \in \Omega_1   \backslash {\mathcal S}(u) $ such that 
%
\[
\lim_{i \rightarrow \infty} \int_{\Omega_1} \ |\nabla u| |\nabla \psi_i| \ d\mu =0.
\]
\[
\lim_{i \rightarrow \infty} \int_{\Omega_1}\ |\nabla u| |\nabla \psi_i|^q \ d\mu =0.
\]
\[
\lim_{i \rightarrow \infty} \int_{\Omega_1} \ |\nabla \nabla u| |\nabla \psi_i| \ d\mu =0.
\]
\end{proposition}
\begin{proof} 
Since $ v(\Omega_1)$ does not contain (the most singular point) ${\mathbb P}_0$, this follows from the inductive hypothesis {\sc Statement 2}$[j+1]$, \dots, {\sc Statement 2}$[k]$ and a partition of unity argument.
\end{proof}

 Before we discuss the target variation we need two preliminary propositions. In the language of \cite{daskal-meseDM} these correspond to Assumptions 3 and 4.
 
  \begin{proposition} \label{A3}
The set ${\mathcal S}_j(u)$ satisfies the following:
\begin{itemize}
\item[(i)]   $v(x)={\mathbb P}_0$ for $x \in {\mathcal S}_j(u) \cap B_{\sigma_{\star}}(x_{\star})$

\item[(ii)]  $\dim_{\mathcal H}(({\mathcal S}(u) \backslash {\mathcal S}_j(u)) \cap  B_{\frac{\sigma_{\star}}{2}}(x_{\star})) \leq n-2$.
\end{itemize}
\end{proposition}

\begin{proof}
Assertion (i) follows immediately from the definition of ${\mathcal S}_j(u)$.  The inductive assumption along with Proposition~\ref{highorderpts} implies the assertion (ii).  
\end{proof}

\begin{proposition} \label{A4}
For $B_R(x_0) \subset B_{\frac{\sigma_{\star}}{2}}(x_{\star})$  
and any harmonic map 
$
w:(B_R(x_0),g) \rightarrow (\overline{\bf H}^{k-j}, d_h)$, the set  ${\mathcal R}(u,w)$ 
 is of full measure in ${\mathcal R}(u) \cap B_R(x_0)$.  Here,  ${\mathcal R}(u,w)$ is  the set  of points $x \in {\mathcal R}(u) \cap B_R(x_0)$ with the property that there exists  $r>0$  such that $v(B_r(x)), w(B_r(x))$ map into the same stratum of $\overline{\bf H}^{k-j}$.
 \end{proposition}

\begin{proof}
By Theorem~\ref{RegularityTheoremModelSpace}, we have $\dim_{\mathcal H}({\mathcal S}(w)) \leq n-2$.  Thus, ${\mathcal R}(w)$ is of full measure in $B_\sigma(x_0)$ which immediately implies  ${\mathcal R}(u,w)$ is of full measure in ${\mathcal R}(u) \cap B_\sigma(x_0)$.   
\end{proof}

By \emph{target variation}, we mean the one-parameter family of maps
\[
v_{t\eta}:B_{\vt}(x_0) \rightarrow \overline{\bf H}^{k-j}, \ \ \ v_{t\eta}(x) = (1-t\eta(x)) v(x) + t\eta(x) w(x)
\]
where  $\eta \in C^{\infty}_c(B_{\vt}(x_0))$ and $w: B_{\vt}(x_0) \rightarrow \overline{\bf H}^{k-j}$ is a Lipschitz map.   Here, the sum indicates  geodesic interpolation; in other words, given  two points $P, Q \in \overline{\bf H}^{k-j}$,  $\tau \mapsto (1-\tau)P+\tau Q$ for $\tau \in [0,1]$  denotes a constant speed parameterization of the unique geodesic from $P$ to $Q$.

We start by observing that  if $v$ was energy minimizing, we would have  
\begin{equation} \label{uptoet}
E^v_{x_0}(\vt) -  E_{x_0}^{v_{t\eta}}(\vt) \leq 0.
\end{equation}
However, since the singular component map $v$ is not necessarily energy minimizing, we don't expect (\ref{uptoet}) to hold.
On the other hand,  the full map  $u=(V,v)$ is energy minimizing and hence  for $u_{t\eta}=(V,v_{t\eta})$
\[
E^u_{x_0}(\vt) -  E_{x_0}^{u_{t\eta}}(\vt) \leq 0. 
\] 
Furthermore, since by  Proposition~\ref{A2}, the Weil-Petersson metric  is  asymptotically a  product,   we have 
\begin{eqnarray*}
E^u_{x_0}(\vt)  & \approx & E^V_{x_0}(\vt) + E^v_{x_0}(\vt)
\\
E^u_{x_0}(\vt)  & \approx & E^V_{x_0}(\vt) + E^{v_{t\eta}}_{x_0}(\vt).
\end{eqnarray*}
These approximations mean that the equalities are correct up to a small  error.   By precisely accounting for these errors, we obtain the following theorem and its corollary which is the 
{\it{target variation formula.}}

\begin{proposition} \label{targetvariation}
There exists $R>0$ and $C>0$ such that
\begin{equation} \label{lst}
\limsup_{t \rightarrow 0^+} \frac{E^v_{x_0}(\vt) - E_{x_0}^{v_{t\eta}}(\vt)}{t}  \leq C \int_{B_{\vt}(x_0)} \eta d_h(v,{\mathbb P}_0) d_h(v,w) d\mu
\end{equation}
for any $x_0\in  {\mathcal S}_j(u) \cap B_{\frac{\sigma_{\star}}{2}}(x_{\star}), \ \vt \in (0,R]$, $\eta \in C_0^{\infty}(B_{\vt}(x_0))$ and harmonic map 
\[
w:(B_R(x_0),g) \rightarrow  (\overline{\bf H}^{k-j},d_h)
\]
where   
\[
v_{t\eta}(x):=(1-t\eta(x)) v(x)+t\eta(x) w(x).
\]
\end{proposition}
\begin{proof}This is proved in~\cite[Proposition 37]{daskal-meseDM}  by a   straightforward computation by using the precise asymptotic estimates of the Weil-Petersson metric given in  Proposition~\ref{A2} and Propositions~\ref{A3} and~\ref{A4}. We omit the details here.
\end{proof}

If $v$ is harmonic, then  the target variation  formula (cf.~\cite[(2.2)]{gromov-schoen}) is
\begin{equation} \label{qwt}
\triangle d_h^2(v,Q) \geq 0,
\end{equation}
where $Q$ is any point on the target space. However, since $v$ is only approximately harmonic, we have to modify (\ref{qwt}) by adding an error term to obtain equation (\ref{vwco}). The precise estimate is 

\begin{corollary} \label{cor:tv}
There exists $R>0$ and $C>0$ 
such that 
\begin{equation} \label{tvgs}
 - C\int_{B_{\vt}(x_0)}  \eta d_h^2(v,P_0)  d\mu
+2 \int_{B_{\vt}(x_0)}  \eta |\nabla v|^2  d\mu    \leq  - \int_{B_{\vt}(x_0)}  \nabla \eta \cdot \nabla d_h^2(v,P_0) \ d\mu
\end{equation}
and
\begin{equation} \label{vwco}
0 \leq - \int_{B_{\vt}(x_0)} \nabla \eta \cdot \nabla d_h^2(v,w) d\mu +C \int_{B_{\vt}(x_0)} \eta d_h(v,{\mathbb P}_0) d_h(v,w) d\mu
\end{equation}
for any $x_0\in  {\mathcal S}_j(u) \cap B_{\frac{\sigma_{\star}}{2}}(x_{\star})$, $\vt \in (0,R]$, $\eta \in C_0^{\infty}(B_{\vt}(x_0))$  and  harmonic map  
\[
w:(B_R(x_0),g) \rightarrow  \overline{\bf H}^{k-j}.
\]
\end{corollary}

\begin{proof}
For inequality~(\ref{tvgs}), we combine the computation of the target variation formula in  \cite[Section 2]{gromov-schoen} with Proposition~\ref{targetvariation}.  See also \cite[Corollary 39]{daskal-meseDM}.
We now prove (\ref{vwco}).
Let $R>0$ and $C>0$ be as in Proposition~\ref{targetvariation}.
By \cite[Lemma 2.4.2]{korevaar-schoen1} (with $u_0=v$, $u_1=w$, replacing $\eta$ by $t\eta$, integrating over $B_{\vt}(x_0)$ and noting that $w$ is an energy minimizing map)
\begin{equation} \label{ks242}
t \int_{B_{\vt}(x_0)} \nabla \eta \cdot \nabla d_h^2(v,w)  \du -O(t^2) \leq E^v_{x_0}(\vt) - E_{x_0}^{v_{t\eta}}(\vt)
\end{equation}
which combined with  Proposition~\ref{targetvariation}
implies the result.
\end{proof}

\begin{lemma}  \label{anothersh} 
For $x_0\in  {\mathcal S}_j(u) \cap B_{\frac{\sigma_{\star}}{2}}(x_{\star})$, 
let  $v_\sigma$ be the singular component of the blow-up map $u_\sigma=(V_\sigma,v_\sigma)$ of $u$ at $x_0$ (cf.~(\ref{buwrtu})).
For  a fixed $R \in (0,1)$ and $r \in (0, 1)$,  there exists a constant $C>0$ that can be chosen independently of $\sigma$
such that for  any harmonic map  
\[
w:(B_R(0),g_{ \sigma}) \rightarrow  \overline{\bf H}^{k-j} 
\
\mbox{  with }
 \sup_{B_R(0) }d_h(w,{\mathbb P}_0) \leq c,
\]
 we have
 \begin{equation} \label{instead}
\sup_{B_{r\vartheta}(0)} d_h^2(v_\sigma,w) \leq   \frac{C}{\vartheta^{n-1}}  \int_{\partial B_{\vartheta}(0)} d_h^2(v_\sigma,w)  d\Sigma_\sigma  + C\sigma^2  \vartheta^3, \ \ \forall \vartheta \in (0,R]
\end{equation}
where $d\s_\sigma$ is the volume form with respect to the metric $g_\sigma$.
\end{lemma}
\begin{proof}
Throughout this proof, we use $c$ to denote an arbitrary constant that is independent of $\sigma$ that may change from line to line.  In the estimate of Corollary~\ref{cor:tv}, identify $x_0=0$ and replace  $\vt$  by $\sigma \vt$.    Multiply the resulting inequality by $\mu_\sigma^{-1}$  and apply change of variables to obtain
\[
0 \leq - \int_{B_{\vt}(0)} \nabla \eta \cdot \nabla d_h^2(v_\sigma,w) d\mu_\sigma +c\sigma^2 \int_{B_{\vt}(0)} \eta d_h(v_\sigma,{\mathbb P}_0) d_h(v_\sigma,w) d\mu_\sigma
\]
where  $\du_\sigma$ is the volume form with respect to the metric $g_\sigma$ (cf.~(\ref{defineblowupmap'})).
Fix $r \in (0,1)$.  Let $x \in B_{r\vt}(0)$, $s \in (0,\vt(1-r))$ and $\eta$ approximate the characteristic function of $B_s(x) \subset B_{\vt}(0)$.  We then obtain
\begin{eqnarray*} 
0 & \leq & \int_{B_s(x)} \frac{\partial}{\partial s} d_h^2(v_\sigma,w) d\Sigma_\sigma +c \sigma^2 \int_{B_s(x)} d_h(v_\sigma,{\mathbb P}_0)d_h(v_\sigma,w) d\mu_\sigma \nonumber \\
& \leq & s^{n-1} \frac{d}{ds} \left( \frac{e^{cs}}{s^{n-1}} \int_{\partial B_s(x)} d_h^2(v_\sigma,w) d\Sigma_\sigma \right) \nonumber \\ 
& & \ \ +c\sigma^2 \int_{B_s(x)} d_h(v_\sigma,{\mathbb P}_0)(d_h(v_\sigma,{\mathbb P}_0) + d_h(w,{\mathbb P}_0)) d\mu_\sigma.
\end{eqnarray*} 
Multiply this by $s^{1-n}$ and apply Holder inequality to obtain
\begin{eqnarray} \label{di}
\lefteqn{0 \leq  \frac{d}{ds} \left( \frac{e^{cs}}{s^{n-1}} \int_{\partial B_s(x)} d_h^2(v_\sigma,w) d\Sigma_\sigma \right) } \nonumber \\
& &  \ +cs^{1-n} \sigma^2 \int_{B_s(x)} d_h^2(v_\sigma,{\mathbb P}_0) d\mu_\sigma \nonumber \\
& &  \ \ +cs^{1-n} \sigma^2 \left( \int_{B_s(x)} d_h^2(v_\sigma,{\mathbb P}_0) d\mu_\sigma \right)^{\frac{1}{2}} \left(   \int_{B_s(x)} d_h^2(w,{\mathbb P}_0) d\mu_\sigma \right)^{\frac{1}{2}}.
\end{eqnarray}
Since the blow up map $u_\sigma$ has Lipschitz bound that can be chosen independently of $\sigma$ in $B_R(0)$ (cf.~(\ref{uLip})), so does the singular component map  $v_\sigma$ by Lemma~\ref{disest2}.  Indeed, there exists a constant $c>0$  that can be chosen independently of $\sigma$ such that 
\[
 \sup_{B_s(0) } d_h(v_\sigma,{\mathbb P}_0) \leq c s.
 \]  
Additionally, 
\[
\mbox{Vol}_{g_\sigma}(B_s(x_0)) \leq c s^n.
\]
Combining the above two inequalities with the assumption $\sup_{B_s(0) }d_h(w,{\mathbb P}_0) \leq c$, we conclude that
the last two terms of the right hand side of (\ref{di}) can be replaced by $c\sigma^2 s^2$; i.e.
  \begin{eqnarray} \label{pdi'}
0 & \leq & \frac{d}{ds} \left( \frac{e^{cs}}{s^{n-1}} \int_{\partial B_s(x)} d_h^2(v_\sigma,w) d\Sigma_\sigma \right) +c\sigma^2 s^2.\end{eqnarray}
Integrating this   over $s \in (0,t)$ for $t \leq \vt (1-r)$ yields 
\[
d_h^2(v_\sigma(x),w(x)) \leq \frac{c}{t^{n-1}} \int_{\partial B_t(x)} d_h^2(v_\sigma,w) d\Sigma_\sigma + c\sigma^2 t^3.
\]
Multiplying the above by $t^{n-1}$,  integrating over $t  \in (0,\vt(1-r))$ and noting that $B_t(x) \subset B_{\vt}(x_0)$, \begin{equation} \label{dotcirc}
\sup_{B_{r\vt} (x_0)} d_h^2(v_\sigma,w) \leq 
\frac{c}{\vt^n} \int_{B_{\vt}(x_0)}  d_h^2(v_\sigma,w) d\mu_\sigma+ c\sigma^2 \vt^{n+3}.
\end{equation}
Now we consider (\ref{pdi'}) with $x=x_0$ and integrate this over $s \in (t, \vt)$. We obtain
\[
\frac{1}{t^{n-1}} \int_{\partial B_t(x_0)} d_h^2(v_\sigma,w) d\Sigma_\sigma 
\leq
\frac{c}{\vt^{n-1}} \int_{\partial B_{\vt}(x_0)} d_h^2(v_\sigma,w) d\Sigma_\sigma + c\sigma^2 \vt^3.
\]
Multiplying by $t^{n-1}$ and  integrating this over $t \in(0,\vt)$, 
\begin{equation} \label{starcirc}
\frac{1}{\vt^n} \int_{B_{\vt}(x_0)} d_h^2(v_\sigma,w) d\Sigma_\sigma 
\leq
\frac{c}{\vt^{n-1}} \int_{\partial B_{\vt}(x_0)} d_h^2(v_\sigma,w) d\Sigma_\sigma + c\sigma^2 \vt^3.
\end{equation}
Combine (\ref{dotcirc}) and (\ref{starcirc}) yields the desired inequality.   
\end{proof}

 
 \begin{lemma} \label{compblowupu}  
If $x_0\in  {\mathcal S}_j(u) \cap B_{\frac{\sigma_{\star}}{2}}(x_{\star})$ and $u_{\sigma_i}=(V_{\sigma_i},v_{\sigma_i})$ is  a sequence of  blow-up maps of $u$ at $x_0$ as in Lemma~\ref{blowupcomponent}, then
 $v_{\sigma_i}$ is a sequence of asymptotically harmonic maps. 
\end{lemma}

\begin{proof}
By Lemma~\ref{blowupcomponent}, there exists a sequence of blow-up maps $u_{\sigma_i}=(V_{\sigma_i},v_{\sigma_i})$ that converges locally uniformly in the pullback sense to a map   $u_*=(V_*,v_*)$
where $V_*, v_*$ 
are homogeneous degree $\alpha$ harmonic maps and the sequences $V_{\sigma_i}$, $v_{\sigma_i}$ converge to $V_*$, $v_*$ 
 respectively.
We check properties (i)-(iv) of Definition~\ref{asymptoticallyharmonic} for $v_i=v_{\sigma_i}$ and $g_i=g_{\sigma_i}$.   Property (i) regarding the metric $g_i$ follows immediately from the definition of $g_{\sigma_i}$.  Property (ii) follows from the fact that $u_{\sigma_i}$ and hence $v_{\sigma_i}$ is uniformly locally Lipschitz continuous by (\ref{uLip}) and Lemma~\ref{disest2}.   Since  $v_{\sigma_i}$ converges to $v_*$, we have Property (iii).   
Finally, Property (iv) follows from Lemma~\ref{anothersh} with $c_0=C$ and $c_i=C\sigma_i^2$.  
\end{proof}

\begin{lemma} \label{tucson}
If $x_0 \in  {\mathcal S}_j(u) \cap B_{\frac{\sigma_{\star}}{2}}(x_{\star})$ and   $u_*=(V_*,v_*)=(V_*,v_*^1, \dots, v_*^{k-j})$ is a tangent map of $u$ at $x_0$ as in Lemma~\ref{blowupcomponent}, then $v_*$ is a constant map.
\end{lemma}

\begin{proof}
Let  $u_{\sigma_i}=(V_{\sigma_i},v_{\sigma_i})$ be the sequence of  blow-up maps converging locally uniformly in the pullback sense to $u_*=(V_*,v_*)$ as in Lemma~\ref{blowupcomponent}.  By Lemma~\ref{compblowupu}, $v_{\sigma_i}$ is a sequence of asymptoticly harmonic maps converging to $v_*$. 
Since $x_0 \in {\mathcal S}_j(u)$, we have by definition that  $v_{\sigma_i}(0) ={\mathbb P}_0$ and  $Ord^{u_*}(0)=1$ (cf. (\ref{sj})).  
By Proposition~\ref{linearapproxblowupsym'}, $v_*$ is identically constant.
\end{proof}

\begin{proposition}\label{A5}
For a.e.~$x\in  {\mathcal S}_j(u) \cap B_{\frac{\sigma_{\star}}{2}}(x_{\star})$,
we have \[
 |\nabla v|^2(x)=0 
\ \mbox{ and } \
|\nabla V|^2(x)=|\nabla u|^2(x).
\]
\end{proposition}

\begin{proof}
Let $x_0 \in {\mathcal S}_j(u)  \cap B_{\sigma_{\star}}(x_{\star})$ and identify $x_0=0$ via normal coordinates.   By Lemma~\ref{tucson}, we can fix a sequence $u_{\sigma_i}=(V_{\sigma_i},v_{\sigma_i})$ of  blow-up maps of $u$ such that $u_{\sigma_i}$ and $V_{\sigma_i}$ converge to a tangent map $u_*=V_*:B_1(0) \rightarrow {\C}^j$ and $v_{\sigma_i}$ converges to a constant map.   
Lemma~\ref{disest2} implies
\begin{equation} \label{willprovelater'}
E^{u_{\sigma_i}}(r) =\left(E^{V_{\sigma_i}}(r) +E^{v_{\sigma_i}}(r) \right)  + O(\sigma_i^2).
\end{equation}
Therefore, 
\begin{eqnarray*}
\limsup_{\sigma_i \rightarrow 0} E^{V_{\sigma_i}}(r) 
&  \leq & \limsup_{\sigma_i \rightarrow 0} (E^{V_{\sigma_i}}(r) + E^{v_{\sigma_i}}(r)) \\ 
& = & \lim_{\sigma_i \rightarrow 0}  \ E^{u_{\sigma_i}}(r)   \ \ \ \mbox{(by (\ref{willprovelater'}))} \\
& = & E^{u_*}(r)  \ \ \ \mbox{ (by  Theorem~\ref{KScpct})}\\
&=   & E^{V_*}(r) \ \ \ \mbox{ (since $u_*=V_*$)} \\
& \leq &  \liminf_{\sigma_i \rightarrow 0} E^{V_{\sigma_i}}(r)  \ \ \ \mbox{ (by  the lower semicontinuity of energy)}.
\end{eqnarray*}
This immediately implies
\begin{equation} \label{heyitgoestozero}
\lim_{\sigma_i \rightarrow 0} E^{V_{\sigma_i}} (r)  = \lim_{\sigma_i \rightarrow 0}  E^{u_{\sigma_i}}(r)  \ \mbox{ and } \ \lim_{\sigma_i \rightarrow 0} E^{v_{\sigma_i}}(r)=0.
\end{equation}
Since $|\nabla v|^2$ is an integrable function, almost every point of  $B_{\sigma_{\star}}(x_{\star})$ is a Lebesgue point.  In particular, at almost every $x \in {\mathcal S}_j(u)  \cap B_{\sigma_{\star}}(x_{\star})$, 
\begin{eqnarray*}
|\nabla v|^2(x) & = & \lim_{\sigma_i \rightarrow 0} \frac{1}{Vol(B_{\sigma_i r}(x))} \int_{B_{\sigma_i r}(0)} |\nabla v|^2 d\mu \\
& = &  \lim_{\sigma_i \rightarrow 0} \frac{\mu_{\sigma_i}^2}{Vol(B_r(0))} \int_{B_r(0)} |\nabla v_{\sigma_i}|^2 d\mu_{\sigma_i}\\
& \leq  &  \lim_{\sigma_i \rightarrow 0} \frac{C^2}{Vol(B_r(0))} \int_{B_r(0)} |\nabla v_{\sigma_i}|^2 d\mu_{\sigma_i}\\
& = & 0  \ \ \ \ \mbox{(by (\ref{heyitgoestozero}))}.
\end{eqnarray*} 
This implies the first assertion.  The second follows immediately from the first.
\end{proof}

Next, we discuss the \emph{variation of the domain} which gives an estimate on how far $v$ is from being an energy minimizing map with respect to domain variations.  By a domain variation, we mean the one-parameter family of maps  
\[
v_t:B_\sigma(x_0) \rightarrow (\overline{\bf H}^{k-j}, d), \ \ \ 
v_t(x)=v \circ F_t(x)
\]
where $F_t$ is a diffeomorphism given by
\[
F_t(x)=(1+t\xi(x))x, \ \   \mbox{$\xi \in C^{\infty}_c(B_\sigma(x_0))$},\ \  0 \leq \xi \leq 1.
\]

In the next Proposition, if $v$ was a minimizer as in \cite{gromov-schoen}, then the right hand side would be 0. The error terms come because $v$ is almost energy minimizing   with respect to the  asymptotic product structure of the Weil-Petersson metric.

\begin{proposition} \label{firstvariationdomain}
 There exists $C>0$  such that  for $x_0 \in {\mathcal S}_j(u) \cap B_{\frac{\sigma_{\star}}{2}}(x_{\star})$ and $\sigma \in (0,r_0)$,
 we have
\begin{eqnarray*}
\lim_{t \rightarrow 0} \frac{E^v_{x_0}(\sigma)-E^{v_t}_{x_0}(\sigma)}{t}
& \leq & C  \int_{B_\sigma(x_0) }  \xi d^2(v,P_0) d\mu  + C \sigma \int_{B_\sigma(x_0) }  \xi |\nabla v|^2 d\mu \nonumber
\end{eqnarray*}
Furthermore,   $C$ depends only on the constant in the estimates for the target metric  $G$, the domain metric $g$ and the Lipschitz constant of $u$. \end{proposition}
\begin{proof}
Follows  from Proposition~\ref{targetvariation} and the  computation  of \cite[Chapter 7 and Chapter 8]{daskal-meseDM} (cf.~\cite[Lemma 52]{daskal-meseDM}).  
\end{proof}
An important consequence of Proposition~\ref{firstvariationdomain}  is the \emph{domain variation formula}.  
\begin{corollary} \label{domvariation}
 There exist $R_0>0$ and $C>0$ such that  for $x_0 \in {\mathcal S}_j(u) \cap B_{\frac{\sigma_{\star}}{2}}(x_{\star})$ and $\sigma \in (0,R_0)$, we have
\begin{equation} \label{qw}
\sigma \frac{d}{d\sigma} E^v_{x_0}(\sigma) +  (2-n+C\sigma) E^v_{x_0}(\sigma)  - 2\sigma \int_{\partial B_\sigma(x_0)} \left| \frac{\partial {v}}{\partial r} \right|^2 d\Sigma \geq 0.
\end{equation}
Furthermore,  $C$ depends only on the constant in the estimates  for the Weil-Petersson metric  $G_{WP}$, the domain metric $g$ and the Lipschitz constant of $u$.
\end{corollary}

\begin{proof}
Combine the usual computation for harmonic maps (cf.~\cite[Chapter 2.4]{simon} or \cite[p.~192-193]{gromov-schoen}) with the  domain variation formula of Proposition~\ref{firstvariationdomain}.  The details are given in~\cite[Proposition 53]{daskal-meseDM}
\end{proof}

Finally, we discuss the existence of the  order of the singular component map $v$.  If $v$ is harmonic, then we have the monotonicity formula~\cite[(2.5)]{gromov-schoen}   
\[
\frac{d}{d\sigma} \left( e^{c\sigma^2} \frac{\sigma E^v(\sigma)}{I^v(\sigma)} \right) \geq 0
\]
  which immediately implies that the order exists.  
Furthermore,  by \cite[proof of Theorem 2.3]{gromov-schoen}, we also obtain
\[
 \frac{d}{d\sigma} \left( e^{c\sigma^2} \frac{E^v(\sigma)}{\sigma^{n-2+2\alpha}} \right)\geq 0 \mbox{ and } \frac{d}{d\sigma} \left( e^{c\sigma^2} \frac{I^v(\sigma)}{\sigma^{n-1+2\alpha}} \right)\geq 0.
  \]
Since $v$ is not necessarily harmonic, by applying  the  target variation formula (cf.~Corollary~\ref{cor:tv} with $\eta$ approximating the characteristic function of $B_\sigma(x_0)$) and the domain variation formula (cf.~Corollary~\ref{firstvariationdomain}), 
we  obtain 
\begin{proposition}  \label{orderofv}
The singular component map  $v$ has a well-defined order at any point of ${\mathcal S}_j(u) \cap B_{\frac{\sigma_{\star}}{2}}(x_{\star})$. In other words,   
\begin{eqnarray} \label{limitexists}
Ord^{v}(x_0):= \lim_{\sigma \rightarrow 0} \frac{\sigma E^v(\sigma)}{I^v(\sigma)}\ \mbox {exists for any $x_0\in  {\mathcal S}_j(u) \cap B_{\frac{\sigma_{\star}}{2}}(x_{\star})$.}
\end{eqnarray}
Furthermore,  there  exist constants $c>0$, $C_1>0$ and $R_0>0$  depending continuously on the point $x_0$ such that
\begin{eqnarray}
Ord^v(x_0) \leq e^{c\sigma} \frac{\sigma E^v(\sigma)}{I^v(\sigma)}, \ \ \ \forall \sigma \in (0,R_0)
\end{eqnarray}
and
\begin{eqnarray}\label{smnl1}
e^{-C_1 \sigma}\frac{\sigma E^v_x (\sigma)}{I^v_x (\sigma)} \leq e^{C_1 \rho}\frac{\rho E^v_x (\rho)}{I^v_x (\rho)}  \ \ \  \forall \sigma< \rho<R_0.
\end{eqnarray}
Finally,
\begin{equation} \label{monotonicity!}
\sigma \mapsto e^{c\sigma} \frac{I^v(\sigma)}{\sigma^{n-1+2\alpha}}, \ \  \ \sigma \mapsto e^{c\sigma} \frac{E^v(\sigma)}{\sigma^{n-2+2\alpha}}
\end{equation}
are   non-decreasing functions in $(0,R_0)$ where $\alpha=Ord^v(x_0) \geq 1$. 
\end{proposition}

\begin{proof}
See \cite[Proposition 54 and Corollary 60]{daskal-meseDM}.
\end{proof}
As the consequence of the existence of order, we will show that the sequence of blow up maps is sequence of approximately harmonic maps (cf.~Lemma~\ref{compblowupv} below).

\begin{definition} \label{blowingupv}
Let $x_0\in  {\mathcal S}_j(u) \cap B_{\frac{\sigma_{\star}}{2}}(x_{\star})$, identify $x_0=0$ via normal coordinates and let $g_\sigma$ as in (\ref{defineblowupmap'}).  For 
\begin{equation} \label{buwrtv0}
 \nu(\sigma) = \sqrt{\frac{I^v(\sigma)}{\sigma^{n-1}}},
\end{equation}
 define 
\begin{equation} \label{buwrtv}
v_\sigma: (B_1(0),g_\sigma) \rightarrow (\overline{\bf H}^{k-j},d_h),  \ \ v_\sigma(x)=\nu^{-1}(\sigma) v(\sigma x).
\end{equation}
We call $v_\sigma$ the \emph{blow-up map of $v$ at $x_0$}.
\end{definition}

\begin{remark} \label{bothblowups}
We emphasize that the scaling factor in the map $v_\sigma$ of (\ref{buwrtv}) is different from the one in the map $v_\sigma$ of  (\ref{buwrtu}) \emph{although we use the same notation}.  More specifically, $v_\sigma$ of (\ref{buwrtv}) (i.e.~the blow-up map of $v$) is scaled with respect to the scaling factor $\sqrt{\frac{I^v(\sigma)}{\sigma^{n-1}}}$, whereas the map $v_\sigma$ of (\ref{buwrtu})  (i.e.~the singular component map of the blow-up $u_\sigma$ of $u$) is scaled with respect to the scaling factor $\sqrt{\frac{I^u(\sigma)}{\sigma^{n-1}}}$.
For a singular component map of the blow-up $u_\sigma$ of $u$, the energy bound follows from  the energy bound of  $u_\sigma$ (cf. Lemma~\ref{disest2}).     On the other hand, the blow-up map $v_\sigma$ of $v$ is  rescaled by $\sqrt{\frac{I^v(\sigma)}{\sigma^{n-1}}}$, which may tend to 0 much quicker than $\sqrt{\frac{I^u(\sigma)}{\sigma^{n-1}}}$ and hence the energy bound for $u_\sigma$ does not help. On the other hand,  by  Proposition~\ref{orderofv}, we can now give a uniform energy bound for  the blow-up map $v_\sigma$ of $v$ at $x_0$.  
More precisely,  for $\sigma>0$ sufficiently small,  (\ref{limitexists}) implies
\begin{equation} \label{bdbyo}
E^{v_\sigma}(1) =\frac{\sigma E^v(\sigma)}{I^{v}(\sigma)} \leq E_0:=2Ord^{v}(x_0).
\end{equation}.  
\end{remark}

\begin{definition} \label{approxhm}
The harmonic map \[
w_\sigma:B_{\frac{3}{4}}(0) \rightarrow ({\bf H}^{k-j},d_h)
\]
 whose boundary values agree with that of $v_\sigma\big|_{B_{\frac{3}{4}}(0)}$ is called the \emph{approximating harmonic map} for $v_\sigma$. 
\end{definition}

%

 \begin{lemma}
 \label{energycomp}
Let  $v_{\sigma}$ and $w_{\sigma}$ be as in Definition~\ref{blowingupv} and Definition~\ref{approxhm} respectively.  There exists a sequence $\sigma_i \rightarrow 0$ and a constant $C>0$ such that
\[
|E^{v_{\sigma_i}}(r) - E^{w_{\sigma_i}}(r)| \leq C\sigma_i, \ \forall  r \in (0,\frac{1}{2}).
\]
\end{lemma}

\begin{proof}
 The main issue  is that the map $v_\sigma$ is not a competitor to the harmonic map $w_\sigma$ in the domain $B_{r}(0)$ because their boundary values do not necessarily match.  Therefore, we ``bridge'' the gap between $v_\sigma$ and $w_\sigma$ using \cite[Lemma 3.12]{korevaar-schoen2}. This is estimate  \cite[estimate (132)]{daskal-meseDM} where we refer for complete details.
\end{proof}

We next prove the existence of  blow up maps of the singular component map converging to a tangent map.  We will need the following.

\begin{lemma}  \label{anothershv} 
For $x_0\in  {\mathcal S}_j(u) \cap B_{\frac{\sigma_{\star}}{2}}(x_{\star})$, 
let  $v_\sigma$ be  a blow-up map  of $v$ at $x_0$  (cf.~Definition~\ref{blowingupv}).
For  a fixed $R \in (0,1)$ and $r \in (0, 1)$,  there exists a constant 
$C>0$ that can be chosen independently of $\sigma$
such that for  any harmonic map  
\[
w:(B_R(0),g_{ \sigma}) \rightarrow  \overline{\bf H}^{k-j}
\
\mbox{  with }
 \sup_{B_R(0) }d_h(w,{\mathbb P}_0) \leq c,
\] 
we have
\[
\sup_{B_{r \vartheta}(0)} d_h^2(v_\sigma,w) \leq   \frac{C}{\vartheta^{n-1}}  \int_{\partial B_{\vartheta}(0)} d_h^2(v_\sigma,w)  d\Sigma_\sigma  + C\sigma^2  \vartheta^3, \ \ \forall \vartheta \in (0,R]
\]
where $d\s_\sigma$ is the volume form with respect to the metric $g_\sigma$.
\end{lemma}

\begin{proof}
We argue in a similar way as in the proof of Lemma~\ref{anothersh}.  The only difference is that we do not know $v_\sigma$ is Lipschitz continuous in this proof.  Instead, we use the monotonicity property of the singular component map $v$ given by Proposition~\ref{orderofv}.
Indeed, the first estimate of (\ref{monotonicity!}) implies
\begin{eqnarray*}
\frac{\displaystyle{\int_{\partial B_{\rho} (0)} d_h^2(v_\sigma, {\mathbb P}_0) d\s_\sigma}}{\rho^{n-1} } \leq c \rho^2,  \ \forall \rho \in (0,1).
\end{eqnarray*}
Multiplying by $\rho^{n-1}$ and integrating over $\rho \in (0,s)$, we obtain
\begin{equation} \label{dii}
\int_{B_s(0)} d_h^2(v_\sigma, {\mathbb P}_0) \du_\sigma  \leq c  s^{n+2}, \ \forall s \in (0,1).
\end{equation}
Arguing as in the proof of Lemma~\ref{anothersh}, we obtain the analogue of (\ref{di}). Combining it with (\ref{dii}), we obtain
\[
0  \leq  \frac{d}{ds} \left( \frac{e^{c_1s}}{s^{n-1}} \int_{\partial B_s(0)} d_h^2(v_\sigma,w) d\Sigma_\sigma \right) +c\sigma^2 s^2
\]
which should be compared to (\ref{pdi'}) in the proof of Lemma~\ref{anothersh}.
The rest of the proof follows  exactly as the proof of Lemma~\ref{anothersh}.  
\end{proof}

\begin{lemma} \label{existenceoftangentmap}
For $x_0\in  {\mathcal S}_j(u) \cap B_{\frac{\sigma_{\star}}{2}}(x_{\star})$,  there exists a sequence of blow-up maps $v_{\sigma_i}$ of $v$ at $x_0$ (cf.~Definition~\ref{blowingupv})  converging locally uniformly in the pullback sense  to a homogeneous harmonic map $v_0:B_1(0) \rightarrow (Y_0,d_0)$ into an NPC space with 
\[
Ord^{v_0}(0)
 = Ord^{v}(x_0).
\] 
\end{lemma}

\begin{proof}
Let $w_\sigma:B_{\frac{3}{4}}(0) \rightarrow ({\bf H}^{k-j},d_h)$ be the approximating harmonic map for $v_\sigma$ (cf.~Definition~\ref{approxhm}).
Since  
\begin{equation} \label{wuniformenergy}
E^{w_\sigma}({\frac{3}{4}}) \leq E^{v_\sigma}({\frac{3}{4}}) \leq E_0
\end{equation} 
by (\ref{bdbyo}), the family of harmonic maps $w_\sigma$  has a uniform local  Lipschitz estimate by Theorem~\ref{KSlip}.   The Compactness Theorem~\ref{KScpct} implies that there exists  a sequence of $w_{\sigma_i} \big|_{B_{\frac{1}{2}}(0)} $ that converges locally uniformly in the pullback sense to a harmonic map  $v_0:B_{\frac{1}{2}}(0) \rightarrow (Y_0,d_0)$ into an NPC space and
\[
E^{v_0}(r) =\lim_{\sigma_i \rightarrow 0} E^{w_{\sigma_i}}(r), \ \forall r \in (0,\frac{1}{2}).
\] 
By Lemma~\ref{anothershv}, 
\[
\lim_{\sigma_i \rightarrow 0} \sup_{B_{\frac{1}{2}}(0)} d^2_h(v_{\sigma_i},w_{\sigma_i})  =0,
\]
and thus 
\begin{eqnarray}\label{cvgamap}
 v_{\sigma_i} \big|_{B_{\frac{1}{2}}(0)}  \ \mbox{converges locally uniformly in the pullback sense to  $v_0$. }
 \end{eqnarray} 
  In particular, we have
\[
I^{v_0}(r) =\lim_{\sigma_i \rightarrow 0} I^{v_{\sigma_i}}(r), \ \forall r \in (0,\frac{1}{2}).
\]
 Furthermore, by Lemma~\ref{energycomp},   there exists a constant $C>0$ such that
\begin{equation} \label{compafevwenergies'}
|E^{v_{\sigma_i}}(r) - E^{w_{\sigma_i}}(r)| \leq C\sigma_i, \ \forall r \in (0,\frac{1}{2}),
\end{equation}
and hence
\[
E^{v_0}(r) =\lim_{\sigma_i \rightarrow 0} E^{v_{\sigma_i}}(r), \ \forall r \in (0,\frac{1}{2}).
\]
Thus, for $r \in (0,\frac{1}{2})$,
\[
\frac{rE^{v_0}(r)}{I^{v_0}(r)}=\lim_{\sigma_i \rightarrow 0}\frac{rE^{v_{\sigma_i}}(r)}{I^{v_{\sigma_i}}(r)}=
 \lim_{\sigma_i \rightarrow 0}
 \frac{r\sigma_i E^v(r\sigma_i)}{I^v(r \sigma_i)} 
 = Ord^{v}(x_0).
 \]
Note that  the right hand side is independent of $r$.  Thus, by following the argument of \cite{gromov-schoen} Proposition~3.3, we conclude that the map $v_0$ is  \emph{homogeneous degree $\alpha=Ord^{v}(x_0)$}.  Furthermore, letting $r \rightarrow 0$ above, we obtain
\[
Ord^{v_0}(0)
 = Ord^{v}(x_0).
\]
\end{proof}

\begin{definition} \label{tangentmapofv}
The homogeneous harmonic map $v_0$  of Lemma~\ref{existenceoftangentmap} will be referred to as a \emph{tangent map} of $v$ at $x_0\in  {\mathcal S}_j(u) \cap B_{\frac{\sigma_{\star}}{2}}(x_{\star})$.   Note that $v_0$ may be different from   $v_*$ of Lemma~\ref{blowupcomponent}, the singular component of a tangent map $u_*=(V_*,v_*)$.
\end{definition}

For the convenience of the reader bellow we summarize the different blow-up and tangent maps used in the paper:
\begin{itemize}
\item The blow-up maps $u_{\sigma_i}=(U_{\sigma_i}, v_{\sigma_i})$ of the harmonic map $u=(U,V)$ defined  in (\ref{bups}) with scaling factor $\mu_\sigma$ defined in (\ref{standardscaling}) and converging in the pullback sense to the tangent map $u_*=(V_*,v_*)=(V_*, v_*^1,..., v_*^k)$ (cf. Lemma~\ref{blowupcomponent}).
\item The blow-up maps $v_{\sigma_i}$  of the singular component $v$ of the harmonic map 
$u=(U,V)$ defined in (\ref{buwrtv}) with scaling factor $\nu_\sigma$ defined in (\ref{buwrtv0}) converging to the homogeneous harmonic map  $v_0$ (cf. Lemma~\ref{existenceoftangentmap}).
\item  The approximating harmonic map 
$w_\sigma:B_{\frac{3}{4}}(0) \rightarrow ({\bf H}^{k-j},d_h)$ for $v_\sigma$
 whose boundary values agree with that of $v_\sigma\big|_{B_{\frac{3}{4}}(0)}$ (cf. Definition~\ref{approxhm}). 
\end{itemize}

\begin{lemma} \label{compblowupv}
If  $x_0 \in  {\mathcal S}_j(u) \cap B_{\frac{\sigma_{\star}}{2}}(x_{\star})$, then 
the sequence $v_{\sigma_i}$ of blow-up maps of $v$ at $x_0$ converging to a tangent map $v_0$ (cf.~Definition~\ref{tangentmapofv}) is a sequence of  asymptotically harmonic maps.
%
\end{lemma} 
\begin{proof}
We check properties (i)-(iv) of Definition~\ref{asymptoticallyharmonic} for $v_i=v_{\sigma_i}$ and $g_i=g_{\sigma_i}$.  Property (i) regarding the metric $g_i$ follows immediately from the definition of $g_{\sigma_i}$. From   the monotonicity property (\ref{monotonicity!}) and the energy bound (\ref{bdbyo}), we obtain (ii). Property (iii) is about the convergence of $v_{\sigma_i}$ to a tangent map $v_0$ (cf.~(\ref{cvgamap})).  Finally, Property (iv) follows from Lemma~\ref{anothershv} with $c_0=C$ and $c_i=C\sigma_i^2$.  
\end{proof}


This yields the following corollary.  
\begin{corollary}\label{ptsorder1}
 If $x_0\in B_{\frac{\sigma_{\star}}{2}}(x_{\star}) \cap {\mathcal S}_j(u)$, then  
 \[
 Ord^v(x_0) \geq 1+ \epsilon_0.
 \]
 \end{corollary}
 \begin{proof}
 Since $v_{\sigma_i}$ of Lemma~\ref{compblowupv} is a sequence of asymptotically harmonic maps,  the assertion follows from Lemma~\ref{ahmresult}, Proposition~\ref{linearapproxblowupsym'} and Lemma~\ref{existenceoftangentmap}. 
 \end{proof}

The above discussion yields a slight variation of Lemma~\ref{uscord} on the upper semicontinuity of the order for $v$.  
 \begin{lemma} \label{usordv} 
Let $x_0 \in {\mathcal S}_j(u) \cap B_{\frac{\sigma_{\star}}{2}}(x_{\star})$ and $v_{\sigma_i}$ be the sequence of blow-up maps of $v$ at $x_0$ converging locally uniformly in the pullback sense to a tangent map $v_0$.   After identifying $x_0=0$ via normal coordinates,  let 
\[
\sigma_i^{-1}({\mathcal S}_j(u) \cap B_{\frac{\sigma_{\star}}{2}}(x_{\star})):=\{\sigma_i^{-1}x:  x\in {\mathcal S}_j(u) \cap B_{\frac{\sigma_{\star}}{2}}(x_{\star})\}.
\]
If  $x_i \in \sigma_i^{-1}({\mathcal S}_j(u) \cap B_{\frac{\sigma_{\star}}{2}}(x_{\star})) \cap B_{\frac{1}{2}}(0)$ 
and $x_i \rightarrow x_*$, then
\[
\liminf_{\sigma_i \rightarrow 0} Ord^{v_{\sigma_i}}(x_i) \leq Ord^{v_0}(x_*).
\]
\end{lemma}

 \begin{proof}
Since $\sigma_i x_i \in {\mathcal S}_j(u)$, we can apply Proposition~\ref{orderofv} to assert that 
\[
\alpha_i:= \lim_{r \rightarrow 0} \frac{r E_{x_i}^{v_{\sigma_i}}(r)}{I_{x_i}^{v_{\sigma_i}}(r)}
\]
exists.
The proof follows  as in Lemma~\ref{uscord} with $u_{\sigma_i}$ replaced with $v_{\sigma_i}$ and $u_*$ replaced with $v_0$.  The only difference is that the equality (\ref{thisuseslip}) in the proof of  Lemma~\ref{uscord} uses the uniform local Lipschitz continuity of the sequence $u_{\sigma_i}$.  Although we know that, for each $i$,  $v_{\sigma_i}$ is Lipschitz continuous  by the Lipschitz continuity of $u$, we have not proven  any \emph{uniform} local Lipschitz estimates (i.e.~independent of $i$) of the sequence $v_{\sigma_i}$. (See again Remark~\ref{bothblowups}.)  On the other hand, the sequence of approximating harmonic maps $w_{\sigma_i}$  for $v_{\sigma_i}$ (cf.~Definition~\ref{approxhm}) is uniformly locally Lipschitz continuous. Indeed, according to Proposition~\ref{orderofv} there exist constants $c_1>0$, $c_2>0$ and  $R_0>0$  such that for $0<\sigma_i<\rho<R_0$ 
\begin{eqnarray}\label{smnl}
E_{x_i}^{w_{\sigma_i}}({\frac{3}{4}}) \leq E_{x_i}^{v_{\sigma_i}}({\frac{3}{4}})  \leq E_{x_i}^{v_{\sigma_i}}(1)= \frac{\sigma_i E^v_{\sigma_i x} (\sigma_i)}{I^v_{\sigma_i x} (\sigma_i)} \leq c_1 \frac{\rho E^v_{\sigma_i x} (\rho)}{I^v_{\sigma_i x} (\rho)}\leq c_2.
\end{eqnarray}
Here, the last inequality follows from the continuity of 
\[
x \mapsto \frac{\rho E_x^v(\rho)}{I_x^v(\rho)}
\]
 and the second to the last inequality follows from (\ref{smnl1}). Thus  (\ref{smnl}) and Theorem~\ref{KSlip} imply that $w_{\sigma_i}$ is uniformly Lipschitz. 
    
Therefore, repeating the proof of (\ref{thisuseslip}), we obtain
\[
\lim_{\sigma_i \rightarrow 0}|E^{w_{\sigma_i}}_{x_*}(r) -E^{w_{\sigma_i}}_{x_i}(r)|=0.
\]
Combining  (\ref{compafevwenergies'}) with  the estimate
\[
|E^{v_{\sigma_i}}_{x_*}(r) -E^{v_{\sigma_i}}_{x_i}(r)|
\leq   |E^{v_{\sigma_i}}_{x_*}(r) -E^{w_{\sigma_i}}_{x_i}(r)|+  |E^{w_{\sigma_i}}_{x_*}(r) -E^{w_{\sigma_i}}_{x_i}(r)|+ |E^{w_{\sigma_i}}_{x_*}(r) -E^{v_{\sigma_i}}_{x_i}(r)|,
\]
we obtain
\[
\lim_{\sigma_i \rightarrow 0}|E^{v_{\sigma_i}}_{x_*}(r) -E^{v_{\sigma_i}}_{x_i}(r)|=0
\]
which is (\ref{thisuseslip}) with $v_{\sigma_i}$ replacing with $u_{\sigma_i}$.  The rest of the proof  is exactly as in Lemma~\ref{uscord}.
\end{proof}

\subsection{Inductive Step}
\label{conclusionofproof}
Throughout this subsection, we assume that the \emph{Inductive Hypothesis [j+1]}   holds  
for a harmonic map $u:(\Omega,g) \rightarrow( \overline{\mathcal T}, d_{ \overline{\mathcal T}})$.  
Let  $x_{\star} \in {\mathcal S}_j(u)$ 
 and
\[
u=(V,v):(B_{\sigma_{\star}}(x_{\star}),g) \rightarrow  ({\mathcal U} \times {\mathcal V}, d_G)
\]
be a local representation (cf. Definition~(\ref{locrep})) of $u$ at $x_{\star}$.
The goal is to show that both {\sc Statement 1}$[j]$ holds and {\sc Statement 2}$[j]$ holds. 

\begin{proposition}\label{hmco2}
The set ${\mathcal S}_j(u)$ is of Hausdorff codimension 2 in $B_{\frac{\sigma_{\star}}{2}}(x_{\star})$; i.e.
\[
\dim_{\mathcal H}({\mathcal S}_j(u) \cap B_{\frac{\sigma_{\star}}{2}}(x_{\star})) \leq n-2.
\]
\end{proposition}

\begin{proof}
The assertion holds trivially if $v$ is identically equal to ${\mathbb P}_0$ (in this case $u$ maps into a single stratum of $\overline{\mathcal T}$),  so assume that $v$ is a non-constant map.   Assume on the contrary that $
\dim_{\mathcal H}({\mathcal S}_j(u) \cap B_{\frac{\sigma_{\star}}{2}}(x_{\star})) > n-2$; thus, there exists  $s >n-2$ such that ${\mathcal H}^s({\mathcal S}_j(u) \cap B_{\frac{\sigma_{\star}}{2}}(x_{\star})) >0$.  By \cite[2.10.19]{federer} (also see the proof of \cite[Lemma 6.5]{gromov-schoen}),  there exists $x_0 \in {\mathcal S}_j(u) \cap B_{\frac{\sigma_{\star}}{2}}(x_{\star})$ such that  
\[
2^{-s} \leq \liminf_{\sigma \rightarrow 0} \frac{ {\mathcal H}^s({\mathcal S}_j(u) \cap B_{\frac{\sigma_{\star}}{2}}(x_{\star})) \cap B_{\frac\sigma{2}}(x_0)}{(\sigma/2)^{s}}.
\]
With  $\sigma_i^{-1}({\mathcal S}_j(u) \cap B_{\frac{\sigma_{\star}}{2}}(x_{\star}))$ defined as in Lemma~\ref{usordv} and after identifying $x_0=0$ via normal coordinates, we conclude  
\begin{equation} \label{lhs65}
n-2<s \leq  \limsup_{\sigma_i \rightarrow 0} \dim_{\mathcal H}(\sigma_i^{-1}({\mathcal S}_j(u) \cap B_{\frac{\sigma_{\star}}{2}}(x_{\star})) \cap B_{\frac{1}{2}}(0).
\end{equation}

We claim
\begin{equation} \label{gslemma65}
 \limsup_{\sigma_i \rightarrow 0} \dim_{\mathcal H}(\sigma_i^{-1}({\mathcal S}_j(u) \cap B_{\frac{\sigma_{\star}}{2}}(x_{\star})) \cap B_{\frac{1}{2}}(0) )\leq \dim_{\mathcal H}({\mathcal S}^{>1}(v_0)).
\end{equation}
 Combining (\ref{lhs65}) and (\ref{gslemma65}), we arrive at a  contradiction (cf.~Lemma~\ref{ahmresult} (\ref{em})) which finishes the proof.

 We are left to prove (\ref{gslemma65}).  Indeed, taking a subsequence if necessary, we can assume that the sequence of  blow up maps  $v_{\sigma_i}$ of  $v$ at $x_0$ converges to a tangent map $v_0$. Let  $R \in (0,\frac{1}{2})$, $x_i \in \sigma_i({\mathcal S}_j(u) \cap B_{\frac{\sigma_{\star}}{2}}(x_{\star})) \cap \overline{B_R(0)}$ and assume $x_i \rightarrow x_*$.
Corollary~\ref{ptsorder1} implies $Ord^{v_{\sigma_i}}(x_i) \geq 1+ \epsilon_0$ which in turn implies
 $Ord^{v_0}(x_*) \geq 1+\epsilon_0$ by the upper semicontinuity of order (cf.~Lemma~\ref{usordv}).  Thus, we conclude that if $x_i \rightarrow x_*$ for $x_i \in   {\mathcal S}_j(u) \cap B_{\frac{\sigma_{\star}}{2}}(x_{\star}) \cap \overline{B_R(0)}$, then $x_* \in {\mathcal S}^{>1}(v_0) \cap \overline{B_R(0)}$. Thus, (\ref{gslemma65}) follows from Lemma~\ref{federeroutermeasure}.
 \end{proof}

\begin{lemma} \label{proof2j}
If {\sc Statement 1}$[j]$ holds, then {\sc Statement 2}$[j]$ also holds.
\end{lemma}


%
%

\begin{proof}  
Throughout this proof,   we will use $C$ (which may change line by line) to denote an arbitrary constant that depends only on  the dimension $n$ of the domain, the
Lipschitz constant of $u$ in $B_{\frac{\sigma_{\star}}{2}}(x_{\star})$ and the modulus of continuity of $V$ in $B_{\frac{\sigma_{\star}}{2}}(x_{\star})$ (cf.~(\ref{regularityV})).  

Let  $\epsilon_0>0$ be  smaller than either of the $\epsilon_0$ that appears in Proposition~\ref{ordergap'} and Corollary~\ref{ptsorder1}. Choose $q<2$, $p>2$, $\delta \in (0,1)$ and $D \in (0,1)$ such that  Proposition~\ref{A6} holds for $q$  and  
\[
\frac{1}{p}+\frac{1}{q}=1, \ \
D<\delta<\epsilon_0, \ \ D<\epsilon_0-\delta
\]
\begin{equation} \label{choiceD}
 -2+D<-q-q\delta, \ \ 
-2+D<-p-p\delta+\epsilon_0.
\end{equation}
To prove {\sc Statement }2[$j$], we show that, for a fixed  subdomain $\Omega \subset \subset B_{\frac{\sigma_{\star}}{2}}(x_{\star})$ and $\varepsilon>0$,  there exists an open set ${\mathcal N}$ contained in an  $\varepsilon$-neighborhood of ${\mathcal S}(u) \cap \overline{\Omega}$ and a smooth  function $\psi$  such that $0 \leq \psi \leq 1$,   $\psi \equiv 0$ in a neighborhood  of ${\mathcal S}(u) \cap \overline{\Omega}$,  $\psi  \equiv 1$ on  $\Omega   \backslash {\mathcal N}$ that satisfies
\begin{eqnarray}
 \label{qqeq}
\int_{B_{\frac{\sigma_{\star}}{2}}(x_{\star})} \ |\nabla u| |\nabla \psi| \ d\mu & < & C \varepsilon,
\\
 \label{qqqeq}
 \int_{B_{\frac{\sigma_{\star}}{2}}(x_{\star})} \ |\nabla u| |\nabla \psi|^q \ d\mu & < &  C\varepsilon
\\ \label{theharderone}
 \int_{B_{\frac{\sigma_{\star}}{2}}(x_{\star})} \ |\nabla \nabla u| |\nabla \psi| \ d\mu & < &  C\varepsilon^{\frac{1}{p}}.
\end{eqnarray}
  
Note that  $|\nabla u|(x) \neq 0$ for  $x \in {\mathcal S}_j(u)$ since $u$ is of order 1 at any point in ${\mathcal S}_j(u)$ (cf.~(\ref{sj})).  Thus, by Proposition~\ref{A5}, $|\nabla V|(x)  \neq  0$ for $x \in {\mathcal S}_j(u) \cap B_{\frac{\sigma_{\star}}{2}}(x_{\star})$.
Since $\nabla V$ is continuous (cf.~Remark~\ref{regularityV}),  
 there exists an open set ${\mathcal N} \subset \subset B_{\frac{\sigma_{\star}}{2}}(x_{\star})$ contained in an $\varepsilon$-neighborhood of ${\mathcal S}_j(u) \cap \overline{\Omega}$ and a constant $\lambda_0$ such that
\begin{equation} \label{lowerbd}
|\nabla V|\geq \lambda_0>0  \  \mbox{  on }  {\mathcal N}.
\end{equation}

{\sc Statement 1}$[j]$ implies that we can choose a finite covering $\{B_{r_J}(x_J):J=1, \dots, l\}$  of the compact set ${\mathcal S}_j(u) \cap \overline{\Omega}$ satisfying
\begin{equation} \label{d}
 \sum_J r_J^{n-2+D} < \varepsilon \ \mbox{ and } \ 
B_{3r_J}(x_J) \subset  {\mathcal N}.
\end{equation}
 Let $\varphi_J$ be a smooth  function such that $\varphi_J \equiv 0$ on  $B_{r_J}(x_J)$, $\varphi_J\equiv 1$  on $\Omega \backslash B_{2r_J}(x_J)$, $|\nabla \varphi_J| \leq C r_J^{-1}$, $|\nabla \nabla \varphi_J| \leq C r_J^{-2}$, $|\nabla (\varphi |\nabla \varphi|^{\delta})| \leq C r_J^{-1-\delta}$ and $|\nabla \nabla (\varphi |\nabla \varphi|^{\delta})| \leq C r_J^{-2-\delta}$.    Define $\varphi$ by setting
\[
\varphi=\prod_J \varphi_J.
\]
Let $\rho$ be a Lipschitz function such that $\rho \equiv 1$ on $\bigcup_J B_{2r_J}(x_J)$, $\rho \equiv 0$ outside  $\bigcup_J B_{3r_J}(x_J)$ and $|\nabla \rho| \leq \frac{1}{r_J}$ in $B_{2r_J}(x_J)$ (cf.~\cite[before (6.3)]{gromov-schoen}).

With $\varphi$ and $\rho$ now fixed, Proposition~\ref{highorderpts} implies that we can choose a finite covering $\{B_{s_J}(\xi_J):J=1, \dots, l'\}$  of
$
{\mathcal S}^{>1}(u)  \cap \overline{\Omega}_0
$
 with\enlargethispage{\baselineskip}
\begin{equation} \label{whoops}
\max\left\{ \sup_{\Omega} |\nabla \varphi|, \sup_{\Omega} |\nabla \nabla \varphi|, \sup_{\Omega} |\nabla \rho|)\right\} \sum_J s_J^{n-2+D} < \varepsilon
\end{equation}
and
\[
B_{2s_J}(x_J) \subset {\mathcal N}.
\]
Let $\phi_J$ be a smooth  function such that $\phi_J \equiv 0$ on  $B_{s_J}(\xi_J)$, $\phi_J\equiv 1$  on $\Omega_0 \backslash B_{2s_J}(\xi_J)$, $|\nabla \phi_J| \leq C s_J^{-1}$ and $|\nabla \nabla \phi_J| \leq C s_J^{-2}$.    Define $\phi$ by setting
\[
\phi=\prod_J \phi_J.
\]

Since ${\mathcal S}(u) \cap v^{-1}({\mathbb P}_0) \subset {\mathcal S}_j(u) \cup {\mathcal S}^{>1}(u)$, the set
\[
\Omega_1 := \Omega  \backslash \left(  \bigcup_{J=1}^l \overline{B_{3r_J}(x_J)}
 \cup \bigcup_{J=1}^{l'} \overline{B_{3s_J}(\xi_J)} \right)
\]
is compactly contained in $B_{\frac{x_\star}{2}}(x_{\star}) \backslash ( {\mathcal S}(u) \cap v^{-1}({\mathbb P}_0))$. With $\varphi$, $\rho$ and $\phi$ now fixed, we apply Proposition~\ref{A6} to obtain  a  smooth function $\hat \psi$ such that $0 \leq \hat \psi \leq 1$,  $\hat \psi \equiv 0$ in a neighborhood of ${\mathcal S}(u) \cap \overline{\Omega_1}$, $
  \hat \psi \equiv 1$ outside $\mathcal N$, 
  \begin{eqnarray} \label{prophat}
  \int_{\Omega_1}|\nabla u| |\nabla \hat{\psi}|d\mu < \varepsilon, \ \  \int_{\Omega_1} |\nabla u| |\nabla \hat{\psi}|^q d\mu < \varepsilon^q, \nonumber  \\   
  \sup_{\Omega_1}\{ |\nabla ( \varphi \rho \phi|\nabla \varphi|^{\delta}) |^p, |\nabla ( \varphi \rho \phi|\nabla \varphi|^{\delta}) |   \}\int_{\Omega_1} |\nabla \nabla u|| \nabla   \hat{\psi}|  d\mu <\varepsilon.
  \end{eqnarray}
  Let
\[
\psi:=\varphi^2 \phi^2 \hat{\psi}^2.
\]
By construction,   $0 \leq \psi \leq 1$,   $\psi \equiv 0$ in a neighborhood  of ${\mathcal S}(u) \cap \overline{\Omega}$,  $\psi =1$ for all $x \in \Omega   \backslash {\mathcal N}$. 
By (\ref{d}),
\begin{eqnarray*}
\int_{\Omega}  |\nabla u| |\nabla \varphi|   \ d\mu & \leq  & C
\int_{\Omega} \left| \sum_{J_0}\nabla \varphi_{J_0} \prod_{J \neq J_0} \varphi_J \right|  \ d\mu
\leq  C \sum_{J_0} r_{J_0}^{n-1}< C\varepsilon.
\end{eqnarray*}
By (\ref{whoops}), a similar estimate applies to the integral involving $\phi$ and using the inequality $n-2+D< n-q$ implied by the  fourth inequality of (\ref{choiceD}).
Combined with (\ref{prophat}), we thus conclude
\[
\int_{\Omega}|\nabla u| |\nabla \psi|  \ d\mu 
 \leq 
\int_{\Omega}   |\nabla u| |\nabla \varphi| \ d\mu + \int_{\Omega}  |\nabla u| |\nabla \phi|   \ d\mu + \int_{\Omega} |\nabla u| |\nabla \hat{\psi}|  \ d\mu\nonumber  <  C\varepsilon
\]
which proves inequality (\ref{qqeq}).  Similar computation proves  (\ref{qqqeq}).

We are left to prove (\ref{theharderone}).
We first write
\begin{eqnarray}
\int_{\Omega} |\nabla \nabla u| |\nabla \psi| d\mu 
&= & 2  \int_{\Omega} \varphi^2 \phi^2\hat{\psi} |\nabla \nabla u| |\nabla \hat{\psi}| d\mu+ 2 \int_{\Omega} \phi \varphi^2 \hat{\psi^2} |\nabla \nabla u| |\nabla \phi| d\mu \nonumber
\\& & 
+2 \int_{\Omega} \varphi \phi^2 \hat{\psi}^2 |\nabla \nabla u| |\nabla \varphi| d\mu \nonumber \\
& =: & (A)+(B)+(C). \label{ABC}
\end{eqnarray}
Applying (\ref{prophat}), we can estimate
\[
(A):=2  \int_{\Omega} \varphi^2 \phi^2\hat{\psi} |\nabla \nabla u| |\nabla \hat{\psi}| d\mu \leq  2\int_{\Omega_1}  |\nabla \nabla u| |\nabla \hat{\psi}| d\mu<C\varepsilon.
\]

We next estimate $(C)$.  Noting that the support of the function $\varphi  \phi^2 \hat{\psi}^2|\nabla \varphi|$ is contained in  ${\mathcal R}(u) \cap \bigcup_{J=1}^l B_{2r_J}(x_J)$, 
 \begin{eqnarray*}
\lefteqn{(C)  :=   2 \int_{\Omega} \varphi  \phi^2\hat{\psi}^2 |\nabla \nabla u| |\nabla \varphi| \ d\mu}\\
& \leq &
2\left( \int_{\bigcup_{J=1}^l B_{2r_J}(x_J)} 
 \varphi  \phi\hat{\psi}   |\nabla \varphi|^{\delta}|\nabla \nabla u|^2 |\nabla u|^{-1}   \ d\mu \right)^{1/2} \left( \int_{{\cup_{J=1}^l B_{2r_J}(x_J)} }  |\nabla u| |\nabla \varphi|^{2-\delta} \ d\mu  \right)^{1/2}\\
& \leq &
2\left( \int_{\bigcup_{J=1}^l B_{2r_J}(x_J)}  \varphi  \phi \hat{\psi}  |\nabla \varphi|^{\delta} |\nabla \nabla u|^2 |\nabla u|^{-1} \ d\mu \right)^{1/2} \left(  C  \sum_{J=1}^l r_J^{n-2+\delta} \right)^{1/2}\\
& \leq &
C\varepsilon^{\frac{1}{2}} \left( \int_{ \bigcup_{J=1}^l B_{2r_J}(x_J)} \varphi  \phi \hat{\psi} |\nabla \varphi|^{\delta} |\nabla \nabla u|^2 |\nabla u|^{-1} \ d\mu \right)^{1/2}
\end{eqnarray*}
where the last inequality uses (\ref{choiceD}) and (\ref{d}).
Combining the Eells-Sampson and Schoen-Yau formulae (cf.~\cite[proof of Theorem 6.4]{gromov-schoen}), we have
\[
 |\nabla \nabla u|^2 |\nabla u|^{-1} \leq C\left(  |\nabla u| +\triangle |\nabla u| \right) \ \mbox{ on }{\mathcal R}(u).
\]
We multiply $\varphi  \phi \hat{\psi} \rho|\nabla \varphi|^{\delta}$ to both sides of the above inequality to obtain
\begin{eqnarray*}
\lefteqn{\int_{ \bigcup_{J=1}^l B_{2r_J}(x_J)}\varphi  \phi \hat{\psi} |\nabla \varphi|^{\delta} |\nabla \nabla u|^2 |\nabla u|^{-1} \ d\mu}\\
& \leq &  C\int_{\cup_{J=1}^l B_{3r_J}(x_J)} \varphi  \phi \hat{\psi} \rho |\nabla \varphi|^{\delta} |\nabla u| d\mu +C\int_{\cup_{J=1}^l B_{3r_J}(x_J)}  \triangle( \varphi  \phi \hat{\psi}\rho  |\nabla \varphi|^{\delta})   |\nabla u|d\mu \\
& =:&(C_1) +(C_2).
\end{eqnarray*}
By  (\ref{d}) and since $\delta <1$,
\[
(C_1) \leq C\int_{\cup_{J=1}^l B_{3r_J}(x_J)} |\nabla \varphi|^{\delta} d\mu
\leq C\sum_{J} r_J^{n-\delta} < C\varepsilon.
\]
To estimate $(C_2)$, we write
\begin{eqnarray*}
(C_2)&= & \int_{\cup_{J=1}^l B_{3r_J}(x_J)}\triangle (\varphi \rho \phi \hat{\psi}|\nabla \varphi|^{\delta}) |\nabla u| d\mu \\
& = &  \int_{\cup_{J=1}^l B_{3r_J}(x_J)}\triangle (\varphi \rho \phi \hat{\psi}|\nabla \varphi|^{\delta})\left( |\nabla V|+ |\nabla u|-|\nabla V| \right)d\mu \\
& = &  \int_{\cup_{J=1}^l B_{3r_J}(x_J)}\triangle (\varphi \rho \phi \hat{\psi}|\nabla \varphi|^{\delta}) |\nabla V|d\mu \\
&  & \ + \int_{\cup_{J=1}^l B_{3r_J}(x_J)}\triangle (\varphi \rho \phi \hat{\psi}|\nabla \varphi|^{\delta})\left( |\nabla u|-|\nabla V| \right)d\mu \\
& = & - \int_{\Omega}   \nabla ( \varphi \rho \phi \hat{\psi} |\nabla \varphi|^{\delta} )\cdot \nabla | \nabla V|d\mu\\
& & \ 
  + \int_{\Omega}   \hat{\psi}  \triangle ( \varphi \rho \phi|\nabla \varphi|^{\delta} )   \left( |\nabla u|-|\nabla V| \right)  d\mu\\
  & & \  \ +  \int_{\Omega}  \varphi \rho \phi|\nabla \varphi|^{\delta}    \triangle  \hat{\psi}    \left( |\nabla u|-|\nabla V| \right)  d\mu  \\
   & & \ \ \  +2 \int_{\Omega} \nabla ( \varphi \rho \phi|\nabla \varphi|^{\delta} )   \cdot \nabla   \hat{\psi}    \left( |\nabla u|-|\nabla V| \right)  d\mu \\
 & =: & (C_{21})+(C_{22})+(C_{23})+(C_{24}).
 \end{eqnarray*}
 Using the fact that $|\nabla \nabla V| \in L^p(\Omega)$ (cf.~Remark~\ref{regularityV}) and the fact that derivatives of $\varphi$ and $\rho$ are supported in $\cup_{J=1}^l B_{3r_J}(x_J)$ and the derivatives of $\phi$ are supported in $\cup_{J=1}^{l'} B_{3s_J}(\xi_J)$, we have
 \begin{eqnarray*}
(C_{21}) &  \leq &  C \left(\int_{\Omega}  |\nabla  ( \varphi \rho \phi \hat{\psi} |\nabla \varphi|^{\delta} )|^{q} d\mu \right)^{\frac{1}{q}} \cdot \left( \int_{\Omega}   | \nabla \nabla V|^p d\mu \right)^{\frac{1}{p}} \\
& \leq & C\left(\int_{\Omega}  |\nabla ( \varphi \rho \phi |\nabla \varphi|^{\delta} )|^{q} d\mu \right)^{\frac{1}{q}}  \\
& \leq & C\left(\sum_{J=1}^{l} r_J^{n-q-q\delta} + \sup_{\Omega} |\nabla \varphi|^{\delta q} \sum_{J=1}^{l'} s_J^{n-q} \right)^{\frac{1}{q}}\\
& \leq & C\varepsilon^{\frac{1}{q}}
\end{eqnarray*}
by (\ref{choiceD}), (\ref{d}) and (\ref{whoops}). 
Furthermore, by the order gap of $v$ (cf. Proposition~\ref{ahmresult}),
\begin{equation} \label{star}
\sup_{B_{3r_J}(x_J)} |\nabla v| \leq Cr_J^{\epsilon_0}.
\end{equation}
Next, note that by the lower bound (\ref{lowerbd}) of $|\nabla V|$ and the Lipschitz estimate of $u$, we have in $\mathcal N$ the estimates
\[
\left| |\nabla u| - |\nabla V| \right| \leq \frac{\left| |\nabla u|^2 - |\nabla V|^2 \right|}{|\nabla u| + |\nabla V|} \leq \frac{\left| |\nabla v|^2 +2<\nabla V, \nabla v>  \right|}{|\nabla u| + |\nabla V|} \leq C|\nabla v|.
\]
and
\[
\left| \nabla  \left( |\nabla u|-|\nabla V| \right) \right| \leq C\left| \nabla \nabla u \right|.
\]
Thus, by (\ref{choiceD}), (\ref{d}), (\ref{whoops}) and (\ref{prophat}),

\begin{eqnarray*}
(C_{22}) & \leq & C\int_{\Omega}   | \triangle ( \varphi \rho \phi|\nabla \varphi|^{\delta}) |   |\nabla v|  d\mu \leq C \sum_J r_J^{n-2-\delta+\epsilon_0}
\end{eqnarray*}
and
\begin{eqnarray*}
(C_{23}) & = &  \int_{\Omega}  \varphi \rho \phi |\nabla \varphi|^{\delta} \triangle  \hat{\psi} \left( |\nabla u|-|\nabla V| \right) d\mu
\\
& = & -\int_{\Omega}  \left( |\nabla u|-|\nabla V| \right) \nabla ( \varphi \rho \phi |\nabla \varphi|^{\delta}) \cdot \nabla  \hat{\psi}  d\mu\\
& & \ -\int_{\Omega} \varphi \rho \phi |\nabla \varphi|^{\delta} \nabla  \hat{\psi} \cdot \nabla  \left( |\nabla u|-|\nabla V| \right) d\mu\\
& \leq &
C\left( \int_{\Omega} |\nabla ( \varphi \rho \phi  |\nabla \varphi|^{\delta} )|^p |\nabla v|  d\mu \right)^{\frac{1}{p}}  \left( \int_{\Omega}   | \nabla \hat{\psi}|^q |\nabla v|  d\mu \right)^{\frac{1}{q}} \\
& & \ +C \int_{\Omega}  |\nabla \varphi|^{\delta}| \nabla  \hat{\psi} | |\nabla \nabla u| d\mu\\
& < & C \epsilon.
\end{eqnarray*}
Finally, (\ref{prophat}) also yields
 \begin{eqnarray*}
(C_{24})&  \leq &
 2 \sup_{\Omega_1} |\nabla ( \varphi \rho \phi|\nabla \varphi|^{\delta}) |    \int_{\Omega} | \nabla   \hat{\psi}|  |\nabla u| d\mu <C \varepsilon\end{eqnarray*}
Combining the estimates for $(C_1)$, $(C_{21})$, $(C_{22})$, $(C_{23})$ and $(C_{24})$, we obtain
$
(C) \leq C\varepsilon^{\frac{1}{p}}.
$

The estimate for $(B)$ is similar to the estimate for $(C)$, so we omit the details.   Indeed, to prove $(B)$ we can  repeat the argument for $(C)$ with $\delta=0$ while keeping in mind that
\begin{equation} \label{circle}
\sup_{B_{2s_J}(\xi_J)} |\nabla u| \leq Cs_J^{\epsilon_0}
\end{equation}
by  the order gap of $u$ of Proposition~\ref{ahmresult} along with the monotonicity property of $u$ (cf.~proof of \cite[Theorem 2.4]{gromov-schoen}).
Note that  the argument for $(B)$ is simpler than that for $(C)$; indeed, we can use the decay of $|\nabla u|$ in $B_{2s_J}(\zeta_J)$ by (\ref{circle}) for $(B)$  whereas   $|\nabla u|$ is only bounded in $B_{2r_J}(x_J)$ for (C).
Applying the estimates for $(A)$, $(B)$, $(C)$ into  (\ref{ABC}) 
proves (\ref{theharderone}).
\end{proof}

\begin{proofRT}
Proposition~\ref{hmco2} implies that 
${\mathcal S}_j(u)$  has codimension at least 2.
Combined with Proposition~\ref{highorderpts}, this implies that 
\[
\dim_{ \mathcal H}(\hat {\mathcal S}_j(u)) \leq n-2.
\]
Now {\sc Statement 1}$[j]$ follows immediately.    Additionally, {\sc Statement 2}$[j]$ follows from Lemma~\ref{proof2j}.  
Thus, induction completes the proof of Theorem~\ref{RegularityTheorem} and Theorem~\ref{goto0}. 
\end{proofRT}

\section{Proof of the Key Technical Lemma}  \label{sec:  proofofkeylemma}
In this section, we will provide a proof of the key technical Lemma~\ref{keylemma'} by deducing it  from the iterative Lemma~\ref{induction}. We will take advantage of the fact that a harmonic map (resp. approximate harmonic map) at an order one point is closely approximated by homogeneous degree 1 maps as indicated in Lemma~\ref{linearapproxblowupsym}  (resp.  proof of Proposition~\ref{linearapproxblowupsym'}, formula (\ref{frank})).       
We employ an iterative argument which has its origin in \cite{gromov-schoen}, but with serious additional complications due to the non-local compactness and degenerating geometry of the Teichm\"uller space near its boundary.   In Section~\ref{simple g-s}, we motivate our proof of the iterative Lemma~\ref{induction} by explaining its origin in the  Gromov-Schoen regularity theorem. We do so by providing a short proof of the Lemma   for the simple case of maps into  a $k$-pod considered in Example 1 of the introduction.   The preparation of the proof of the key technical Lemma is given in
Section~\ref{sec:EC} and Section~\ref{sec:ER} where we summarize our results from \cite{daskal-meseER} needed in the proof.
 The main step in the proof of the iterative Lemma is presented in Section~\ref{sec:inductivelemma}.  Finally, the proof of the key technical Lemma~\ref{keylemma'} is given in Section~\ref{sec:theproof}.

\subsection{Simple Gromov-Schoen}\label{simple g-s} In order to motivate the proof of  the iterative Lemmas~\ref{induction'} and~\ref{induction} 
we will now sketch an argument  due to Gromov-Schoen for harmonic maps in the simple case where the target  is a finite tree as in Example 1 of the introduction.   As we will see later, iterative Lemma~\ref{induction} is a more complex version of the above argument.

\begin{quote} Let  $X$  be a $k$-pod  formed by  $k$ distinct copies $E_1, \dots,  E_k$ of the half-line  $[0,\infty)$   identified at 0 as in Example 1 of the introduction and $X_0 =E_1 \cup E_2$ be a totally geodesic subspace isometric to $\R$.  For a harmonic map $u:B_1(0)\subset \R^n  \rightarrow X$  and a homogeneous degree 1 map $l:B_1(0) \rightarrow X$  as in (\ref{alphaxb}) effectively contained in the essentially regular totally geodesic subspace $X_0 \simeq \R$,   assume  that $u(0)=l(0)$ and 
\begin{equation} \label{setup}
\sup_{x \in B_1(0)} d(u(x), l(x)) < D.
\end{equation}
\end{quote}

Given the above set up, the idea is to show that there exists  $\theta \in (0,\frac{1}{2}]$ such that if  an affine map \[
_il:B_{\theta^i}(0) \rightarrow X_0
\]
is  ``close" to $u$ in a small ball in the sense that,  
\begin{equation} \label{assumptionGS}
 \left\{
\begin{array}{lll}
\displaystyle{\sup_{x \in B_{\theta^i}(0)} d(u(x),l(x))}& < &  \theta^i {\delta_0}
\\
 \displaystyle{\sup_{x \in B_{\theta^i}(0)} d(u(x), { _il}(x))} & < & \theta^i d_0,
\end{array}
\right.
\ \ \ \ \ \ \ \ \ 
\end{equation}
then
then there exists  a new affine map 
 \[
 _{i+1}l:B_{\theta^{i+1}}(0) \rightarrow X_0
 \]
such that
\begin{equation} \label{conclusionGS}
 \left\{ \begin{array}{lll}
\displaystyle{\sup_{x \in B_{\theta^{i+1}}(0)} d(u(x),l(x))}& < & \theta^{i+1} \left( {\delta_0} + 2\theta^{-1}d_0 \right) 
\\
\displaystyle{\sup_{x \in B_{\theta^{i+1}}(0)} d(u(x), { _{i+1}l}(x))} & < & \theta^{i+1} \displaystyle{\frac{d_0}{2}}.
\end{array}
\right.
\end{equation}

\begin{proofofimplication}
 Since \emph{$l$ is effectively contained in $X_0$},  for $\epsilon>0$ to be chosen later,  there exists $\delta>0$ such that (cf.~(\ref{exeffct})) 
\[
\mbox{Vol} \big(\{ x \in B_{\theta^i}(0):  {\bf B}_{\delta \theta^i}(l(x)) \cap (X \backslash X_0) \neq \emptyset \} \big) <\epsilon \theta^{in},
\] 
thus, there exists $R \in [\frac{3}{4},1]$ such that
\[
\mbox{Vol}\big( \{ x \in \partial B_{R\theta^i}(0):  {\bf B}_{\delta \theta^i}(l(x)) \cap (X \backslash X_0) \neq \emptyset \} \big) < 4\epsilon \theta^{i(n-1)}.
\]
(Note that ${\bf B}$ denotes the ball in the target). For $x \in \partial B_{R\theta^i}(0)$, the first inequality of  assumption (\ref{assumptionGS}), implies
\[
{\bf B}_{\theta^i {\delta_0}}(l(x)) \cap (X \backslash X_0) = \emptyset 
\ \ \Rightarrow \ \ u(x) \in X_0 \ \  \Rightarrow  \ \  \pi \circ u(x) = u(x) .
\]
Thus,
\[
\mbox{Vol}\big( \{ x \in \partial B_{R\theta^i}(0):  \pi \circ u(x) \neq u(x) \} \big) < 4\epsilon \theta^{i(n-1)}.
\]
Consider the  harmonic function $v:  B_{R\theta^i}(0) \rightarrow X_0 \approx \R$ with boundary condition $\pi \circ u$ on $\partial B_{R\theta^i}(0)$.
Using the fact that the image of $_il$ is contained in $X_0$, $\pi $ is the closest point projection and  the second inequality of assumption~(\ref{assumptionGS}),
\[
d(u, v) \leq d(u, {_il}) < \theta^i d_0 \ \ \mbox{on} \ \partial B_{R\theta^i}(0).
 \]  Thus,  
\[
\int_{\partial {B}_{R\theta^i}(0)} d(u,v) d\Sigma<4 \epsilon \theta^{i(n-1)} \theta^id_0.
\]
Since $v$ is minimizing and $\pi$ is distance non-increasing,  $E^v \leq E^{\pi \circ u} \leq E^u$.  
Since \emph{$X_0$ is essentially regular}, there exists  $_{i+1}l$ such that (cf.~(\ref{exessreg}))
\begin{equation} \label{affinemap}
\sup_{x \in B_{\theta^{i+1}}(0)} d(v(x),{_{i+1}l}(x)) \leq C\theta^2 \sup_{x \in B_{\frac{\theta^i}{2}}(0)} d(v(x),{_il}(x))
\end{equation}
where $C>1$  depends only on $E^u$. 
Since $u$ and $v$ are harmoninc maps,   $d(u,v)$ is subharmonic.  Thus, for a constant $c_n>0$ depending only on the domain dimension $n$, 
\[
\sup_{x \in B_{\frac{\theta^{i}}{2}}(0)} d(u(x),v(x)) <c_n \epsilon  \theta^i d_0.
\]
 The triangle inequality then implies
\begin{eqnarray*}
\sup_{x \in B_{\theta^{i+1}}(0)} d(u(x),{_{i+1}l}(x)) 
& \leq &  
\sup_{x \in B_{\theta^{i+1}}(0)} d(u(x),v(x))+ \sup_{x \in B_{\theta^{i+1}}(0)} d(v(x),{_{i+1}l}(x)))\\
& < & c_n \epsilon \theta^i d_0  + C\theta^2 \sup_{x \in B_{\frac{\theta^i}{2}}(0)} d(v(x),{_il}(x))
\\
& < & c_n \epsilon \theta^i d_0  + C\theta^2 \sup_{x \in B_{\frac{\theta^i}{2}}(0)} \Big(d(v(x),u(x))+ d(u(x),{_il}(x))\Big
)\\
& < &  c_n \epsilon \theta^i d_0 + C\theta^2(c_n\epsilon \theta^id_0+ \theta^i d_0).
\end{eqnarray*}
Thus, by choosing $\theta = \frac{1}{4C}$, $\epsilon = \frac{\theta}{4c_n}$, we obtain 
\[
\sup_{x \in B_{\theta^{i+1}}(0)} d(u(x),{_{i+1}l}(x)) < \theta^{i+1}\displaystyle{\frac{d_0}{2}}.
\]
This proves the second inequality of (\ref{conclusionGS}).

By assumption (\ref{assumptionGS})
and the triangle inequality
\[
\sup_{x \in B_{\theta^i}(0)} d(l(x),{_{i}l}(x))\leq  \sup_{x \in B_{\theta^i}(0)} (d(l(x),u(x)) +d(u(x),{_{i}l}(x)) )< \theta^i (\delta_0+d_0)
\]
Furthermore, the assumption that  $l(0)=u(0)$ implies
\[
d(l(0),{_{i}l}(0))=d(l(0),u(0))+d(u(0),{_{i}l}(0)) < \theta^i d_0
\]
By the linearity of $l$ and $_il$, we thus conclude
\begin{eqnarray*}
\sup_{x \in B_{\theta^{i+1}}(0)} d(l(x),{_{i}l}(x)) &=&\sup_{x \in B_{\theta^i}(0)} d(l(\theta x),{_{i}l}(\theta x))\\
&\leq& \theta\sup_{x \in B_{\theta^i}(0)} d(l( x),{_{i}l}( x))+(1-\theta)d(l(0),{_{i}l}(0))\\
&<&  \theta^{i+1} (\delta_0+d_0)+(1-\theta)\theta^i d_0\\
&=& \theta^{i+1} (\delta_0+\theta^{-1} d_0).
\end{eqnarray*}
Combining this with assumption~(\ref{assumptionGS}), we obtain
\begin{eqnarray*}
\sup_{x \in B_{\theta^{i+1}}(0)} d(u(x),l(x))& < & \theta^{i+1} \left( {\delta_0} + 2\theta^{-1}d_0 \right).
\end{eqnarray*}
\end{proofofimplication}


\subsection{Effectively Contained} 
\label{sec:EC}

In preparation of the proof of the key technical Lemma,  we  recall the results from  \cite{daskal-meseER}.    First, we introduce a global coordinate system on ${\bf H}$ using symmetric geodesics (cf.~ Section~\ref{sssec:sg}). 

These new coordinates, denoted $(\varrho, \varphi)$, will depend on a given  symmetric homogeneous degree 1 map 
\[
l:B_1(0) \rightarrow {\bf H}, \ \ l(x) = \gamma(Ax^1)
\]
 where $A>0$ is the stretch of $l$ and $\gamma$ is a symmetric geodesic (cf. Definition~\ref{defstretchy} and Section~\ref{sssec:sg}).  We construct the coordinates $(\varrho,\varphi)$ so that if we write $\gamma(t)=(\gamma_{\varrho}(t), \gamma_{\varphi}(t))$ with respect to  $(\varrho, \varphi)$, then $\gamma_{\varrho}(t)=t$ and $\gamma_{\varphi}(t)=0$; i.e.
 \[
\gamma(t)=(t,0).
\] 
Thus,  $l(x)$ with respect to coordinates $(\varrho,\varphi)$ is given by
\begin{equation} \label{pout}
l(x)=(Ax^1,0).
\end{equation}
There is an advantage in using the coordinates $(\varrho,\varphi)$.  Indeed, since harmonic maps into $\overline{\bf H}$ and the singular components of maps of harmonic maps into $\overline{\mathcal T}$ at order 1 points are well approximated by symmetric  homogeneous degree 1 maps, the coordinates $(\varrho,\varphi)$ are the most convenient when analyzing the behavior of such maps. 

In the sequel, we will need to consider several symmetric homogeneous degree 1 maps at once.  Thus, we first introduce new coordinates $(s,t)$ such that we can associate a  constant $t_*$ to any symmetric homogeneous degree 1 map $l(x)$  such that with respect to  coordinates $(s,t)$
\begin{equation} \label{pouty}
l(x)=(Ax^1,t_*).  
\end{equation}
We refer to the number $t_*$
as the \emph{address of $l$}.
Once we fix a particular symmetric homogeneous degree 1 map $l(x)$, then we apply a simple translation in the $t$-coordinate which results in new coordinates $(\varrho,\varphi)$ with respect to which  $l$ is expressed by (\ref{pout}).

To construct the coordinates $(s,t)$, we foliate ${\bf H}$ by an one parameter family of symmetric geodesics.  Indeed, consider 
 \begin{equation} \label{defofc}
 c=(c_{\rho},c_{\phi}):(-\infty,\infty) \times (-\infty,\frac{3}{2})\rightarrow {\bf H}
 \end{equation}
satisfying the following:  
\begin{eqnarray}
& \bullet  & \mbox{$\varrho \mapsto c^t(\varrho)=c(\varrho,\varphi)$ is a unit speed symmetric geodesic}.  \label{unitspeed}\\
& \bullet  &   t \mapsto  c_{\rho}(0,t) \mbox{ satisfies the equation } \frac{\partial c_{\rho}}{\partial t}(0,t)=c_{\rho}^3(0,t), \label{at1}\\
& \bullet  & c_{\rho}(0,1)=1 \mbox{ and } c_{\phi}(0,t)=0 \mbox{ for all } t \in (-\infty,\frac{3}{2}), \label{normalizeit}
\end{eqnarray}

The parameters $s$ and $t$ define coordinates of ${\bf H}$ via the map
\[
(s,t) \mapsto c(s,t).
\]

Given a symmetric homogeneous degree 1 map $l(x)$ with address $t_*$ (cf.~(\ref{pouty})),
we apply a translation by $t_*$ to construct $(\varrho,\varphi)$ (see Figure 2). 
More precisely, since 
\begin{equation} \label{atstar}
l(0)=(0,t_*) \ \mbox{in the coordinates $(s,t)$},
\end{equation} 
we  define  coordinates $(\varrho,\varphi)$ by setting 
\begin{equation} \label{cov}
(\varrho, \varphi)=(s,t-t_*).
\end{equation}
This results in
\[
l(0)=(0,0) \ \mbox{in coordinates $(\varrho,\varphi)$}.
\]
Thus,  the construction of the coordinates  $(\varrho,\varphi)$ depends on $t_*$, and we will say that the coordinates $(\varrho,\varphi)$ are \emph{anchored} at $t_*$.
Using the new coordinates $(\varrho,\varphi)$, we introduce a family of  totally geodesic subspaces of ${\bf H}$ which will play a central role in the proof of the key technical Lemma. 

\begin{definition} \label{essregset}
Let $(\varrho,\varphi)$ be the coordinates anchored at $t_*$.  For $\varphi_0>0$, define the subset 
\[
\overline{\bf H}[\varphi_0,t_*] :=\{ (\varrho, \varphi) \in {\bf H}:  |\varphi|\leq \varphi_0\}.
\] 
Furthermore, let
\begin{equation} \label{defofa}
a[\varphi_0,t_*]:= c_{\rho}(0,\varphi_0 +t_*)=\max_{\{\varphi:  |\varphi|\leq \varphi_0\}} c_{\rho}(0,\varphi+t_*).
\end{equation}
\end{definition}

In other words,   $\overline{\bf H}[\varphi_0,t_*] $ is the union of the level sets $\varphi=k$ where $-\varphi_0 \leq k \leq \varphi_0$, and the level set $\varphi=k$ is the image of symmetric geodesic 
\[
\varrho \mapsto c(\varrho,k+t_*).
\]
The boundary of $\overline{\bf H}[\varphi_0,t_*] $ consists of a pair of  level sets $\varphi=\varphi_0$ and $\varphi=-\varphi_0$, and  the set $\overline{\bf H}[\varphi_0,t_*]  $ is totally geodesic.   
Moreover, $a[\varphi_0,t_*]$ is the distance from $P_0$  of the symmetric geodesic in $\overline{\bf H}[\varphi_0,t_*] $ furthest away from $P_0$. See Figure \ref{pic:varrhovarphi}.
Define the function ${\mathcal J}(\varrho,\varphi)$ by  writing the metric $g_{\bf H}$ with respect to coordinates $(\varrho,\varphi)$   as
\begin{eqnarray} \label{wrtnew}
g_{\bf H} = d\varrho^2+ {\mathcal J}(\varrho,\varphi) d\varphi^2
\end{eqnarray}
As observed in  \cite{daskal-meseER}, this local expression of $g_{\bf H}$ with respect to $(\varrho,\varphi)$ is close to the local expression  $g_{{\bf H}}=d\rho^2+ \rho^6 d\phi^2$ with respect to $(\rho,\phi)$. More precisely, there exists a constant $C>0$ such that 
\begin{equation} \label{a2}
\varrho^3 \leq {\mathcal J}(\varrho,\varphi) \leq C(\varrho+c_{\rho}(0,\varphi+t_*))^3.
\end{equation}
In particular,
\[
\varrho^3 \leq {\mathcal J}(\varrho,\varphi) \leq C(\varrho+a[\varphi_0,t_*])^3 \mbox{ for } (\varrho,\varphi) \in \overline{\bf H}[\varphi_0,t_*].
\]

The following lemma plays the role  for a homogeneous map to be \emph{effectively contained} in a totally geodesic subspace. The proof  is contained in \cite{daskal-meseER} but since it is simple geometric argument  we include it here for the sake of completeness.

  \begin{lemma}\label{geomcor}
Fix $\theta \in (0,\frac{1}{24})$.  Given $A>0$, $\epsilon_0>0$ $D_0 \in (0,\frac{\epsilon_0}{2})$ and  $i \in \{0, 1,2,\dots\}$,  if
 \[
 _il:B_{\theta^i}(0) \rightarrow \overline{\bf H}[
\left( \frac{\theta^i \epsilon_0}{2}\right)^{-3}  \frac{\theta^iD_0}{2^i},t_*]
 \]
 and 
 \[
 v:B_{\theta^i}(0) \rightarrow \overline{\bf H}
 \]
 satisfies
\begin{equation}  \label{agcor2}
\sup_{B_{\theta^i}(0)} |v_{\varrho}-Ax^1| < \theta^i \epsilon_0
\end{equation}
and
\begin{equation} \label{geomcorassump}
\sup_{B_{\theta^i}(0)} d_{\overline{\bf H}}(v, {_il}) <\frac{\theta^i  D_0}{2^i},
\end{equation}  
then
\[
\frac{1}{2v_{n-1}}Vol \left\{ x \in B_{\theta^i}(0):  v(x) \notin \overline{\bf H}[2
\left( \frac{\theta^i \epsilon_0}{2}\right)^{-3} \frac{\theta^iD_0}{2^i},t_*]
\right\} <\theta^{in}\frac{2\epsilon_0}{A}
\]
where $Vol$ is the volume with respect to Euclidean metric and  $v_{n-1}$ denotes the Euclidean volume of the unit $(n-1)$-dimensional ball.
\end{lemma}

\begin{proof}  
We start with the following claim.\\
\\
{\sc Claim}.
For $\delta_0 <\frac{\varepsilon}{2}$,
\[
d_{\overline{\bf H}}((\varrho,\varphi),\overline{\bf H}[
\varepsilon^{-3} \delta_0,t_*]) \leq \delta_0
\ \Rightarrow \ 
 |\varrho|  \leq 2\varepsilon    \ \mbox{ or } \ (\varrho,\varphi) \in \overline{\bf H}[
2\varepsilon^{-3} \delta_0,t_*].
\]  
To prove the claim,  assume on the contrary that 
there exists $(\varrho,\varphi)$ with 
\[
d_{\overline{\bf H}}((\varrho,\varphi),\overline{\bf H}[
\varepsilon^{-3} \delta_0,t_*]) \leq \delta_0, \    \ |\varrho|  \geq 2 \varepsilon \ \  
\mbox{ and }  \  
(\varrho,\varphi) \notin \overline{\bf H}[\
2\varepsilon^{-3} \delta_0,t_*].
\]
Let $\gamma=(\gamma_{\varrho}, \gamma_{\varphi}):[0,1] \rightarrow \overline{\bf H}$ be a geodesic with
\[
 \gamma(0)= (\gamma_{\varrho}(0), \gamma_{\varphi}(0))=(\varrho,\varphi) \ \mbox{ and } \  \gamma(1) \in  \partial \overline{\bf H}[
\varepsilon^{-3} \delta_0,t_*]
\]
 where $\gamma(1)$ is the point in $\overline{\bf H}[
\varepsilon^{-3} \delta_0,t_*]$ closest to $(\varrho,\varphi)$.
  We claim
\begin{equation} \label{lowerbdonepsilonnot}
 \min_{t \in [0,1]} |\gamma_{\varrho}(t)| \geq \varepsilon.
 \end{equation}
 Indeed, assume on the contrary that $\gamma_{\varrho}(t_0) < \varepsilon$ for some $t_0 \in (0,1]$.  Then since $\gamma_{\varrho}(0)\geq 2\varepsilon$, we obtain
 \begin{eqnarray*}
\varepsilon &  < & |\gamma_{\varrho} (t_0) - \gamma_{\varrho}(0)| \\
& \leq &   \int_0^{t_0} \left|\frac{d{\gamma_{\varrho}}}{dt}\right| dt \leq  \int_0^{t_0} \left|\frac{d{\gamma}}{dt}\right| dt \\
& \leq & d_{\overline{\bf H}}((\varrho,\varphi), \overline{\bf H}[
\varepsilon^{-3} \delta_0],t_*) \leq \delta_0.
 \end{eqnarray*}
 This  contradicts the assumption that $\delta_0 <\frac{\varepsilon}{2}$ and proves (\ref{lowerbdonepsilonnot}).  
Combined with  (\ref{a2}), we conclude
\[
\varepsilon ^3 \leq  {\mathcal J}(\gamma(t)). 
\]
Therefore
  \begin{eqnarray*}
\varepsilon^3 \left||\varphi| -\varepsilon^{-3} \delta_0 \right| & \leq  & \varepsilon^3 \int_0^1 \left|\frac{d\gamma_{\varphi}}{dt}(t) \right| dt \\
& \leq  & \int_0^1 \sqrt{{\mathcal J}(\gamma(t)) \left|\frac{d\gamma_{\varphi}}{dt}(t)\right|^2} \ dt \\
& \leq  & \int_0^1 \sqrt{\left|\frac{d\gamma_{\varrho}}{dt}(t)\right|^2+{\mathcal J}(\gamma(t)) \left|\frac{d\gamma_{\varphi}}{dt}(t) \right|^2} \ dt \\
& = & \mbox{length}(\gamma)\\
& = & d_{\overline{\bf H}}((\varrho,\varphi), \gamma(1))\\
& \leq &  \delta_0
\end{eqnarray*}
which in turn implies
\[
|\varphi| \leq 2 \varepsilon^{-3} \delta_0, 
\]
In other words, 
\[
(\varrho,\varphi) \in  \overline{\bf H}[\
2\varepsilon^{-3} \delta_0,t_*].
\]
This contradiction proves the {\sc Claim}.

  Since  $_il(x)  \in\overline{\bf H}[
\left( \frac{\theta^i \epsilon_0}{2}\right)^{-3} \frac{\theta^iD_0}{2^i},t_*]$,  assumption   (\ref{geomcorassump}) implies that we have for $x \in B_{\theta^i}(0)$ 
\[
d_{\overline{\bf H}}(v(x),\overline{\bf H}[
\left( \frac{\theta^i \epsilon_0}{2}\right)^{-3} \frac{\theta^iD_0}{2^i},t_*] )\leq \sup_{B_{\theta^i}(0)} d_{\overline{\bf H}}(v, {_il}) <\frac{\theta^i  D_0}{2^i} . 
\] 
Thus,  applying the {\sc Claim} with 
\[
\varepsilon= \frac{\theta^i \epsilon_0}{2}
\ \mbox{ and } \ 
\delta_0 = \frac{\theta^i D_0}{2^i}
\]
implies that
\begin{eqnarray*}
\lefteqn{\left\{ x \in B_{\theta^i}(0):  v(x) \notin \overline{\bf H}[2
\left( \frac{\theta^i \epsilon_0}{2}\right)^{-3} \frac{\theta^iD_0}{2^i},t_*]
\right\}}\\
 & & \ \ \ \ \subset   \{ x \in B_{\theta^i}(0):  |v_{\varrho}(x)| \leq \theta^i\epsilon_0\}.
\end{eqnarray*}
Furthermore,  assumption (\ref{agcor2}) implies
\[
|v_{\varrho}(x)| \leq  \theta^i\epsilon_0
\ \ \Rightarrow \ \ 
|Ax^1| \leq |Ax^1-v_{\varrho}(x)| +|v_{\varrho}(x)|< 2\theta^i\epsilon_0 
\]
in $B_{\theta^i}(0)$.  Hence
\[
\{ x \in B_{\theta^i}(0):  |v_{\varrho}(x)|  \leq  \theta^i\epsilon_0\}
\subset
\{x \in B_{\theta^i}(0): |Ax^1|  <   2\theta^i \epsilon_0 \}.
\]
The assertion now follows from the fact that
\[
\frac{1}{2v_{n-1}}Vol \{x \in B_{\theta^i}(0): |Ax^1|  <   2\theta^i\epsilon_0 \} \leq \theta^{in} \frac{2\epsilon_0}{A}.
\]
\end{proof}

\subsection{Essentially Regular Subspaces}\label{sec:ER}
Now we turn to the notion of {\it{essentially regular}}.  We assert that 
 the  totally geodesic subspace $\overline{\bf H}[\varphi_0,t_*]$  of $\overline{\bf H}$
is  essentially regular in the  sense  that a harmonic map into $\overline{\bf H}[\varphi_0,t_*]$ is approximated  by an almost affine map.  
We first need the following
\begin{definition} \label{almostaffine}
Let $(\varrho,\varphi)$ be the coordinates anchored at $t_*$ defined in the previous section.    A map $l=(l_{\varrho},l_{\varphi}): B_1(0) \rightarrow {\bf H}$ written with respect to coordinates $(\varrho,\varphi)$, is said to be an \emph{almost affine map} if the first coordinate function $l_{\varrho}$ is an affine function; i.e.  
\[
l_{\varrho}(x)=a\cdot x +b
\]
 for $a \in \R^n$ and $b \in \R$.
\end{definition}

We have so far been  unable to prove  that these subspaces are  essentially regular in the strict sense of Gromov-Schoen \cite{gromov-schoen}.    (We remark that, as far as we  know, Euclidean spaces and buildings are the only known examples of  essentially regular sets in the strict sense of 
\cite{gromov-schoen}.)   
On the other hand,  $\overline{\bf H}[\varphi_0,t_*]$  satisfies a weaker notion of essentially regular that is sufficient  for obtaining good estimates for harmonic maps.  For convenience, we will \emph{also call this weaker notion essentially regular}.
Given that the 
the local geometry of ${\bf H}$  is very singular near $P_0$,  it is  surprising that essentially regular subspaces near the point $P_0$ exist at all.

The key is the introduction of different set of new coordinates in ${\bf H}$ that are motived by Example 2. Specifically,  we let
 \begin{equation} \label{polar}
\Upsilon:=\varrho-\frac{3}{2} \varrho^5 \varphi^2
 \ \mbox{ and } \ \Phi:= \varrho^3 \varphi.
\end{equation}
To explain the relationship  of the new  coordinates $(\Upsilon,\Phi)$ to the Euclidean coordinates $(x,y)$ in Example 2, we first  consider  $(\varrho, \varrho^2 \varphi)$  as the analogue of the polar coordinates $(r,\theta)$ of ${\bf R}^2$.  Then the coordinates
\[
(\varrho,\varphi) \mapsto (\varrho \cos \sqrt 3 \varrho^2 \varphi , \varrho \sin \sqrt 3 \varrho^2 \varphi )
\]
are the
analogues of the standard Euclidean coordinates (\ref{ec1}).  The coordinates $\Upsilon$ and $\sqrt 3 \Phi$ agree up to the first order with $\varrho \cos \sqrt  3 \varrho^2 \varphi$  and $\varrho \sin \sqrt  3 \varrho^2 \varphi$ respectively.  We then write 
the harmonic map equations in terms of the coordinates $(\Upsilon,\Phi)$ to obtain the regularity results needed.  
An important observation about Example~2 is the implicit use of the  assumption  $0 \leq h_{\theta}<2\pi$.  (We need this assumption in order to show that the change of variables defines a diffeomorphism away from the origin). In fact, without assuming this bound, it is unclear whether  the solutions to (\ref{he1}) are regular.  
For a harmonic $u: \Omega \rightarrow  \overline{\bf H}[\varphi_0,t_*]$, we are also assuming  an apriori bound on the ``angular'' component function.  For a harmonic $u: \Omega \rightarrow  \overline{\bf H}[\varphi_0,t_*]$, we are also assuming  an apriori bound on the ``angular'' component function   by virtue of the definition of the target set.  This bound is precisely why we are able to  use  $\overline{\bf H}[\varphi_0,t_*]$ as the analog of essentially regular sets (cf.~\cite[page 210]{gromov-schoen}) when we generalize the Gromov-Schoen argument in Section~\ref{sec:inductivelemma} below.  Indeed, the following $(1+\alpha)$-Taylor approximation of a harmonic map into $\overline{\bf H}[\varphi_0,t_*]$ is proved in  \cite{daskal-meseER}, Theorem 28. 

\begin{theorem}  \label{essreg**}
Let $R \in [\frac{1}{2},1)$,  $E_0>0$, $A_0>0$ and a normalized metric $g$ on $B_R(0)$ be given.  Then there exist $C \geq 1$ and $\alpha>0$ depending only on $E_0$, $A_0$ and $g$ with the following property:  \\
\\
For $\varphi_0>0$, $s \in (0,1]$ and  $\vartheta \in (0,1]$, if  ${\bf B}_{A_0\vartheta}(P_0)$ is a geodesic ball of radius $A_0\vartheta$ centered at $P_0$ in $\overline{\bf H}$,  if 
\[
w:(B_{\vartheta R}(0),g_s) \rightarrow \overline{\bf H}[\frac{\varphi_0}{\vartheta^2},t_*] \cap {\bf B}_{A_0\vartheta}(P_0)
\] 
is a harmonic map with
\begin{equation} \label{additionalassumption}
a[\frac{\varphi_0}{\vartheta^2},t_*]\leq \frac{\vartheta}{2}
\end{equation}
and 
\[
E^{w} \leq \vartheta^{n}E_0,
\]
then
\[
\sup_{B_{r \vartheta}(0)} d_{\overline{\bf H}}(w,\hat{l}) \leq Cr^{1+\alpha} \sup_{B_{R \vartheta}(0)} d_{\overline{\bf H}}(w,L) +C r \vartheta \varphi_0^2, \ \forall r \in (0,\frac{R}{2}]
\]
where $\hat{l}=(\hat{l}_{\varrho},\hat{l}_{\varphi}):B_1(0) \rightarrow \overline{\bf H}$ is the almost  affine map 
given by
\[
\hat{l}_{\varrho}(x)= w_{\varrho}(0) +\nabla w_{\varrho}(0) \cdot x, \ \  \ \hat{l}_{\varphi}(x)=w_{\varphi}(x)
\]
and   $L:B_1(0) \rightarrow \overline{\bf H}$    is any almost affine map.
\end{theorem}
\subsection{The statement and proof of the iterative Lemma}
\label{sec:inductivelemma}
In this subsection, we prove the iterative Lemma which allows us to go from an approximation of a harmonic map (resp.~approximate harmonic map) by an almost affine map on  one scale to an approximation on a smaller scale. 
This lemma plays a central role in the proof of the key technical Lemma~\ref{keylemma'}.

Let  $g$ be  a normalized metric on $B_1(0)$ sufficiently close to the Euclidean metric $g_0$ in the sense that if we denote by $Vol$ and  $Vol_{g}$ to be the volume with respect to $g_0$ and  $g$ respectively, then for any smooth submanifold $S$ of $B_1(0)$ 
\begin{equation} \label{volumewrtmetric'}
\frac{15}{16} Vol(S) \leq Vol_{g}(S) \leq \frac{17}{16} Vol (S).
\end{equation}
Additionally, we assume $g$ is sufficiently close to $g_0$ (in $C^2$) so that the error term $e^{c\sigma^2}$ that appears in the monotonicity formula of Theorem~\ref{orderforharmonic} is $\leq 2$ for all $\sigma \in (0,1]$.

Next, let $c_0\geq 1$ be a constant such that for any subharmonic function $f:B_1(0) \rightarrow \R$  with respect to the metric $g$, we have 
\begin{equation} \label{sybharm}
 \sup_{B_{\frac{15\vartheta R}{16}}(0)} f  \leq  \frac{c_0}{(\vartheta R)^{n-1}} \int_{\partial B_{\vartheta R}(0)} f d\Sigma.
\end{equation}

\label{sec:induct}
\begin{keylemma}\label{induction'}
Given $E_0, A>0$ and a normalized metric $g$ on $B_R(0)$,
there exist  $\theta  \in (0,\frac{1}{24})$, $\epsilon_0>0$  and  $D_0 \in (0,\frac{1}{\sqrt{8}})$ that satisfy the following statement.  \\
\\
Assume the following:  
\begin{itemize}
\item The map
\[
l:B_{\theta^i}(0) \rightarrow \overline{\bf H}, 
\ \ l(x)= (Ax^1,0)
\]
is defined  in the coordinates $(\varrho,\varphi)$ anchored at $t_* \in (-\infty,\frac{3}{2})$.

\item  The subset $\overline{\bf H}[2
\left( \frac{\theta^i \epsilon_0}{2}\right)^{-3} \frac{\theta^iD_0}{2^i},t_*]$ satisfies 
\begin{equation} \label{sat'}
a[2
\left( \frac{\theta^i \epsilon_0}{2}\right)^{-3} \frac{\theta^iD_0}{2^i},t_*]=a[\frac{16D_0}{\epsilon_0^3 \theta^{2i} 2^i},t_*
]
< \frac{\theta^i}{2}.
\end{equation}  
\item
The map 
\begin{equation} \label{uish}
u:(B_1(0),g) \rightarrow \overline{\bf H} \ \mbox{ is harmonic with } \ u(0)=P_0, \ E^u(1) \leq \frac{E_0}{2^{n+1}}.
\end{equation}

\item The map
\[
_il:B_{\theta^i}(0) \rightarrow \overline{\bf H}[
\left( \frac{\theta^i \epsilon_0}{2}\right)^{-3} \frac{\theta^iD_0}{2^i},t_*]
\
\mbox{is an almost affine map.}
\] 
\item The constant  $_i\delta>0$ is  such that
\begin{equation} \label{assumptionoflemma'}
\left\{
\begin{array}{l}
{\displaystyle \sup_{B_{^{\theta^i}}(0)}  d_{\overline{\bf H}}(u, {_il}) <\theta^{i}\frac{D_0}{2^i}}\\
{\displaystyle  \sup_{B_{^{\theta^i}}(0)} } |u_{\varrho}-Ax^1|  < \theta^{i}{_i\delta}<\theta^{i}\displaystyle{ \sum_{k=0}^{i} \frac{\theta^{-1}D_0}{2^{k-2}}}.
\end{array}
\right.
\end{equation}
\\
\end{itemize}
Then there exists an almost affine map
\[
 _{i+1}l:B_{\theta^{i+1}}(0) \rightarrow  \overline{\bf H}[
\left( \frac{\theta^{i+1} \epsilon_0}{2}\right)^{-3} \frac{\theta^{i+1}D_0}{2^{i+1}},t_*] 
\]
such that
\begin{equation} \label{conclusionoflemma'}
\left\{
\begin{array}{l}
\displaystyle{\sup_{B_{\theta^{i+1}}(0)}} d_{\overline{\bf H}}(u,{_{i+1}l}) < \theta^{i+1}\frac{D_0}{2^{i+1}}\\
\displaystyle {\sup_{B_{\theta^{i+1}}(0)}  |u_{\varrho} (x)- A x^1|}  < \displaystyle {_{i+1}\delta} \theta^{i+1}\ :=  \left( {_i\delta}  +   \frac{2D_0 \theta^{-1}}{2^i} \right) \theta^{i+1} < \displaystyle{\theta^{i+1} \sum_{k=0}^{i+1} \frac{ \theta^{-1}D_0}{2^{k-2}}} \\
{\displaystyle 
\sup_{B_{\theta^{i+1}}(0)} d_{\overline{\bf H}}(u,l) 
< \theta^{i +1}  \left( \frac{2^3\left( A+9D_0 \right)^3}{\epsilon_0^3} +10 \right)  \theta^{-1}  D_0}.
\end{array}
\right.
\end{equation}
\end{keylemma}

\begin{remark} 
\label{whatweuse}
The harmonicity of the map $u$ implies by the last part of Theorem~\ref{orderforharmonic}, the assumption on the metric $g$, the comment after (\ref{volumewrtmetric'}) and the assumption (\ref{uish}) the following energy decay estimate
\begin{equation} \label{insteadofcalF'}
\frac{E^u(\vartheta)}{\vartheta^n}  \leq e^{c} E^u(1) \leq \frac{E_0}{2^n}. 
\end{equation}
Furthermore, by \cite{korevaar-schoen1} Lemma 2.4.2, 
 for $R \in (0,\frac{7}{8}]$ and a harmonic map $w:(B_{\theta^i R}(0),g)  \rightarrow \overline{\bf H}$ with $E^w(\theta^i R) \leq E^u(\theta^i R)$,
we have for $c_0$ as in (\ref{sybharm})
 \begin{equation} \label{abc!'}
 \sup_{B_{\frac{15\theta^i R}{16}}(0)} d_{\overline{\bf H}}^2(u,w)  \leq  \frac{c_0}{(\theta^i R)^{n-1}} \int_{\partial B_{\theta^i R}(0)} d_{\overline{\bf H}}^2(u,w)d\Sigma
\end{equation}
\end{remark}

\begin{proof}
For the sake of simplicity, we denote $d=d_{\overline{\bf H}}$ throughout the proof.   
For $R= \frac{1}{2}$, $E_0>0$ as in (\ref{uish}) and the metric $g$ as above, let
 \begin{equation} \label{startofconstants'}
C \geq 1 \mbox{ and }  \alpha>0 \mbox{ be as  in Theorem~\ref{essreg**}}.
\end{equation}
Let $\theta \in (0,\min\{\frac{1}{24},\frac{1}{\sqrt{A}} \})$ sufficiently small such that
\begin{equation} \label{ceatheta'}
C\theta<1,
\end{equation}
\begin{equation} \label{th'}
C \theta^{\alpha} <   \frac{1}{2^{6}},
\end{equation}
and
\begin{equation} \label{teanot'}
 C \theta^3   < \frac{1}{2^3}.
\end{equation}
Define
 \begin{equation} \label{epth'}
\epsilon_0:=\left( \frac{A}{2^{2n+11}c_0} \right) \theta^2<1.
\end{equation}
Choose $D_0 \in (0,\frac{1}{\sqrt{8}})$ such that
\begin{equation} \label{Dnot'} 
 D_0 < \min \left\{  \frac{\epsilon_0^{6}}{2^{13}C}, \frac{A}{4},  \frac{\theta \epsilon_0}{8}   \right\}.
\end{equation}
Furthermore,  inequality (\ref{Dnot'}) implies $8 \theta^{-1}D_0 < \epsilon_0$.  Combining this with (\ref{assumptionoflemma'}) and (\ref{epth'}), we obtain 
\begin{equation} \label{Dnot''} 
 \sup_{B_{^{\theta^i}}(0)} |u_{\varrho}  -A x^1| < \theta^i 8 \theta^{-1}D_0<\theta^i\epsilon_0<\theta^iA.
 \end{equation}
Thus, the assumption (\ref{agcor2}) of Lemma~\ref{geomcor} is satisfied.  Additionally,  the assumption (\ref{geomcorassump}) of Lemma~\ref{geomcor} is implied by (\ref{assumptionoflemma'}).   Thus, Lemma~\ref{geomcor} and (\ref{volumewrtmetric'}) imply
\begin{equation} \label{psy!}
Vol_{g} \left\{ x \in B_{\theta^i}(0):  u(x) \notin \overline{\bf H}[2
\left( \frac{\theta^i \epsilon_0}{2}\right)^{-3} \frac{\theta^iD_0}{2^i},t_* ]  \right\} <\frac{17v_{n-1}}{8} \cdot \theta^{in}\frac{2\epsilon_0}{A}
\end{equation}
which in turn implies that there exists  $R_0 \in [\frac{5}{8},\frac{7}{8}]$ with the property that 
\begin{equation} \label{psy'}
Vol_{g}\left\{ x \in \partial B_{\theta^iR}(0): u(x) \notin \overline{\bf H}[2
\left( \frac{\theta^i \epsilon_0}{2}\right)^{-3} \frac{\theta^iD_0}{2^i}, t_* ]  \right\} <(\theta^i R_0)^{n-1}\frac{2^{2n+3}\epsilon_0}{A}.
\end{equation}
To see this, denote by $f(R)$ the volume appearing on the left side of (\ref{psy'}) and let $R_0 $ be such that 
\[
f(R_0)=\inf_{ R \in [\frac{5}{8},\frac{7}{8}]}f(R).
\]
Then  by (\ref{psy!})
\[
\frac{\theta^i}{4}f(R_0) \leq \int_{\frac{5\theta^i}{8}}^{\frac{7\theta^i}{8}}f(R)dR <\frac{17v_{n-1}}{8} \cdot \theta^{in}\frac{2\epsilon_0}{A}
\]
hence
\[
f(R_0) < \theta^{i(n-1)}\frac{17v_{n-1}\epsilon_0}{A} \leq (\theta^i R_0)^{(n-1)} (\frac{8}{5})^{(n-1)}\frac{17v_{n-1}\epsilon_0}{A}.
\]
Since the Euclidean volume $v_{n-1}$ of the unit $(n-1)$-dimensional ball is bounded by $6$ for all $n$ and $v_1=2$, (\ref{psy'}) follows. 
 
 Let 
\[
\pi :\overline{\bf H}  \rightarrow \overline{\bf H}[
2\left( \frac{\theta^i \epsilon_0}{2}\right)^{-3} \frac{\theta^iD_0}{2^i},t_*]
\]
be the closest point projection map and 
 \[
w:B_{\theta^i {R_0}}(0) \rightarrow \overline{\bf H}[
2\left( \frac{\theta^i \epsilon_0}{2}\right)^{-3} \frac{\theta^iD_0}{2^i},t_*]
\]
be the harmonic map with boundary value equal to $\pi \circ u$.
 By the definition of $\pi$, the fact that ${_il}(x) \in   \overline{\bf H}[ 
\left( \frac{\theta^i \epsilon_0}{2}\right)^{-3} \frac{\theta^iD_0}{2^i},t_* ]$, we conclude
\begin{equation} \label{doc'}
d(u(x),w(x)) \leq d(u(x), {_il}(x)), \  \ \forall x \in  \partial B_{\theta^{i}R}(0).  
\end{equation}   
We thus     
obtain 
\begin{eqnarray} \label{bdryinfo}
\sup_{B_{\frac{15\theta^iR_0}{16}}(0)} d^2(u,w) 
& \leq & 
 \frac{c_0}{(\theta^i R_0)^{n-1}} \int_{\partial B_{\theta^iR_0}(0)} d^2(u,w)d\Sigma\ \ \ \mbox{(by (\ref{abc!'}))} \nonumber \\
& < & \frac{2^{2n+2}\epsilon_0 c_0}{A}  \sup_{\partial B_{\theta^i R_0}(0)} d^2(u,w) \ \ \ \mbox{(by (\ref{psy'})) }\nonumber \\
& \leq & \frac{2^{2n+3}\epsilon_0c_0}{A}  \sup_{\partial B_{\theta^i R_0}(0)} d^2(u,{_il})  \ \ \ \mbox{(by (\ref{doc'}))}\nonumber \\
& <&\frac{2^{2n+3}\epsilon_0c_0}{A} \cdot \theta^{2i} \frac{D_0^2}{2^{2i}}  \ \ \ \ \ \ \ \mbox{(by (\ref{assumptionoflemma'})) } \nonumber \\
&< & \theta^{2i+2}  \frac{D_0^2}{2^{2i+8}} \ \ \ \ \ \ \ \mbox{(by (\ref{epth'}))},
\end{eqnarray}
or more simply
\begin{equation} \label{uv'}
\sup_{B_{\frac{15\theta^iR_0}{16}}(0)} d(u,w)  < \theta^{i+1}  \frac{D_0}{2^{i+4}}.
\end{equation}
Combining (\ref{assumptionoflemma'}) and (\ref{uv'}), we obtain
\begin{equation} \label{wl'}
\sup_{B_{\frac{\theta^i}{2}}(0)}  d(w ,{_il}) \leq  \sup_{B_{\frac{\theta^i}{2}}(0)}  d(u,w) +  \sup_{B_{\frac{\theta^i}{2}}(0)}  d(u,{_il}) \leq   \theta^i \frac{D_0}{2^{i-1}}.
\end{equation}  
We will now check that we can apply  Theorem~\ref{essreg**}.  We fix $R=\frac{1}{2}$, $E_0$ as in (\ref{insteadofcalF'}), $A_0=6A$.  Set 
\[
\varphi_0 = 2\left(\frac{\epsilon_0}{2}\right)^{-3} \frac{D_0}{2^i}    \ \mbox{ and } \ \vartheta=\frac{\theta^i}{2}.
\]
   First, since $w\big|_{2\vartheta R_0} = \pi \circ u\big|_{2\vartheta R_0}$ and the projection into a convex set in an NPC space is distance non-increasing, we obtain 
\[
E^{w}(2\vartheta R_0) \leq E^{u}(2 \vartheta R_0).
\]
Furthermore, 
(\ref{insteadofcalF'}) implies
\[
\frac{E^{u}(2\vartheta R_0)}{(2\vartheta R_0)^n}  
\leq e^{c} E^u(1) \leq \frac{E_0}{2^n}.
\]
Since $R_0 \in [\frac{5}{8},\frac{7}{8}]$, we therefore conclude
\[
E^w(\vartheta) \leq \vartheta^n E_0.
\]
Next, Lemma~\ref{distequiveuc}, (\ref{uv'}), (\ref{Dnot'}) and (\ref{Dnot''}) imply that in $B_{\frac{15\theta^iR}{16}}(0)$, we have
\begin{eqnarray}\label{jsb'}
|w_{\varrho}| & \leq & |w_{\varrho}-u_{\varrho}| + |u_{\varrho}-Ax^1|+|Ax^1| \nonumber \\
& < &  \theta^{i+1}  \frac{D_0}{2^{i+4}}+\theta^iA+\theta^iA  \\ 
& \leq &  3\theta^iA = A_0 \vartheta  \nonumber.
\end{eqnarray}
Thus, $w$ maps into $\overline{\bf H}[\frac{\varphi_0}{\vartheta^2},t_*] \cap {\bf B}_{A_0\vartheta}(P_0)
$.  Finally, (\ref{sat'}) implies 
\begin{eqnarray}\label{jcb'}
a[\frac{\varphi_0}{\vartheta^2},t_*] = a[2
\left( \frac{\theta^i \epsilon_0}{2}\right)^{-3} \frac{\theta^iD_0}{2^i},t_*]
=a[\frac{16D_0}{\epsilon_0^3 \theta^{2i} 2^i},t_*]
< \frac{\theta^i}{2}=\frac{\vartheta}{2}
\end{eqnarray}
which is assumption (\ref{additionalassumption}) of Theorem~\ref{essreg**}.
In other words, we have verified all the assumptions of Theorem~\ref{essreg**}.  Thus, with  
\[
{_il}=L, \  {_{i+1}l}=\hat{l}, \ \vartheta =\theta^i, \  R= \frac{1}{2}  \mbox{ and } \ r  =\theta,
\]
Theorem~\ref{essreg**} implies with the  choice of the  constants in (\ref{startofconstants'}) that
\[
\sup_{B_{\theta^{i+1}}(0)} d(w,{_{i+1}l}) \leq C\theta^{1+\alpha} \sup_{B_{\frac{\theta^i}{2}}(0)} d(w,{_il}) +C \theta^{i+1}  \left( 2
\left( \frac{\epsilon_0}{2}\right)^{-3} \frac{D_0}{2^i} \right)^2 .
\]
Hence
\begin{eqnarray*}
\sup_{B_{\theta^{i+1}}(0)} d(w,{_{i+1}l}) &  \leq & C\theta^{i+1}\theta^{\alpha}  \frac{D_0}{2^{i-1}}  +C\theta^{i+1} \frac{D_0^2}{\epsilon_0^{6} 2^{2i-8}}  \ \mbox{ (by  (\ref{wl'}))} \\
&  < &  \theta^{i+1} \frac{D_0}{2^{i+4}}  \ \mbox{ (by  (\ref{th'}) and  (\ref{Dnot'}))}.
\end{eqnarray*}
Combined with (\ref{uv'}), we obtain
\begin{eqnarray} \label{ggg'}
\sup_{B_{\theta^{i+1}}(0)} d(u,{_{i+1}l}) & \leq &  \sup_{B_{\theta^{i+1}}(0)} d(u,w)+ \sup_{B_{\theta^{i+1}}(0)} d(w,{_{i+1}l}) \nonumber \\
& < & \theta^{i+1} \frac{D_0}{2^{i+3}}.
\end{eqnarray}
This implies the first inequality of (\ref{conclusionoflemma'}). 
Furthermore, note that  ${_{i+1}l}_{\varphi}=w_{\varphi}$ by definition (cf. Theorem~\ref{essreg**}).  Since $\theta \in (0,\frac{1}{24})$, 
\begin{eqnarray}\label{terr2'}
| {_{i+1}l}_{\varphi}(x)|=| w_{\varphi}(x)| \leq  
2\left( \frac{\theta^i \epsilon_0}{2}\right)^{-3} \frac{\theta^iD_0}{2^i} \leq 
\left( \frac{\theta^{i+1} \epsilon_0}{2}\right)^{-3} \frac{\theta^{i+1}D_0}{2^{i+1}}.
\end{eqnarray}
Thus, we conclude  ${_{i+1}l}$  maps into  $\overline{\bf H}[\left( \frac{\theta^{i+1} \epsilon_0}{2}\right)^{-3} \frac{\theta^{i+1}D_0}{2^{i+1}},t_*]$. 

We now proceed with the proof of  the second inequality of  (\ref{conclusionoflemma'}).  
 Since $_{i}l_{\varrho}$ and $Ax^1$ are both  affine functions and $u(0)=P_0$, we have for every $x \in B_{\theta^i}(0)$
\begin{eqnarray*} 
|_{i}l_{\varrho}(\theta x)-A\theta x^1| & = &   |(1-\theta) {_{i}l_{\varrho}}(0)+\theta ({_{i}l_{\varrho}}( x)-A x^1)| \nonumber \\
& \leq  &   (1-\theta) {_{i}l_{\varrho}}(0)+\theta |{_{i}l_{\varrho}}( x)-A x^1|. \nonumber 
 \end{eqnarray*}
By the definition of the coordinates $(\varrho,\varphi)$,  $_il_{\varrho}(0)$ is  the distance between the point $_il(0)$ and the geodesic ray ${\mathcal L}=\{\phi=0\} \cup P_0$.   Since $u(0)=P_0$,  we have that 
 \[
 _il_{\varrho}(0)=d(_il(0),{\mathcal L})  \leq d(_il(0),u(0)).
 \]
 Thus, 
 \begin{eqnarray*} 
|_{i}l_{\varrho}(\theta x)-A\theta x^1|
& \leq &   (1-\theta) d({_{i}l}(0), u(0))+ \theta |{_{i}l}_{\varrho}( x)-A x^1| \nonumber    \\
& \leq &   (1-\theta) d({_{i}l}(0), u(0))+ \theta |{_{i}l}_{\varrho}( x)-u_{\varrho}(x)| + |u_{\varrho}(x) - A x^1| \nonumber    \\
\end{eqnarray*}
Since 
\[
|_il_{\varrho}(x) -u_{\varrho}(x)| \leq d(_il(x),u(x)).
\]
Thus, 
\begin{eqnarray*}
|_{i}l_{\varrho}(\theta x)-A\theta x^1|& \leq &   (1-\theta) d({_{i}l}(0), u(0))+ \theta d({_{i}l}( x),u( x))+  \theta |u_{\varrho}( x)-A x^1|  \nonumber    \\
 & < &  \theta^{i} \frac{D_0}{2^i} +\theta^{i+1}  {_i\delta}   \ \ \ \mbox{(by (\ref{assumptionoflemma'})}) \\
 & = & \theta^{i+1} \left( {_i\delta}  +  \frac{D_0 \theta^{-1}}{2^i} \right)
\end{eqnarray*}
which implies
\begin{equation} \label{terr'}
 \sup_{B_{\theta^{i+1}}(0)} |{_{i}l_{\varrho}} (x)-A x^1|
  \leq  \theta^{i+1} \left( {_i\delta}  +   \frac{D_0 \theta^{-1}}{2^i} \right).
\end{equation}
Thus, for $x \in B_{\theta^{i+1}}(0)$
\begin{eqnarray}\label{removed'} 
\lefteqn{|u_{\varrho}(x)-Ax^1| } \nonumber \\
& \leq & |u_{\varrho} (x)- {_{i}l}_{\varrho}(x)| +|{_{i}l}_{\varrho}(x)-Ax^1| \nonumber \\ 
& \leq & d(u(x), {_{i}l}(x))+|{_{i}l}_{\varrho}(x)-A x^1|   \ \ \ \mbox{(by Lemma~\ref{distequiveuc}}) \nonumber  \\
& < &   \theta^i \frac{D_0}{2^i} +\theta^{i+1} \left( {_i\delta}  +   \frac{D_0 \theta^{-1}}{2^i} \right)  \ \ \ \mbox{(by (\ref{assumptionoflemma'}) and (\ref{terr'})})\\
& < & \theta^{i+1} \left( {_i\delta}  +   \frac{2D_0 \theta^{-1}}{2^i} \right)  \nonumber\\
& < & \displaystyle{\theta^{i+1} \sum_{k=0}^{i+1} \frac{ \theta^{-1}D_0}{2^{k-2}}} \ \ \ \mbox{(by (\ref{assumptionoflemma'})}). \nonumber
\end{eqnarray}
This is the second inequality of (\ref{conclusionoflemma'}).  \\

Finally, we will prove the third inequality of (\ref{conclusionoflemma'}).  Since $l(x) =(Ax^1,0)$ and since  by (\ref{assumptionoflemma'})\[
{_i\delta}<\displaystyle{ \sum_{k=0}^{i} \frac{\theta^{-1}D_0}{2^{k-2}}} \leq 8\theta^{-1} D_0,
\] 
we  conclude from  (\ref{terr'}) 
that
\[
\sup_{B_{\theta^{i+1}}(0)} d( ({_{i}l_{\varrho}}(x), 0), l(x))= \sup_{B_{\theta^{i+1}}(0)} |{_{i}l}_{\varrho}(x)-Ax^1| < 9\theta^i D_0.  
\]
Thus, for $x \in B_{\theta^{i+1}}(0)$, 
\begin{eqnarray} \label{casa}
\lefteqn{ d({_{i}l}(x), ({_{i}l}_{\varrho}(x), 0))} \nonumber \\ 
&  \leq &  ({_{i}l}_{\varrho}(x))^3|_{i}l_{\varphi}(x)|  \nonumber \\
& < &  \theta^{3i}\left( A+9D_0  \right)^3\cdot 2
\left( \frac{\theta^i \epsilon_0}{2}\right)^{-3} \frac{\theta^i D_0}{2^i}   \ \ \ \mbox{(by (\ref{assumptionoflemma'})}) \\
& \leq & \theta^i \frac{2^3\left( A+ 9D_0 \right)^3}{\epsilon_0^3} D_0.  \nonumber
\end{eqnarray}
Combining the above two inequalities, we obtain
\begin{eqnarray}
d({_{i}l}(x), l(x)) < \theta^i   \left( \frac{2^3\left( A+9D_0 \right)^3}{\epsilon_0^3} +9 \right)   D_0.
\end{eqnarray}
Combined with (\ref{assumptionoflemma'}), 
\begin{eqnarray*}
\sup_{B_{\theta^{i+1}}(0)} d(u,l) & \leq &  \sup_{B_{\theta^{i+1}}(0)} d(u,{_{i}l})+ \sup_{B_{\theta^{i+1}}(0)} d({_{i}l},l) \nonumber \\
& < &\theta^i  \left( \frac{2^3\left( A+9D_0 \right)^3}{\epsilon_0^3} +10 \right)   D_0.
\end{eqnarray*}
\end{proof}

We now present the general case of the above theorem.  This generalization is needed in order to handle the case of approximate harmonic maps.   The assumptions made on  $v$ in Theorem~\ref{induction} should be compared with the properties of the harmonic map observed in Remark~\ref{whatweuse}.

\begin{keylemma}\label{induction}
Given $c_0 \geq 1$, $E_0, A^1, \dots, A^m>0$,
there exist  $\theta  \in (0,\frac{1}{24})$, $\epsilon_0>0$  and  $D_0 \in (0,\frac{1}{\sqrt{8}})$ that satisfy the following statement.  \\
\\
Assume the following:  
\begin{itemize}
\item The map
\[
l=(l^1 \circ (R^1)^{-1}, \dots, l^m \circ (R^m)^{-1}, l^{m+1}, \dots, l^{k-j}):B_{\theta^i}(0) \rightarrow \overline{\bf H}^{k-j}
\]
is such that $R^{\mu}$  is a rotation, 
\[
l^{\mu}(x)= (A^{\mu}x^1,0) \mbox{ in coordinates $(\varrho,\varphi)$ anchored at $t_*^{\mu} \in (-\infty,\frac{3}{2})$  (cf. (\ref{cov}))}
\]
for $\mu=1, \dots, m$  and 
\[
l^{\mu} \mbox{ is identically equal to }P_0
\]
for $\mu=m+1, \dots, k-j$.

\item  The subset $\overline{\bf H}[2
\left( \frac{\theta^i \epsilon_0}{2}\right)^{-3} \frac{\theta^iD_0}{2^i},t_*^{\mu}]$ satisfies 
\begin{equation} \label{sat}
a[2
\left( \frac{\theta^i \epsilon_0}{2}\right)^{-3} \frac{\theta^iD_0}{2^i},t_*^{\mu}]=a[\frac{16D_0}{\epsilon_0^3 \theta^{2i} 2^i},t_*^{\mu}
]
< \frac{\theta^i}{2}    \ \ \mbox{(cf. (\ref{defofa}))}
\end{equation}  
for $\mu=1, \dots, m$.
\item
The map 
\[
v=(v^1, \dots, v^{k-j}):(B_1(0),g) \rightarrow \overline{\bf H}^{k-j}
\]
is such that 
\begin{equation} \label{insteadofcalF}
v(0)={\mathbb P}_0, \ \ E^v(\vartheta) \leq \vartheta^nE_0 
\end{equation}
and 
\begin{quote}
\emph{for $R \in (0,\frac{7}{8}]$, a harmonic map $w:(B_{\theta^i R}(0),g)  \rightarrow \overline{\bf H}^{k-j}$ with $E^w(\theta^i R) \leq E^v(\theta^i R)$ and a constant}
\begin{equation} \label{choosec}
c=\frac{\theta^2 D_0^2}{2^8},
\end{equation}
we have
 \begin{equation} \label{abc!}
 \sup_{B_{\frac{15\theta^i R}{16}}(0)} d_h^2(v,w)  \leq  \frac{c_0}{(\theta^i R)^{n-1}} \int_{\partial B_{\theta^i R}(0)} d_h^2(v,w)d\Sigma + c \theta^{3i}.
\end{equation}
\end{quote}
\item  The metric $g$   is a normalized metric satisfying (\ref{volumewrtmetric'}) for any smooth submanifold $S$ of $B_1(0)$. 
\item The map
\[
{_il}=({_il}^1 \circ (R^1)^{-1}, \dots, {_il}^m \circ (R^1)^{-1}, {_il}^{m+1},   \dots, {_il}^{k-j}):B_{\theta^i}(0) \rightarrow \overline{\bf H}^{k-j},
\]
is such that
\[
_il^{\mu}:B_{\theta^i}(0) \rightarrow \overline{\bf H}[
\left( \frac{\theta^i \epsilon_0}{2}\right)^{-3} \frac{\theta^iD_0}{2^i},t_*^{\mu}]
\
\mbox{is an almost affine map }
\] 
for 
$\mu=1, \dots, m$ (cf. Definition~\ref{almostaffine})
 and
\[
_il^{\mu} \mbox{ is identically equal to }P_0
\]
 for $\mu=m+1, \dots, k-j$.
\item The constant  $_i\delta>0$ is  such that
\begin{equation} \label{assumptionoflemma}
\left\{
\begin{array}{l}
{\displaystyle \sup_{B_{^{\theta^i}}(0)}  d_h(v, {_il}) <\theta^{i}\frac{D_0}{2^i}}\\
{\displaystyle  \sup_{B_{^{\theta^i}}(0)} } |v_{\varrho}^{\mu} \circ R^{\mu}(x)-A^{\mu}x^1|  < \theta^{i}{_i\delta}<\theta^{i}\displaystyle{ \sum_{k=0}^{i} \frac{\theta^{-1}D_0}{2^{k-2}}}.
\end{array}
\right.
\end{equation}
\end{itemize}
Then there exists a map
\[
 _{i+1}l=( {_{i+1}l}^1 \circ (R^1)^{-1}, \dots, {_{i+1}}l^m  \circ (R^m)^{-1},  {_{i+1}}l^{m+1}, \dots,  {_{i+1}l}^{k-j}):B_{\theta^{i+1}}(0) \rightarrow  \overline{\bf H}^{k-j} 
\]
 such that 
\[
 _{i+1}l^{\mu}:B_{\theta^{i+1}}(0) \rightarrow  \overline{\bf H}[
\left( \frac{\theta^{i+1} \epsilon_0}{2}\right)^{-3} \frac{\theta^{i+1}D_0}{2^{i+1}},t_*^{\mu}] 
\
\mbox{is an almost affine map}
\] 
for 
$\mu=1, \dots, m$,
\[
_{i+1}l^{\mu} \mbox{ is identically equal to }P_0
\]
for $\mu=m+1, \dots, k-j$  and 
\begin{equation} \label{conclusionoflemma}
\left\{
\begin{array}{l}
\displaystyle{\sup_{B_{\theta^{i+1}}(0)}} d_h(v,{_{i+1}l}) < \theta^{i+1}\frac{D_0}{2^{i+1}}\\
\displaystyle {\sup_{B_{\theta^{i+1}}(0)}  |v_{\varrho}^{\mu} \circ R^{\mu}(x)- A^{\mu}x^1|}  < \displaystyle {_{i+1}\delta} \theta^{i+1}\ :=  \left( {_i\delta}  +   \frac{2D_0 \theta^{-1}}{2^i} \right) \theta^{i+1} < \displaystyle{\theta^{i+1} \sum_{k=0}^{i+1} \frac{ \theta^{-1}D_0}{2^{k-2}}} \\
{\displaystyle 
\sup_{B_{\theta^{i+1}}(0)} d_h(v,l) 
< m\theta^{i +1}  \left( \frac{2^3\left( A+9D_0 \right)^3}{\epsilon_0^3} +10 \right)  \theta^{-1}  D_0}.
\end{array}
\right.
\end{equation}
\end{keylemma}

\begin{proof}   
Let
   \begin{equation} \label{Anot1}
  A_{\min}:=\min \{A^1, \dots, A^{m}\} \ \mbox{ and }  A_{\max}:=\max \{A^1, \dots, A^{m}\}.
   \end{equation}
For $R= \frac{1}{2}$, $E_0>0$ as in (\ref{insteadofcalF}) and the metric $g$ given as in the statement of the theorem, let
 \begin{equation} \label{startofconstants}
C \geq 1 \mbox{ and }  \alpha>0 \mbox{ be as  in Theorem~\ref{essreg**}}.
\end{equation}
Let $\theta \in (0,\min\{\frac{1}{24},\frac{1}{\sqrt{A}} \})$ sufficiently small such that
(\ref{ceatheta'}), (\ref{th'}) and (\ref{teanot'}) are satisfied.
Define
 \begin{equation} \label{epth}
\epsilon_0:=\left( \frac{A_{\min}}{2^{2n+11}c_0} \right) \theta^2<1.
\end{equation}
Choose $D_0 \in (0,\frac{1}{\sqrt{8}})$ such that
\begin{equation} \label{Dnot} 
 D_0 < \min \left\{  \frac{\epsilon_0^{6}}{2^{13}mC}, \frac{A_{\min}}{4},  \frac{\theta \epsilon_0}{8}   \right\}.
\end{equation}
As in (\ref{Dnot''}), we obtain 
\begin{equation}\label{Dnot'''}
 \sup_{B_{^{\theta^i}}(0)} |v_{\varrho}^{\mu} \circ R^{\mu}(x)-A^{\mu}x^1| < \theta^i\epsilon_0< \theta^i A^{\mu}.
 \end{equation}
Thus, assumption (\ref{agcor2}) of Lemma~\ref{geomcor} is satisfied.  As in (\ref{psy'}) 
\begin{equation} \label{psy}
Vol_{g}\left\{ x \in \partial B_{\theta^iR}(0): v^{\mu}\circ R^{\mu}(x) \notin \overline{\bf H}[2
\left( \frac{\theta^i \epsilon_0}{2}\right)^{-3} \frac{\theta^iD_0}{2^i}, t_*^{\mu} ]  \right\} <(\theta^i R)^{n-1}\frac{2^{2n+2}\epsilon_0}{A^{\mu}}.
\end{equation}
 Let 
 \[
 w=(w^1, \dots, w^{k-j}):B_{\theta^i R}(0) \rightarrow \overline{\bf H}^{k-j}
\]
be the harmonic map defined as follows:  

\begin{itemize}
\item Let 
\[
\pi^{\mu} :\overline{\bf H}  \rightarrow \overline{\bf H}[
2\left( \frac{\theta^i \epsilon_0}{2}\right)^{-3} \frac{\theta^iD_0}{2^i},t_*^{\mu}] , \ \ \mu=1,\dots, m
\]
be the closest point projection map and 
 \[
w^{\mu}:B_{\theta^i R}(0) \rightarrow \overline{\bf H}[
2\left( \frac{\theta^i \epsilon_0}{2}\right)^{-3} \frac{\theta^iD_0}{2^i},t_*^{\mu}], \ \ \mu=1,\dots, m
\]
be the harmonic map with boundary value equal to $\pi^{\mu} \circ v^{\mu}$.
\item For $\mu=m+1, \dots, k-j$, 
let 
$w^{\mu}$ be identically equal to $P_0$.
\end{itemize}
 By  definition of $\pi^{\mu}$, the fact that ${_il}^{\mu}(x) \in   \overline{\bf H}[ 
\left( \frac{\theta^i \epsilon_0}{2}\right)^{-3} \frac{\theta^iD_0}{2^i},t_*^{\mu} ]$ for $\mu=1, \dots, m$ and that ${_il}^{\mu}(x) \equiv P_0$ for $\mu=m+1, \dots, k-j$, we conclude
\begin{equation} \label{doc}
d(v(x),w(x)) \leq d(v(x), {_il}(x)), \  \ \forall x \in  \partial B_{\theta^{i}R}(0).
\end{equation}   
Since $\theta <\frac{1}{24}$, we have
$
\theta^{i} < \frac{1}{2^{2i+1}},
$
and thus (\ref{choosec}) implies
\begin{equation} \label{xtraterm}
c\theta^{3i} = c\theta^{-2} \theta^{3i+2} < \theta^{2i+2}  \frac{D_0^2}{2^{2i+9}}.
\end{equation}
We thus     
obtain 
\begin{eqnarray} \label{bdryinfo}
\sup_{B_{\frac{15\theta^iR}{16}}(0)} d^2(v,w) 
& \leq & 
 \frac{c_0}{(\theta^i R)^{n-1}} \int_{\partial B_{\theta^iR}(0)} d^2(v,w)d\Sigma +c\theta^{3i}\ \ \ \mbox{(by (\ref{abc!}))} \nonumber \\
& < & \frac{2^{2n+2}\epsilon_0 c_0}{A_{\min}}  \sup_{\partial B_{\theta^i R}(0)} d^2(v,w) +c\theta^{3i}\ \ \ \mbox{(by (\ref{psy})) }\nonumber \\
& \leq & \frac{2^{2n+2}\epsilon_0c_0}{A_{\min}}  \sup_{\partial B_{\theta^i R}(0)} d^2(v,{_il}) +c\theta^{3i} \ \ \ \mbox{(by (\ref{doc}))}\nonumber \\
& <&\frac{2^{2n+2}\epsilon_0c_0}{A_{\min}} \cdot \theta^{2i} \frac{D_0^2}{2^{2i}} +c\theta^{3i} \ \ \ \ \ \ \ \mbox{(by (\ref{assumptionoflemma})) } \nonumber \\
&< & \theta^{2i+2}  \frac{D_0^2}{2^{2i+8}} \ \ \ \ \ \ \ \mbox{(by (\ref{epth}) and (\ref{xtraterm}))},
\end{eqnarray}
or more simply
\begin{equation} \label{uv}
\sup_{B_{\frac{15\theta^iR}{16}}(0)} d(v,w)  < \theta^{i+1}  \frac{D_0}{2^{i+4}}.
\end{equation}
Combining (\ref{assumptionoflemma}) and (\ref{uv}), we obtain
\begin{equation} \label{wl}
\sup_{B_{\frac{\theta^i}{2}}(0)}  d(w ,{_il}) \leq  \sup_{B_{\frac{\theta^i}{2}}(0)}  d(v,w) +  \sup_{B_{\frac{\theta^i}{2}}(0)}  d(v,{_il}) \leq   \theta^i \frac{D_0}{2^{i-1}}.
\end{equation}  
We will now check that we can apply  Theorem~\ref{essreg**}.  We fix $R=\frac{1}{2}$, $E_0$ as in (\ref{insteadofcalF}), $A_0=3A_{\max}$, $\varphi_0 = 2\left(\frac{\epsilon_0}{2}\right)^{-3} \frac{D_0}{2^i}$    and $\vartheta=\theta^i$,   First, note that  since  projection into a convex set in an NPC space is distance non-increasing, we obtain $E^{w^{\mu}}(\theta^i) \leq E^{v^{\mu}}(\theta^i) \leq \theta^{in}E_0$ by (\ref{insteadofcalF}).
In analogy with (\ref{jsb'}) we obtain
\begin{eqnarray*}
|w_{\varrho}^{\mu} \circ R^{\mu}|  \leq A_0 \vartheta.
\end{eqnarray*}
Thus, $w^{\mu}$ maps into $\overline{\bf H}[\frac{\varphi_0}{\vartheta^2},t_*] \cap {\bf B}_{A_0\vartheta}(P_0)
$.  Finally, in analogy with (\ref{jcb'})  
\[
a[\frac{\varphi_0}{\vartheta^2},t_*^{\mu}] < \frac{\theta^i}{2}=\frac{\vartheta}{2}
\]
which is assumption (\ref{additionalassumption}) of Theorem~\ref{essreg**}.  Thus, with  
\[
{_il}^{\mu}=L, \  {_{i+1}l}^{\mu}=\hat{l}, \ \vartheta =\theta^i, \  R= \frac{1}{2}  \mbox{ and } \ r  =\theta
\]
in Theorem~\ref{essreg**},
 we have by the  choice of the  constants in (\ref{startofconstants}) that
\[
\sup_{B_{\theta^{i+1}}(0)} d_{\overline{\bf H}}(w^{\mu},{_{i+1}l}^{\mu}) \leq C\theta^{1+\alpha} \sup_{B_{\frac{\theta^i}{2}}(0)} d_{\overline{\bf H}}(w^{\mu},{_il}^{\mu}) +C \theta^{i+1}  \left( 2
\left( \frac{\epsilon_0}{2}\right)^{-3} \frac{D_0}{2^i} \right)^2 .
\]
This immediately implies
\[
\sup_{B_{\theta^{i+1}}(0)} d(w,{_{i+1}l}) \leq C\theta^{1+\alpha} \sup_{B_{\frac{\theta^i}{2}}(0)} d(w,{_il}) +mC \theta^{i+1}  \left( 2
\left( \frac{\epsilon_0}{2}\right)^{-3} \frac{D_0}{2^i} \right)^2 
\]
hence
\begin{eqnarray*}
\sup_{B_{\theta^{i+1}}(0)} d(w,{_{i+1}l}) &  \leq & C\theta^{i+1}\theta^{\alpha}  \frac{D_0}{2^{i-1}}  +m C\theta^{i+1} \frac{D_0^2}{\epsilon_0^{6} 2^{2i-8}}  \ \mbox{ (by  (\ref{wl}))} \\
&  < &  \theta^{i+1} \frac{D_0}{2^{i+4}}  \ \mbox{ (by  (\ref{th'}) and  (\ref{Dnot}))}.
\end{eqnarray*}
Combined with (\ref{uv}), we obtain as in (\ref{ggg'})
\begin{eqnarray} \label{ggg}
\sup_{B_{\theta^{i+1}}(0)} d(v,{_{i+1}l}) & < & \theta^{i+1} \frac{D_0}{2^{i+3}}.
\end{eqnarray}
This implies the first inequality of (\ref{conclusionoflemma}). 
Furthermore, since ${_{i+1}l}_{\varphi}^{\mu}=w_{\varphi}^{\mu}$ by definition (cf. Theorem~\ref{essreg**}), we conclude via the analogous equation to  (\ref{terr2'}) 
that  ${_{i+1}l}^{\mu}$  maps into  $\overline{\bf H}[\left( \frac{\theta^{i+1} \epsilon_0}{2}\right)^{-3} \frac{\theta^{i+1}D_0}{2^{i+1}},t_*^{\mu}]$. 

We now proceed with the proof of  the second inequality of  (\ref{conclusionoflemma}). 
Seting $A^{\mu}=0$ for $\mu=m+1, \dots, k-j$ for simplicity, we deduce in a maner  identical to (\ref{terr'}) 
\begin{eqnarray} \label{terr}
 \sup_{B_{\theta^{i+1}}(0)} |{_{i}l_{\varrho}^{\mu}} \circ R^{\mu}(x)-A^{\mu} x^1|
  \leq  \theta^{i+1} \left( {_i\delta}  +   \frac{D_0 \theta^{-1}}{2^i} \right).
\end{eqnarray}
Thus, for $x \in B_{\theta^{i+1}}(0)$ we obtain as in (\ref{removed'})
\[
|v_{\varrho}^{\mu} \circ R^{\mu}(x)-A^{\mu}x^1|< \displaystyle{\theta^{i+1} \sum_{k=0}^{i+1} \frac{ \theta^{-1}D_0}{2^{k-2}}}
\]
which is the second inequality of (\ref{conclusionoflemma}).

Finally, we will prove the third inequality of (\ref{conclusionoflemma}).  Since $l^{\mu}(x) =(A^{\mu}x^1,0)$ and since  by (\ref{assumptionoflemma})\[
{_i\delta}<\displaystyle{ \sum_{k=0}^{i} \frac{\theta^{-1}D_0}{2^{k-2}}} \leq 8\theta^{-1} D_0,
\] 
we  conclude from  (\ref{terr}) 
that
\[
\sup_{B_{\theta^{i+1}}(0)} d( ({_{i}l^{\mu}_{\varrho}}\circ R^{\mu}(x), 0), l^{\mu}(x))= \sup_{B_{\theta^{i+1}}(0)} |{_{i}l}_{\varrho}^{\mu}\circ R^{\mu}(x)-A^{\mu}x^1| < 9\theta^i D_0.  
\]
Thus, for $x \in B_{\theta^{i+1}}(0)$ as in (\ref{casa})
\[
d({_{i}l^{\mu}}\circ R^{\mu}(x), ({_{i}l}^{\mu}_{\varrho}\circ R^{\mu}(x), 0))
 \leq  \theta^i \frac{2^3\left( A^{\mu}+ 9D_0 \right)^3}{\epsilon_0^3} D_0.
\]
Combining the above two inequalities, we obtain
\begin{eqnarray*}
d({_{i}l}^{\mu}\circ R^{\mu}(x), l^{\mu}(x)) < \theta^i   \left( \frac{2^3\left( A^{\mu}+9D_0 \right)^3}{\epsilon_0^3} +9 \right)   D_0.
\end{eqnarray*}
Combined with (\ref{assumptionoflemma}) 
\begin{eqnarray*}
\sup_{B_{\theta^{i+1}}(0)} d(v^{\mu},l^{\mu}) & \leq &  \sup_{B_{\theta^{i+1}}(0)} d(v^{\mu},{_{i}l^{\mu}}\circ R^{\mu})+ \sup_{B_{\theta^{i+1}}(0)} d({_{i}l^{\mu}}\circ R^{\mu},l^{\mu}) \nonumber \\
& < &\theta^i  \left( \frac{2^3\left( A+9D_0 \right)^3}{\epsilon_0^3} +10 \right)   D_0.
\end{eqnarray*}
Hence, 
\begin{eqnarray*}
\sup_{B_{\theta^{i+1}}(0)} d(v,l) &  \leq  & \sup_{B_{\theta^{i+1}}(0)}  \sum_{\mu=1}^m d({_{i}l}^{\mu}\circ R^{\mu}(x), l^{\mu}(x))\\
& < &m\theta^i   \left( \frac{2^3\left( A+9D_0 \right)^3}{\epsilon_0^3} +10 \right)   D_0.
\end{eqnarray*}
\end{proof}

\subsection{Proof of the Key Technical Lemma~\ref{keylemma'}} \label{sec:theproof}

\begin{proof}
The assumption (i) that $g$ is sufficiently close to the Euclidean metric is the condition given by (\ref{volumewrtmetric'}).
Let  $\theta$, $\epsilon_0$ and $D_0$ be as in the iterative Lemma~\ref{induction} and let $c=\frac{\theta^2 D_0^2}{2^8}$.  By assumption,
\begin{equation} \label{ruff}
\sup_{B_{\frac{1}{2}}(0)} d_h(v,l) < D_0.
\end{equation}
We will also assume $v(0)={\mathbb P}_0$.
In order to arrive at a contradiction, we will apply iterative Lemma~\ref{induction} starting with $l={_0l}$ and ${_0\delta} =D_0$ (cf. assumption~(\ref{assumptionoflemma}) of the iterative Lemma~\ref{induction}).  To do so, we need to verify assumption~(\ref{sat}) of iterative Lemma~\ref{induction}; in other words,  we need to show
\[
a[2
\left( \frac{\theta^i \epsilon_0}{2}\right)^{-3} \frac{\theta^iD_0}{2^i},t_*^{\mu}]=a[\frac{16D_0}{\epsilon_0^3 \theta^{2i} 2^i},t_*^{\mu}
]
< \frac{\theta^i}{2}.
\]   For this purpose, we note the constants $\theta$ and $\epsilon_0$ are  \emph{chosen before} the constant $D_0$ in the proof of iterative Lemma~\ref{induction};  hence, there is no loss of generality in assuming that $D_0
$ is chosen sufficiently small (cf. (\ref{Dnot})) such that
\begin{equation} \label{noloss1}
\frac{8D_0}{\epsilon_0^3}<1
\end{equation}
and
\begin{equation} \label{noloss2}
m \left( \frac{2^3\left( A+9D_0 \right)^3}{\epsilon_0^3} +10 \right)  \theta^{-1} D_0<\frac{1}{\sqrt{8}}.
\end{equation}  
 For $\mu=1, \dots, m$, recall that $t^{\mu}_*$ is the address of $l^{\mu}$ (cf. (\ref{pouty})). 
Reordering if necessary, we can assume 
\begin{equation}\label{sany}
t^1_* =\max \{t^1_*, \dots, t^m_*\}.
\end{equation}  
Let $i_0$ be the  non-negative integer     such that
\begin{equation} \label{chooseinot}
\frac{\theta^{i_0+1}}{\sqrt{8}} \leq 
c_{\rho}(0,t_*^1)<\frac{\theta^{i_0}}{\sqrt{8}}.
\end{equation}
Recall by (\ref{at1}) and (\ref{normalizeit}) that 
$
t \mapsto c_{\rho}(0,t)=:f(t)
$ satisfies 
\[
f'(t)=f^3(t) \ \mbox{with} \ f(1)=1.
\]
Solving this differential equation, we obtain
\[
f(t)=\frac{1}{\sqrt{3-2t}}
\]
and 
\[
t=\frac{3}{2} -\frac{1}{2f^2(t)}.
\]
In particular, since $f(t_*^1)=c_{\rho}(0,t_*^1) < \frac{\theta^{i_0}}{\sqrt{8}}$, we have
\begin{equation} \label{ts}
-t_*^1=-\frac{3}{2} +\frac{1}{2f^2(t_*)}>-\frac{3}{2} +\frac{4}{\theta^{2i_0}}.
\end{equation}
 Therefore, 
if 
\[
i  \in \{1,2, \dots, i_0\} \ \mbox{ and } \ 
|t_*^1-t| \leq \frac{16D_0}{\epsilon_0^3 \theta^{2i} 2^i},
\]
then by (\ref{noloss1}) and (\ref{ts})
\[ 
3-2t>3-2t_*^1-\frac{32D_0}{\epsilon_0^3 \theta^{2i} 2^i}  > \frac{8}{\theta^{2i_0}} - \frac{4}{\theta^{2i} 2^i} \geq \frac{8}{\theta^{2i}} - \frac{4}{\theta^{2i}}= \frac{4}{\theta^{2i}}.
\]
In turn, this implies
\[
c_{\rho}(0,t)=f(t)=\frac{1}{\sqrt{3-2t}} < \frac{\theta^{i}}{2}.
\]
In summary, we have shown
\begin{eqnarray*}
i  \in \{1,2, \dots, i_0\}
&  \Rightarrow & 
a[\frac{16D_0}{\epsilon_0^3 \theta^{2i} 2^i},t_*^1] =\max_{\{\varphi:  |\varphi|\leq \frac{16D_0}{\epsilon_0^3 \theta^{2i} 2^i}\}} c_{\rho}(0,t_*^1+\varphi) < \frac{\theta^i}{2}.
\end{eqnarray*}
By (\ref{at1}), $t \mapsto c_{\rho}(0,t)$ is an increasing function.  Since $t_*^1 \geq t_*^{\mu}$ for $\mu=2, \dots, m$, this implies that
\begin{eqnarray*}
i  \in \{1,2, \dots, i_0\}
&  \Rightarrow & 
a[\frac{16D_0}{\epsilon_0^3 \theta^{2i} 2^i},t_*^\mu] =\max_{\{\varphi:  |\varphi|\leq \frac{16D_0}{\epsilon_0^3 \theta^{2i} 2^i}\}} c_{\rho}(0,t_*^{\mu}+\varphi) < \frac{\theta^i}{2}.
\end{eqnarray*}
In other words,   the assumption~(\ref{sat}) of iterative Lemma~\ref{induction} is satisfied for $i=0,1,2,\dots, i_0$. We can now complete the proof by applying the iterative Lemma~\ref{induction} as follows:\\

Let $_0l=l$ and ${_0\delta}=D_0$ (cf. assumption (\ref{assumptionoflemma}) of the iterative Lemma~\ref{induction}).   By (\ref{ruff}) and Lemma~\ref{distequiveuc}, 

\[
\left\{
\begin{array}{l}
{\displaystyle \sup_{B_{\theta}(0)}  d_h(v,{_0l}) < D_0}\\
{\displaystyle  \sup_{B_{\theta}(0)} }|v^{\mu}_{\varrho} \circ R^{\mu}-A^{\mu} x^1|  < {_0\delta}< 4 \theta^{-1}D_0.
\end{array}
\right.
\]
We  apply the iterative Lemma~\ref{induction}  for $i=1,2,\dots, i_0$ to obtain
\begin{equation} \label{ss}
\sup_{B_{\theta^{i_0+1}}(0)} d_h(v,l) 
< m \theta^{i_0+1}    \left( \frac{2^3\left( A+9D_0 \right)^3}{\epsilon_0^3} +10 \right) \theta^{-1}  D_0.
\end{equation}
Thus, 
\begin{eqnarray*}
\frac{\theta^{i_0+1}}{\sqrt{8}} & \leq &  c_{\rho}(0,t_*^1)  \ \ \ \ \ \mbox{(by (\ref{sany}) and (\ref{chooseinot}))} \\
& = & d_{\overline{\bf H}}(P_0, l^1 \circ (R^1)^{-1}(0))  \ \ \ \ \ \mbox{(by (\ref{atstar}))}\\
& = & d_{\overline{\bf H}}(v^1(0),l^1 \circ (R^1)^{-1}(0)) \ \mbox{(by the assumption that $v(0)={\mathbb P}_0$)} \\
& \leq &  d_h(v(0),l(0)) \\
& \leq &m \theta^{i_0+1}    \left( \frac{2^3\left( A+9D_0 \right)^3}{\epsilon_0^3} +10 \right) \theta^{-1}  D_0 \ \ \ \ \  \mbox{(by (\ref{ss}))}\\
& < & \frac{\theta^{i_0+1}}{\sqrt{8}} \ \ \ \  \mbox{(by (\ref{noloss2}))}.
\end{eqnarray*}
This contradicts our assumption that  $v(0) = {\mathbb P}_0$.
\end{proof}

\section{Two dimensional domains} \label{sec:dim2}

In this section, we prove Theorem~\ref{surf}, the regularity of harmonic maps from  two dimensional domains. We first need the following  preliminary lemma.

\begin{lemma}\label{onestratum} 
Let  $u:(\Omega,g) \rightarrow( \overline{\mathcal T}, d_{ \overline{\mathcal T}})$  be a harmonic map from  an $n$-dimensional  Lipschitz Riemannian domain,  $\Sigma$  a connected submanifold of $\Omega$ (possibly  $\Sigma= \Omega$) and ${\mathcal T}'$  a stratum of $\overline {\mathcal T}$ (possibly ${\mathcal T}'={\mathcal T}$).  If $u(\Sigma) \cap {\mathcal T}' \neq \emptyset$ and $\Sigma \subset {\mathcal R}(u)$, then $u(\Sigma) \subset  {\mathcal T}'$. Moreover,  there exists a stratum ${\mathcal T}'$ of $\overline{\mathcal T}$ such that $u({\mathcal R}(u)) \subset {\mathcal T}'$. 
\end{lemma}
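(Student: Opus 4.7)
The plan is a connectedness argument using continuity of $u$ together with the local structure of the regular set ${\cal R}(u)$.

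First I would note that, by the definition of ${\cal R}(u)$ recalled earlier, each $x \in {\cal R}(u)$ admits a stratum ${\cal T}'_x$ and a radius $r = r(x) > 0$ with $B_r(u(x)) \subset {\cal T}'_x$. By continuity of $u$ I can then select a neighborhood $V_x \subset \Omega$ of $x$ with $u(V_x) \subset B_r(u(x)) \subset {\cal T}'_x$, thereby assigning to every point of ${\cal R}(u)$ a well-defined local stratum containing its image.

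For the first assertion, I would partition $\Sigma$ as the disjoint union $\Sigma = \bigsqcup_{{\cal T}''} A_{{\cal T}''}$, where $A_{{\cal T}''} := \Sigma \cap u^{-1}({\cal T}'')$ and ${\cal T}''$ ranges over all strata of $\overline{\cal T}$. Each $A_{{\cal T}''}$ is open in $\Sigma$: if $x \in A_{{\cal T}''}$ then $x \in \Sigma \subset {\cal R}(u)$, so by the previous step $V_x \cap \Sigma \subset A_{{\cal T}'_x}$, and because the strata are pairwise disjoint and $u(x) \in {\cal T}'_x \cap {\cal T}''$, we must have ${\cal T}'_x = {\cal T}''$. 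Connectedness of $\Sigma$ then forces exactly one $A_{{\cal T}''}$ to be nonempty, and the hypothesis $u(\Sigma) \cap {\cal T}' \neq \emptyset$ identifies this stratum as ${\cal T}'$, yielding $u(\Sigma) \subset {\cal T}'$.

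For the second assertion, I would apply the first assertion to $\Sigma = {\cal R}(u)$, which requires verifying that ${\cal R}(u)$ is connected. Since ${\cal R}(u)$ is open in $\Omega$ and $\Omega$ is a connected $n$-manifold, this reduces to showing that removing ${\cal S}(u)$ does not disconnect $\Omega$. By Theorem~\ref{RegularityTheorem} we have $\dim_{\cal H}({\cal S}(u)) \leq n-2$, which forces the topological dimension of ${\cal S}(u)$ to also be at most $n-2$; such a set cannot locally separate a connected open subset of $\Omega$, so ${\cal R}(u)$ is connected. Picking any $x_0 \in {\cal R}(u)$ and letting ${\cal T}'$ be the stratum containing $u(x_0)$, the first assertion then gives $u({\cal R}(u)) \subset {\cal T}'$. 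The only nontrivial ingredient is the connectedness of ${\cal R}(u)$, and once Theorem~\ref{RegularityTheorem} is in hand this follows from standard dimension theory, so no real obstacle remains.
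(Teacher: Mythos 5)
Your proof is correct and follows essentially the same route as the paper: an open/closed (connectedness) argument on the preimages of the strata for the first assertion, and connectedness of ${\cal R}(u)$ deduced from $\dim_{\cal H}({\cal S}(u))\leq n-2$ for the second. The only cosmetic difference is that you justify ``a closed set of Hausdorff codimension $2$ does not disconnect $\Omega$'' by standard dimension theory, whereas the paper cites Shiffman's result; both are valid.
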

 
\begin{proof}
Since   $u(\Sigma) \cap {\mathcal T}' \neq \emptyset$, we have that $W :=u^{-1}({\mathcal T}') \cap \Sigma$ is a nonempty  subset of 
 $\Sigma$. Assume on the contrary that  $u(\Sigma) \not \subset {\mathcal T}'$, and let  $x$ be a boundary point of $ W$ in $ \Sigma$. Since $\Sigma \subset {\mathcal R}(u)$, there exists $r>0$ such that $u(B_r(x))$ is contained in a single stratum. Since 
 $B_r(x) \cap W \neq \emptyset$, we conclude that  $u(B_r(x)) \subset {\mathcal T}'$ contradicting the fact that  $x$ is a boundary point of $ W$ in  $ \Sigma$.  This proves the first assertion.  
Since  ${\mathcal S}(u)$ is of Hausdorff codimension 2 the set  ${\mathcal R}(u)$ is connected. (This follows easily from \cite{bernie} Corollary 4.)  Thus, the second assertion follows from the first.
 \end{proof}  

\begin{bach}
 We first prove that if $u:\Sigma \rightarrow( \overline{\mathcal T}, d_{ \overline{\mathcal T}})$ is a harmonic map from a Riemann surface, then the set ${\mathcal S}^{>1}(u)$ of its singular points of order $>1$ is discrete.
Assume on the contrary that there exists $x_i \in {\mathcal S}^{>1}(u)$ such that $x_i \rightarrow x_0$.  By Lemma~\ref{ordergap'}, $Ord^u(x_i) \geq 1+\epsilon_0$ for some $\epsilon_0>0$.   By Theorem~\ref{orderforharmonic}, the order is a decreasing limit of continous functions and hence upper semicontinuous.  Thus,  $x_0 \in {\mathcal S}^{>1}(u)$.   Identify a neighborhood of $x_0=0$  to a disk $\D$ via normal coordinates.     By letting $\sigma_i=2|x_i|$ and taking a subsequence, if necessary, we can assume $\zeta_i = \frac{x_i}{\sigma_i} \rightarrow \zeta_*$ and the blow up maps $u_{\sigma_i}=(V_{\sigma_i},v_{\sigma_i})$ with blow-up factor  $\sqrt{\frac{I^u(\sigma_i)}{\sigma_i}}$ converge locally uniformly in the pullback sense  (cf.~Lemma~\ref{blowupcomponent}) to
\[
u_*=(V_*,v_*)=(V_*,v_*^1, \dots, v^{k-j}_*):B_1(0) \rightarrow \C^j \times Y_{1*} \times \dots \times Y_{k-j*}.
\]
  By Lemma~\ref{uscord}, $Ord^{u_*}(\zeta_*) \geq 1+\epsilon_0$, and thus the homogeneity of $u_*$ implies that $Ord^{u_*}(x) \geq 1+\epsilon_0$ for every point on the ray starting at 0 and going through $\zeta_*$. By rotating if necessary, we assume that this ray is the positive $x$-axis and $\zeta_*=(\frac{1}{2},0)$.  Thus, $V_*$ must be identically constant since otherwise $V_*$ is a harmonic map into $\C^j$ with order $\geq 1+\epsilon_0$ along the $x$-axis which is impossible.    Since $u_*$ is a non-constant map, it follows that  $v_*$ must be  non-constant. 
  
   We will now do a similar argument with $v_*$ in order to get a contradiction. From the proof of Lemma~\ref{ordergap'}, we observe that there exists a sequence of harmonic maps $w_i:\D \rightarrow \overline{\bf H}^{k-j}$ converging locally uniformly to $v_*$.  For simplicity, we will assume that $k-j=1$.  (Otherwise, pick one of the non-constant components of $v_*$ and the corresponding component  of $w_i$.)    By homogeneity, $v_*^{-1}(v_*(0))$ is a union of rays emanating from the origin in $\D$ and the connected components of $\D \backslash v_*^{-1}(v_*(0))$ are sectors of $\D$.  Furthermore,   Claim 1 in the proof of Lemma~\ref{proveitsfl} says that $v_*$  must map  every connected component of $\D \backslash v_*^{-1}(v_*(0))$ into a geodesic ray starting at $v_*(0)$.    Since $Ord^{v_*} \geq  1+\epsilon_0$ along the positive $x$-axis,  the postive $x$-axis is one of the rays in $v_*^{-1}(v_*(0))$.  We choose a sufficiently small neighborhood $\mathcal N$ of $\zeta_*=(\frac{1}{2},0)$ such that ${\mathcal N}$ intersects exactly  two sectors of $\D \backslash v_*^{-1}(v_*(0))$. Thus, $v_*(\mathcal N)$ is contained in a union of two geodesic rays.    Harmonicity of $v_*$ implies that $v_*(\mathcal N)$ is a geodesic segment.  After identifying the geodesic segment with an interval $[a,b]$ in the real line,  $v_*$ is a harmonic function in ${\mathcal N}$ with order $\geq 1+\epsilon$ along the $x$-axis, a contradiction.  Thus, we have shown that ${\mathcal S}^{>1}(u)$  is a discrete set.

Next we prove that the set ${\mathcal S}_j(u)$ (cf. (\ref{sj})) is discrete.  
Indeed, on the contrary, suppose that there exists a sequence  $x_i \in {\mathcal S}_j(u) \rightarrow x_{\star} \in {\mathcal S}_j(v)$.   Let  $u=(V,v)$ be a local representation at  $x_{\star}$. By Corollary~\ref{ptsorder1}, there exists $\epsilon_0>0$ such that $Ord^v(x_i) \geq 1+\epsilon_0$.    Identify a neighborhood of $x_0=0$ with $\D$  and take as before  $\sigma_i=2|x_i|$ and $\zeta_i = \frac{x_i}{\sigma_i} \rightarrow \zeta_*$ such that the  sequence of  blow-up maps $v_{\sigma_i}$   of $v$ at $x_{\star}$ with blow-up factor  $\sqrt{\frac{I^v(\sigma_i)}{\sigma_i}}$ is a sequence of asymptotic harmonic maps and converges locally uniformly in the pullback sense to a homogeneous harmonic map $v_0$.  Lemma~\ref{usordv} on the upper semicontinuity of order  implies $Ord^{v_0}(\zeta_{\star}) \geq 1+\epsilon_0$.   As before, the homogeneity of $v_0$ implies $Ord^{v_0} \geq 1+\epsilon_0$ along a the ray.     This contradicts Lemma~\ref{ahmresult} (cf. (\ref{em})).

 We have thus shown that the singular set of $u$ is discrete 
and hence 
given   $x \in {\mathcal S}(u)$,  there is $r>0$ such that $\overline{B_r (x)} \cap {\mathcal S}(u)= \{ x \}$. Thus $\partial B_r(x) \subset {\mathcal R}(u)$.   Applying Lemma~\ref{onestratum} for $\Sigma= \partial{B_r (x)}$, we have that $u( \partial{B_r (x)}) \subset {\mathcal T}'$ for some stratum ${\mathcal T}'$ of $\overline{\mathcal T}$.   Now recall the existence of  a convex exhaustion function $f:  {\mathcal T}' \rightarrow [0,\infty)$ (cf. \cite{wolpertJDG}).   Since $u(\partial B_r(x))$ is closed, there exists $c>0$ such that $u(\partial B_r(x)) \subset \{p \in {\mathcal T}':  f(p)\leq  c\}$.    Since sublevel sets of a convex function are convex, we conclude $u(\overline{B_r (x)}) \subset \{p \in {\mathcal T}':  f(p) \leq  c\}$, and hence $x \in {\mathcal R}(u)$.   This contradicts the assumption that  $x \in {\mathcal S}(u)$ and proves  ${\mathcal S}(u)= \emptyset$.
 \end{bach}

\section{Proof of Theorem~\ref{rigiditytheorem0} and Corollary~\ref{rigiditytheorem1}}\label{proofrigiditytheorem0}

Let $ u: \tilde M \rightarrow( \overline{\mathcal T}, d_{ \overline{\mathcal T}})$ be  a $\Gamma$-equivariant  harmonic map as in the statement of Theorem~\ref{rigiditytheorem0}. By  Lemma~\ref{onestratum}, there exists a stratum ${\mathcal T}'$ of $\overline{\mathcal T}$ such that $u({\mathcal R}(u)) \subset {\mathcal T}'$ and therefore $u(\tilde{M}) \subset \overline{\mathcal T}'$ where $\overline{\mathcal T}'$ denotes the Weil-Petersson completion of ${\mathcal T}'$.   
Since ${\mathcal T}'$ is isometric to a product of lower dimensional  Teichm\"uller spaces with the Weil-Petersson metric, the strong negative curvature of ${\mathcal T}'$ together with Theorem~\ref{RegularityTheorem} and Theorem~\ref{goto0} imply, as in  \cite{gromov-schoen} or \cite{daskal-meseSR}, that $u$ is pluriharmonic on the regular set ${\mathcal R}(u)$ (also cf. \cite{siu}). More precisely,  on  ${\mathcal R}(u)$,  we have that
\begin{eqnarray}\label{pluri}
D''d'u \equiv 0  \equiv D'd''u \ \ \mbox{and} \ \ \sum_{i,j,k,l}R_{ijkl}d''u_i \wedge d'u_j \wedge d'u_k \wedge d''u_l \equiv 0. 
\end{eqnarray}

Next,  applying \cite{bernie} Lemma 2, there exists a holomorphic disc $\D$ through any $x \in {\mathcal S}(u)$ such that
\begin{equation}\label{dimintersection}
{\mathcal H}^1 ( {\mathcal S}(u) \cap \D ) =0
\end{equation}
where ${\mathcal H}^1$ denotes 1-dimensional Hausdorff measure.
We next need the following 

\begin{claim} \label{rou}The restriction of $u$ to $\D$ is a harmonic map.
\end{claim}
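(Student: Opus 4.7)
The plan has two stages: first, use pluriharmonicity to deduce harmonicity of $u|_\Sigma$ on the regular part of $\Sigma$; then extend this across the ${\cal H}^1$-null singular set $\Sigma\cap {\cal S}(u)$ using a variational argument with the cutoffs from Theorem~\ref{goto0}.

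For the first stage, recall from the discussion preceding the claim that $u({\cal R}(u))\subset {\cal T}$, so on the open set $\Sigma\cap {\cal R}(u)$ the restriction $u|_\Sigma$ is a smooth map into the K\"ahler manifold ${\cal T}$. Evaluating the pluriharmonicity identity $D''d'u=0$ from (\ref{pluri}) on the rank-one holomorphic tangent bundle $T^{1,0}\Sigma$ produces $\partial_{\bar z}\partial_z(u|_\Sigma)=0$ for any holomorphic coordinate $z$ on $\Sigma$; this is precisely the harmonic map equation on $\Sigma$ with the induced conformal structure, so $u|_\Sigma$ is a smooth classical harmonic map on $\Sigma\cap {\cal R}(u)$.

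For the second stage, the strategy is to verify the variational characterization of harmonic maps into NPC spaces: for every compactly contained subdomain $\Sigma'$ of $\Sigma$ and every Lipschitz $w:\Sigma\to\overline{\cal T}$ agreeing with $u|_\Sigma$ outside $\Sigma'$, the inequality $E^{u|_\Sigma}(\Sigma')\leq E^w(\Sigma')$ must hold. I would let $\{\psi_i\}$ denote the cutoff functions produced by Theorem~\ref{goto0}, restricted to $\Sigma$, and define the interpolated competitor $w_i=(1-\psi_i)u|_\Sigma + \psi_i w$ via geodesic interpolation in $\overline{\cal T}$. By construction $w_i=u|_\Sigma$ on a neighborhood of $\Sigma\cap {\cal S}(u)$, so on each connected component of $\{\psi_i>0\}\cap \Sigma'$ (which lies entirely in the regular set) the maps $w_i$ and $u|_\Sigma$ agree on the boundary; applying the Dirichlet minimizing property of smooth harmonic maps into the simply connected non-positively curved target ${\cal T}$ component by component, together with the equality $w_i=u|_\Sigma$ on $\{\psi_i=0\}$, then yields $E^{u|_\Sigma}(\Sigma')\leq E^{w_i}(\Sigma')$.

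The remaining and most delicate step is passing to the limit $i\to\infty$ to obtain $E^{w_i}(\Sigma')\to E^w(\Sigma')$. Since ${\cal H}^1(\Sigma\cap {\cal S}(u))=0$ implies zero area in $\Sigma$, the region where $\psi_i<1$ shrinks to an area-zero set, so the contributions of $E^{u|_\Sigma}$ and $E^w$ supported there tend to zero. The hard part will be controlling the cross-term in $|\nabla w_i|^2$ of the form $|\nabla \psi_i|^2\, d^2(u|_\Sigma,w)$ produced by the geodesic interpolation; this is where the precise estimate $\int |\nabla\nabla u||\nabla\psi_i|\to 0$ from Theorem~\ref{goto0} enters, via an integration by parts that trades derivatives of $\psi_i$ against Hessians of $u$, combined with the Lipschitz bounds on $u$ and $w$. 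Once this limit argument is carried out, we deduce $E^{u|_\Sigma}(\Sigma')\leq E^w(\Sigma')$, proving that $u|_\Sigma$ is harmonic.
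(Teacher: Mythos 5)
Your first stage (pluriharmonicity of $u$ on ${\cal R}(u)$ forces $u|_\Sigma$ to be a classical harmonic map on $\Sigma\cap{\cal R}(u)$) is exactly how the paper starts. But your second stage diverges from the paper and contains a genuine gap in its key limiting step. The paper does not attempt to verify the energy-minimizing property against arbitrary competitors; instead it solves the Dirichlet problem on $\Sigma$ with boundary data $u|_{\partial\Sigma}$ to get a harmonic map $w$, shows that $d^2(u,w)$ is weakly subharmonic across $\Sigma\cap{\cal S}(u)$, and concludes $u\equiv w$ from the maximum principle. That argument is ``first order'' in the cutoff: the error term is $\sum_i\int_{B_{2r_i}}|\nabla\phi_i|\,|\nabla d^2(u,w)|\lesssim \sum_i r_i^{-1}\cdot r_i^2=\sum_i r_i<\epsilon$, which is exactly what ${\cal H}^1(\Sigma\cap{\cal S}(u))=0$ delivers.

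Your argument, by contrast, requires $E^{w_i}(\Sigma')\to E^{w}(\Sigma')$ for the geodesic interpolation $w_i=(1-\psi_i)u|_\Sigma+\psi_i w$, and the cross-term there is quadratic in $\nabla\psi_i$, of the form $\int_\Sigma|\nabla\psi_i|^2\,d^2(u|_\Sigma,w)$. Two things go wrong. First, the cutoffs of Theorem~\ref{goto0} come with an estimate on the $n$-dimensional integral $\int_\Omega|\nabla\nabla u|\,|\nabla\psi_i|\,d\mu$; restricting those functions to the two-dimensional slice $\Sigma$ gives no control whatsoever on $\int_\Sigma|\nabla\psi_i|\,d{\cal H}^2$, let alone on $\int_\Sigma|\nabla\psi_i|^2\,d{\cal H}^2$, so the proposed ``integration by parts against Hessians of $u$'' has nothing to feed on. Second, even if you build fresh cutoffs adapted to $\Sigma$, the condition ${\cal H}^1(\Sigma\cap{\cal S}(u))=0$ does not imply that $\Sigma\cap{\cal S}(u)$ has zero $W^{1,2}$-capacity in two dimensions (a set of Hausdorff dimension $1/2$, say, satisfies ${\cal H}^1=0$ yet has positive $2$-capacity); with the natural cover $\sum_j r_j<\epsilon$ one gets $\int|\nabla\psi_i|^2\sim\sum_j r_j^{-2}\cdot r_j^2$, which is not small, and $d^2(u|_\Sigma,w)$ is only known to be bounded near the singular set. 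So the limit $E^{w_i}(\Sigma')\to E^w(\Sigma')$ cannot be established by the tools you invoke, and this is precisely the difficulty the paper's subharmonicity-plus-maximum-principle route is designed to avoid.
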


\begin{proof}
Let $w:\D \rightarrow( \overline{\mathcal T}', d_{ \overline{\mathcal T}'})$ be a harmonic map with $w\big|_{\partial \D} = u\big|_{\partial \D}$.  
We will show $u=w$, thereby proving the claim.  Fix $\varphi \in C^{\infty}_c(\D)$ with $0 \leq \varphi \leq 1$.  For $\epsilon>0$, (\ref{dimintersection}) implies that  there exists a covering $\{B_{r_i}(x_i)\}_{i=1}^N$  of $\mbox{sup}(\varphi) \cap {\mathcal S}(u)\subset \D$ such that $\sum_{i=1}^N r_i<\epsilon$.  Let $\phi_i$ be a smooth function such $0 \leq \phi_i \leq 1$,  $\phi_i \equiv 0$ in $B_{r_i}(x_i)$, $\phi_i \equiv 1$ outside $B_{2r_i}(x_i)$ and $|\nabla \phi_i|<\frac{1}{r_i}$.  Define $\phi_{\epsilon}=\Pi_{i=1}^N \phi_i$ and 
$\phi_{\epsilon}^i = \Pi_{j \neq i} \phi_j$.   Since $u$ is pluriharmonic in ${\mathcal R}(u)$, its restriction $u\big|_{\D \backslash \bigcup_{i=1}^N B_{r_i}(x_i)}$ is a harmonic map.  Thus, $d^2(u,w)$ is weakly subharmonic in $\D \backslash \bigcup_{i=1}^N B_{r_i}(x_i)$ (cf. \cite{korevaar-schoen1} Lemma 2.4.2 and Remark 2.4.3).  Thus,  
\begin{eqnarray*}
\lefteqn{\int_{\D} \phi_{\epsilon}  \nabla \varphi \cdot \nabla d^2(u,w) dxdy + \sum_{i=1}^N \int_{B_{2r_i}(x_i)} \varphi \phi_{\epsilon}^i  \nabla \phi_i \cdot \nabla d^2(u,w) dxdy}\\
& = & \int_{\D} \nabla (\varphi \phi_{\epsilon}) \cdot \nabla d^2(u,w) dxdy  \geq 0.  \hspace{1.5in}
\end{eqnarray*}
Since $d^2(u,w)$ is a Lipschitz function in supp$(\varphi)$,  we can estimate
\[
\sum_{i=1}^N \int_{B_{2r_i}(x_i)} \left|\varphi \phi_{\epsilon}^i  \nabla \phi_i \cdot \nabla d^2(u,w) \right| dxdy \leq C \sum_{i=1}^N r_i^{-1} \int_{B_{2r_i}(x_i)}dxdy \leq C \sum_{i=1}^N r_i < C \epsilon.
\]
Letting $\epsilon \rightarrow 0$, we obtain
\[
\int_{\D} \nabla \varphi \cdot \nabla d^2(u,w) dxdy \geq 0.
\]
In other words, $d^2(u,w)$ is a weakly subharmonic function on $\D$.  Since $u$ and $w$ agree on the boundary $\partial \D$, we conclude that $u=w$ on $\D$.
\end{proof}
%

Now Theorem~\ref{surf} implies that the image of $u$ lies in  the  stratum ${\mathcal T}'$. From here the proof of Theorem~\ref{rigiditytheorem0} follows from the strong negativity of the curvature of ${\mathcal T}'$ as in \cite{siu}.  

We now proceed with the proof of the Corollary~\ref{rigiditytheorem1}. 
Notice that by the assumption that $\rho$ is sufficiently large,  \cite{daskal-wentworth} Corollary 1.3 implies that there exists a finite energy $\rho$-equivariant harmonic map 
\[
 u: \tilde M \rightarrow  \overline {\mathcal T}.
\]
By Theorem~\ref{rigiditytheorem0},  there exists a stratum ${\mathcal T}'$ of  $\overline {\mathcal T}$ such that $u$ is a pluriharmonic map into ${\mathcal T}'$. Since the image of $u$ is invariant under all pseudo-Anosov  transformations, 
\[
 {\mathcal T}'={\mathcal T} \ \mbox{and} \ \ u(\tilde M) \subset {\mathcal T}.
 \]
This completes the proof of the Corollary.

\section{Proof of Theorem~\ref{rigiditytheorem} and Corollary~\ref{symTeich}}\label{applications1}

\begin{george} At this point, we argue more or less as in Jost-Yau \cite{jost-yau}. We include the details here for the sake of completeness. Let $q: \hat M \rightarrow \bar M$ be a smooth resolution of singularities  with  exceptional divisor $\Sigma$ and let $M=\hat M \backslash \Sigma$. We label the connected components of $\Sigma$ by $\Sigma_j, j=1,...J$ and the irreducible components of $\Sigma_j$ by $\Sigma_j^l, l=1,...,L_j$. We can also assume that $\Sigma_j$ consists only of normal crossings. 
We endow $M$ with a  Poincare type metric (originally due to Cornabla and Griffiths  \cite{griffiths}) defined as follows: Let  $\sigma_{j,l}$ be a canonical section  of the line bundle 
 $\mathcal O(\Sigma_j^l)$ vanishing along $\Sigma_j^l$. For $\bar \omega$   a K\"ahler a form on $ M$ induced from a projective embedding of $\bar M$ we consider the metric associated to the K\"ahler form
 \begin{equation} \label{donaldsonmetric}
\omega=\sum_{j,l}  \frac{i}{2}  \partial \bar{\partial} \log \log |\sigma_{j,l}|_{h_{j,l}}^{-2}  + Cq^*(\bar{\omega})
\end{equation} 
where $h_{j,l}$ is a Hermitian metric on $\mathcal O(\Sigma_j^l)$ and $C>0$ is chosen sufficiently large such that  $\omega$ is positive. Let $g$ be the K\"ahler metric associated to $\omega.$ By \cite[Section 1]{jost-yau}, $g$ has bounded diameter and bounded Ricci curvature.

%

For each connected component $ \Sigma_j$ of $\hat M \backslash M$, the end $ E_j$ of $M$ corresponding to $ \Sigma_j$ can be written topologically (not metrically) as 
\begin{equation}\label{retractend}
E_j \simeq  \partial E_j \times {\R}^+.
\end{equation} 
To see this, consider the holomorphic section $\sigma_j$ vanishing on $\Sigma_j$  and Hermition metric $h_j$ defined by 
\[
\sigma_j= \sigma_{j,1}\otimes ... \otimes \sigma_{j,L_j}, \ h_j= h_{j,1}\otimes ...\otimes h_{j,L_j}
\]
and use the gradient flow of $|\sigma_j|_{h_j}^2$ to decompose into level sets.  

Retraction of each end $E_j$ to its boundary $\partial E_j$  via (\ref{retractend}) induces a deformation retraction of $M$ into its core 
\begin{equation}\label{defteract}
r_c : M \rightarrow M_c := M \backslash \bigcup_j E_j .
\end{equation}
 The same is true for $\mathcal M'$, by taking a resolution of singularities of a compactification of $\overline {\mathcal M'}$ and arguing as for $M$.

 Since  $M$ and ${\mathcal M'}$ are homotopy equivalent,  we can    induce via  (\ref{defteract}) a smooth homotopy equivalence 
 \[
 k_c: M_c \backslash \partial M_c \rightarrow \mathcal M'.
 \]
Under the codimension assumption of $\bar M \backslash M$ given in the statement of Theorem~\ref{rigiditytheorem}, the energy of the map $r_c$ 
is bounded with respect to the metric $g$ on $M$ by \cite[p.487]{jost-yau}. Hence, by the smoothness of $k_c$ and the compactness of $M_c$, we conclude that 
\[
f:=k_c \circ r_c: M \rightarrow \mathcal M'
\]
defines a smooth homotopy equivalence of finite energy. Since $\Gamma$ contains pseudo-Anosov elements associated to different measured foliations, $\Gamma$ is sufficiently large.  We thus obtain from Corollary~\ref{rigiditytheorem1} that there is a pluriharmonic map of finite energy
\[
u': M \rightarrow \mathcal M'
\]
which is also a homotopy equivalence.   

Next, consider the embedding of the  moduli space of Riemann surfaces $\mathcal M=\mathcal T/ \Gamma$ in $D/\Lambda$ where $D$ is the Siegel upper half space of degree $g$, $\Lambda$ is the Siegel modular group and let $ \overline {D/\Lambda}^{SBB}$ denote the Satake-Baily-Borel compactification of $D/\Lambda$ (cf. \cite{baborel}).  Let $ \overline {\mathcal M}^{SBB}$ denote the closure of $ {\mathcal M}$ in $\overline {D/\Lambda}^{SBB}$. Since $ \overline {\mathcal M}^{SBB} \backslash \mathcal M$ has more than one connected components (cf. \cite{jost-ji} Proposition 4.1), it follows that $\mathcal M$ has more than one ends. Since the quotient map $\mathcal M' \rightarrow \mathcal M $
is a proper surjective map, $\mathcal M'$ must have more than one ends as well.

\begin{lemma} The map $u'$ is holomorphic or conjugate holomorphic, and  its rank  is equal to  $2 \dim_{\C}  \mathcal M$.
\end{lemma}
\begin{proof}
Let $m=\dim_{\C} \mathcal M$.  We claim  that $H_{2m-1}(M, \R) \neq 0$. Assume on the contrary that $H_{2m-1}(M, \R) = 0$. Since $u'$ is a homotopy equivalence, it also implies that $ H_{2m-1}(\mathcal M', \R) = 0$ and, since ${\mathcal M}'_c$ is homotopy equivalent to ${\mathcal M}'$,  $ H_{2m-1}({\mathcal M}'_c, \R) = 0$. This contradicts the fact that ${\mathcal M}'$ has more than one ends. Indeed, since $ H_{2m}({\mathcal M}'_c, \R) \simeq H_0(({\mathcal M}'_c, \partial {\mathcal M}'_c), \R) = 0$,  the exact sequence
   \[
 H_{2m} ({\mathcal M}'_c, \R) \rightarrow  H_{2m}(({\mathcal M}'_c, \partial {\mathcal M}'_c), \R) \rightarrow H_{2m-1} ( \partial {\mathcal M}'_c, \R) \rightarrow H_{2m-1}({\mathcal M}'_c, \R)
\]
  implies
\[
H_{2m}(({\mathcal M}'_c, \partial {\mathcal M}'_c), \R)\simeq H_{2m-1} ( \partial {\mathcal M}'_c, \R)
\]
and by Poincare Lefschetz duality 
\[
H^0({\mathcal M}'_c, \R) \simeq H^0 ( \partial{\mathcal M}'_c, \R).
\]
This is a contradiction since ${\mathcal M}'_c$ is connected and $\partial {\mathcal M}'_c$ is not.
Hence $ H_{2m-1}(M, \R) \neq 0$. Since $u'$ is a homotopy equivalence it must carry a non-trivial $2m-1$ homology class to a non-trivial $2m-1$ homology class and hence it must have rank $\geq 2m-1$ somewhere.   Since $u'$ is holomorphic or conjugate holomorphic by Theorem~\ref{rigiditytheorem0}, it must have maximal rank $=2m$.
  \end{proof}

  By changing orientations if necessary we can assume $u'$  is holomorphic. Let
\[
u: M \rightarrow \mathcal M
\]
 denote the composition of the quotient map to $\mathcal M$ and $u'$, which is also holomorphic.
 By embedding  $\mathcal M$ in $D/\Lambda$, 
 we obtain a holomorphic map
 \[
 u: M \rightarrow D/\Lambda
 \]
 which by \cite{borel} extends to a  holomorphic  map 
\[
\hat u: \hat M \rightarrow \overline {D/\Lambda}^{SBB}
\]
where $\hat M$ is a smooth compactification of $M$ as before.  
 \begin{lemma}\label{isproper}
 The map 
 $\hat u$ as given above takes $\hat{M} \backslash M$ into $\overline {\mathcal M}^{SBB} \backslash \mathcal M$. In particular, $u$ and hence also $u'$ is proper.
\end{lemma}
\begin{proof} 
 Let $p \in \hat{M} \backslash M$ and    
\[
v: \D =\{ z \in \C : |z| < 1 \} \rightarrow \hat{M}, \ \ v(0)=p
\]
be such that
\[
\gamma_t:=v(\{  |z| =t \}) \ \mbox{homotopically nontrivial in }M,  \ \ \mbox{length}(\gamma_t) \rightarrow 0.
\]
Since $u: M \rightarrow  \mathcal M$ is a homotopy equivalence, $u(\gamma_t)$ is homotopically nontrivial on $ \mathcal M$ and since the domain metric has bounded Ricci curvature, we obtain by the Schwartz Lemma \cite{roy}
$ \mbox{length}(u(\gamma_t)) \rightarrow 0$. It follows that $\hat u(p) \in  \overline {D/\Lambda}^{SBB}  \backslash {D/\Lambda}$, hence $\hat u(p) \in {\mathcal M}^{SBB} \backslash \mathcal M$ which proves the lemma.
\end{proof}
 
  Since $u'$ is proper and has maximal rank, it is onto. Given $y \in \mathcal M$,  ${u'}^{-1}(y)$ is a compact subvariety of $M$ and hence, if of positive dimension, it is homologically nontrivial. Since $u'$ is a homotopy equivalence and maps ${u'}^{-1}(y)$ to $\{y\}$ which is homologically trivial, this is a contradiction. It follows that $u'$ is a covering map and since it is also a homotopy equivalence it must have degree 1.  The fact that $u'$ is a biholomorphism 
follows as in \cite[proof of Theorem 8, p.110]{siu}.
  \end{george}
  
  \begin{chika} 
 Assume on the contrary that  there exists a sufficiently large  homomorphism $\rho: \Lambda \rightarrow \Gamma$.
As in \cite[Lemma 8.1]{gromov-schoen}, we first construct a  finite energy equivariant Lipschitz map 
$f: \tilde M= G \slash K \rightarrow {\mathcal T}$. Corollary~\ref{rigiditytheorem1} implies that there exists  a   $\Lambda$-equivariant  harmonic map
$$u: \tilde M \rightarrow( \overline{\mathcal T}, d_{ \overline{\mathcal T}}).$$

By Lemma~\ref{onestratum}, there exists ${\mathcal T}' \subset \overline{\mathcal T}$ such that $u({\mathcal R}(u)) \subset {\mathcal T}'$.  
We are going to show that $u$ is constant, so with an intent of arriving at a contradiction, let's assume that $u$ is non-constant.
As in \cite{daskal-meseSR} Corollary 14 and Lemma 15, our regularity Theorem~\ref{RegularityTheorem} and Theorem~\ref{goto0} imply that $u$ is totally geodesic on the regular set ${\mathcal R}(u)$.    In other words, $u$  satisfies on ${\mathcal R}(u)$
\begin{equation}\label{totgeod}
\nabla du= 0.
\end{equation}
As in  \cite{daskal-meseSR} proof of Theorem 1, (\ref{totgeod}) combined with Theorem~\ref{RegularityTheorem} implies  that $u$ is totally geodesic on the entire $\tilde M$ in the sense that $u$ maps geodesics to geodesics. 

Since the domain is an  irreducible symmetric space, $u$ must be a totally geodesic {\it{immersion}} into a stratum ${\mathcal T}'$. This is clearly a contradiction if the symmetric space has rank
$\geq 2$. In the rank 1 case,  the contradiction follows from \cite{wu} Theorem 1.2. 
We thus conclude that $u$ is constant, hence $\rho(\Lambda)$ fixes a point in Teichm\"uller space. Since the action of the mapping class group is properly discontinuous, this implies that $\rho(\Lambda)$ is finite contradicting the fact that $\rho$ is sufficiently large. 
\end{chika}

\end{document}